\tikzset{inner sep=0pt, 
  root/.style={circle,draw,minimum size=7pt,thick}, 
  fatroot/.style={circle,draw,minimum size=10pt,thick}, 
  short root/.style={circle,fill,minimum size=7pt}, 
  doublearrow/.style={postaction={decorate}, 
  decoration={markings,mark=at position .7
  with {\arrow{angle 60}}},double distance=3pt,thick}
} 
\theoremstyle{plain}
\newtheorem{theorem}{Theorem}[section]
\newtheorem{contheorem}[theorem]{Theorem-Construction}
\newtheorem{lemma}[theorem]{Lemma}
\newtheorem{proposition}[theorem]{Proposition}
\newtheorem{corollary}[theorem]{Corollary}
\theoremstyle{remark}
\newtheorem{remark}[theorem]{Remark}
\numberwithin{equation}{section}
\numberwithin{paragraph}{section}
\newcommand{\dquot}{{\,\!\sslash\!\,}}
\DeclareMathOperator{\Hom}{Hom}
\DeclareMathOperator{\vol}{vol}
\DeclareMathOperator{\Ad}{Ad}
\DeclareMathOperator{\Gal}{Gal}
\DeclareMathOperator{\Aut}{Aut}
\DeclareMathOperator{\Sel}{Sel}
\DeclareMathOperator{\ad}{ad}
\DeclareMathOperator{\ord}{ord}
\DeclareMathOperator{\Lie}{Lie}
\DeclareMathOperator{\Pic}{Pic}
\DeclareMathOperator{\diag}{diag}
\DeclareMathOperator{\End}{End}
\DeclareMathOperator{\Frob}{Frob}
\DeclareMathOperator{\disc}{disc}
\DeclareMathOperator{\Frac}{Frac}
\DeclareMathOperator{\Spec}{Spec}
\newcommand{\cC}{{\mathcal C}}
\newcommand{\cJ}{{\mathcal J}}
\newcommand{\cO}{{\mathcal O}}
\newcommand{\cW}{{\mathcal W}}
\newcommand{\fra}{{\mathfrak a}}
\newcommand{\frg}{{\mathfrak g}}
\newcommand{\frh}{{\mathfrak h}}
\newcommand{\frl}{{\mathfrak l}}
\newcommand{\ffrm}{{\mathfrak m}}
\newcommand{\frs}{{\mathfrak s}}
\newcommand{\frt}{{\mathfrak t}}
\newcommand{\frz}{{\mathfrak z}}
\newcommand{\bbA}{{\mathbb A}}
\newcommand{\bbC}{{\mathbb C}}
\newcommand{\bbF}{{\mathbb F}}
\newcommand{\bbG}{{\mathbb G}}
\newcommand{\bbP}{{\mathbb P}}
\newcommand{\bbQ}{{\mathbb Q}}
\newcommand{\bbR}{{\mathbb R}}
\newcommand{\bbZ}{{\mathbb Z}}
\newcommand{\PGL}{\mathrm{PGL}}
\newcommand{\SO}{\mathrm{SO}}
\newcommand{\SL}{\mathrm{SL}}
\newcommand{\al}{\alpha}
\newcommand{\be}{\beta}
\newcommand{\lam}{\lambda}
\newcommand{\frakh}{\mathfrak h}
\newcommand{\sH}{\mathscr{H}}
\newcommand{\la}{\langle}
\newcommand{\ra}{\rangle}
\newcommand{\wt}{\widetilde}
\DeclareMathOperator{\Span}{span}
\newcommand{\htvar}{a}
\newcommand{\Cen}{A} 
\newcommand{\cen}{\mathfrak a} 
\newcommand{\irr}{{\text{irr}}}
\DeclareMathOperator{\Ht}{ht} 
\newcommand{\Curve}{\mathscr{C}} 
\newcommand{\OpenSurface}{\mathscr{X}} 
\newcommand{\Surface}{\mathscr{Y}} 
\newcommand{\Family}{\mathscr{F}} 
\newcommand{\Equations}{\mathscr{E}} 
\newcommand{\Point}{\mathscr{P}} 
\newcommand{\Jacobian}{\mathscr{J}} 
\newcommand{\Lattice}{\mathscr{W}} 
\newcommand{\OO}{\mathscr{O}} 
\newcommand{\FF}{\mathscr{F}} 
\newcommand{\Kostant}{\sigma} 
\newcommand{\Torsor}{\mathscr{T}} 
\newcommand{\intH}{\underline{H}} 
\newcommand{\intG}{\underline{G}} 
\newcommand{\intT}{\underline{T}} 
\newcommand{\intV}{\underline{V}} 
\newcommand{\intfrh}{{\underline{\mathfrak{h}}}} 
\newcommand{\intB}{\underline{B}} 
\newcommand{\intCen}{\underline{A}} 
\newcommand{\extJ}{{\mathscr{G}}} 
\newcommand{\prin}{\mathscr{M}} 
\newcommand{\tprin}{\mathscr{L}} 
\author{Beth Romano \and Jack A. Thorne}
\title{$E_8$ and the average size of the 3-Selmer group of the Jacobian of a pointed genus-2 curve }
\begin{document}

\maketitle

\begin{abstract}
We prove that the average size of the 3-Selmer group of a genus-2 curve with a marked Weierstrass point is 4. We accomplish this by studying rational and integral orbits in the representation associated to a stably $\bbZ / 3 \bbZ$-graded simple Lie algebra of type $E_8$. We give new techniques to construct integral orbits, inspired by the proof of the fundamental lemma and by the twisted vertex operator realisation of affine Kac--Moody algebras. 
\end{abstract}

\tableofcontents
\section{Introduction}

\subsection{Statement of results}

In this paper we prove new theorems about the arithmetic statistics of odd genus-2 curves. If $f(x) = x^5 + c_{12} x^3 + c_{18} x^2 + c_{24} x + c_{30} \in \bbQ[x]$ is a polynomial of non-zero discriminant, then the smooth projective completion of the affine curve
\[ \Curve_f^0 : y^2 = f(x) \]
is a genus-2 curve with a marked Weierstrass point (the unique point at infinity). Conversely, any pair $(\Curve, \Point)$, where $\Curve$ is a (smooth, projective, connected) curve of genus 2 and $\Point \in \Curve(\bbQ)$ is a marked Weierstrass point, arises from a polynomial $f(x)$ satisfying the following conditions:
\begin{enumerate}
\item The coefficients of $f(x)$ are integers and the discriminant of $f(x)$ is non-zero.
\item No polynomial of the form $n^{-10} f(n^{2} x)$ has integer coefficients, where $n \geq 2$ is an integer.
\end{enumerate}
We write $\Equations$ for the set of all polynomials $f(x) = x^5 + c_{12} x^3 + c_{18} x^2 + c_{24} x + c_{30} \in \bbZ[x]$ of non-zero discriminant, and $\Equations_\text{min} \subset \Equations$ for the subset satisfying condition 2.\ above. For $f(x) \in \Equations$, we write $\Curve_f$ for the corresponding pointed genus-2 curve and $\Jacobian_f$ for the Jacobian of $\Curve_f$, a principally polarized abelian  surface over $\bbQ$. We define the height $\Ht(f)$ of a polynomial $f(x) \in \Equations$ by the formula
\[ \Ht(f) = \sup_i | c_i(f) |^{120 / i}. \]
Note that for any $\htvar > 0$, the set $\{ f \in \Equations | \Ht(f) < \htvar \}$ is finite. We can now state our first main theorem.
\begin{theorem}[Theorem \ref{thm_main_theorem_first_version}]\label{thm_intro_bounded_selmer}
The average size of the 3-Selmer group $\Sel_3(\Jacobian_f)$ for $f \in \Equations_\text{min}$ is 4. More precisely, we have
\[ \lim_{\htvar \to \infty} \frac{ \sum_{f \in \Equations_\text{min}, \Ht(f) < \htvar} | \Sel_3(\Jacobian_f) | }{ | \{ f \in \Equations_\text{min} \mid \Ht(f) < \htvar \} |} = 4. \]
\end{theorem}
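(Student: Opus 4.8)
The plan is to follow the now-standard strategy pioneered by Bhargava and applied by Bhargava--Shankar, Bhargava--Gross, and the second author (and in the cubic case by Bhargava--Gross--Wang and Thorne) for bounding average Selmer groups, but with the representation $(\ZG, \ZV)$ attached to the stable $\bbZ/3\bbZ$-grading on a simple Lie algebra of type $E_8$ playing the role that the $2$-grading on $E_7$ played for $2$-Selmer groups of the same family of curves. First I would recall from the earlier sections the arithmetic interpretation: for $f \in \Equations$ with Jacobian $\Jacobian_f$, elements of a suitable quotient of $\Sel_3(\Jacobian_f)$ should be shown to correspond bijectively to $\ZG(\bbQ)$-orbits on $\ZV(\bbQ)$ that (i) lie over the point of the GIT quotient determined by $f$ (via the Kostant section $\Kostant$ and the identification of the invariants with the coefficients $c_{12}, c_{18}, c_{24}, c_{30}$), and (ii) are everywhere locally soluble, i.e.\ contain an element of $\ZV(\bbZ_p)$ lying in the relevant orbit for every prime $p$ and an element of $\ZV(\bbR)$ over the reals. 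This reduces the theorem to two independent inputs: a \emph{uniform orbit-counting estimate} for integral points in $\ZV(\bbZ)$ of bounded height lying in the various orbits, and an \emph{integral orbit construction} showing that each everywhere-locally-soluble rational orbit meets $\ZV(\bbZ)$ (up to controlled denominators).

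For the counting input I would set up the fundamental domain for the action of $\ZG(\bbZ)$ on $\ZV(\bbR)$, using reduction theory, and count lattice points in the main body of the cusp via Bhargava's averaging method over translates of a Siegel domain; the non-trivial contributions come from the main term, which is a product of local volumes, while the cuspidal regions must be shown to contribute only to the error term (this is where irreducibility/stability of the generic orbit, and the structure of the unstable locus, are used). One then applies a sieve to restrict to $\Equations_\text{min}$ and to impose the local solubility conditions at each prime, invoking a tail estimate (a uniform bound on the number of $f$ of bounded height for which $p^2 \mid \disc(f)$, or for which the relevant local condition fails, summed over large $p$) to justify interchanging the limit with the infinite product over primes. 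The product of local densities, together with the local orbit-counting at each place and the archimedean factor, should collapse — exactly as in the $E_7$ case — to give the constant $1 + 1 = 2$ for the relevant quotient of the Selmer group, and hence average $4$ for $|\Sel_3(\Jacobian_f)|$ itself once one accounts for the $3$-torsion subgroup contribution and the fact that we are really averaging a sum indexed by a nontrivial group.

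I expect the main obstacle to be the integral orbit construction: showing that an everywhere locally soluble class in the Selmer group gives rise to an actual integral orbit, rather than merely a rational one with bounded denominators at the primes dividing the discriminant. In the $2$-Selmer setting this was handled using either explicit models or, in bad-reduction cases, by hand; here, in the $3$-graded $E_8$ situation, the representation is larger and the stabiliser is the $3$-torsion of the Jacobian, so the naive approach does not obviously work at primes of bad reduction. This is precisely where the new techniques advertised in the abstract enter: one argument modelled on the proof of the fundamental lemma (transferring integrality along a deformation to the Hitchin-type fibration, or using the geometry of the affine Springer fibre / compactified Jacobian), and a second argument using the twisted vertex operator construction of the affine Kac--Moody algebra of type $E_8^{(1)}$ (or rather its twist by the order-$3$ automorphism), which provides an explicit lattice-theoretic model for $\ZV$ in which integral representatives can be exhibited directly. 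Carrying out these constructions carefully, and checking that they cover every locally soluble orbit with the denominators under control, is the technical heart of the argument; the counting and sieving, while laborious, are by now essentially routine adaptations of the established machinery.
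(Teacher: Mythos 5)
Your outline is essentially the paper's own strategy: rational orbits for Selmer elements via the graded $E_8$ construction and the Kostant section, integral representatives at good primes via a deformation to square-free discriminant inspired by the fundamental lemma (with small primes handled by scaling $f$ and congruence neighbourhoods), and Bhargava-style counting of $\intG(\bbZ)$-orbits in $\intV(\bbZ)$ with a cusp estimate, a uniformity/tail estimate for imposing infinitely many congruence conditions, and a sieve down to $\Equations_\text{min}$. So the architecture matches.

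The one step that would not go through as you describe it is the endgame that produces the constant $4$. There is no ``relevant quotient of the Selmer group'' counted with constant $1+1=2$, and the $3$-torsion is not an extra ``contribution'' to be added back in. What the computation actually yields is that the weighted number of irreducible integral orbits matching Selmer elements is governed by $\vol(\intG(\bbZ)\backslash \intG(\bbR))\prod_p\vol(\intG(\bbZ_p))$, i.e.\ by the Tamagawa number of $G=\SL_9/\mu_3$, which is $3$; the archimedean and local mass factors (the constant $W_0$ and the local weights built from $|Z_{\intG}(v)(\bbQ_p)|/|Z_{\intG}(v)(\bbZ_p)|$) cancel against the identity $\dim V=\sum_i \deg c_i$. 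This gives, for each congruence-restricted family, the average of $\bigl(|\Sel_3(\Jacobian_f)|-1\bigr)/|\Jacobian_f[3](\bbQ)|$ equal to $3$; one must then \emph{remove} the factor $|\Jacobian_f[3](\bbQ)|$ by showing it equals $1$ outside a density-zero set (this is the big-stabilizer estimate, an equidistribution statement at a varying prime, not a bookkeeping correction), after which the average of $|\Sel_3(\Jacobian_f)|$ is $1+3=4$. In particular each nontrivial Selmer element gives its own orbit here --- there is no identification of inverse pairs --- so any argument that first computes an average for a quotient and then doubles or rescales will land on the wrong constant. Finally, note that passing from the congruence-restricted averages to all of $\Equations_\text{min}$ requires the countable partition into neighbourhoods $\Equations_{W_i,\text{min}}$ together with a uniform bound on the tail $\sum_{i\geq k}$ (using that $\Kostant(N\cdot f)\in\intV(\bbZ)$ and that $\prod_{p\leq N}\vol(\sqcup_{i\geq k}W_{p,i})\to 0$); this is needed because the integral-representative statement at the primes $p\leq N$ is only local on $\Equations_p$ and after scaling by an integer $M$ depending on the neighbourhood.
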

(A similar result can be proved for subsets of $\Equations_\text{min}$ defined by congruence conditions. See Remark \ref{rmk_generalized_version_of_main_theorem}.) 

Theorem \ref{thm_intro_bounded_selmer} has the following consequence for rational points, which follows from work of Poonen and Stoll \cite{Poo14}: 
\begin{theorem}[Theorem \ref{thm_poonen_stoll}]\label{thm_intro_poonen_stoll}
Let $\Family$ be the set of pairs $(\Curve, \Point)$ where $\Curve$ is a (smooth, projective, connected) curve of genus 2 and $\Point \in \Curve(\bbQ)$ is a marked Weierstrass point. Then
a positive proportion of curves $(\Curve, \Point) \in \Family$ satisfy $\Curve(\bbQ) = \{ \Point \}$. More precisely, we have
\[ \liminf_{\htvar \to \infty} \frac{ | \{ f \in \Equations_\text{min} \mid \Ht(f) < \htvar, | \Curve_f(\bbQ) | = 1 \} |}{| \{ f \in \Equations_\text{min} \mid \Ht(f) < \htvar \} |} > 0. \]
\end{theorem}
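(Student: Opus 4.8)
The plan is to deduce Theorem~\ref{thm_intro_poonen_stoll} from Theorem~\ref{thm_intro_bounded_selmer} by feeding the average $3$-Selmer bound into the method of Poonen and Stoll \cite{Poo14}. That method proves a statement of exactly this shape for the family of odd hyperelliptic curves of a fixed genus $g$: it takes as input an upper bound for the average size of the $p$-Selmer group of the Jacobian and yields as output a positive lower bound for the proportion of curves whose only rational point is the marked one, provided the numerical input is small enough in terms of $p$ and $g$. In \cite{Poo14} this is run with $p = 2$ and the Bhargava--Gross bound on the average size of $\Sel_2$, but the argument is not special to the prime $2$; I would re-run it with $p = 3$ and the value $4$ for the average size of $\Sel_3(\Jacobian_f)$ supplied by Theorem~\ref{thm_intro_bounded_selmer}. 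As an initial soft consequence, record that, since $|\Sel_3(\Jacobian_f)|$ is always a power of $3$ and $\Jacobian_f(\bbQ)/3\Jacobian_f(\bbQ)$ injects into $\Sel_3(\Jacobian_f)$, so that $\rank\Jacobian_f(\bbQ) \le \dim_{\bbF_3}\Sel_3(\Jacobian_f)$, Markov's inequality applied to Theorem~\ref{thm_intro_bounded_selmer} already gives that a proportion at least $1 - 4/9 = 5/9$ of $f \in \Equations_\text{min}$ have $|\Sel_3(\Jacobian_f)| \le 3$, hence $\rank\Jacobian_f(\bbQ) \le 1 = g - 1$; this is the range in which Chabauty's method applies to $\Curve_f$.

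The essential input is then the refinement of Chabauty--Coleman in \cite{Poo14} that upgrades ``finitely many, effectively bounded'' rational points to ``none besides $\Point$''. I would use it as follows. For a prime $p$ of good reduction, an affine point $P \in \Curve_f(\bbQ)$ reduces to some $\bar P \in \Curve_f(\bbF_p)$; the reduction of $[P - \Point] \in \Jacobian_f(\bbQ)$ in $\Jacobian_f(\bbF_p)$ depends only on $\bar P$, so an affine rational point reducing to $\bar P$ forces a class depending only on $\bar P$ to lie in the image of $\Sel_3(\Jacobian_f)$ under localisation at $p$. Poonen and Stoll show, by computing the relevant local densities at $p$ and at the real place and combining with the average-size bound on $\Sel_p(\Jacobian_f)$, that for a positive proportion of $f$ the localised Selmer image is too small to be compatible with any residue disk, whence $\Curve_f(\bbQ) = \{\Point\}$. (Note that an affine Weierstrass point $(\alpha,0)$ with $f(\alpha) = 0$ is itself an affine rational point, so this conclusion already subsumes the statement that $f$ has no rational root; Weierstrass points need no separate treatment.) Substituting $p = 3$, $g = 2$ and the value $4$, and redoing the local computations at $3$ and $\infty$, produces the asserted $\liminf > 0$. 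I would also remark that the height $\Ht$ and the set $\Equations_\text{min}$ of minimal representatives differ only cosmetically from the family and ordering used in \cite{Poo14}, and the density assertions are insensitive to this.

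The deep content is entirely in Theorem~\ref{thm_intro_bounded_selmer}; the remaining work for Theorem~\ref{thm_intro_poonen_stoll} is the bookkeeping of the previous paragraph. The main thing to check is that the Poonen--Stoll estimates survive the replacement of the prime $2$ by $3$ and that the final numerical inequality stays strict once $4$ (the $3$-Selmer average) and $p = 3$ are substituted for $3$ (the $2$-Selmer average) and $p = 2$. Two effects pull against the margin: at $p = 3$ there are more residue disks to rule out than at $p = 2$, while the size of the localised Selmer image must be estimated from the global average $4$ together with the local conditions at $3$ and at $\infty$. Verifying that these combine to leave a positive proportion of $f$ admitting no compatible disk is a finite computation with $3$-adic and archimedean integrals, carried out exactly as in \cite{Poo14}; it is the crux of the proof of Theorem~\ref{thm_intro_poonen_stoll}, but it introduces no new ideas.
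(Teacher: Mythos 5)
There is a genuine gap. You treat the Poonen--Stoll machine as if its only global input were an upper bound on the average of $|\Sel_3(\Jacobian_f)|$, with everything else being ``local computations at $3$ and $\infty$''. That is not how their sieve works, and it is not how the paper deduces Theorem \ref{thm_poonen_stoll}. Beyond the average, the method needs an \emph{equidistribution} statement for the localisations of Selmer elements: as $f$ varies (within a trivializing congruence class $U_3$ at $3$, so that $\Jacobian_f(\bbQ_3)/3\Jacobian_f(\bbQ_3)$ is a fixed group $F$), the images $x|_3 \in F$ of the elements $x \in \Sel_3(\Jacobian_f)$ must be equidistributed; this is the property $\operatorname{Eq}_2(3)$ of \cite[Remark 10.5]{Poo14}, and it is exactly what the paper invokes. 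This property is a global statement about the family and does \emph{not} follow from the value $4$ of the average: the average controls how many Selmer elements there are, but says nothing about how their local images at $3$ distribute among the cosets of $F$, which is what is needed to show that, for most $f$, no Selmer element is compatible with the image of a residue disk. In \cite{Poo14} the corresponding input at $p=2$ is not a local density calculation but an equidistribution theorem supplied by the orbit-counting machinery of \cite{Bha13} (their Theorem 47): one reruns the geometry-of-numbers count with local weight functions at the relevant prime. Here, likewise, $\operatorname{Eq}_2(3)$ must be proved by a modification of the proof of Theorem \ref{thm_main_theorem_first_version}, inserting weights at $3$; it is this step, which your proposal never identifies and indeed explicitly declares unnecessary (``no new ideas'' beyond $3$-adic and archimedean integrals), that carries the remaining content of the deduction.

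Two smaller points. Your Markov-inequality observation (at least $5/9$ of $f$ have $|\Sel_3(\Jacobian_f)| \leq 3$, hence rank $\leq 1$) is correct but plays no role in the actual argument: Chabauty applicability is not the bottleneck, and the paper's proof does not pass through any rank bound, going instead directly through \cite[Remark 10.5]{Poo14} once $\operatorname{Eq}_2(3)$ is in hand. Also, the numerical side is already handled by that remark of Poonen--Stoll for $p=3$, $g=2$, and the average value $4$; what is left to you is not to ``redo the local computations'' but to prove the equidistribution input, along the lines of \cite[Theorem 47]{Bha13} adapted to the present representation.
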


\subsection{Context and method of proof}

In the paper \cite{Bha13}, Bhargava and Gross calculated the average size of the the 2-Selmer group of the Jacobian of an odd hyperelliptic curve of fixed genus $g \geq 2$ using a connection with the arithmetic invariant theory of a graded Lie algebra; more precisely, the $\bbZ / 2 \bbZ$-graded Lie algebra arising from the element $-1$ of the automorphism group of a type-$A_{2g}$ root lattice. Their proof required studying the orbits of the representation coming from this grading: that of the group $\SO_{2g+1}$ on the space of traceless, self-adjoint $(2g + 1) \times (2g + 1)$ matrices. 

In this paper, we exploit the stable $\bbZ / 3 \bbZ$-grading of a Lie algebra of type $E_8$ in order to study the 3-Selmer groups of odd genus-2 curves. Note that we are firmly in the territory of exceptional groups! In particular, there seems to be no hope of generalizing anything in this paper to study e.g.\ the 3-Selmer groups of hyperelliptic curves of higher genus. Nevertheless, we expect the methods developed in this paper to have applications elsewhere, for reasons we will soon explain.

Let $H$ be a split reductive group over $\bbQ$ of type $E_8$ with split maximal torus $T$, and let $\check{\rho} : \bbG_m \to T$ be the sum of the fundamental coweights with respect to some choice root basis. The restriction $\theta$ of $\check{\rho}$ to $\mu_3$ determines a stable $\bbZ / 3 \bbZ$-grading 
\[ \frh = \oplus_{i \in \bbZ / 3 \bbZ} \frh(i) \]
of the Lie algebra $\frh := \Lie H$, and hence a coregular representation of $G = H^\theta$ on $V = \frh(1)$ (see e.g.\ \cite{Ree12} -- the word `stable' refers to the presence of stable $G$-orbits in $V$, i.e. orbits that are closed and have finite stabilizers).

One can identify $G$ with $\SL_9 / \mu_3$ and $V$ with the 3rd exterior power of the standard representation of $\SL_9$. The relation between this representation and 3-descent on odd genus-2 curves has been studied previously by Rains and Sam \cite{Rai17}. We do not use their work. Instead, we follow a different approach which we find more suited to studying integrality problems (of which more in a moment).

Using results of Vinberg, one can identify the geometric quotient $B = V \dquot G = \Spec \bbQ[V]^G$ with the spectrum of the polynomial algebra $\bbQ[c_{12}, c_{18}, c_{24}, c_{30}]$ in 4 indeterminates (thus $c_{12}, \dots, c_{30}$ are algebraically independent $G$-invariant polynomials on $V$). We can therefore think of $B$ as parameterizing polynomials $f(x) = x^5 + c_{12} x^3 + c_{20} x^2 + c_{24} x + c_{30}$. We write $V_f \subset V$ for the $G$-invariant closed subscheme given by the fibre of the quotient map $\pi : V \to B$ above a point $f$ of the base. 

The first step in the proof of Theorem \ref{thm_intro_bounded_selmer} is to construct for any field $k / \bbQ$ and any $f \in B(k)$ of non-zero discriminant an injection
\begin{equation}\label{eqn_intro_eta}\eta_f: \Jacobian_f(k) / 3 \Jacobian_f(k) \to G(k) \backslash V_f(k), 
\end{equation}
where $\Jacobian_f$ is the Jacobian of the curve given by the equation $y^2 = f(x)$. In fact, we go further than this, giving a construction that works over any $\bbQ$-algebra $R$ (and for any $f \in B(R)$ with discriminant that is a unit in $R$). If $R$ is a ring over which every locally free module is free, then we obtain an injection
\begin{equation}\label{eqn_intro_eta_R} \eta_f : \Jacobian_f(R) / 3 \Jacobian_f(R) \to G(R) \backslash V_f(R), 
\end{equation}
recovering the previous map (\ref{eqn_intro_eta}) in the case that $R = k$ is a field. 

This construction is based on changing our point of view from $G$-orbits in $V$ to isomorphism classes of triples $(H', \theta', \gamma')$, where $H'$ is a reductive group of type $E_8$, $\theta'$ is a stable $\bbZ / 3 \bbZ$-grading, and $\gamma' \in \frh'(1)$. We give a construction that begins with a Heisenberg group (such as the $\mu_3$-extension of $\Jacobian_f[3]$ arising from the Mumford theta group of thrice the canonical principal polarization of $\Jacobian_f$) and a representation $W$ of this Heisenberg group, and returns a Lie algebra $\frh'$ of type $E_8$ with a stable $\bbZ / 3 \bbZ$-grading $\theta'$, together with a representation of $\frg' = (\frh')^{\theta'}$ on the same space $W$. The existence of this construction, which seems to be related to twisted vertex operator realisations of affine Kac--Moody algebras \cite{Lep85}, still seems remarkable to us! The general version of this construction will be described in a future work of the first author \cite{Rom18a}.

The next step in the proof of Theorem \ref{thm_intro_bounded_selmer} is to introduce integral structures. All of the objects $H$, $\theta$, $G$, $V$ can be defined naturally over $\bbZ$, and we may choose the polynomials $c_{12}, \dots, c_{30}$ so that they lie in $\bbZ[V]^G$. If $p$ is a prime and $f(x) = x^5 + c_{12} x^3 + c_{18} x^2 + c_{24} x + c_{30} \in \bbZ_p[x]$ is a polynomial of non-zero discriminant, then our constructions so far yield a map $\Jacobian_f(\bbQ_p) / 3 \Jacobian_f(\bbQ_p) \to G(\bbQ_p) \backslash V_f(\bbQ_p)$. However, it is essential to be able to show that each $G(\bbQ_p)$-orbit in $V_f(\bbQ_p)$ that is in the image of this map admits an integral representative, i.e.\ intersects $V_f(\bbZ_p)$ non-trivially. This has been a sticking point for some time. In our earlier papers \cite{Tho15, Rom18}, our failure to construct integral representatives in full generality meant we could provide upper bounds only for the average sizes of the Selmer sets, and not the full Selmer groups, of the families of curves studied there. 

In this paper we introduce a new general technique to construct integral orbit representatives. We describe it briefly here. If $f(x) \in \bbZ_p[x]$ is a polynomial of non-zero discriminant, we choose a lifting to $\widetilde{f}(x) \in \bbZ_p[u][x]$ with favourable properties. In particular, the discriminant of $\widetilde{f}(x)$ should be non-zero in $\bbF_p[u]$ and {square-free} in $\bbQ_p[u]$. The construction giving rise to the map (\ref{eqn_intro_eta_R}) determines a triple $(H', \theta', \gamma')$ over the complement in $\Spec \bbZ_p[u]$ of the locus where the discriminant of $\widetilde{f}$ vanishes.

Using an explicit construction of integral representatives in the square-free discriminant case, we extend the objects in this triple to the complement in $\Spec \bbZ_p[u]$ of finitely many closed points. Finally, we use the fact that a reductive group on the punctured spectrum of a 2-dimensional regular local ring extends uniquely to the whole spectrum (see \cite[Theorem 6.13]{Col79}) to extend the objects in our triple further to the whole of $\Spec \bbZ_p[u]$. Specialising to $u = 0$, we find the desired integral representative. 

This argument is inspired by the proof of the fundamental lemma for Lie algebras \cite{Ngo10}. The problem of constructing integral representatives can be viewed as the problem of showing that a graded analogue of an affine Springer fibre is non-empty. From this point of view, attempting to deform the problem to a case where it can be solved directly is a natural strategy. Although we develop this technique here just in the case of the stable $\bbZ / 3 \bbZ$-grading of $E_8$ and its relation to odd genus-2 curves, it is completely general. We hope to return to this in a future work. 

Once integral representatives have been constructed, we can reduce the problem of studying the average size of the 3-Selmer groups of the curves $\Curve_f$ to the problem of studying the number of orbits of $G(\bbZ)$ in $V(\bbZ)$ of bounded height (with congruence conditions and local weights imposed). In the final step in the proof of Theorem \ref{thm_intro_bounded_selmer}, we use Bhargava's techniques and their interpretation in the framework of graded Lie algebras (as in e.g.\ \cite{Bha13}, \cite{Tho15}) to carry out this orbit count and finally prove Theorem \ref{thm_intro_bounded_selmer}.

\begin{remark}
	In the second author's thesis \cite{Tho13}, simple curve singularities and their deformations played an important role. The same is true here. The family of affine curves given by the equation $y^2 = x^5 + c_{12} x^3 + c_{18} x + c_{24} x + c_{30}$ is a versal deformation of a type-$A_4$ singularity. Here, we think of this family instead as being embedded in the family of affine surfaces
	\[ y^2 = z^3 + x^5 + c_{12} x^3 + c_{18} x + c_{24} x + c_{30}. \]
	This family is a versal deformation of the $E_8$ surface singularity $y^2 = z^3 + x^5$ and carries an action of $\mu_3$ by the formula $\zeta \cdot (x, y, z) = (x, y, \zeta^{-1} z)$. This fact plays an important role in \S \ref{sec_proof_of_prop_on_invariants}. 
\end{remark}

\subsection{Organization of this paper}

We now describe the organization of this paper. In \S \ref{sec_root_lattice_of_type_E8} we review relevant properties of the $E_8$ root lattice and its associated Weyl group. In \S \ref{sec_a_functor}, fundamental for the construction of orbits, we give our `Heisenberg group to graded Lie algebra' construction. In \S \ref{sec_stable_grading_of_E8}, we describe the invariant theory of our graded Lie algebra, and use the construction of \S \ref{sec_a_functor} to parameterize and construct orbits. An important role is played by two special transverse slices to nilpotent elements, namely the Kostant section and the subregular Slodowy slice: we use the first of these to parametrize the set of stable orbits, and the second to normalize our generators for the ring of $G$-invariant polynomials on $V$.

In \S \ref{sec_integral_orbits} we give our construction of integral orbit representatives. We treat the local case using the ideas described above, and then deduce the existence of integral orbit representatives for Selmer elements in the global case as a consequence. In \S \ref{sec_counting_points}, we give the point-counting results we need in order to prove Theorem \ref{thm_intro_bounded_selmer}. Finally, in \S \ref{sec_main_theorem} we combine all of these ingredients to prove our main theorems. 

\subsection{Acknowledgments}

Both authors received support from EPSRC First Grant EP/N007204/1. This project has received funding
from the European Research Council (ERC) under the European Union's Horizon
2020 research and innovation programme (grant agreement No. 714405).

\subsection{Notation}

If $H$ is a group scheme, then we will use a gothic letter $\frh = \Lie H$ for its Lie algebra. If $\theta : \mu_n \to \Aut(\frh)$ is homomorphism, then we write $\frh = \oplus_{i \in \bbZ / n \bbZ} \frh(\theta, i)$ for the corresponding grading; thus $\frh(\theta, i)$ is the isotypic subspace in $\frh$ corresponding to the character $\zeta \mapsto \zeta^i$ of $\mu_n$. (If the map $\theta$ is understood, we sometimes write $\frh(i)$ for $\frh(\theta, i)$.) In particular $\frh(\theta, 0) = \frh^\theta$. Similarly, if $\theta: \mu_n \to H$ is a homomorphism, we write $H^\theta$ for centralizer of $\theta$ in $H$.

If $G$ is a group scheme over a base $S$, $X$ an $S$-scheme on which $G$ acts, $T$ an $S$-scheme, and $x \in X(T)$, then we write $Z_G(x)$ for the scheme-theoretic stabilizer of $x$, which is a $T$-scheme.  

By a Lie algebra over $S$, we mean a coherent sheaf of $\cO_S$-modules $\frg$ together with an alternating bilinear form $[\cdot, \cdot] : \frg \times \frg \to \frg$ that satisfies the Jacobi identity. Similarly, if $\frg$ is a Lie algebra that is equipped with a Lie algebra homomorphism $\frg \to \End_{\cO_S}(W)$, for some locally free sheaf $W$ of $\cO_S$-modules, and $x \in W \otimes_{\cO_S} \cO_T$, then we define $\frz_\frg(x)$ to be the Lie centralizer of $x$, which is a Lie algebra over $T$. 

If $G$ is reductive and $A \subset G$ is a maximal torus, we write $X^*(A) = \Hom(A, \bbG_m)$ for its character group, $\Phi(G, A) \subset X^*(A)$ for its set of roots, and $X_*(A)$ for its cocharacter group. We write $N_G(A)$ for the normalizer of $A$ and $W(G, A) = N_G(A) / A$ for the Weyl group of $A$ in $G$.

If $G$ is a smooth group scheme over a scheme $S$, then we write $H^1(S, G)$ for the set of isomorphism classes of $G$-torsors over $S$, which we think of as a non-abelian \'etale cohomology set. If $S = \Spec R$ is affine then we will write $H^1(R, G)$ for the same object. 

If $G$ is a smooth linear algebraic group over a field $k$ which acts on an integral affine variety $X$, and $G^0$ is reductive, then we write $X \dquot G = \Spec k[X]^G$, which is again an integral affine variety. 

\section{The $E_8$ root lattice}\label{sec_root_lattice_of_type_E8}

Throughout this paper, we will constantly make use of the properties of a certain conjugacy class of automorphisms of the $E_8$ root lattice. We therefore record some of these properties here. For us, an $E_8$ root lattice $\Lambda$ is a finite free $\bbZ$-module, equipped with a symmetric bilinear pairing $(\cdot, \cdot) : \Lambda \times \Lambda \to \bbZ$ with the following property: the set $\Phi := \{ \alpha \in \Lambda \mid (\alpha, \alpha) = 2\}$ forms a root system in $\Lambda_\bbR$ of Dynkin type $E_8$ (elements of $\Phi$ are called roots). Any two $E_8$ root lattices are isomorphic. 

Let $\Lambda$ be an $E_8$ root lattice with $\Phi \subset \Lambda$ its set of roots.
Note that because $E_8$ is simply laced, if $\al, \be \in \Phi$, then $\al + \be \in \Phi$ if and only if $(\al, \be) = -1$. 
We will make frequent use of this fact throughout the proofs in \S \ref{sec_a_functor} and \S \ref{sec_counting_points}. Given $\gamma \in \Lambda$, we define $\check\gamma$ to be the element of the dual lattice $\Lambda^\vee = \Hom(\Lambda, \bbZ)$ given by $\check\gamma(\mu) = (\gamma, \mu)$ for all $\mu \in \Lambda$. We note that the map $\Lambda \to \Lambda^\vee$ defined by $\gamma \mapsto \check\gamma$ is an isomorphism of lattices. (If $\al \in \Phi$ is a root, then $\check\al$ is called a coroot.)

We write $\Aut(\Lambda)$ for the group of automorphisms of $\Lambda$ that preserve the pairing $(\cdot, \cdot)$. Since $E_8$ has no diagram automorphisms, $\Aut(\Lambda)$ equals the Weyl group of $E_8$.
We recall that an element $w \in \Aut(\Lambda)$ is said to be elliptic if $\Lambda^w = 0$. 
\begin{lemma}\label{lem_weyl_group}
	The automorphism group $\Aut(\Lambda)$ contains a unique conjugacy class of elliptic elements of order $3$. Let $w$ be such an element, and let $\Lambda_w = \Lambda / (w-1)\Lambda$ be the group of $w$-coinvariants in $\Lambda$. Then:
	\begin{enumerate}
		\item There is an isomorphism $\Lambda_w \cong \bbF_3^4$.
		\item 
		Any choice of orbit representatives for the action of $\la w \ra$ on $\Phi$ gives a complete set of coset representatives for the non-zero elements of $\Lambda_w$. 
\item The centralizer of $w$ in $\Aut(\Lambda)$ acts transitively on $\Lambda_w - \{ 0 \}$.
	\end{enumerate}
\end{lemma}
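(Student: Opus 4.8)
The plan is to handle the four assertions in turn: the existence/uniqueness of the conjugacy class and part~(1) are quick consequences of the eigenvalue structure of $w$, part~(2) is a direct lattice computation modulo~$3$, and part~(3) comes from identifying the relevant centralizer with (an extension of) $\Sp_4(\bbF_3)$.

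\emph{Existence and uniqueness of the class.} To produce an elliptic element of order $3$ I would take a full-rank subsystem $A_2^4 \subset \Phi$ — one exists since $E_8 \supset A_2 + E_6$ and $E_6 \supset A_2^3$ — and let $w$ be the product of the Coxeter elements of the four $A_2$-factors; then $w$ has order $3$ and characteristic polynomial $(x^2+x+1)^4$ on $\Lambda_\bbR$, so $\Lambda^w = 0$. For uniqueness: any elliptic $w$ of order $3$ has minimal polynomial dividing $x^3-1$ with no root at $1$, hence $w^2+w+1 = 0$ on $\Lambda$ and the $\zeta_3$-eigenspace of $w$ on $\Lambda_\bbC$ has dimension $4$; since $4$ is the number of fundamental degrees of $E_8$ (namely $2,8,12,14,18,20,24,30$) divisible by $3$, Springer's theory of regular elements shows $w$ is regular of order $3$, and regular elements of a fixed order in a reflection group form a single conjugacy class. (Equivalently, quote Carter's classification: this is the class $4A_2$.)

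\emph{Part~(1).} From $w^2+w+1 = 0$ on $\Lambda$ the subring $\bbZ[w] \subset \End(\Lambda)$ is isomorphic to $\bbZ[\zeta_3]$, a principal ideal domain; as $\Lambda$ is $\bbZ$-torsion-free it is a free $\bbZ[\zeta_3]$-module, of rank $4$ for rank reasons. Under $w \leftrightarrow \zeta_3$ we get $(w-1)\Lambda = (\zeta_3-1)\Lambda$, so $\Lambda_w \cong \big(\bbZ[\zeta_3]/(\zeta_3-1)\big)^{4} \cong \bbF_3^4$, since $(\zeta_3-1)$ is a prime ideal with residue field $\bbF_3$.

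\emph{Part~(2).} Since $\Lambda^w = 0$, the group $\langle w\rangle$ acts freely on $\Phi$, so $\Phi$ is a union of $240/3 = 80$ orbits, matching $|\Lambda_w \setminus \{0\}| = 3^4-1 = 80$; and reduction $\Phi \to \Lambda_w$ is constant on $\langle w\rangle$-orbits because $w\alpha - \alpha \in (w-1)\Lambda$. By the equality of cardinalities, it is enough to prove that this map has image in $\Lambda_w\setminus\{0\}$ and is injective on orbits. The key observation is that $(\cdot,\cdot)$ descends to a non-degenerate symmetric form on $\Lambda/3\Lambda$ (non-degenerate because $E_8$ is unimodular) for which $U := (w-1)\Lambda/3\Lambda$ is totally isotropic: indeed $((w-1)x,(w-1)y) = 2(x,y) - ((w+w^2)x,y) = 3(x,y)$. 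A root $\alpha$ has $(\alpha,\alpha) = 2 \not\equiv 0 \bmod 3$, hence $\bar\alpha \notin U$, i.e. $\alpha \notin (w-1)\Lambda$; this gives the first point. For the second, if $\alpha,\beta \in \Phi$ with $\alpha-\beta \in (w-1)\Lambda$, then $\overline{\alpha-\beta} \in U$ forces $(\alpha-\beta,\alpha-\beta) \equiv 0 \bmod 3$, so $(\alpha,\beta) \equiv 2 \bmod 3$ and hence $(\alpha,\beta) \in \{2,-1\}$. If $(\alpha,\beta) = 2$ then $\alpha = \beta$. If $(\alpha,\beta) = -1$, write $\alpha-\beta = (w-1)\mu$; then $3(\mu,\mu) = (\alpha-\beta,\alpha-\beta) = 6$, so $\mu$ is a root, and pairing $\alpha-\beta = w\mu-\mu$ with $\alpha$ gives $(w\mu,\alpha) - (\mu,\alpha) = 3$, which (inner products of roots lying in $\{-2,\dots,2\}$) forces either $\alpha = w\mu$, whence $\beta = \mu = w^{-1}\alpha$, or $\alpha = -\mu$, whence $\beta = -w\mu = w\alpha$. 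In all cases $\beta \in \langle w\rangle\alpha$.

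\emph{Part~(3), and the main obstacle.} The centralizer $Z := Z_{\Aut(\Lambda)}(w)$ acts on $\Lambda_w$; by part~(2) this is the action on the set of $\langle w\rangle$-orbits in $\Phi$. I would note that $\Lambda_w \cong U$ (via $w-1$) carries a canonical $Z$-invariant alternating $\bbF_3$-bilinear form $\omega(\overline{(w-1)x},\overline{(w-1)y}) = (\bar x, \overline{(w-1)y})$, well defined and alternating because $U$ is isotropic and $E_8$ is even, and non-degenerate because $U$ is a maximal isotropic subspace of $\Lambda/3\Lambda$. This produces a homomorphism $Z \to \Sp(\Lambda_w,\omega) \cong \Sp_4(\bbF_3)$ with $\langle w\rangle$ in the kernel, and since $\Sp_4(\bbF_3)$ is transitive on nonzero vectors of $\bbF_3^4$, part~(3) follows as soon as this map is surjective. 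I expect that surjectivity to be the main obstacle. One route: quote that, $w$ being regular of order $3$, $|Z| = \prod_{3\mid d_i} d_i = 12\cdot 18\cdot 24\cdot 30 = 155520 = 3\cdot|\Sp_4(\bbF_3)|$, and that $Z$ is the Shephard--Todd group $G_{32} \cong \bbZ/3\bbZ \times \Sp_4(\bbF_3)$ acting on $\Lambda_w$ through the second factor — equivalently, that $\langle w\rangle$ is exactly the kernel of $Z \to \Sp_4(\bbF_3)$. A more self-contained alternative is to exhibit enough explicit isometries of $\Lambda$ commuting with $w$ (e.g. inside the normalizer in $\Aut(\Lambda)$ of the subsystem $A_2^4$ used above, or via a model of $\Lambda$ as a Hermitian $\bbZ[\zeta_3]$-lattice) whose images generate $\Sp_4(\bbF_3)$; this is elementary but requires bookkeeping.
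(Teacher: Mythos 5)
Your proposal is correct in substance, but it takes a genuinely different route from the paper: the paper does not prove the lemma internally at all, it simply cites Reeder (\cite[Table 1, Lemma 4.4]{Ree11}), whereas you give a nearly self-contained argument. Your treatment of existence/uniqueness (via the $A_2^4$ subsystem and Springer's theory of regular elements, using that exactly four of the degrees $2,8,12,14,18,20,24,30$ are divisible by $3$), of part (1) (via the free $\bbZ[\zeta_3]$-module structure forced by $w^2+w+1=0$), and of part (2) (free action of $\la w\ra$ on $\Phi$, the counting $240/3=80=3^4-1$, and the mod-$3$ computation using $((w-1)x,(w-1)y)=3(x,y)$ and unimodularity of $E_8$) is complete and correct; in particular the case analysis $(\al,\be)\in\{2,-1\}$ closing injectivity on orbits checks out. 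What your approach buys is transparency: the only inputs are Springer's theorems and lattice arithmetic, and the symplectic form you construct on $\Lambda_w$ is exactly the pairing (\ref{eqn_reeders_pairing}) used later in the paper, so the argument doubles as a proof of Lemma \ref{lem_reeder_pairing_is_symplectic}. The one step you leave as a quotation rather than a proof is the surjectivity of $Z=Z_{\Aut(\Lambda)}(w)\to\Sp(\Lambda_w)\cong\Sp_4(\bbF_3)$ in part (3). This is not a fatal gap --- the facts you invoke ($|Z|=\prod_{3\mid d_i}d_i=155520$ and $Z\cong G_{32}\cong \bbZ/3\times\Sp_4(\bbF_3)$) are standard, and quoting them is no worse than the paper's citation of Reeder --- but note that once you have them the surjectivity closes in one line: the kernel of $Z\to\Sp(\Lambda_w)$ is a normal subgroup containing $\la w\ra$ but not $-1$ (which acts as $-\mathrm{id}\neq\mathrm{id}$ on $\bbF_3^4$), and since $\Sp_4(\bbF_3)$ is quasi-simple with centre $\{\pm 1\}$, every normal subgroup of $\bbZ/3\times\Sp_4(\bbF_3)$ strictly containing the first factor contains $-1$; hence the kernel is exactly $\la w\ra$ and the image has order $155520/3=|\Sp_4(\bbF_3)|$, so the map is onto and transitivity on $\Lambda_w-\{0\}$ follows. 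With that sentence added, your proof is complete.
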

\begin{proof}
	See \cite[Table 1]{Ree11} and \cite[Lemma 4.4]{Ree11}.
\end{proof}
We also note for later use that if $w \in \Aut(\Lambda)$ is elliptic of order 3, then $w^2\gamma + w\gamma + \gamma = 0$ for all $\gamma \in \Lambda$. 
In \cite{Ree11}, for any elliptic element $w \in \Aut(\Lambda)$, Reeder defines a symplectic pairing on $\Lambda_w$ that is invariant under the action of the centralizer of $w$ in $\Aut(\Lambda)$. We now describe a slight variant of this pairing.

Let $S$ be a $\bbZ[1/3]$-scheme.  We now let $\Lambda$ be an \'etale sheaf of $E_8$ root lattices on $S$. By this we mean that $\Lambda$ is a locally constant \'etale sheaf of finite free $\bbZ$-modules that is equipped with a pairing $\Lambda \times \Lambda \to \bbZ$ making each stalk $\Lambda_{\overline{s}}$ above a geometric point $\overline{s} \to S$ into an $E_8$ root lattice. Then $\Aut(\Lambda)$ is a finite \'etale $S$-group. 

In this setting we define an elliptic $\mu_3$-action on $\Lambda$ to be a homomorphism $\theta : \mu_3 \to \Aut(\Lambda)$ such that for any geometric point $\overline{s} \to S$ and any primitive 3rd root of unity $\zeta \in \mu_3(\overline{s})$, $\theta(\zeta) \in \Aut(\Lambda_{\overline{s}})$ is an elliptic element of order 3. 

If $\theta$ is an elliptic $\mu_3$-action on $\Lambda$, then we write $\Lambda_\theta$ for the sheaf of $\theta$-coinvariants; by Lemma \ref{lem_weyl_group}, it is a locally constant \'etale sheaf of $\bbF_3$-vector spaces of rank 4. We define a pairing $\langle \cdot, \cdot \rangle : \Lambda_\theta \times \Lambda_\theta \to \mu_3$ by the formula
\begin{equation}\label{eqn_reeders_pairing}
\langle \lambda, \mu \rangle = \zeta^{((1 - \theta(\zeta)) \lambda, \mu)},
\end{equation} 
for any primitive 3rd root of unity $\zeta$. (Despite appearances, the pairing does not depend on a choice of root of unity.)
\begin{lemma}\label{lem_reeder_pairing_is_symplectic}
	The pairing (\ref{eqn_reeders_pairing}) is symplectic and non-degenerate, and it induces an isomorphism  $\Lambda_\theta \cong \Hom(\Lambda_\theta, \mu_3)$.
\end{lemma}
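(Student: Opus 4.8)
The plan is to reduce immediately to a statement about one fixed $E_8$ root lattice, and then verify everything by hand using the identity $1+w+w^2=0$.

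\textbf{Reduction to the constant case.} All three assertions --- that $\la\cdot,\cdot\ra$ is alternating, that it is non-degenerate, and that the resulting map $\Lambda_\theta \to \Hom(\Lambda_\theta,\mu_3)$ is an isomorphism --- may be checked étale-locally on $S$ (a morphism of étale sheaves is an isomorphism exactly when it is one on every geometric stalk). Since $S$ is a $\bbZ[1/3]$-scheme, after an étale base change $\mu_3$ becomes constant, a primitive cube root of unity $\zeta$ can be chosen, and the locally constant sheaf $\Lambda$ can be trivialised. By Lemma \ref{lem_weyl_group}, the automorphism $w:=\theta(\zeta)$ of the resulting lattice is elliptic of order $3$, and any two such are conjugate in $\Aut(\Lambda)$; so it suffices to prove the lemma for one $E_8$ root lattice $\Lambda$ and one elliptic $w\in\Aut(\Lambda)$ of order $3$. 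I will use repeatedly the relation $1+w+w^2=0$ (recorded after Lemma \ref{lem_weyl_group}) and its consequences $w^{-1}=w^2=-1-w$ and $(1-w)^2=-3w$, together with the fact that $w$ preserves $(\cdot,\cdot)$, so that the adjoint of $w$ with respect to the pairing is $w^{-1}$.

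\textbf{Well-definedness and the symplectic property.} Set $B(\lambda,\mu):=((1-w)\lambda,\mu)\in\bbZ$, so $\la\lambda,\mu\ra=\zeta^{B(\lambda,\mu)}$. Using adjointness of $w$ and $2-w-w^{-1}=3$, one computes $B(\lambda,\mu)+B(\mu,\lambda)=((2-w-w^{-1})\lambda,\mu)=3(\lambda,\mu)$; in particular $B$ is skew-symmetric modulo $3$, and since $2$ is a unit mod $3$ this forces $B(\lambda,\lambda)\equiv 0\pmod 3$. Moreover $B((1-w)\nu,\mu)=((1-w)^2\nu,\mu)=-3(w\nu,\mu)$, and then the skew relation shows $B(\lambda,(1-w)\nu)$ is also divisible by $3$; hence $B\bmod 3$ descends to a well-defined $\bbF_3$-bilinear pairing on $\Lambda_\theta$, and the same relations show that replacing $\zeta$ by $\zeta^{-1}$ (hence $w$ by $w^{-1}$) leaves $\zeta^{B(\lambda,\mu)}$ unchanged, justifying the claim in the text that the pairing is independent of the chosen root of unity. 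Together with $B(\lambda,\lambda)\equiv 0$, this gives that $\la\cdot,\cdot\ra$ is a well-defined symplectic pairing on $\Lambda_\theta$.

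\textbf{Non-degeneracy.} This is the one step using more than $1+w+w^2=0$, and I expect it to be the main obstacle. Suppose $\lambda\in\Lambda$ lies in the radical, i.e.\ $B(\lambda,\mu)\equiv 0\pmod 3$ for all $\mu\in\Lambda$. Since the $E_8$ lattice is unimodular --- the map $\gamma\mapsto\check\gamma$ is an isomorphism $\Lambda\cong\Lambda^\vee$ --- this is equivalent to $(1-w)\lambda\in 3\Lambda$. Now $(1-w)^2=-3w$ gives $(1-w)\big((1-w)\Lambda\big)=(1-w)^2\Lambda=3w\Lambda=3\Lambda$, and $1-w$ is injective on $\Lambda$ (if $(1-w)x=0$ then $x=wx=w^2x$, whence $3x=(1+w+w^2)x=0$ and $x=0$); hence $\lambda\in(1-w)\Lambda=(w-1)\Lambda$, so $\lambda$ is zero in $\Lambda_\theta$. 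Thus the left radical of $\la\cdot,\cdot\ra$ is trivial; and since $\Lambda_\theta$ and $\Hom(\Lambda_\theta,\mu_3)$ are $\bbF_3$-vector spaces of the same dimension $4$ (Lemma \ref{lem_weyl_group}, together with $\mu_3\cong\bbZ/3\bbZ$ after our base change), the induced map $\Lambda_\theta\to\Hom(\Lambda_\theta,\mu_3)$ is an isomorphism. Everything except this last step is formal manipulation with $w$; the crux is combining $(1-w)^2=-3w$ with the self-duality of the $E_8$ lattice.
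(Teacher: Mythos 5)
Your proof is correct, and it takes a genuinely different route from the paper. The paper's proof consists of the same first step as yours -- the assertions are checked on geometric stalks, where $\mu_3$ and $\Lambda$ become constant -- but then simply cites \cite[Lemma 2.2, Lemma 2.3]{Ree11} for the statements about the coinvariant pairing of an elliptic order-3 element. You instead prove those facts from scratch: the identities $1+w+w^2=0$, $(1-w)^2=-3w$ and the adjointness $(wx,y)=(x,w^{-1}y)$ give well-definedness on $\Lambda_\theta$, skew-symmetry mod $3$ (hence the alternating property and the independence of the choice of $\zeta$), and the non-degeneracy argument correctly combines unimodularity of the $E_8$ lattice (so that $((1-w)\lambda,\Lambda)\subset 3\bbZ$ forces $(1-w)\lambda\in 3\Lambda$) with $3\Lambda=(1-w)^2\Lambda$ and injectivity of $1-w$ (from $\Lambda^w=0$) to conclude $\lambda\in(w-1)\Lambda$; the final isomorphism then follows from the rank-4 statement in Lemma \ref{lem_weyl_group}. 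What your approach buys is self-containedness and slightly greater generality: the only inputs are $\Lambda^w=0$, $w^3=1$, unimodularity, and the order of $\Lambda_\theta$, so no $E_8$-specific facts or external reference are needed (indeed the conjugacy of elliptic elements you invoke in the reduction step is not even used afterwards). What the paper's citation buys is brevity, and the reassurance that the pairing here is (a mild variant of) exactly the one Reeder studies, which is relied on again later via \cite{Ree11}.
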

\begin{proof}
	This can be checked on geometric points, in which case it reduces to \cite[Lemma 2.2, Lemma 2.3]{Ree11}.
\end{proof}
Let $H$ be a reductive group over $S$ with geometric fibres of type $E_8$. We define a stable $\bbZ / 3 \bbZ$-grading of $H$ to be a homomorphism $\theta : \mu_3 \to H$ such that for each geometric point $\overline{s} \to S$, there exists a maximal torus $\Cen \subset H_{\overline{s}}$ that is normalized by the image of $\theta$ and such that the induced map $\mu_3 \to \Aut(X^\ast(\Cen))$ is an elliptic $\mu_3$-action (this definition makes sense since $X^\ast(\Cen)$ is an $E_8$ root lattice). Note that any such $\theta$ is then automatically a closed immersion, cf. \cite[Lemma B.1.3]{Con14}.

The next lemma shows that any two stable $\bbZ / 3 \bbZ$-gradings are conjugate \'etale locally on the base. 
\begin{lemma}\label{lem_graded_groups_etale_locally_isomorphic}
	Let $S$ be a $\bbZ[1/3]$-scheme. Let $(H, \theta)$ and $(H', \theta')$ be two pairs, each consisting of a reductive group over $S$ with geometric fibres of type $E_8$ and a stable $\bbZ / 3 \bbZ$-grading. Then for any $s \in S$ there exists an \'etale morphism $S' \to S$ with image containing $s$ and an isomorphism $H_{S'} \to H'_{S'}$ intertwining $\theta_{S'}$ and $\theta'_{S'}$.
\end{lemma}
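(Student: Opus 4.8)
The plan is to reduce the statement to the classification of pairs $(H,\theta)$ up to étale-local isomorphism, and to carry this out by the standard device of interpreting $(H,\theta)$ as a torsor under the automorphism scheme of a fixed reference pair. First I would fix a split reductive group $H_0$ of type $E_8$ over $\bbZ[1/3]$ together with a split maximal torus $\Cen_0$, and a stable $\bbZ/3\bbZ$-grading $\theta_0 : \mu_3 \to H_0$ constructed as in the introduction (the restriction to $\mu_3$ of $\check\rho$, after conjugating so that its action on $X^*(\Cen_0)$ realizes an elliptic element of order $3$, which exists and is unique up to conjugacy by Lemma \ref{lem_weyl_group}). Pulling back along $S \to \Spec \bbZ[1/3]$ gives a reference pair $(H_0,\theta_0)$ over $S$. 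The key claim is then that the functor on $S$-schemes sending $S' \mapsto \Isom_{S'}\big((H_0,\theta_0)_{S'},(H,\theta)_{S'}\big)$ (isomorphisms of reductive groups intertwining the gradings) is representable by a smooth affine $S$-scheme which is surjective onto $S$; granting this, smoothness plus surjectivity yields étale-local sections, which are exactly the data $S' \to S$ and $H_{S'} \xrightarrow{\sim} H'_{S'}$ required (applying it once to $(H,\theta)$ and once to $(H',\theta')$, then composing).

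To establish the claim I would proceed in three steps. (1) \emph{Representability and affineness}: the scheme $\underline{\Isom}(H_0,H)$ of group isomorphisms is representable, affine and smooth over $S$ because $H_0,H$ are reductive (this is in SGA3 / \cite{Con14}); imposing the closed condition that an isomorphism carry $\theta_0$ to $\theta$ — equivalently, equality of two maps $\mu_3 \rightrightarrows H$ — cuts out a closed subscheme, which is still affine over $S$. (2) \emph{Smoothness}: I would check the infinitesimal lifting criterion. A deformation of an isomorphism intertwining the gradings is obstructed in $H^2$ and controlled in $H^1$ of the Lie algebra $\frh_0$ with its induced grading; the relevant cohomology is that of the fixed subalgebra and of the graded pieces, and the point is that $\frh_0^{\theta_0} = \frg$ is reductive (a form of $\SL_9/\mu_3$, as recalled in the introduction) and that $3$ is invertible, so the cohomology of the finite group scheme $\mu_3$ with coefficients in these modules vanishes in positive degree. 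Concretely, averaging over $\mu_3$ (legitimate since $3 \in \cO_S^\times$) kills the obstruction and shows lifts exist, so the $\Isom$-scheme is formally smooth, hence smooth. (3) \emph{Surjectivity}: by definition of a stable $\bbZ/3\bbZ$-grading, for every geometric point $\overline{s} \to S$ both $(H_0,\theta_0)_{\overline s}$ and $(H,\theta)_{\overline s}$ contain a maximal torus normalized by the grading on which $\mu_3$ acts through an elliptic element of order $3$; since $E_8$ is simply laced with trivial outer automorphism group and (by Lemma \ref{lem_weyl_group}) there is a \emph{unique} conjugacy class of such elliptic elements, any two such pairs over an algebraically closed field are isomorphic as graded groups — one first conjugates to match the tori, then matches the elliptic automorphisms using Lemma \ref{lem_weyl_group}(i)–(iii), and finally adjusts by an element of the (connected) centralizer to match the gradings on the nose. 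Hence $\underline{\Isom}\big((H_0,\theta_0),(H,\theta)\big)(\overline s) \neq \emptyset$ for all geometric points, i.e.\ the structure map is surjective.

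A smooth surjective morphism of schemes admits sections étale-locally on the target (SGA1); applying this to $\underline{\Isom}\big((H_0,\theta_0),(H,\theta)\big) \to S$ at the point $s$ produces an étale $S' \to S$ with $s$ in its image and an isomorphism $(H_0,\theta_0)_{S'} \xrightarrow{\sim} (H,\theta)_{S'}$, and likewise an isomorphism $(H_0,\theta_0)_{S''} \xrightarrow{\sim} (H',\theta')_{S''}$; after replacing $S'$ and $S''$ by a common étale refinement containing $s$ in its image, composing one with the inverse of the other gives the desired isomorphism $H_{S'} \xrightarrow{\sim} H'_{S'}$ intertwining $\theta_{S'}$ and $\theta'_{S'}$.

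The main obstacle is step (2), the smoothness of the graded-isomorphism scheme: one has to check carefully that imposing compatibility with the $\mu_3$-grading does not destroy formal smoothness, which comes down to the vanishing of the relevant deformation/obstruction groups. The clean way to see this is to package everything as $\mu_3$-equivariant deformation theory and use that $3$ is a unit on $S$ (so that $\mu_3$ is linearly reductive and its higher cohomology on any quasi-coherent module vanishes), reducing the graded problem to the ungraded one where smoothness of $\underline{\Isom}$ of reductive groups is classical.
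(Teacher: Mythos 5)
Your overall strategy is the same as the paper's: form the scheme of graded isomorphisms (the paper uses the transporter $\{h \in H : \Ad(h)\circ\theta = \theta'\}$ after splitting étale-locally, you use an $\underline{\Isom}$-scheme from a reference split pair, which is equivalent since $E_8$ has no outer automorphisms), show it is smooth and surjective over $S$, check surjectivity on geometric fibres via conjugacy of maximal tori and the uniqueness of the elliptic class of order $3$ (Lemma \ref{lem_weyl_group}), and conclude by taking étale-local sections of a smooth surjective morphism. The smoothness step you worry about is handled in the paper by a citation (\cite[Proposition 2.1.2]{Con14}, whose proof is essentially your linear-reductivity argument; note, though, that $\mu_3$ is diagonalizable and hence linearly reductive in every characteristic, so invertibility of $3$ is not what makes the averaging work).

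The genuine thin spot is the end of your fibrewise surjectivity argument, where you say one ``finally adjusts by an element of the (connected) centralizer to match the gradings on the nose.'' After conjugating so that $\theta$ and $\theta'$ normalize the same maximal torus $\Cen$ and induce the same elliptic element $w$ of the Weyl group, the two gradings are given by two lifts $n, n'$ of $w$ to $N_H(\Cen)(k)$, so $n' = t\,n$ for some $t \in \Cen(k)$; conjugating $n$ by $c \in \Cen(k)$ replaces it by $c\,w(c)^{-1}\,n$, so the adjustment you need exists if and only if $t$ lies in the image of the endomorphism $1 - w$ of $\Cen(k)$. That this image is everything is precisely where ellipticity is used again: since $\Cen^{w} $ is finite, $1 - w : \Cen \to \Cen$ is an isogeny (indeed étale, as $3$ is invertible), hence surjective on $k$-points for $k$ algebraically closed. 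This is the actual content of the paper's last two sentences, and without it your geometric-fibre argument only matches the Weyl-group images of $\theta$ and $\theta'$, not the homomorphisms themselves; the phrase ``adjust by the connected centralizer'' (the centralizer of $\Cen$ is $\Cen$ itself) asserts rather than proves the needed statement. With that step supplied, your proof is complete and coincides with the paper's.
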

\begin{proof}
	The question is \'etale local on $S$, so we can assume that $H = H'$ are both split reductive groups. Let $T$ denote the scheme of elements $h \in H$ such that $\Ad(h) \circ \theta = \theta'$; this is a closed subscheme of $H$ that is smooth over $S$, by \cite[Proposition 2.1.2]{Con14}. Since surjective smooth morphisms have sections \'etale locally, we just need to show that $T \to S$ is surjective. Since the formation of $T$ commutes with base change, we are therefore free to assume that $S = \Spec k$ is the spectrum of an algebraically closed field. 
	
	In this case, there exist (by assumption) maximal tori $\Cen$, $\Cen' \subset H$ on which $\theta$, $\theta'$ act through elliptic automorphisms of order 3. Using the conjugacy of maximal tori, we can therefore assume that $\Cen = \Cen'$. Using Lemma \ref{lem_weyl_group}, we can assume that $\theta$, $\theta'$ define the same element of the Weyl group of $\Cen$. 
	
	We have therefore reduced the problem to the statement that if $w \in W(H, \Cen)$ is an elliptic element of order 3, then any two lifts $n, n'$ of $w$ to the normalizer $N_H(\Cen)(k)$ are $H(k)$-conjugate. In fact, they are even $\Cen(k)$-conjugate, as follows from the fact that the morphism $1 - w : \Cen \to \Cen$ is \'etale (and surjective). This completes the proof.
\end{proof}

\section{A morphism of stacks}\label{sec_a_functor}

 In this section we describe a functorial construction of a graded Lie algebra from a Heisenberg group satisfying some conditions. We will later observe that the input data can be constructed from a genus-2 curve with a Weierstrass point (see \S \ref{sec_twisting}). 

\subsection{Two stacks}
We first need to introduce some notation. Let $N = 2 \times 3 \times 5 \times 7$. Let $R$ be a $\bbZ[1/N]$-algebra.
We write $\operatorname{Heis}_R$ for the groupoid of triples $(\Lambda, \theta, \mathscr{H})$, where:
\begin{enumerate}
	\item $\Lambda$ is an \'etale sheaf of $E_8$ root lattices on $\Spec R$ in the sense described in \S \ref{sec_root_lattice_of_type_E8} with symmetric pairing $( \cdot, \cdot ) : \Lambda \times \Lambda \to \bbZ$.
	\item $\theta : \mu_3 \to \Aut(\Lambda)$ is an elliptic $\mu_3$-action on $\Lambda$. 
	\item $\mathscr{H}$ is a central extension
	\[ 1 \to \mu_3 \to \mathscr{H} \to \Lambda_\theta \to 1 \]
	of \'etale $R$-groups, with the property that the induced commutator pairing $\Lambda_\theta \times \Lambda_\theta \to \mu_3$ is the same as the pairing $\langle \cdot, \cdot \rangle: \Lambda_\theta \times \Lambda_\theta \to \mu_3$ given by (\ref{eqn_reeders_pairing}).
\end{enumerate}
Morphisms $(\Lambda, \theta, \mathscr{H}) \to (\Lambda', \theta', \mathscr{H}')$ in $\operatorname{Heis}_R$ are pairs of isomorphisms $\alpha : \Lambda \to \Lambda'$, $\beta : \mathscr{H} \to \mathscr{H}'$ such that $\al$ intertwines $\theta$ and $\theta'$ and such that the induced diagram
\[ \xymatrix{ 1 \ar[r] & \mu_3 \ar[r]\ar[d]^= & \mathscr{H}\ar[d]^\beta \ar[r] & \Lambda_\theta \ar[r]\ar[d]^{\alpha_\theta} & 1 \\ 
	1 \ar[r] & \mu_3 \ar[r] & \mathscr{H}' \ar[r] & \Lambda'_{\theta'} \ar[r] & 1	}\]
commutes. (Here $\al_\theta$ is the map naturally induced by $\al$.) The groupoids $\operatorname{Heis}_R$ fit together into a stack $\operatorname{Heis} \to \operatorname{Alg}_{\bbZ[1/N]}^{\text{op}}$ in the \'etale topology.

Now let $R$ again be a $\bbZ[1/N]$-algebra. We write $\operatorname{GrLieT}_R$ for the groupoid of triples $(H, \theta, \Cen)$, where:
\begin{enumerate}
	\item $H$ is a reductive group over $R$ with geometric fibres all of Dynkin type $E_8$.
	\item $\Cen \subset H$ is a maximal torus. 
	\item $\theta : \mu_3 \to H$ is a homomorphism whose image normalizes $\Cen$, and such that the induced map $\theta : \mu_3 \to \Aut(X^\ast(\Cen))$ is an elliptic $\mu_3$-action.
\end{enumerate}
Morphisms $(H, \theta, \Cen) \to (H', \theta', \Cen')$ in this category are given by isomorphisms $\gamma : H \to H'$ sending $\Cen$ to $\Cen'$ and intertwining $\theta$ and $\theta'$. The groupoids $\operatorname{GrLieT}_R$ again fit together into a stack $\operatorname{GrLieT} \to \operatorname{Alg}_{\bbZ[1/N]}^{\text{op}}$ in the \'etale topology.

It seems likely that both $\operatorname{Heis}$ and $\operatorname{GrLieT}$ are algebraic stacks, and even global quotient stacks. We have chosen not to pursue this point of view here since we don't need it. We will introduce variants of these stacks in \S \ref{sec_twisting}. In particular, we will introduce the stack $\operatorname{GrLie}$ of pairs $(H, \theta)$ (we forget the torus).
\subsection{Definition of the morphism}
 The main goal of \S \ref{sec_a_functor} is to prove the following theorem.
\begin{contheorem}\label{thm_equivalence_of_Heis_with_GrLieT}
There is a morphism of stacks $\operatorname{Heis} \to \operatorname{GrLieT}$.
\end{contheorem}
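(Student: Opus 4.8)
The plan is to construct, for each $\bbZ[1/N]$-algebra $R$, a functor $\operatorname{Heis}_R \to \operatorname{GrLieT}_R$ compatible with base change, and hence a morphism of stacks. Given a triple $(\Lambda, \theta, \sH)$ in $\operatorname{Heis}_R$, I would build the Lie algebra $\frh'$ directly as an $\cO_S$-module by mimicking the twisted vertex operator / Frenkel--Kac construction in the finite (non-affine) setting: set
\[
\frh' = \frt' \oplus \bigoplus_{\text{suitable } \gamma} \cL_\gamma,
\]
where $\frt'$ is the "Cartan" piece built from $\Lambda_\theta$ (more precisely $\frt' := (\Lambda \otimes_{\bbZ} \cO_S)^{\theta = \zeta}$, a rank-$4$ bundle, since $\theta$ elliptic of order $3$ forces $\Lambda^\theta = 0$ and $\Lambda \otimes \bbQ = \frt'(\zeta) \oplus \frt'(\zeta^2)$ as $\mu_3$-modules) and the remaining summands are rank-$1$ bundles indexed by the $\langle w\rangle$-orbits on $\Phi$, equivalently by the nonzero elements of $\Lambda_\theta$, using Lemma \ref{lem_weyl_group}(2). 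The central extension $\sH$ enters precisely here: the line $\cL_\gamma$ attached to a class $\gamma \in \Lambda_\theta \setminus \{0\}$ should be the $\zeta$-eigenspace in the pushout of $\sH$ along the chosen character $\mu_3 \hookrightarrow \bbG_m$ over the fiber of $\sH \to \Lambda_\theta$ at $\gamma$; the bracket $[\cL_\gamma, \cL_\delta] \to \cL_{\gamma + \delta}$ is then forced, up to a scalar, by multiplication in $\sH$, and the commutator-pairing hypothesis (3) in the definition of $\operatorname{Heis}_R$ guarantees that this scalar is governed by $\langle \gamma, \delta\rangle$ exactly as in Reeder's pairing (\ref{eqn_reeders_pairing}). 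The maximal torus $\Cen$ is taken to be the torus with cocharacter lattice $\Lambda$ (i.e. $\Cen := \Lambda^\vee \otimes \bbG_m$), on which $\theta$ acts through $w$; this makes the third datum in $\operatorname{GrLieT}_R$ transparent.

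The key steps, in order, would be: (i) carry out the construction above over a geometric point $\overline{s}$, i.e. the split case, and verify that $[\cdot,\cdot]$ is alternating and satisfies the Jacobi identity — this is where the sign/cocycle bookkeeping lives, and one checks Jacobi on triples $(\cL_\gamma, \cL_\delta, \cL_\epsilon)$ by the usual root-string argument, using that $E_8$ is simply laced (so $\gamma + \delta \in \Phi \Leftrightarrow (\gamma,\delta) = -1$, as recalled after Lemma \ref{lem_weyl_group}); (ii) identify the resulting Lie algebra abstractly as $\frh(\theta)$ for $H$ of type $E_8$ with its stable $\bbZ/3\bbZ$-grading, e.g. by exhibiting an $\sll$-triple or simply by dimension count ($248 = 8 + 240$, with the grading $80 + 84 + 84$) together with the known classification of $\bbZ/3\bbZ$-gradings; (iii) reconstruct the group $H$ from $\frh'$ — over a general base this is the subtle point — by taking $H := \Aut(\frh')^\circ$, or better by using the Chevalley-style construction from the root datum $(\Lambda, \Phi)$ that is already encoded, and defining $\theta : \mu_3 \to H$ via the action on generators; (iv) check functoriality in $R$ and in isomorphisms of Heisenberg data, so the assignment descends to a morphism of the associated \'etale stacks; (v) check compatibility with arbitrary base change $R \to R'$, which is formal once everything is built from $\cO_S$-linear-algebra data that manifestly commutes with $\otimes_R R'$.

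I expect the main obstacle to be step (iii) together with the integrality of the bracket: a priori the pushout-of-$\sH$ construction produces a Lie algebra only after inverting some small primes, and showing that the structure constants genuinely lie in $\cO_S$ (no denominators beyond $1/N$) and that $\Aut(\frh')$ is the right reductive group scheme over $\bbZ[1/N]$ — rather than just over a field — requires care. Here I would lean on the fact established in \S \ref{sec_root_lattice_of_type_E8} that the relevant Weyl-group and torus data are finite \'etale over $\bbZ[1/N]$, and on Lemma \ref{lem_graded_groups_etale_locally_isomorphic}, which says the target objects are all \'etale-locally isomorphic: this reduces reconstructing $(H, \theta, \Cen)$ to descent from the split case, where step (i) applies, provided one first checks that the automorphisms of $\frh'$ induced by automorphisms of the input $(\Lambda, \theta, \sH)$ match the automorphisms of $(H,\theta,\Cen)$. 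The actual explicit formulas for the cocycle and the resulting structure constants are the content of the subsequent subsections of \S \ref{sec_a_functor}, so I would state the construction, verify Jacobi in the split case, and then globalize by functoriality and descent rather than re-deriving everything over a general base.
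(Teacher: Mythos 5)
There is a genuine gap, and it lies in the very first step: the object you build is not of the right shape to yield a point of $\operatorname{GrLieT}$. An object of $\operatorname{GrLieT}_R$ is a triple $(H,\theta,\Cen)$ with $H$ reductive of type $E_8$, so you must produce the full $248$-dimensional Lie algebra (rank-$8$ Cartan plus $240$ root lines) together with its grading and torus. Your proposed $\frh' = \frt' \oplus \bigoplus_\gamma \cL_\gamma$, with $\frt'$ of rank $4$ and the lines indexed by the $80$ $\langle w\rangle$-orbits on $\Phi$ (equivalently the nonzero classes in $\Lambda_\theta$), has rank $84$; no reductive Lie algebra of rank $4$ has $80$ roots, and in fact $4+80$ is the $\mathscr{H}$-isotypic shape of the \emph{representation} $\frh(1)\cong\wedge^3 W$ (while $80$ lines alone is the Heisenberg decomposition of $\frg\cong\frs\frl_9$), not of a graded $E_8$ Lie algebra. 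Moreover a bracket law $[\cL_\gamma,\cL_\delta]\subset\cL_{\gamma+\delta}$ with $\gamma+\delta$ computed in $\Lambda_\theta\cong\bbF_3^4$ cannot encode the $E_8$ structure: whether $[\frh_\alpha,\frh_\beta]\neq 0$ is governed by the condition $\alpha+\beta\in\Phi$ inside $\Lambda$, which is not determined by the images of $\alpha,\beta$ in the coinvariants. So steps (ii)--(iii) cannot be repaired by a dimension count or by taking $\Aut(\frh')$; the input object is wrong.

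The correct construction (as in the paper) keeps the full Cartan $\cen=\Lie\Cen$ with $X^\ast(\Cen)=\Lambda$ and attaches one line to each root $\alpha\in\Phi$: pull back $\mathscr{H}$ to $\widetilde\Lambda=\Lambda\times_{\Lambda_\theta}\mathscr{H}$, take basis vectors $X_{\widetilde\alpha}$ for lifts $\widetilde\alpha\in\widetilde\Phi$ subject to $X_{\zeta\widetilde\alpha}=\zeta X_{\widetilde\alpha}$, and define structure constants $[X_{\widetilde\alpha},X_{\widetilde\beta}]=(-1)^{(\alpha,w\beta)}\langle\alpha,\beta\rangle X_{\widetilde\alpha\widetilde\beta}$ when $\alpha+\beta\in\Phi$ and $-\widetilde\alpha\widetilde\beta\,\check\alpha$ when $\alpha+\beta=0$; the extra sign $(-1)^{(\alpha,w\beta)}$ is essential for the Jacobi identity, not just the commutator pairing of $\mathscr{H}$. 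Only then does one set $H=\Aut(\frh)$ (non-degeneracy of the Killing form over $\bbZ[1/N]$ making $H$ reductive of type $E_8$), define $\theta_{\sH}$ by $w$ on $\cen$ and $X_{\widetilde\alpha}\mapsto X_{w\widetilde\alpha}$, and handle morphisms and descent from the split case. Those later ingredients of your plan (split case plus \'etale descent, $H$ as an automorphism group, Jacobi via simply-lacedness) do match the paper's strategy, but they rest on a construction that produces the wrong object; note also that identifying $\frh(0)$ with $\frs\frl(W)$ via the Heisenberg representation is a separate subsequent result (Theorem \ref{thm_embedding_Heis_in_Gsc}), not the mechanism by which $\frh$ itself is built.
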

Let $R \in \operatorname{Alg}_{\bbZ[1/N]}$ and let $(\Lambda, \theta, \mathscr{H}) \in \operatorname{Heis}_R$. Let us say that $(\Lambda, \theta, \mathscr{H})$ is split if $\mu_3$, $\Lambda$ and $\mathscr{H}$ are constant as \'etale sheaves on $\Spec R$. Since every object of $\operatorname{Heis}$ is split locally in the \'etale topology, to prove Theorem \ref{thm_equivalence_of_Heis_with_GrLieT} it will be enough to construct the morphism of Theorem \ref{thm_equivalence_of_Heis_with_GrLieT} on the full sub-fibred category of split objects. This is what we will do. We choose for each $R \in \operatorname{Alg}_{\bbZ[1/N]}$ a primitive 3rd root of unity $\zeta_R$; our construction will be independent of this choice, up to equivalence.   We can and do assume that $\Spec R$ is connected.

Take a split triple $(\Lambda, \theta, \mathscr{H}) \in \operatorname{Heis}_R$. We will abuse language by thinking of the constant sheaves $\Lambda$ and $\mathscr{H}$ as abstract groups. Let $w = \theta(\zeta_R)$. Let $\Phi \subset \Lambda$ be the set of roots, let $\widetilde{\Lambda} = \Lambda \times_{\Lambda_\theta} \mathscr{H}$, and let $\widetilde{\Phi}$ denote the pre-image of $\Phi$ in $\widetilde{\Lambda}$. Thus $\widetilde{\Lambda}$ is a central extension
	\begin{equation*}
	1 \to \mu_3(R) \to \widetilde{\Lambda} \to \Lambda \to 1
	\end{equation*}
	that also has commutator pairing given by $\langle \cdot, \cdot \rangle$, and $\widetilde{\Phi} \to \Phi$ is a 3-fold cover. For an element $\widetilde\al \in \widetilde\Phi$, we write $\al$ for the image of $\widetilde\al$ in $\Phi$. 

Let $\Cen$ be the torus over $R$ with $X^\ast(\Cen) = \Lambda$, and let $\cen = \Lie \Cen$. For any $\alpha \in \Phi$, the coroot $\check{\alpha}$ corresponds to an element of $\cen$. In fact $\cen$ is generated by $\{\check\al \mid \al \in \Phi\}$. Let $\frh_1$ be the quotient of the free $R$-module with basis $\{X_{\widetilde{\alpha}} \mid \widetilde{\alpha} \in \widetilde{\Phi}\}$, by the relations $X_{\zeta_R \widetilde{\alpha}} = \zeta_R X_{\widetilde{\alpha}}$ (for any $\widetilde{\alpha} \in \widetilde{\Phi}$). Finally, let $\frh = \cen \oplus \frh_1$. Thus $\frh$ is a free $R$-module of rank 248 generated by $\{ \check\al, X_{\widetilde\be} \mid \al \in \Phi, \widetilde\be \in \widetilde\Phi\}$ (by abuse of notation we also write $X_{\widetilde\be}$ for the image of this vector in $\frh_1$).

To motivate our definition of the Lie bracket on $\frakh$, we note that we have constructed $\frakh$ so that $\cen$ will be a Cartan subalgebra, and for each $\widetilde{\al} \in \widetilde{\Phi}$, we would like $X_{\widetilde{\al}}$ to be a basis for the $\al$-root space with respect to $\cen$. The definitions of $[\check\al, \check\be]$ and $[\check\al, X_{\widetilde\be}]$ (for $\al \in \Phi, \wt{\be} \in \wt\Phi$) are completely determined by these choices, and the definition of $[X_{\wt\al}, X_{\wt\be}]$ (for $\wt\al, \wt\be \in \wt\Phi$) is determined up to scalar. It is a well-known difficulty in Lie theory to choose these scalars in a consistent way. Here, we take advantage of the extra structure afforded by the triple $(\Lambda, \theta, \mathscr{H})$ to define these scalars using multiplication in $\wt\Lambda$. This strategy was inspired by \cite{Lurie} and \cite{Lep85}, and the construction is generalized in \cite{Rom18a}.

Thus we define a bilinear map $[ \cdot, \cdot ] : \frh \times \frh \to \frh$ as follows. We set $[x, y] = 0$ for any $x, y \in \cen$. We let
\[ [ \check{\alpha}, X_{\widetilde{\beta}} ] = -[  X_{\widetilde{\beta}},  \check{\alpha} ] = ( \alpha, \beta ) X_{\widetilde{\beta}} \]
for any $\al \in \Phi, \wt\be \in \wt\Phi$. 
Finally, the bracket of vectors $X_{\widetilde{\alpha}}$, $X_{\widetilde{\beta}}$ is defined by the formula
\[ [ X_{\widetilde{\alpha}}, X_{\widetilde{\beta}} ] = \left\{ \begin{array}{cc} -\widetilde{\alpha} \widetilde{\beta} \check\alpha & \text{if }\alpha + \beta = 0. \\
(-1)^{(\al, w\be)}\la \al, \be \ra X_{\wt\al\wt\be} & \text{if }\alpha + \beta \in \Phi. \\
0 & \text{otherwise.} \end{array}\right. \]
We observe that the map is well-defined, i.e. it respects the defining relation $X_{\zeta_R \widetilde{\alpha}} = \zeta_R X_{\widetilde{\alpha}}$.
\begin{proposition}\label{prop_jacobi}
	With the above definition, $\frh$ is a Lie algebra (i.e.\ the bracket $[ \cdot, \cdot]$ is alternating and satisfies the Jacobi identity). 
\end{proposition}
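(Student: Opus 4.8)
The plan is to verify directly that the bracket is alternating and satisfies the Jacobi identity, organizing the computation by the types of the basis vectors involved. Since $\frh = \cen \oplus \frh_1$ and the bracket is defined on pairs of basis elements drawn from $\{\check\al\} \cup \{X_{\wt\be}\}$, there are only finitely many ``shapes'' of triples to consider, and many are immediately zero.

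\begin{proof}[Proof proposal]
First I would check that $[\cdot,\cdot]$ is alternating. On $\cen \times \cen$ it is identically zero, and $[\check\al, X_{\wt\be}] = -[X_{\wt\be},\check\al]$ by definition, so only the $\frh_1 \times \frh_1$ case needs attention. If $\al + \be = 0$ then $[X_{\wt\al}, X_{\wt\be}] = -\wt\al\wt\be\,\check\al$ while $[X_{\wt\be}, X_{\wt\al}] = -\wt\be\wt\al\,\check\be = -\wt\al\wt\be\,(-\check\al)$ since $\check\be = \check{-\al} = -\check\al$ and $\mu_3(R)$ is central hence $\wt\al\wt\be = \wt\be\wt\al$; so these are negatives of each other. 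If $\al + \be \in \Phi$ I need $(-1)^{(\al,w\be)}\la\al,\be\ra X_{\wt\al\wt\be} = -(-1)^{(\be,w\al)}\la\be,\al\ra X_{\wt\be\wt\al}$. Here $\la\al,\be\ra \la\be,\al\ra = 1$ and $\la\be,\al\ra = \la\al,\be\ra^{-1}$, the commutator relation in $\wt\Lambda$ gives $\wt\be\wt\al = \la\al,\be\ra^{\pm1}\wt\al\wt\be$ (so $X_{\wt\be\wt\al}$ and $X_{\wt\al\wt\be}$ differ by that scalar), and the key sign identity is $(\al,w\be) + (\be,w\al) \equiv 1 \pmod 2$, which I would derive from $w^2\be + w\be + \be = 0$ and $(\al,\be) = -1$ (so that $(\al,w\be) + (\al,w^2\be) = -(\al,\be) = 1$) together with the $w$-invariance of the pairing, which gives $(\be, w\al) = (w\be, w^2\al) = (w^2\al, w\be)$; combining these with a short bookkeeping of the cocycle defining $X_{\wt\al\wt\be}$ yields alternation. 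The case $\al+\be \notin \Phi \cup \{0\}$ is trivially symmetric (both sides zero), using $\al + \be \notin \Phi \iff (\al,\be) \ne -1 \iff \be + \al \notin \Phi$.

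Next, for Jacobi, write $J(x,y,z) = [[x,y],z] + [[y,z],x] + [[z,x],y]$; since $J$ is trilinear and alternating (given alternation of the bracket), it suffices to check $J$ on unordered triples of basis vectors. I would split into cases by how many of $x,y,z$ lie in $\cen$. If all three lie in $\cen$, then $J = 0$ trivially. If exactly two lie in $\cen$, say $x,y \in \cen$ and $z = X_{\wt\ga}$, then $[x,y] = 0$, $[[y,z],x] = (\ga,\text{--})^2$-type terms that cancel because $\cen$ is abelian: explicitly $[[y,X_{\wt\ga}],x] = (\ga,\cdot)(\ga,\cdot)X_{\wt\ga}$ and $[[X_{\wt\ga},x],y]$ is its negative, so $J=0$. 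If exactly one lies in $\cen$, say $x = \check\al$, $y = X_{\wt\be}$, $z = X_{\wt\ga}$: this is the first genuinely substantive case. One computes $[[X_{\wt\be},X_{\wt\ga}],\check\al]$, $[[X_{\wt\ga},\check\al],X_{\wt\be}] = -(\al,\ga)[X_{\wt\ga},X_{\wt\be}]$, and $[[\check\al,X_{\wt\be}],X_{\wt\ga}] = (\al,\be)[X_{\wt\be},X_{\wt\ga}]$, and then subdivides according to whether $\be+\ga = 0$, $\be+\ga \in \Phi$, or neither; in each sub-case the identity reduces to the linearity $(\al,\be+\ga) = (\al,\be) + (\al,\ga)$ of the root pairing (for $\be+\ga=0$ one also uses $\check{\be}=-\check\ga$ and $(\al,\be)=-(\al,\ga)$). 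Finally, the main case: all three of $x = X_{\wt\al}$, $y = X_{\wt\be}$, $z = X_{\wt\ga}$ in $\frh_1$. Here I would organize by the combinatorial type of the triple $\{\al,\be,\ga\}$ of roots --- whether $\al+\be+\ga = 0$, whether some pair sums to zero, whether various pairwise sums lie in $\Phi$, etc. The fact (recalled in \S\ref{sec_root_lattice_of_type_E8}) that $E_8$ is simply laced, so $\al+\be\in\Phi \iff (\al,\be)=-1$, and that consequently the possible values of $(\al,\be)$ for $\al\ne\pm\be$ are $0$ or $\pm1$, keeps the case list finite and manageable.

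The main obstacle will be the all-in-$\frh_1$ case when $\al + \be + \ga = 0$ with all pairwise sums roots: here the three terms of $J$ are each of the form (scalar)$\cdot\,\check{(\cdot)}$ and one must show the scalars combine to zero. This requires carefully tracking (i) the sign cocycle $(-1)^{(\al,w\be)}$, (ii) the Reeder pairing values $\la\al,\be\ra \in \mu_3$, and (iii) the multiplication $\wt\al\wt\be \in \wt\Phi$ in the Heisenberg cover, using the compatibility of the commutator pairing of $\mathscr H$ with $\la\cdot,\cdot\ra$ from Lemma~\ref{lem_reeder_pairing_is_symplectic} and the relation $w^2\delta+w\delta+\delta=0$. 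The strategy is to reduce each such scalar identity to the bilinearity and $w$-invariance of $(\cdot,\cdot)$ on $\Lambda$ together with the symplecticity of $\la\cdot,\cdot\ra$; I expect that after fixing a consistent choice of lifts $\wt\al$ (e.g.\ choosing lifts for a set of orbit representatives of $\la w\ra$ on $\Phi$, as in Lemma~\ref{lem_weyl_group}(2), and propagating via $X_{\zeta_R\wt\al}=\zeta_R X_{\wt\al}$) the cocycle conditions collapse to a finite check that can be done uniformly. A secondary obstacle is simply the sub-case $\al+\be\in\Phi$ but $\al+\be+\ga\ne 0$ and $(\al+\be)+\ga\in\Phi$: one gets a ``structure-constant associativity'' identity of the shape $c_{\al,\be}\,c_{\al+\be,\ga} = c_{\be,\ga}\,c_{\al,\be+\ga} + (\text{sign})\,c_{\al,\ga}\,c_{\be,\al+\ga}$ for the constants $c_{\al,\be} = (-1)^{(\al,w\be)}\la\al,\be\ra$ (suitably interpreted in $\wt\Phi$), which is the graded analogue of the classical Chevalley structure-constant relation and which I would verify by the same cocycle bookkeeping; once the $\al+\be+\ga=0$ case is done, this one follows by an entirely parallel argument.
\end{proof}
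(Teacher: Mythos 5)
Your proposal follows essentially the same route as the paper: a direct case-by-case verification organized by the root configuration of $\{\al,\be,\gamma\}$, resting on the sign identity $(-1)^{(\al,w\be)} = -(-1)^{(w\al,\be)}$ for $\al,\be,\al+\be\in\Phi$ (which you derive correctly and which is precisely the paper's key lemma), on the centrality of $\mu_3$ and the commutator relation in $\wt\Lambda$, and on the fact that $\la\cdot,\cdot\ra$ is alternating. The computations you defer in the hardest cases ($\al+\be+\gamma=0$, the case where some pair sums to zero, and the structure-constant identity when $\be+\gamma$ and $\al+\be+\gamma$ are roots) are exactly the ones the paper carries out with these same ingredients, so the plan is sound.
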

In order to prove the proposition, we first make the following observation.

\begin{lemma}\label{lem_weyl_negative}
	If $\al, \be, \al + \be \in \Phi$, then $(-1)^{(\al, w\be)} + (-1)^{(w\al, \be)} = 0$.
\end{lemma}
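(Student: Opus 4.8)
The plan is to set up the elementary identity relating the inner products $(\alpha, w\beta)$ and $(w\alpha, \beta)$ modulo $2$, using only the facts recorded in \S\ref{sec_root_lattice_of_type_E8}: that $w$ has order $3$, that $w^2\gamma + w\gamma + \gamma = 0$ for all $\gamma \in \Lambda$, and that for simply-laced $\Phi$ the condition $\alpha + \beta \in \Phi$ forces $(\alpha,\beta) = -1$ (and $(\alpha,\alpha) = 2$ for all roots). Since $w$ preserves the pairing, $(w\alpha, w\beta) = (\alpha,\beta)$, so $(w\alpha,\beta) = (\alpha, w^{-1}\beta) = (\alpha, w^2\beta)$. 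Applying the relation $w^2\beta = -\beta - w\beta$ gives
\[
(w\alpha, \beta) = (\alpha, w^2\beta) = (\alpha, -\beta - w\beta) = -(\alpha,\beta) - (\alpha, w\beta).
\]
Therefore $(w\alpha,\beta) + (\alpha, w\beta) = -(\alpha,\beta) = 1$, where the last equality uses $\alpha + \beta \in \Phi$. Since this sum is odd, the two summands have opposite parities, and hence $(-1)^{(\alpha, w\beta)} + (-1)^{(w\alpha,\beta)} = 1 + (-1) = 0$ (in either order).

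There is essentially no obstacle here: the only thing to be careful about is that the hypothesis "$\alpha, \beta, \alpha+\beta \in \Phi$" is used precisely once, to pin down $(\alpha,\beta) = -1$, which is what makes the sum odd rather than merely determined. One should also note that the argument does not use ellipticity of $w$ beyond the order-$3$ relation; the key input $w^2 + w + 1 = 0$ on $\Lambda$ is exactly the remark stated just before the lemma. I would present the computation in the three displayed lines above and then conclude. If desired one can phrase it slightly more symmetrically by writing $(w\alpha,\beta) + (\alpha,w\beta) + (\alpha,\beta) = (\alpha, w\beta + w^2\beta + \beta) = (\alpha, 0) = 0$, which makes the role of the order-$3$ relation transparent, and then substitute $(\alpha,\beta) = -1$.
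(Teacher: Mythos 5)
Your proof is correct and uses the same ingredients as the paper's one-line argument: invariance of the pairing under $w$, the relation $1 + w + w^2 = 0$ on $\Lambda$, and $(\al,\be) = -1$ from $\al + \be \in \Phi$ (the paper applies the relation to $w^2\al$ rather than $w^2\be$, which is an immaterial difference). No changes needed.
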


\begin{proof}
	We have $(-1)^{(\al, w\be)} = (-1)^{(w^2\al, \be)} = (-1)^{(-\al -w\al, \be)} = (-1)^{(w\al, \be) + 1}$ since $(\al, \be) = -1$.
\end{proof}

We also point out the useful fact that because the pairing $\la \cdot, \cdot \ra$ is alternating, we have $\wt\al\wt\be = \wt\be\wt\al$ whenever $\al + \be = 0$.

\begin{proof}[Proof of Proposition \ref{prop_jacobi}]
	Using Lemma \ref{lem_weyl_negative} and the fact that the pairing $\la \cdot, \cdot \ra$ is alternating, it is not hard to check that the bracket $[\cdot, \cdot]$ is alternating. Thus it suffices to check the Jacobi identity.
	Consider
	\begin{equation}\label{eqn-jac1}
	[x,[y,z]] + [y,[z,x]] + [z,[x,y]]
	\end{equation}
	for generators $x, y, z$. If any of $x, y, z$ are in $\cen$, then 
	it follows easily from the definition of the bracket that (\ref{eqn-jac1}) is zero. 

	Thus we restrict our attention to the case when $x = X_{\widetilde\al}, y = X_{\widetilde\be}$ and $z = X_{\widetilde\gamma}$ for some $\widetilde\al, \widetilde\be, \widetilde\gamma \in \widetilde\Lambda$.

	First suppose $\al + \be + \gamma = 0$. 
	Then $\be + \gamma = -\al \in \Phi$, and similarly $\be + \gamma, \al + \be \in \Phi$. So we have
	\begin{eqnarray*}
		[x,[y,z]] + [y,[z,x]] + [z,[x,y]] &=&
		(-1)^{(\be, w\gamma)}\la\be, \gamma\ra[X_{\widetilde\al},X_{\widetilde\be\widetilde\gamma}] + (-1)^{(\gamma, w\al)}\la \gamma, \al\ra[X_{\widetilde\be},X_{\widetilde\gamma\widetilde\al}]\\ & &+ (-1)^{(\al, w \be)}\la\al, \be\ra[X_{\widetilde\gamma},X_{\widetilde\al\widetilde\be}]\\
		&=&
		-\wt\al\wt\be\wt\gamma\left[(-1)^{(\be, w\gamma)}\la\be, \gamma\ra \check\al + (-1)^{(\gamma, w\al)}\la\gamma, \al\ra\check\be + (-1)^{(\al, w\be)}\la\al,\be\ra\check\gamma\right].
	\end{eqnarray*}

	Replacing $\gamma$ by $-\al - \be$ and $\check\gamma$ by $-\check\al - \check\be$ and using the fact that $(\be, w\be) = (\al, w\al) = -1$, we may simplify this equation to
	\begin{equation*}
	[x,[y,z]] + [y,[z,x]] + [z,[x,y]] =
	\widetilde\al\widetilde\be\widetilde\gamma\left[(-1)^{(\be, w\al)}\la\be,-\al\ra + (-1)^{(\al, w\be)}\la\al,\be\ra\right](\check\al + \check\be),
	\end{equation*}
	which is zero by Lemma \ref{lem_weyl_negative}.
	
	For the rest of the proof we assume $\al + \be + \gamma \neq 0$. For (\ref{eqn-jac1}) to be non-zero,  at least one term in (\ref{eqn-jac1}) must be non-zero. Without loss of generality, we may assume the first term is non-zero, and so either $\be + \gamma = 0$ or $\be + \gamma \in \Phi$. We deal with each of these cases separately.
	
	Case 1: $\be + \gamma = 0$. 
	
	In this case the first term of (\ref{eqn-jac1}) is $(\al, \be)\widetilde\be\widetilde\gamma X_{\widetilde\al}$. By assumption $(\al, \be) \neq 0$. Suppose $(\al, \be) = -1$. Then $(\al, \gamma) = 1$ and $(\gamma, \al + \be) = -1$, so 
	\begin{eqnarray*}
		[x,[y,z]] + [y,[z,x]] + [z,[x,y]] &=&
		-\widetilde\be\widetilde\gamma X_{\widetilde\al} + (-1)^{(\al, w\be) + (\gamma, w\al + w\be)}\la\al, \be\ra \la \gamma, \al + \be\ra X_{\widetilde\gamma\widetilde\al\widetilde\be}\\
		&=& -\widetilde\be\widetilde\gamma X_{\widetilde\al} +\la\al, \be\ra^{-1}X_{\widetilde\gamma\widetilde\al\widetilde\be}.
	\end{eqnarray*}
	Note that
	\begin{equation*}
	X_{\wt\gamma\wt\al\wt\be} = \la\al, \be\ra X_{\wt\gamma\wt\be\wt\al} = \la\al, \be\ra \wt\gamma\wt\be X_{\wt\al},
	\end{equation*}
	where we are using the fact that $\widetilde\be\widetilde\gamma \in \mu_3$. Thus (\ref{eqn-jac1}) is zero in the case when $(\al, \be) = -1$.
	The case when $(\al, \be) = 1$ is similar.
	
	If $(\al, \be) = -2$ then $\al = \gamma = -\be$, so 
	\begin{eqnarray*}
		[x,[y,z]] + [y,[z,x]] + [z,[x,y]] 
		&=& -2\widetilde\be\widetilde\gamma X_{\widetilde\al} + 2\widetilde\al\widetilde\be X_{\widetilde\gamma}\\
		&=& -2\widetilde\be\widetilde\gamma X_{\widetilde\al} + 2\widetilde\al\widetilde\be \widetilde\gamma(\widetilde\al)^{-1}X_{\widetilde\al}.
	\end{eqnarray*}
	Since $\widetilde\al$ commutes with $\widetilde\be$ and $\widetilde\gamma$ commutes with $(\widetilde\al)^{-1}$, we see that this is zero. The case when $(\al, \be) = 2$ is similar. 
	
	Case 2: $\be + \gamma \in \Phi$. 
	
	Since we are assuming that the first term in (\ref{eqn-jac1}) is non-zero, we have that $\al + \be + \gamma \in \Phi$, and so $(\be, \gamma) = (\al, \be + \gamma) = -1$.
	Thus $(\al, \be) + (\al, \gamma) = -1$, and at least one of $(\al, \be)$ or $(\al, \gamma)$ is less than 0. If $(\al, \be)$ or $(\al, \gamma)$ is $-2$, then the proof reduces to that of Case 1. 

	Suppose $(\al, \be) = -1$. Then 
	\begin{eqnarray*}
		[x,[y,z]] + [y,[z,x]] + [z,[x,y]] &=& (-1)^{(\al, w\be + w\gamma) + (\be, w\al)}\la\al, \be + \gamma\ra\la\be, \gamma\ra X_{\widetilde\al\widetilde\be\widetilde\gamma}\\& & + (-1)^{(\al, w\be) + (\gamma, w\al + w\be)}\la\al, \be\ra\la\gamma, \al + \be\ra X_{\widetilde\gamma\widetilde\al\widetilde\be}\\
		&=& (-1)^{(\al, w\be)}\la\al, \be\ra\left[(-1)^{(\al + \be, w\gamma)}\la\al + \be, \gamma\ra + (-1)^{(\gamma, w\al + w\be)}\langle\gamma, \al + \be\rangle^2\right] X_{\widetilde\al\widetilde\be\widetilde\gamma},
	\end{eqnarray*}
	which is zero by Lemma \ref{lem_weyl_negative} since $\la \al + \be, \gamma \ra = \la \gamma, \al + \be \ra^{2}$. If $(\al, \gamma) = -1$, the proof is similar.
\end{proof}
Thus $\frh$ is a Lie algebra over $R$. Our assumption that $N = 2 \times 3 \times 5 \times 7$ is a unit in $R$ implies that the Killing form of $\frh$ is non-degenerate, and therefore (by the main theorem of \cite{Vas16}) that $H = \Aut(\frh)$ is a reductive group over $R$ with geometric fibres of Dynkin type $E_8$. Moreover, we have $\Lie H = \frh$ and we can identify $Z_H(\cen) = \Cen$. 

To describe the homomorphism $\mu_3 \to H$, we first extend the action of $\mu_3$ on $\Lambda$ to $\widetilde{\Lambda} = \Lambda \times_{\Lambda_\theta} \mathscr{H}$ by letting $\mu_3$ act trivially on $\mathscr{H}$. We define a map $\theta_{\mathscr{H}} : \mu_3 \to H$ using the action on $\cen$ induced from $\theta$, together with the formula
\[ \theta_{\sH}(\zeta_R)(X_{\widetilde{\alpha}}) = X_{\theta(\zeta_R)(\widetilde{\alpha})}. \]
\begin{lemma}
	The action of $\theta_{\sH}(\zeta_R)$ on $\frh$ 
	preserves the Lie bracket. Consequently, the map $\theta_{\sH} : \mu_3 \to H$ is defined.
\end{lemma}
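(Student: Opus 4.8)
The plan is to verify directly that the $R$-linear map $\sigma := \theta_{\sH}(\zeta_R)$ on $\frh$ intertwines the bracket, i.e.\ that $\sigma([u,v]) = [\sigma u, \sigma v]$ for $u, v$ ranging over the spanning set $\{ \check\al, X_{\widetilde\be} \mid \al \in \Phi,\ \widetilde\be \in \widetilde\Phi \}$ of $\frh$, and then to deduce the two assertions of the lemma formally.

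First I would record the properties of $w = \theta(\zeta_R)$ that will be used. Since $\theta$ is an elliptic $\mu_3$-action, $w \in \Aut(\Lambda)$ has order $3$ and preserves $(\cdot, \cdot)$; hence it permutes $\Phi$ and fixes $0 \in \Lambda$, so that $w\al + w\be$ lies in $\Phi$, resp.\ equals $0$, if and only if $\al + \be$ does. Because $w$ preserves $(\cdot,\cdot)$, its action on $\cen$ sends the coroot $\check\al$ to $w(\check\al)$, which is the coroot of $w\al$. The action of $w$ on $\widetilde\Lambda = \Lambda \times_{\Lambda_\theta} \sH$, being trivial on $\sH$, is a group automorphism of $\widetilde\Lambda$ of order $3$ lifting the action on $\Lambda$; it restricts to the identity on the central $\mu_3(R)$ and permutes $\widetilde\Phi$. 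In particular $\sigma$ is a well-defined endomorphism of the $R$-module $\frh$, as it respects the relation $X_{\zeta_R\widetilde\al} = \zeta_R X_{\widetilde\al}$: indeed $w(\zeta_R\widetilde\al) = \zeta_R(w\widetilde\al)$.

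Next I would run through the cases in the definition of the bracket. If $u, v \in \cen$, both sides vanish. If $u = \check\al$ and $v = X_{\widetilde\be}$, then $\sigma([u,v]) = (\al, \be) X_{w\widetilde\be}$, while $[\sigma u, \sigma v] = [w(\check\al), X_{w\widetilde\be}] = (w\al, w\be) X_{w\widetilde\be}$ since $w(\check\al)$ is the coroot of $w\al$; the two agree because $w$ preserves $(\cdot, \cdot)$. For $u = X_{\widetilde\al}$, $v = X_{\widetilde\be}$ there are three subcases. If $\al + \be = 0$, then $w\al + w\be = 0$ and $\widetilde\al\widetilde\be \in \mu_3(R)$, so
\[ [\sigma u, \sigma v] = -(w\widetilde\al)(w\widetilde\be)\, w(\check\al) = -w(\widetilde\al\widetilde\be)\, w(\check\al) = -\widetilde\al\widetilde\be\, w(\check\al) = \sigma(-\widetilde\al\widetilde\be\,\check\al) = \sigma([u,v]), \]
using that $w$ is multiplicative on $\widetilde\Lambda$ and trivial on $\mu_3(R)$. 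If $\al + \be \in \Phi$, then $w\al + w\be = w(\al + \be) \in \Phi$, so
\[ [\sigma u, \sigma v] = (-1)^{(w\al,\, w^2\be)} \la w\al, w\be \ra X_{(w\widetilde\al)(w\widetilde\be)} = (-1)^{(\al, w\be)} \la \al, \be \ra X_{w(\widetilde\al\widetilde\be)}, \]
where I used $(w\al, w^2\be) = (\al, w\be)$, the equality of the images of $w\al$ and $w\be$ in $\Lambda_\theta$ with those of $\al$ and $\be$ (so $\la w\al, w\be\ra = \la \al, \be\ra$), and the multiplicativity of $w$; the right-hand side equals $\sigma\big( (-1)^{(\al, w\be)} \la \al, \be\ra X_{\widetilde\al\widetilde\be} \big) = \sigma([u,v])$. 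Finally, if $\al + \be \notin \Phi \cup \{0\}$, then $w\al + w\be = w(\al + \be) \notin \Phi \cup \{0\}$ and both brackets are $0$.

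This shows that $\sigma$ is an automorphism of the Lie algebra $\frh$, hence an element of $H(R) = \Aut(\frh)(R)$. Since $w^3 = 1$ on $\Lambda$ and on $\widetilde\Lambda$, we get $\sigma^3(\check\al) = \check\al$ and $\sigma^3(X_{\widetilde\al}) = X_{w^3\widetilde\al} = X_{\widetilde\al}$, so $\sigma^3 = \Id$; therefore $\zeta_R \mapsto \sigma$ extends uniquely to a homomorphism of $R$-group schemes $\mu_3 \to H$ (in the split setting $\mu_3$ is constant), and this is the map $\theta_{\sH}$. The only point requiring genuine care is the subcase $\al + \be \in \Phi$, where one must track simultaneously how $w$ interacts with the $\widetilde\Lambda$-valued coefficient $\widetilde\al\widetilde\be$, the $\mu_3$-valued factor $\la \al, \be\ra$, and the sign $(-1)^{(\al, w\be)}$; but all three compatibilities follow from $w$ being a pairing-preserving group automorphism of $\widetilde\Lambda$, so I do not anticipate a real obstacle.
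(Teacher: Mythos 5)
Your proposal is correct and follows essentially the same route as the paper: a case-by-case verification of $\sigma([u,v])=[\sigma u,\sigma v]$ on the generators $\check\al$, $X_{\widetilde\be}$, using that $w$ preserves $(\cdot,\cdot)$, acts as a group automorphism of $\widetilde\Lambda$ trivial on the central $\mu_3$, and that $\la\cdot,\cdot\ra$ only depends on images in $\Lambda_\theta$. In fact you are slightly more explicit than the paper in the case $\al+\be\in\Phi$, where you record the sign compatibility $(w\al,w^2\be)=(\al,w\be)$, and in spelling out why $\sigma^3=\Id$ yields the homomorphism $\mu_3\to H$.
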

\begin{proof}
	We again write $w$ for the extension of $w = \theta(\zeta_R)$ to $\wt\Lambda$, and we let $w' = \theta_{\sH}(\zeta_R)$. In order to prove the lemma, it suffices to show that 
	\begin{equation}\label{eqn-lieaut}
	[w'(x), w'(y)] = w'([x,y])
	\end{equation}
	for any generators $x, y$ of $\frakh$. If $x, y \in \cen$, then both sides of (\ref{eqn-lieaut}) are zero. If $x = \check\al$ and $y= X_{\wt\be}$ for some $\al \in \Phi, \wt\be \in \wt\Lambda$, then equality in (\ref{eqn-lieaut}) follows from the fact that $(w\al, w\be) = (\al, \be)$. Suppose $x = X_{\wt\al}$ and $y = X_{\wt\be}$ for some $\wt\al, \wt\be \in \wt\Lambda$. If $(\al, \be) \geq 0$, then both sides of (\ref{eqn-lieaut}) are zero, and if $\al + \be \in \Phi$, then equality in (\ref{eqn-lieaut}) follows from the fact that
	$\la w\al, w\be \ra = \la \al, \be \ra$. If  $\al + \be = 0$, then 
	\begin{eqnarray*}
		[w'(X_{\wt\al}), w'(X_{\wt\be})] &=& -(w\wt\al)(w\wt\be)w(\check\al)\\
		&=& -w(\wt\al\wt\be)w(\check\al)\\
		&=& -\wt\al\wt\be w(\check\al),
	\end{eqnarray*}
	where we are using that $\wt\al\wt\be \in \mu_3(R)$. Thus we again have equality in (\ref{eqn-lieaut}). 
\end{proof}
This completes the construction of the triple $(H, \theta_{\sH}, \Cen)$ associated to a split object $(\Lambda, \theta, \mathscr{H}) \in \operatorname{Heis}_R$. We must next consider morphisms. Let $(\Lambda, \theta, \mathscr{H}) \in \operatorname{Heis}_R$, $(\Lambda', \theta', \mathscr{H}') \in \operatorname{Heis}_{R'}$ be split, and suppose that $(f, g) : (\Lambda', \theta', \mathscr{H}') \to (\Lambda, \theta, \mathscr{H}) $ is a morphism in $\operatorname{Heis}$; equivalently, a ring map $f : R \to R'$ and an isomorphism $g : (\Lambda', \theta', \mathscr{H}') \to f^\ast(\Lambda, \theta, \mathscr{H})$. Let $(H, \theta, A) \in \operatorname{GrLieT}_R$ and $(H', \theta', A') \in \operatorname{GrLieT}_{R'}$ be the corresponding triples. We must construct the morphism $(H', \theta', A') \to (H, \theta, A)$ in $\operatorname{GrLieT}$ corresponding to $(f, g)$. It will suffice to construct a Lie algebra isomorphism $\frh' \to \frh \otimes_R R'$. We can again assume that both $\Spec R$ and $\Spec R'$ are connected. We can define an element $x_f \in \{ \pm 1 \} \subset (R')^\times$ defined by the formula $f(\zeta_R) = \zeta_{R'}^{x_f}$; equivalently, $f(\zeta_R / (1-\zeta_R^{-1})) = x_f \zeta_{R'} / (1 - \zeta_{R'}^{-1})$.  Since $\cen = \Hom(\Lambda, R)$, there is an obvious homomorphism $\cen' \to \cen \otimes_R R'$.  We extend this to a map $\sigma_{(f, g)} = \sigma : \frh' \to \frh \otimes_R R'$ by sending $X_{\widetilde{\alpha}}$ to $x_f \cdot X_{g(\widetilde{\alpha})}$. 
\begin{lemma}
The homomorphism $\sigma : \frh' \to  \frh \otimes_R R'$ is an isomorphism of Lie algebras; in other words, it satisfies the identity
	\begin{equation}\label{eqn_gamma_action}
	\sigma([x, y]) = [\sigma(x), \sigma(y)]
	\end{equation}
for all $x, y \in \frh'$.
\end{lemma}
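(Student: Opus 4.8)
The plan is to verify the Lie algebra isomorphism identity (\ref{eqn_gamma_action}) by checking it on the generators $\check\al'$, $X_{\wt\al'}$ of $\frh'$, exactly as in the proof that $\theta_{\sH}$ preserves the bracket. Note first that since $g$ is an isomorphism in $\operatorname{Heis}$ (after base change along $f$), it respects all the structure: $g$ intertwines $\theta'$ with $\theta$ (so $w' = \theta'(\zeta_{R'})$ corresponds under $g$ to $\theta(\zeta_{R})$ acting on $\wt\Lambda$, using that $g(\zeta_{R'}\wt\al') = \zeta_{R'} g(\wt\al')$ -- wait, one must be careful here: $g$ is $\mu_3$-equivariant only up to the twist $x_f$, which is precisely why $x_f$ appears in the definition of $\sigma$), it is an isometry on the root lattices (so $(\al',\be') = (g(\al'), g(\be'))$), and it is compatible with the Heisenberg pairings (so $\la \al', \be' \ra = \la g(\al'), g(\be') \ra$, both valued in $\mu_3$ but transported via $f$). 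I would record these compatibilities as a preliminary observation.

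**Next I would** run through the cases. If $x, y \in \cen'$, both sides vanish. If $x = \check\al'$, $y = X_{\wt\be'}$, then $\sigma([\check\al', X_{\wt\be'}]) = (\al',\be') x_f X_{g(\wt\be')}$ while $[\sigma(\check\al'), \sigma(X_{\wt\be'})] = [\check{g(\al')}, x_f X_{g(\wt\be')}] = (g(\al'), g(\be')) x_f X_{g(\wt\be')}$, and these agree by the isometry property (here $\check{g(\al')}$ denotes the image of $\check\al'$ under $\cen' \to \cen \otimes_R R'$, which sends coroots to coroots since $g$ is a lattice isomorphism). For $x = X_{\wt\al'}$, $y = X_{\wt\be'}$ with $(\al', \be') \geq 0$ and $\al' + \be' \neq 0$ both sides are zero. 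If $\al' + \be' = 0$, then $\sigma([X_{\wt\al'}, X_{\wt\be'}]) = -\wt\al'\wt\be' \cdot \check{g(\al')}$ (the scalar $\wt\al'\wt\be' \in \mu_3(R')$ is transported by $f$, and $x_f^2 = 1$ kills the two factors of $x_f$), while $[\sigma X_{\wt\al'}, \sigma X_{\wt\be'}] = x_f^2 [X_{g(\wt\al')}, X_{g(\wt\be')}] = -g(\wt\al') g(\wt\be') \check{g(\al')}$, and these agree because $g$ on $\mu_3$ is the twist by $x_f$ on $\wt\Lambda$, so $g(\wt\al')g(\wt\be') = g(\wt\al'\wt\be')$ with $g$ restricted to $\mu_3(R')$ being $f$ composed with the $x_f$-power map, which is a homomorphism.

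**The one genuinely delicate case** -- and the main obstacle -- is $\al' + \be' \in \Phi'$, where the structure constant $(-1)^{(\al', w'\be')} \la \al', \be' \ra$ enters. Here $\sigma([X_{\wt\al'}, X_{\wt\be'}]) = (-1)^{(\al', w'\be')} \la \al', \be' \ra \cdot x_f X_{g(\wt\al' \wt\be')}$, whereas $[\sigma X_{\wt\al'}, \sigma X_{\wt\be'}] = x_f^2 (-1)^{(g\al', \theta(\zeta_R) g\be')} \la g\al', g\be' \ra X_{g(\wt\al') g(\wt\be')}$. Since $g$ intertwines $w'$ with $\theta(\zeta_R)$ and is an isometry, the sign exponents match: $(\al', w'\be') = (g\al', \theta(\zeta_R)g\be')$. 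The Heisenberg pairing values match likewise. The subtle point is the identity $x_f X_{g(\wt\al'\wt\be')} = x_f^2 X_{g(\wt\al')g(\wt\be')}$: one uses $x_f^2 = 1$, then that $g(\wt\al'\wt\be') = g(\wt\al')g(\wt\be')$ as elements of $\wt\Lambda \otimes$ (which holds because $g$ is a group homomorphism on the Heisenberg groups, hence on $\wt\Lambda$), together with the relation $X_{\zeta \wt\al} = \zeta X_{\wt\al}$ to cancel the central ambiguity. The reason $x_f$ is needed at all -- rather than $\sigma$ simply sending $X_{\wt\al'} \mapsto X_{g(\wt\al')}$ -- is that the bracket formula in the $\al' + \be' = 0$ case outputs the raw product $\wt\al'\wt\be'$ of Heisenberg elements without any sign, and $g$ on $\mu_3$ is only $f$-semilinear up to the $x_f$-twist; the factor $x_f$ on the $X$'s is exactly what compensates, and the fact that it appears squared (hence trivially) in every three-term bracket and linearly matching on both sides of the structure-constant relation is what makes the construction functorial. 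I would close by noting $\sigma$ is bijective since $g$ is and $x_f$ is a unit, completing the proof.
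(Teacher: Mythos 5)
Your proposal goes wrong exactly at the step you single out as the crucial one. In the case $\al' + \be' \in \Phi'$ you assert that ``$g$ intertwines $w'$ with $\theta(\zeta_R)$'' so that the sign exponents match, $(\al', w'\be') = (g\al', \theta(\zeta_R)g\be')$. This is true only when $x_f = 1$. The structure constants of $\frh \otimes_R R'$ are formed using $\theta(\zeta_R)$, and since $f(\zeta_R) = \zeta_{R'}^{x_f}$, the base change of $\theta(\zeta_R)$ to $R'$ is $\theta(\zeta_{R'})^{x_f}$; hence $g$ intertwines $w' = \theta'(\zeta_{R'})$ with $\theta(\zeta_R)^{x_f}$, i.e.\ with $\theta(\zeta_R)^{-1}$ in the only nontrivial case $x_f = -1$. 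Then $(g\al', \theta(\zeta_R)g\be') = (\al', (w')^{-1}\be') = (w'\al', \be')$, and by Lemma \ref{lem_weyl_negative} this differs from $(\al', w'\be')$ by exactly $1$ modulo $2$. That sign discrepancy is precisely what the single surviving factor $x_f$ on the left-hand side (one $x_f$ there, versus $x_f^2 = 1$ on the right) is there to absorb; with your claimed matching of exponents, your own bookkeeping would instead yield $\sigma([x,y]) = x_f\,[\sigma(x),\sigma(y)]$, which is false for $x_f = -1$.

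The repair you then attempt cannot work. You invoke an identity $x_f X_{g(\wt\al'\wt\be')} = x_f^2 X_{g(\wt\al')g(\wt\be')}$ ``cancelling the central ambiguity'' via $X_{\zeta\wt\al} = \zeta X_{\wt\al}$, but there is no ambiguity to cancel: $g$ is an honest group homomorphism, so $g(\wt\al'\wt\be') = g(\wt\al')g(\wt\be')$ on the nose, and the defining relation only absorbs scalars in $\mu_3(R')$, not $-1$. Likewise your description of the mechanism is off in the $\al'+\be'=0$ case: morphisms in $\operatorname{Heis}$ are the identity on the central $\mu_3$, so $g$ is not ``$f$-semilinear up to an $x_f$-twist'' on $\mu_3$; that case works simply because the bracket lands in $\cen$ (no factor of $x_f$ on the left) and $x_f^2 = 1$ on the right. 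The actual role of $x_f$, as in the paper's proof, is confined to the case $\al'+\be' \in \Phi'$: the structure constant $(-1)^{(\al, w\be)}$ depends on the choice of primitive cube root of unity, the two choices $\zeta_R, \zeta_{R'}$ may disagree under $f$, and the resulting sign flip from Lemma \ref{lem_weyl_negative} is compensated by the factor $x_f$ in the definition of $\sigma$. Your remaining cases ($x,y \in \cen'$, $x = \check\al'$, $(\al',\be') \ge 0$ with $\al'+\be'\neq 0$, and $\al'+\be'=0$) are fine, but the heart of the lemma needs the corrected intertwining relation.
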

\begin{proof}
It suffices to check this identity in the case $x = X_{\wt\al}$ and $y = X_{\wt\be}$ for some $\wt\al, \wt\be \in \wt\Phi$. If $x_f = 1$, then this is clear. Suppose instead that $x_f = -1$. We split into cases. If $\alpha + \beta = 0$, the both sides of (\ref{eqn_gamma_action}) are equal to $-g(\check{\alpha}) \otimes \widetilde{\alpha} \widetilde{\beta} \in \cen \otimes_R R'$.
	
	If $\alpha + \beta$ is a root, then the left-hand side of (\ref{eqn_gamma_action}) equals $ X_{g(\widetilde{\alpha} \widetilde{\beta})} \otimes (-1)^{(\alpha, \theta'(\zeta_{R'})(\beta)) + 1} \langle \alpha, \beta \rangle$,  while the right-hand side equals $ X_{g(\widetilde{\alpha} \widetilde{\beta})} \otimes f((-1)^{(g(\alpha),  \theta(\zeta_R) g(\beta))}\la g(\al), g(\be)\ra)$. Since $g$ intertwines $\theta'(\zeta_{R'})$ and $\theta(\zeta_R)^{-1}$, we have
	\begin{equation*}
	(-1)^{(g(\alpha), \theta(\zeta_R)(g(\beta)))} = (-1)^{(\theta'(\zeta_R')( \al), \be)} =  (-1)^{(\alpha, \theta'(\zeta_{R'})(\beta)) + 1},
	\end{equation*}
	showing that both sides of (\ref{eqn_gamma_action}) are equal.
\end{proof}
This completes the proof of Theorem \ref{thm_equivalence_of_Heis_with_GrLieT}. 

We observe that if $(\Lambda, \theta, \mathscr{H}) \in \operatorname{Heis}_R$, then there is a morphism $f_R : H^0(R, \Lambda_\theta) \to \Aut_{\operatorname{Heis}_R}(\Lambda, \theta, \mathscr{H})$ defined as follows: if $\lambda \in H^0(R, \Lambda_\theta)$, then $f_R(\lambda)$ acts as the identity on $\Lambda$ and as $\mu \mapsto \langle \lambda, \mu \rangle \mu$ on $\mathscr{H}$. Let $(H, \theta_{\sH}, \Cen) \in \operatorname{GrLieT}_R$ be the tuple corresponding to $(\Lambda, \theta, \mathscr{H})$ under the construction of Theorem \ref{thm_equivalence_of_Heis_with_GrLieT}. Varying $R$ and taking into the account the functorial nature of our construction, we obtain a morphism of group schemes over $R$:
\begin{equation}\label{eqn_action_of_coinvariants_by_automorphisms} \Lambda_\theta \to \Aut(H, \theta_{\sH}, A) = N_H(A)^{\theta_{\sH}}. \end{equation}
We can describe this explicitly:
\begin{lemma}\label{lem_action_by_automorphisms}
	Let notation be as in the above discussion. Then there is a canonical isomorphism $\Lambda_\theta \cong \Cen^\theta$, under which the morphism (\ref{eqn_action_of_coinvariants_by_automorphisms}) corresponds to the adjoint action of $\Cen^\theta = Z_H(A)^{\theta_{\sH}} \subset N_H(A)^{\theta_{\sH}} = \Aut(H, \theta_{\sH}, A)$.
\end{lemma}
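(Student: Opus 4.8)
The plan is to compute both sides of the claimed correspondence explicitly; no estimate or deep input is needed beyond the construction of \S\ref{sec_a_functor} and Lemmas \ref{lem_weyl_group} and \ref{lem_reeder_pairing_is_symplectic}. First I would pin down $\Cen^\theta$. Recall from the construction of $H$ that $\Cen$ is a maximal torus, so $Z_H(\Cen) = \Cen$, and that $\theta_{\sH}$ acts on $\Cen$ so that the generator $\theta_{\sH}(\zeta_R)$ acts on $\Lambda = X^\ast(\Cen)$ as $w^{\pm 1}$, where $w = \theta(\zeta_R)$; in particular $Z_H(\Cen)^{\theta_{\sH}} = \Cen^{\theta_{\sH}} = \ker\bigl(1 - w \colon \Cen \to \Cen\bigr)$ (the choice of sign being irrelevant since $1 + w$ is invertible on $\Lambda$). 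Since $w$ is elliptic and $3 \in R^\times$, the homomorphism $1 - w$ of $\Cen$ is a finite \'etale surjective isogeny — a fact already used in the proof of Lemma \ref{lem_graded_groups_etale_locally_isomorphic} — so there is a short exact sequence $0 \to \Cen^{\theta_{\sH}} \to \Cen \xrightarrow{1-w} \Cen \to 0$, and applying $X^\ast(-)$ gives $X^\ast(\Cen^{\theta_{\sH}}) = \Lambda/(w-1)\Lambda = \Lambda_\theta$. As $\Lambda_\theta$ is killed by $3$ (Lemma \ref{lem_weyl_group}), $\Cen^{\theta_{\sH}}$ is the Cartier dual $\Hom(\Lambda_\theta, \mu_3)$ of the locally constant \'etale sheaf $\Lambda_\theta$, and composing with the isomorphism $\Lambda_\theta \xrightarrow{\sim} \Hom(\Lambda_\theta, \mu_3)$, $\lambda \mapsto \langle\lambda, -\rangle$, of Lemma \ref{lem_reeder_pairing_is_symplectic} yields the canonical isomorphism $\Lambda_\theta \cong \Cen^\theta := Z_H(\Cen)^{\theta_{\sH}}$ of the lemma.

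Next I would make the morphism (\ref{eqn_action_of_coinvariants_by_automorphisms}) explicit. Both its source and target are \'etale sheaves on $\Spec R$ and the assertion is \'etale local, so one may assume $(\Lambda, \theta, \mathscr{H})$ is split and work with the concrete construction of Theorem \ref{thm_equivalence_of_Heis_with_GrLieT}. For $\lambda \in H^0(R, \Lambda_\theta)$ the automorphism $f_R(\lambda)$ fixes $\Lambda$ and acts on $\mathscr{H}$ by $\mu \mapsto \langle\lambda,\mu\rangle\mu$, hence on $\widetilde\Lambda = \Lambda \times_{\Lambda_\theta}\mathscr{H}$ by $\widetilde\mu \mapsto \langle\lambda,\overline\mu\rangle\widetilde\mu$, where $\overline\mu$ is the image of $\mu$ in $\Lambda_\theta$. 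Since $f_R(\lambda)$ covers $f = \mathrm{id}_R$, the sign $x_f$ that enters the construction of the induced Lie algebra map equals $1$, so the associated automorphism $\sigma$ of $\frh = \cen \oplus \frh_1$ is the identity on $\cen$ and, using the relation $X_{\zeta_R\widetilde\alpha} = \zeta_R X_{\widetilde\alpha}$, satisfies $\sigma(X_{\widetilde\alpha}) = X_{\langle\lambda,\overline\alpha\rangle\widetilde\alpha} = \langle\lambda,\overline\alpha\rangle X_{\widetilde\alpha}$ for every $\widetilde\alpha \in \widetilde\Phi$.

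Finally I would match the two descriptions. Let $t_\lambda$ be the point of $\Cen$ given by the character $\gamma \mapsto \langle\lambda,\overline\gamma\rangle$ of $\Lambda$; it is $3$-torsion and factors through $\Lambda/(w-1)\Lambda = \Lambda_\theta$, so $t_\lambda \in \Cen^\theta$, and by the first step $t_\lambda$ is exactly the point of $\Cen^\theta = \Hom(\Lambda_\theta,\mu_3)$ corresponding to $\langle\lambda,-\rangle$, i.e.\ to $\lambda$ under the canonical isomorphism. The relation $[\check\alpha, X_{\widetilde\beta}] = (\alpha,\beta)X_{\widetilde\beta}$ identifies $R X_{\widetilde\beta}$ with the $\beta$-weight space for the adjoint action of $\Cen$ on $\frh$, so $\Ad(t_\lambda)|_{\cen} = \mathrm{id}$ and $\Ad(t_\lambda)(X_{\widetilde\beta}) = \beta(t_\lambda) X_{\widetilde\beta} = \langle\lambda,\overline\beta\rangle X_{\widetilde\beta}$; comparing with the formula for $\sigma$ on the generators $\cen \cup \{X_{\widetilde\beta}\}$ gives $\sigma = \Ad(t_\lambda)$. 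As $t_\lambda \in \Cen^{\theta_{\sH}}$, this $\Ad(t_\lambda)$ commutes with $\theta_{\sH}$ and is therefore the automorphism of $(H, \theta_{\sH}, \Cen)$ attached to $f_R(\lambda)$, lying in the subgroup $Z_H(\Cen)^{\theta_{\sH}}$ of $N_H(\Cen)^{\theta_{\sH}} = \Aut(H, \theta_{\sH}, \Cen)$. Letting $R$ and $\lambda$ vary, this says precisely that (\ref{eqn_action_of_coinvariants_by_automorphisms}) corresponds, via $\Lambda_\theta \cong \Cen^\theta$, to the adjoint action $t \mapsto \Ad(t)$ of $\Cen^\theta \subset N_H(\Cen)^{\theta_{\sH}}$, as claimed.

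The routine parts aside, the step I expect to need the most care is the compatibility of conventions: confirming that $x_f = 1$ for an automorphism of a fixed split triple (so that no spurious sign slips into $\sigma$), and that the isomorphism $\Lambda_\theta \cong \Cen^\theta$ is taken to be the one induced by the pairing (\ref{eqn_reeders_pairing}) in its given normalisation rather than by its negative — getting either backwards would compose the morphism with $\lambda \mapsto -\lambda$. Everything else is a direct unravelling of the construction of \S\ref{sec_a_functor}, together with the standard computation of the character group of the kernel of the isogeny $1 - w$.
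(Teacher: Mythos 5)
Your proof is correct and follows essentially the same route as the paper: identify the element of $\Cen^\theta$ attached to $\lambda$ and check that it acts on each root line $R\,X_{\widetilde\alpha}$ by the scalar $\langle\lambda,\alpha\rangle$, which is exactly how $f_R(\lambda)$ acts (with $x_f=1$), working with a split triple. The only cosmetic difference is in packaging the canonical isomorphism: you obtain $\Cen^\theta\cong\Hom(\Lambda_\theta,\mu_3)$ from the \'etale isogeny $1-w$ and then use Lemma \ref{lem_reeder_pairing_is_symplectic}, whereas the paper writes the map directly as $\lambda\mapsto(1-\theta(\zeta_R))\check\lambda(\zeta_R)$; both give the same point $t_\lambda\colon\mu\mapsto\langle\lambda,\mu\rangle$, as your computation on root spaces confirms.
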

\begin{proof}
	By definition, we have $X^\ast(\Cen) = \Lambda$, hence $X_\ast(\Cen) \cong \Lambda^\vee = \Hom(\Lambda, \bbZ)$. There is a canonical isomorphism $\Cen^\theta \cong (\Lambda^\vee \otimes \mu_3)^\theta$. 
There is also an isomorphism $\Lambda_\theta \cong (\Lambda^\vee \otimes \mu_3)^\theta$, given by the formula $\lambda \mapsto (1 - \theta(\zeta))\check\lambda \otimes \zeta$ for $\lambda \in \Lambda$; this does not depend on the choice of $\zeta$ and also depends only on the image of $\lambda$ in $\Lambda_\theta$.
	Composing the above two isomorphisms gives the desired isomorphism $\Lambda_\theta \cong \Cen^\theta$. 
	
	We now need to check that under this isomorphism, the action of $ \Cen^\theta$ on the triple $(H, \theta_{\mathscr{H}}, A)$ is the adjoint action. It suffices to check this in the case that the triple $(\Lambda, \theta, \mathscr{H})$ is split. Let us therefore assume this and fix $\lambda \in \Lambda_\theta$. By definition, $\lambda$ acts as the identity on $\cen$ and sends the vector $X_{\widetilde{\alpha}}$ to $X_{\langle \lambda, \alpha \rangle \widetilde{\alpha}}$. In other words, it leaves invariant the $\alpha$-root space and acts by the scalar $\langle \lambda, \alpha \rangle$ there. 
	
	On the other hand, the element $(1 - \theta(\zeta_R))\check\lambda(\zeta_R)$ in $\Cen^\theta$ also acts as the identity on $\Cen$ and leaves invariant each root space, acting on the $\alpha$-root space by the scalar
	\[ \zeta_R^{((1 - \theta(\zeta_R))\lambda, \alpha)} = \langle \lambda, \alpha \rangle. \]
	This completes the proof.
\end{proof}

\subsection{Identifying $\frh(0)$}

Theorem \ref{thm_equivalence_of_Heis_with_GrLieT} associates to any triple $(\Lambda, \theta, \mathscr{H}) \in \operatorname{Heis}_R$ a triple $(H, \theta_{\sH}, \Cen) \in \operatorname{GrLieT}_R$. In the proof of our next result, we show that if $W$ is an irreducible representation of $\mathscr{H}$ on which the central $\mu_3$ acts by scalar multiplication through its tautological character, then we can identify $\frh(0)$ with $\frs\frl(W)$ and $\mathscr{H}$ with a certain subgroup of $\SL(W)$. (Note that over an algebrically closed field of characteristic prime to 3, any two such representations are isomorphic, by the Stone--von Neumann theorem, and have dimension 9.) This result will play an essential role in the construction of orbits in \S \ref{sec_twisting}.
\begin{theorem}\label{thm_embedding_Heis_in_Gsc}
	Let $(\Lambda, \theta, \mathscr{H}) \in \operatorname{Heis}_R$. Let $W$ be a locally free $R$-module of rank 9, and suppose $\rho : \mathscr{H} \to \Aut_R(W)$ is a homomorphism such that the central $\mu_3$ acts on $W$ through its tautological character. Let $(H, \theta_{\sH}, \Cen)$ denote the image of $(\Lambda, \theta, \mathscr{H})$ under the functor of Theorem \ref{thm_equivalence_of_Heis_with_GrLieT} and let $G = H^{\theta_{\sH}}$. Then:
	\begin{enumerate}
		\item $G$ is a semisimple reductive group, and the simply connected cover $G^\text{sc}$ of $G$ is isomorphic to $\SL(W)$.
\item	There is a commutative diagram of $R$-groups with exact rows:
		\[ \xymatrix{ 1 \ar[r] & \mu_3 \ar[r] & G^\text{sc} \ar[r] & G \ar[r] & 1 \\ 1 \ar[r] & \mu_3 \ar[r] \ar[u]^= & \mathscr{H} \ar[u] \ar[r] & \Lambda_\theta \ar[u] \ar[r] & 1, } \]
		where the map $\Lambda_\theta \to G$ is induced by the canonical isomorphism $\Lambda_\theta \cong \Cen^\theta$ of Lemma \ref{lem_action_by_automorphisms} and the map $\sH \to G^\text{sc}$ is induced by $\rho$.
	\end{enumerate}
\end{theorem}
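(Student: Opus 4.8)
The plan is to make everything completely explicit in the split case and then descend, so first I would reduce to the situation where $\Spec R$ is connected and $\mu_3$, $\Lambda$, $\mathscr{H}$, $W$ are all constant/free: every assertion is \'etale-local on $\Spec R$, and the maps produced will be canonical enough to glue. In the split case, recall $\frg := \Lie G = \frh(\theta_{\sH},0)$, and since $\frh = \cen \oplus \frh_1$ is $\theta_{\sH}$-stable we have $\frg = \cen^{\theta_{\sH}} \oplus \frh_1^{\theta_{\sH}}$ with $w = \theta(\zeta_R)$. Ellipticity of $w$ gives $\cen^{\theta_{\sH}} = \cen^w = 0$ (indeed $1 - w$ is an automorphism of $\cen$, as $3 \in R^\times$), and Lemma~\ref{lem_weyl_group}(2) identifies $\langle w\rangle$-orbits in $\Phi$ with the nonzero elements of $\Lambda_\theta$. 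Choosing for each $\lambda \in \Lambda_\theta \setminus \{0\}$ a root $\alpha_\lambda$ in the corresponding orbit and a lift $\widetilde\alpha_\lambda \in \widetilde\Phi$, the vectors $Y_\lambda := X_{\widetilde\alpha_\lambda} + X_{w\widetilde\alpha_\lambda} + X_{w^2\widetilde\alpha_\lambda}$ then form an $R$-basis of $\frg$ (well-defined up to a unit depending on the lift), and in particular $\dim_R \frg = 80 = \dim \frs\frl_9$.

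On the Heisenberg side, the pairing $\langle \cdot, \cdot \rangle$ is symplectic and non-degenerate (Lemma~\ref{lem_reeder_pairing_is_symplectic}) and $|\mathscr{H}| = 3^5$ is a unit on $R$, so $R[\mathscr{H}]$ is \'etale-locally a product of matrix algebras and $W$ --- being locally free of rank $9$ with the central $\mu_3$ acting through the tautological character --- is the unique (up to isomorphism) Heisenberg representation. I would then record the consequences I need: $\rho$ is injective with image in $\SL(W)$; and $\End_R(W) = \bigoplus_{\lambda \in \Lambda_\theta} R\, U_\lambda$, where $U_0 = \mathrm{id}$ and $U_\lambda := \rho(h)$ for any $h \in \mathscr{H}$ lifting $\lambda$ (so $U_\lambda$ is well-defined up to $\mu_3$), with $\tr(U_\lambda) = 0$ for $\lambda \neq 0$ and $U_\lambda U_\mu = \psi(\lambda,\mu)\, U_{\lambda+\mu}$ for a $\mu_3$-valued cochain $\psi$ satisfying $\psi(\lambda,\mu)\psi(\mu,\lambda)^{-1} = \langle \lambda,\mu\rangle$. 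Consequently $\frs\frl(W) = \bigoplus_{\lambda \neq 0} R\, U_\lambda$, with $[U_\lambda, U_\mu] = \psi(\mu,\lambda)\,(\langle \lambda,\mu\rangle - 1)\, U_{\lambda+\mu}$, which vanishes when $\lambda + \mu = 0$ since $\langle \cdot, \cdot\rangle$ is alternating.

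The heart of the argument is then to construct an isomorphism of Lie algebras $\phi : \frg \xrightarrow{\ \sim\ } \frs\frl(W)$ of the form $Y_\lambda \mapsto c_\lambda U_\lambda$ with $c_\lambda \in R^\times$. Expanding $[Y_\lambda, Y_\mu] = \sum_{i,j} [X_{w^i \widetilde\alpha_\lambda}, X_{w^j \widetilde\alpha_\mu}]$ with the bracket formula of \S\ref{sec_a_functor}, one checks using Lemma~\ref{lem_weyl_group}(2) that every surviving term lies in the span of the $\langle w\rangle$-orbit above $\lambda+\mu$ (because $w$ acts trivially on $\Lambda_\theta$), hence --- being $\theta_{\sH}$-invariant --- equals a scalar multiple $n(\lambda,\mu)\, Y_{\lambda+\mu}$; the case $\lambda + \mu = 0$ produces $-\sum_i w^i \check\alpha_\lambda = 0$, consistent with $\cen^{\theta_{\sH}} = 0$. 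A computation with the relation between $\langle\cdot,\cdot\rangle$ and the inner products of roots, together with the sign identity of Lemma~\ref{lem_weyl_negative}, shows that $n(\lambda,\mu)$ vanishes exactly when $\langle\lambda,\mu\rangle = 1$ and that its nonzero values differ from $\psi(\mu,\lambda)(\langle\lambda,\mu\rangle - 1)$ by a coboundary; choosing the $c_\lambda$ to cancel this coboundary makes $\phi$ bracket-preserving, hence an isomorphism. This yields part (1): $G$ is smooth reductive, being the centralizer of the multiplicative-type subgroup $\theta_{\sH}(\mu_3)$ (\cite[Proposition 2.1.2]{Con14}), with $\Lie G = \frg \cong \frs\frl(W)$; it is semisimple since $\frg$ has trivial centre; and comparing adjoint quotients, $G^{\text{ad}} \cong \Aut(\frg)^0 \cong \PGL(W)$, so $G^{\text{sc}} \cong \SL(W)$ on simply connected covers (using \cite{Ree12} for connectedness of $G$ at a geometric point, which then propagates). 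This structure-constant verification --- matching $n(\lambda,\mu)$ with $\psi(\mu,\lambda)(\langle\lambda,\mu\rangle - 1)$ up to a coboundary across the cases of the bracket formula, with care for the signs $(-1)^{(\alpha, w\beta)}$ and for Reeder's pairing --- is where I expect the real work to be.

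For part (2) I would compare the two subgroups via their adjoint action on $\frg \cong \frs\frl(W) = \bigoplus_{\lambda \neq 0} R\, U_\lambda$. By the standard Heisenberg commutation relation (using that $\langle \cdot, \cdot\rangle$ \emph{is} the commutator pairing of $\mathscr{H}$), an element $h \in \mathscr{H}$ lifting $\overline h \in \Lambda_\theta$ conjugates $U_\mu$ to $\langle \overline h, \mu\rangle\, U_\mu$; by Lemma~\ref{lem_action_by_automorphisms}, the canonical composite $\Lambda_\theta \cong \Cen^\theta \hookrightarrow G$ followed by $\Ad$ sends $\lambda$ to $Y_\mu \mapsto \langle\lambda,\mu\rangle\, Y_\mu$. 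Under $\phi$ these agree, and since $\Ad$ restricts to a closed immersion on both $\PGL(W)$ and $G^{\text{ad}}$, the isomorphism $G^{\text{sc}} \cong \SL(W)$ integrating $\phi$ carries $\rho(\mathscr{H})$ onto the preimage of $\Cen^\theta$ in $G^{\text{sc}}$; the central $\mu_3$'s match because the tautological character of $\mathscr{H}$ corresponds to the action on $W$ of $\ker(G^{\text{sc}} \to G) = \mu_3 \subset Z(\SL(W))$. A final check on generators --- lifts of $\Lambda_\theta$, using that $\mathscr{H}$ has exponent $3$ and $\Cen^\theta \cong \mu_3^4$, so each side is determined by its image in $G^{\text{ad}}$ together with the central character --- then shows the square commutes, and, since nothing above depends (up to coherent isomorphism) on the choices of $\zeta_R$, $\alpha_\lambda$, $\widetilde\alpha_\lambda$, the construction descends from the splitting cover back to $R$. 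The secondary obstacle here is exactly this last passage from ``the two subgroups have the same adjoint action'' to ``they are identified on the nose by $\phi$'', which needs the central-character and exponent-$3$ inputs to rule out a twist by a character $\Lambda_\theta \to \mu_3$.
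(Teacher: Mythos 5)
Your architecture matches the paper's: reduce to the split case, identify $\frg = \frh^{\theta_{\sH}}$ with $\frs\frl(W)$ by sending $\theta$-averaged root vectors to scalar multiples of Heisenberg operators, deduce $G^{\text{sc}} \cong \SL(W)$, and then compare the two adjoint actions of $\Lambda_\theta$ on $\frg$ to identify $\rho(\mathscr{H})$ with the preimage of $\Cen^\theta$. But the step you yourself flag as ``where the real work is'' is the entire content of part (1), and you do not do it: you assert that the structure constants $n(\lambda,\mu)$ of $\frg$ in the basis $Y_\lambda$ differ from $\psi(\mu,\lambda)(\langle\lambda,\mu\rangle-1)$ ``by a coboundary'', so that suitable units $c_\lambda$ exist. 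Nothing in the proposal shows that this discrepancy cochain is exact; a priori it could represent a nontrivial class, in which case no rescaling $Y_\lambda \mapsto c_\lambda U_\lambda$ would preserve brackets, and the only way to see triviality is to carry out the computation. The paper does exactly this, and in a way that makes the point moot: it indexes the averaged vectors by lifts $\widetilde{\alpha} \in \widetilde{\Phi}$ rather than by chosen orbit representatives in $\Lambda_\theta$, and verifies case by case on the value of $(\alpha,\beta)$ that $Z_{\widetilde{\alpha}} \mapsto \zeta_R(1-\zeta_R^{-1})^{-1}\rho(\pi(\widetilde{\alpha}))$ --- a single universal scalar, with no $\lambda$-dependent correction --- is a Lie algebra homomorphism, using the Heisenberg commutation relation, the identity $\langle \alpha, \beta \rangle = \zeta_R^{-(w\alpha,\beta)-1}$ when $(\alpha,\beta)=-1$, and the vanishing of the nine-term sum when $\alpha = -\beta$ (the $\cen$-contributions cancel and $\alpha - w\alpha$, $\alpha - w^2\alpha$ are not roots). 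Until you either produce this verification or give an independent reason the discrepancy is a coboundary, part (1) is not proved.

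For part (2) your comparison of adjoint actions is the same as the paper's (the computations (\ref{eqn_first_heis_action}) and (\ref{eqn_second_heis_action})), and your instinct that matching adjoint actions only pins elements down up to the centre $Z(G) \cong \mu_3$ --- hence the possible twist by a character $\Lambda_\theta \to \mu_3$ --- is a fair concern; but your proposed resolution (``central character and exponent $3$'') is again left as a promise, whereas the paper pins down both $\rho(\mathscr{H})$ and $\mathscr{H}'$ as the cube-one elements of $G^{\text{sc}}$ lying over a common subgroup of $G^{\text{ad}}$ and reduces everything to the equality of the two maps $\Lambda_\theta \to G^{\text{ad}}$ checked on the generators $Z_{\widetilde{\beta}}$. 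This second issue is minor; the essential missing piece is the bracket computation in part (1).
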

To prove the theorem, we can again assume that $\Spec R$ is connected and that the triple $(\Lambda, \theta, \mathscr{H})$ is split. The group $G$ is reductive with geometric fibres of type $\SL_9 / \mu_3$, so its simply connected cover $G^\text{sc}$ exists, and the kernel of the natural covering map $G^\text{sc} \to G$ is a group of multiplicative type over $R$ of order 3. Let $\frg = \Lie G$. Then $\frg = \frh^{\theta_{\sH}}$ (see e.g. \cite[Lemma 2.2.4]{Con14}).

The first step in the proof of Theorem \ref{thm_embedding_Heis_in_Gsc} is to define an action of the Lie algebra $\frg$ on $W$; equivalently, to define a map $\frg \to \End_{R}(W_{R})$. 
Recall that we have extended $\theta$ to a map $\mu_3 \to \Aut(\wt\Lambda)$. If $\widetilde{\alpha} \in \widetilde{\Phi}$, define $Z_{\widetilde{\alpha}} = X_{\widetilde{\alpha}} + X_{\theta(\zeta_R)(\widetilde{\alpha})} + X_{\theta(\zeta_R^2)(\widetilde{\alpha})} \in \frg$. Elements of this form span $\frg$.  

\begin{proposition}
Let $\pi : \widetilde{\Lambda} \to \mathscr{H}$ denote the canonical projection. Define a map $\rho' : \frg \to \End_{R}(W)$ by the formula 
\[ \rho'(Z_{\widetilde{\alpha}}) = \zeta_R(1-\zeta_R^{-1})^{-1} \rho(\pi(\widetilde{\alpha})). \]
	Then $\rho'$ is a well-defined Lie algebra homomorphism.
\end{proposition}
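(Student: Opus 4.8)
The plan is to verify the two assertions in turn: first that $\rho'$ is well-defined, i.e.\ that it does not depend on the choice of representative $\widetilde{\alpha}$ within its $\langle \theta(\zeta_R)\rangle$-orbit (and respects the relation $X_{\zeta_R \widetilde\alpha} = \zeta_R X_{\widetilde\alpha}$), and second that $\rho'$ is a Lie algebra homomorphism. For well-definedness, note that $Z_{\widetilde\alpha}$ is by construction symmetric under replacing $\widetilde\alpha$ by $\theta(\zeta_R)(\widetilde\alpha)$; on the other hand $\pi(\theta(\zeta_R)(\widetilde\alpha)) = \pi(\widetilde\alpha)$ since $\mu_3$ acts trivially on $\mathscr H$, so the right-hand side is also invariant. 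Replacing $\widetilde\alpha$ by $\zeta_R \widetilde\alpha$ multiplies both $Z_{\widetilde\alpha}$ and $\pi(\widetilde\alpha)$ by $\zeta_R$, which is consistent because $\rho(\zeta_R) = \zeta_R \cdot \mathrm{id}_W$ by the hypothesis that the central $\mu_3$ acts through the tautological character. So $\rho'$ is well-defined on the spanning set $\{Z_{\widetilde\alpha}\}$; one should also check that any linear relation among the $Z_{\widetilde\alpha}$ in $\frg$ is sent to zero, which can be done by computing the dimension of the span — or more cheaply, by observing that the relations in $\frg$ are inherited from those in $\mathscr H \otimes_{\mu_3} R$ (the group algebra modulo the central character), and $\rho$ already factors through that quotient algebra. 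Equivalently, one notes that $\rho'$ is manifestly $R$-linear in $\widetilde\alpha$ in the appropriate sense and so descends.

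The substantive part is the homomorphism property: for all $\widetilde\alpha, \widetilde\beta \in \widetilde\Phi$ one must show
\[ \rho'([Z_{\widetilde\alpha}, Z_{\widetilde\beta}]) = [\rho'(Z_{\widetilde\alpha}), \rho'(Z_{\widetilde\beta})] = \zeta_R^2(1-\zeta_R^{-1})^{-2}\bigl(\rho(\pi(\widetilde\alpha))\rho(\pi(\widetilde\beta)) - \rho(\pi(\widetilde\beta))\rho(\pi(\widetilde\alpha))\bigr). \]
I would expand $[Z_{\widetilde\alpha}, Z_{\widetilde\beta}] = \sum_{i,j \in \bbZ/3} [X_{\theta(\zeta_R^i)\widetilde\alpha}, X_{\theta(\zeta_R^j)\widetilde\beta}]$ using the explicit bracket formula from \S\ref{sec_a_functor} (the three cases $\alpha+\beta=0$, $\alpha+\beta\in\Phi$, otherwise, with the $w$'s now being powers of $\theta(\zeta_R)$). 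The key combinatorial input is Lemma \ref{lem_weyl_group}(2): the $\langle w\rangle$-orbits on $\Phi$ biject with the nonzero elements of $\Lambda_\theta$, so the root pairs $(\theta(\zeta_R^i)\alpha, \theta(\zeta_R^j)\beta)$ that contribute to the sum are controlled by the images $\bar\alpha, \bar\beta \in \Lambda_\theta$. On the $W$ side, $\rho(\pi(\widetilde\alpha))\rho(\pi(\widetilde\beta)) = \langle\bar\alpha,\bar\beta\rangle \, \rho(\pi(\widetilde\beta))\rho(\pi(\widetilde\alpha))$ because the commutator pairing of $\mathscr H$ is Reeder's pairing $\langle\cdot,\cdot\rangle$, so the right-hand side is $\zeta_R^2(1-\zeta_R^{-1})^{-2}(1 - \langle\bar\alpha,\bar\beta\rangle^{-1})\rho(\pi(\widetilde\alpha)\pi(\widetilde\beta))$ (using $\langle\bar\alpha,\bar\beta\rangle \in \mu_3$, so $1-\langle\bar\alpha,\bar\beta\rangle = -\langle\bar\alpha,\bar\beta\rangle(1-\langle\bar\alpha,\bar\beta\rangle^{-1})$ up to the usual bookkeeping). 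One then matches this term-by-term against the expansion of $\rho'([Z_{\widetilde\alpha},Z_{\widetilde\beta}])$, using $\pi(\widetilde{\alpha}\widetilde{\beta}) = \pi(\widetilde\alpha)\pi(\widetilde\beta)$ and tracking the sign $(-1)^{(\alpha, w\beta)}$ together with the normalisation factor $\zeta_R(1-\zeta_R^{-1})^{-1}$.

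The case $\alpha + \beta = 0$ needs slightly separate treatment: here $[Z_{\widetilde\alpha}, Z_{\widetilde\beta}]$ lands in $\cen^{\theta_{\sH}} \cong \Lambda_\theta$, and $\rho'$ must be defined (or seen to extend) on this part too — concretely $\rho'$ on $\cen^{\theta}$ should send the coinvariant class of $\lambda$ to the endomorphism by which $\rho(\mathscr H)$ "sees" $\lambda$, i.e.\ the diagonalizable operator with eigenvalue $\langle\lambda,\mu\rangle$ on the $\mu$-weight piece of $W$; one checks the bracket relation $[\rho'(\check\alpha + \cdots), \rho'(Z_{\widetilde\beta})] = (\alpha,\beta)\rho'(Z_{\widetilde\beta})$-type identities hold after this identification, matching the torus action computed in the proof of Lemma \ref{lem_action_by_automorphisms}. \textbf{The main obstacle} I anticipate is purely bookkeeping: getting all the roots of unity, the $(-1)^{(\alpha,w\beta)}$ signs, and the factor $(1-\zeta_R^{-1})$ to cancel correctly across the three-term sums, especially reconciling the "associator" that appears because $\mathscr H$ is only a central extension (so $\pi(\widetilde\alpha)\pi(\widetilde\beta)$ versus $\pi(\widetilde\alpha\widetilde\beta)$ carries a cocycle) with the cocycle implicit in the bracket formula's $X_{\widetilde\alpha\widetilde\beta}$. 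The factor $\zeta_R(1-\zeta_R^{-1})^{-1}$ is precisely chosen so that $(\zeta_R(1-\zeta_R^{-1})^{-1})^2 \cdot (\text{the } \mu_3\text{-valued discrepancy}) = \zeta_R(1-\zeta_R^{-1})^{-1} \cdot 1$, which is the scalar identity $\zeta(1-\zeta^{-1})^{-1} = (\zeta(1-\zeta^{-1})^{-1})^2(1-\zeta)$ in $\bbZ[1/3,\zeta_3]$ — verifying this elementary identity and then threading it through the orbit sums is the crux.
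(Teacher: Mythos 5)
Your overall strategy is the same as the paper's: expand $[Z_{\widetilde\alpha},Z_{\widetilde\beta}]$ into the nine cross terms, transport the commutator relation of $\mathscr{H}$ through $\rho$, and match scalars case by case using the constraint $(\alpha,\beta)+(w\alpha,\beta)+(w^2\alpha,\beta)=0$ (the paper organises this as a case analysis on the value of $(\alpha,\beta)$). Well-definedness is, as you say, just the relation $\rho(\zeta_R)=\zeta_R\cdot 1_W$; your worry about further linear relations among the $Z_{\widetilde\alpha}$ is unnecessary, since distinct $\langle w\rangle$-orbits of roots involve disjoint root spaces, so the $Z_{\widetilde\alpha}$ for orbit representatives are linearly independent.

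There is, however, a genuine error in your treatment of the case $\alpha+\beta=0$. You claim that $[Z_{\widetilde\alpha},Z_{\widetilde\beta}]$ "lands in $\cen^{\theta_{\sH}}\cong\Lambda_\theta$" and that $\rho'$ must be extended to this piece, with eigenvalues $\langle\lambda,\mu\rangle$. This conflates the group-scheme fixed points $\Cen^{\theta}\cong\Lambda_\theta$ (a finite group of order $3^4$) with the Lie-algebra fixed points: since $\theta$ is elliptic, $\cen^{\theta}=0$, so $\frg\cap\cen=0$ and $\frg$ contains no Cartan part at all --- in particular there is nothing to extend $\rho'$ to, and a "diagonalizable operator with eigenvalue $\langle\lambda,\mu\rangle$" is not even a Lie-algebra--theoretic assignment (those eigenvalues are roots of unity, not additive scalars). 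The correct statement, which is what the paper proves, is that when $\alpha=-\beta$ both sides vanish: on the right because $\pi(\widetilde\alpha)$ and $\pi(\widetilde\beta)$ commute in $\mathscr{H}$ (the pairing is alternating, so $\langle\alpha,-\alpha\rangle=1$); on the left because the three "diagonal" terms $[X_{w^i\widetilde\alpha},X_{w^i\widetilde\beta}]$ sum to a $\theta$-invariant element of $\cen$, hence to $0$, while the off-diagonal terms vanish since $\alpha-w\alpha$ and $\alpha-w^2\alpha$ are not roots. Without this observation your Case-$\alpha+\beta=0$ argument does not go through as written. A smaller point in the same spirit: your "crux" scalar identity $\zeta(1-\zeta^{-1})^{-1}=(\zeta(1-\zeta^{-1})^{-1})^2(1-\zeta)$ is false as stated (the two sides differ by $-\zeta^{-1}$); the matching only works once the sign $(-1)^{(\alpha,w\beta)}$ and the factor $\langle\alpha,\beta\rangle$ are reinstated, exactly the bookkeeping done in the paper's two sub-cases $\langle\alpha,\beta\rangle=\zeta_R$ and $\langle\alpha,\beta\rangle=\zeta_R^{-1}$, so this part of your plan still needs to be carried out carefully rather than reduced to the identity you quote.
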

\begin{proof}
	We see that $\rho'$ is well defined exactly because $\rho(\zeta_R) = \zeta_R \cdot 1_W$. The key point is to check that the Lie bracket is preserved, or in other words that the relation
	\begin{equation}\label{eqn_lie_bracket_preserved}
	\rho'([Z_{\widetilde{\alpha}}, Z_{\widetilde{\beta}}]) = [ \rho'(Z_{\widetilde\al}), \rho'(Z_{\widetilde\be})]
	\end{equation}
	holds for all $\widetilde{\alpha}, \widetilde{\beta} \in \widetilde{\Phi}$. 
	We give a case-by-case-proof depending on the value of $(\alpha, \beta )$.  Let $w = \theta(\zeta_R)$.
	Before beginning, we note again the useful fact that if $\alpha \in \Phi$, then $\alpha + w(\alpha) + w^2(\alpha) = 0$. In particular, $\alpha + w(\alpha)$ and $\alpha + w^2(\alpha)$ are roots. 
	
	Case 1: $(\al, \be) = \pm 2$. 

If $( \alpha, \beta) = \pm 2$, then $\alpha = \pm \beta$. If $\alpha = \beta$ then both sides of (\ref{eqn_lie_bracket_preserved}) vanish. If $\alpha = - \beta$, then the right-hand side vanishes because $\pi(\widetilde{\alpha})$ and $\pi(\widetilde{\beta})$ commute in $\mathscr{H}$. On the other hand, $[Z_{\widetilde{\alpha}}, Z_{\widetilde{\beta}}]$ equals
	\begin{equation}\label{eqn_heisenberg_sum} \begin{split}   &  [ X_{\widetilde{\alpha}}, X_{\widetilde{\beta}}] + [ X_{w\widetilde{\alpha}}, X_{w\widetilde{\beta}}] + [ X_{w^2\widetilde{\alpha}}, X_{w^2\widetilde{\beta}}]  \\ +& [ X_{\widetilde{\alpha}}, X_{w\widetilde{\beta}}] + [ X_{w\widetilde{\alpha}}, X_{w^2\widetilde{\beta}}] + [ X_{w^2\widetilde{\alpha}}, X_{\widetilde{\beta}}]  \\ + &[ X_{\widetilde{\alpha}}, X_{w^2\widetilde{\beta}}] + [ X_{w\widetilde{\alpha}}, X_{\widetilde{\beta}}] + [ X_{w^2\widetilde{\alpha}}, X_{w\widetilde{\beta}}] . \end{split} \end{equation}
	The first line of (\ref{eqn_heisenberg_sum}) is zero because it is an element of $\frg \cap \cen = 0$. The second line vanishes because $\alpha - w \alpha$ is not a root. The third line vanishes because $\alpha - w^2 \alpha$ is not a root. Therefore both sides of (\ref{eqn_lie_bracket_preserved}) are zero in this case. 
	
	We note that if any of $(\al, w\be) = \pm 2, (\al, w^2\be) = \pm 2, (w\al, \be) = \pm 2$, or $(w^2\al, \be) = \pm 2$, then because $Z_{\wt\al} = Z_{w\wt\al} = Z_{w^2\wt\al}$ and similarly for $Z_{\wt\be}$, we still have that both sides of (\ref{eqn_lie_bracket_preserved}) are zero, so for the rest of the proof we can, and do, assume that this is not the case.

	Case 2: $(\al, \be) = -1$. 

If $(\alpha, \beta ) = -1$, then $\alpha + \beta$ is a root. Note that the equation $(\al, \be) + (w\al, \be) + (w^2\al, \be) = 0$ implies that $(w\al, \be)$ and $(w^2\al, \be)$ are nonnegative, and similarly for $(\al, w\be)$ and $(\al, w^2\be)$. This implies that $[Z_{\wt\al}, Z_{\wt\be}] = (-1)^{(\al, w\be)}\la\al,\be\ra Z_{\wt\al\wt\be}$ and that $\la \al, \be \ra = \zeta_R^{-(w\al, \be) - 1} \neq 1$. Thus
	\begin{eqnarray*}
		[ \rho'(Z_{\widetilde\al}), \rho'(Z_{\widetilde\be})] &=& 
		\zeta_R^{-1}(1-\zeta_R^{-1})^{-2}\left[\rho(\pi(\widetilde\al )), \rho(\pi(\widetilde{\beta}))\right]\\ 
		&=& \zeta_R^{-1}(1- \zeta_R^{-1})^{-2} (1 - \rho(\pi(\widetilde\be\widetilde\al\widetilde{\be}^{-1} \widetilde{\al}^{-1})))\rho(\pi(\widetilde\al \widetilde\be))\\ 
		&=& \zeta_R^{-1}(1 - \zeta_R^{-1})^{-2}(1 - \la \be, \al \ra)\rho(\pi(\widetilde\al \widetilde\be)).
	\end{eqnarray*}
	If $\la \al, \be \ra = \zeta_R$, then $(w\al, \be) = 1$ and $(\al, w\be) = 0$, so 
	\begin{equation*}
	\zeta_R^{-1}(1 - \zeta_R^{-1})^{-2}(1 - \la \be, \al \ra)\rho(\pi(\widetilde\al \widetilde\be)) = \zeta_R^{-1}(1 - \zeta_R^{-1})^{-1} = \rho'(\left[Z_{\widetilde{\alpha}}, Z_{\widetilde{\beta}}\right]).
	\end{equation*}
	If $\la \al, \be \ra = \zeta_R^{-1}$, then $(w\al, \be) = 0$, $(\al, w\be) = 1$, and we again have equality in (\ref{eqn_lie_bracket_preserved}).

	Case 3: $(\alpha, \beta) \in \{0, 1\}$.

If $(\al, \be) = (w \alpha, \beta) = (w^2 \alpha, \beta) = 0$, then both sides of (\ref{eqn_lie_bracket_preserved}) are  0. Otherwise the equation $(\al, \be) + (w\al, \be) + (w^2\al, \be) = 0$ implies that either $(w\al, \be) = -1$ or $(w^2\al, \be) = -1$, and thus we may reduce to Case 2.
	This completes the proof that $\rho'$ is a Lie algebra homomorphism.
%
\end{proof}
The induced map $\frg \to \frs\frl(W)$ is an isomorphism and thus induces an isomorphism $G^\text{ad} \to \PGL(W)$, where $G^\text{ad}$ is the adjoint group of $G$ (by \cite{Vas16} once again). There is a unique isomorphism $G^\text{sc} \to \SL(W)$ that is compatible with the given map $\frg \to \frs\frl(W)$ (\cite[Exercise 6.5.2]{Con14}). Let $\mathscr{H}'$ denote the pre-image of $\Cen^\theta \cong \Lambda_\theta$ in $G^\text{sc}$ under the covering map $G^\text{sc} \to G$. Identifying the centre of $G^\text{sc}$ with $\mu_9$ via its action on $W$ (and therefore the kernel $G^\text{sc} \to G$ with $\mu_3$), we find that $\mathscr{H}'$ fits into a diagram
\[ \xymatrix{ 1 \ar[r] & \mu_3 \ar[r] & G^\text{sc} \ar[r] & G \ar[r] & 1 \\ 1 \ar[r] & \mu_3 \ar[r] \ar[u]^= & \mathscr{H}' \ar[u] \ar[r] & \Lambda_\theta \ar[u] \ar[r] & 1, } \]
where the vertical maps are induced by inclusion and the isomorphism $\Lambda_\theta \cong A^\theta$.
To complete the proof of Theorem \ref{thm_embedding_Heis_in_Gsc}, we must show that there is an isomorphism $\mathscr{H} \cong \mathscr{H}'$ of central extensions of $\Lambda_\theta$ by $\mu_3$. We will show that in fact the image of $\mathscr{H}$ under $\rho$ corresponds to $\mathscr{H}'$ under the isomorphism $G^\text{sc} \cong \SL(W)$. 

Let $p : G^\text{sc} \to G^\text{ad}$ denote the projection to the adjoint group. A straightforward calculation shows $\rho$ is injective, and thus we can characterize $\mathscr{H}$ as 
\[ \mathscr{H} = \{ g \in G^\text{sc} \mid p(g) \in p(\rho(\mathscr{H})), g^3 = 1 \}. \]
Identifying $A^\theta$ with $\Lambda_\theta$, we can characterize $\mathscr{H}'$ as 
\[ \mathscr{H}' = \{ g \in G^\text{sc} \mid p(g) \in p(\Lambda_\theta), g^3 = 1 \}. \]
To prove the theorem, it is therefore enough to show that the two homomorphisms $\Lambda_\theta \to \Aut(\frg)^\circ = G^\text{ad}$, one derived from $\Ad \circ \rho$, the other derived from $\Ad|_{\mathscr{H}'}$, are the same. Since $\frg$ is spanned by the elements $Z_{\widetilde{\beta}}$, and the non-trivial elements of $\Lambda_\theta$ are all of the form $\alpha \text{ mod}(\theta - 1)$, for some $\alpha \in \Phi$, it is enough to show that the two possible actions of $\alpha$ on $Z_{\widetilde{\beta}}$ are the same for all $\alpha \in \Phi$, $\widetilde{\beta} \in \widetilde{\Phi}$. 

For the first action, we see that, by definition, $\rho'(Z_{\widetilde{\beta}}) \in \frg = \frs\frl(W) \subset \End_{R}(W)$ is a scalar multiple of $\rho(\pi(\widetilde{\beta}))$. Therefore we have 
\begin{equation}\label{eqn_first_heis_action}
\Ad(\rho(\pi(\widetilde{\alpha})))(Z_{\widetilde{\beta}}) = \rho(\pi(\widetilde{\alpha} \widetilde{\beta} \widetilde{\alpha}^{-1} \widetilde{\beta}^{-1}))Z_{\widetilde{\beta}} = \langle \alpha, \beta \rangle Z_{\widetilde{\beta}}. 
\end{equation}
For the second action, we use the fact that $Z_{\widetilde{\beta}} = X_{\widetilde{\beta}} + X_{\theta(\zeta_R)(\widetilde{\beta})} + X_{\theta(\zeta_R^2)(\widetilde{\beta})}$. The isomorphism $\Lambda_\theta \to \Cen^\theta$, defined by Lemma \ref{lem_action_by_automorphisms}, sends a root $\alpha$ to the element $(1-w)\check{\alpha}(\zeta_R)$. We calculate the corresponding action on $Z_{\widetilde{\beta}}$ as
\begin{equation}\label{eqn_second_heis_action}
\Ad((1-w)\check{\alpha}(\zeta_R))(Z_{\widetilde{\beta}}) = \zeta_R^{((1 - w)\check{\alpha}, \beta)}Z_{\widetilde{\beta}} = \langle \alpha, \beta \rangle Z_{\widetilde{\beta}}.
\end{equation} 
The equality of the expressions (\ref{eqn_first_heis_action}) and (\ref{eqn_second_heis_action}) concludes the proof of Theorem \ref{thm_embedding_Heis_in_Gsc}.

\section{A stable grading of $E_8$}\label{sec_stable_grading_of_E8}

In the previous section we constructed a functor from certain Heisenberg groups to $\bbZ / 3 \bbZ$-graded Lie algebras. In order to count points, we need to have a `reference' algebra in which to do explicit calculations. In this section we introduce such an algebra using a principal grading as defined in \cite{Ree12} and give rigidifications of orbits and invariant polynomials using two special transverse slices to nilpotent orbits. The main results of this section, in \S \ref{sec_twisting}, combine this work with the work done in \S \ref{sec_a_functor} to define the map $\eta_f$ described in the introduction (in other words, to construct rational orbits from rational points of Jacobians).

\subsection{Definition of the grading}\label{sec_definition_of_grading}

Let $\intH$ be a split reductive group of type $E_8$ over $\bbZ$. Let $\intT \subset \intH$ be a split maximal torus, and let $\Phi_H \subset X^\ast(\intT)$ be the corresponding set of roots. Let $S_H \subset \Phi_H$ be a fixed choice of root basis.  Let $\Phi_H^+$ be the corresponding set of positive roots. We suppose that $\intH$ comes with a pinning $\{ X_\alpha \}_{\alpha \in S_H}$. 

Let $\check{\rho} \in X_\ast(\intT)$ be the sum of the fundamental coweights with respect to $S_H$, and let $\theta = \check{\rho}|_{\mu_3} : \mu_3 \to \intH$. Let $\intfrh = \Lie(\intH)$. Then $\theta$ defines an action of $\mu_3$ on $\intfrh$ and thus determines a $\bbZ / 3 \bbZ$-grading
\begin{equation} \intfrh = \intfrh(0) \oplus \intfrh(1) \oplus \intfrh(2). 
\end{equation}
We let $\intG = \intH^\theta$, the centralizer of the image of $\theta$ in $\intH$. We write $\intV = \intfrh(1)$; it is a representation of $\intG$, free over $\bbZ$ of rank 84.
\begin{proposition}
The group $\intG$ is a split reductive group isomorphic to $\SL_9 / \mu_3$. The subgroup $\intT \subset \intG$ is a split maximal torus. Over $\bbZ[1/3]$, $\theta$ is a stable $\bbZ / 3 \bbZ$-grading of $\intH$, in the sense of \S \ref{sec_root_lattice_of_type_E8}.
\end{proposition}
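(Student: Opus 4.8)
The plan is to verify the three assertions of the proposition in turn, all by reducing to combinatorics of the root system $\Phi_H$ together with the explicit grading defined by $\check\rho|_{\mu_3}$. First I would recall that for $\check\rho$ the sum of the fundamental coweights, the pairing $\langle \check\rho, \alpha \rangle$ equals the height $\Ht(\alpha)$ of $\alpha$ for every root $\alpha$. Hence $\theta = \check\rho|_{\mu_3}$ acts on the root space $\frh_\alpha$ by $\zeta \mapsto \zeta^{\Ht(\alpha) \bmod 3}$, and the grading $\intfrh = \intfrh(0)\oplus\intfrh(1)\oplus\intfrh(2)$ is the grading of $\intfrh$ by height modulo $3$, with $\intfrt = \Lie\intT$ sitting in degree $0$. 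Since $E_8$ has Coxeter number $h = 30$, which is divisible by $3$, this is a $\bbZ/3\bbZ$-grading coming from a torsion automorphism of order $3$ in the Kac classification; the relevant diagram-automorphism data (for $E_8$ there are none) confirms $\intG$ is connected.

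Next I would identify $\intG = \intH^\theta$. Over a field of characteristic zero or prime to $|W(E_8)|$, Vinberg's theory tells us $\intG$ is reductive with root system the roots of height $\equiv 0 \pmod 3$, and the derived group is determined by the corresponding sub-root-system. One checks directly — this is the one explicit computation — that the roots $\alpha \in \Phi_H$ with $3 \mid \Ht(\alpha)$ form a root system of type $A_8$ (for instance, using the fact noted in \S\ref{sec_root_lattice_of_type_E8} that the $\bbZ/3\bbZ$-grading is the one attached to an elliptic order-$3$ element $w$ of $W(E_8)$, whose fixed subsystem in the Kac coordinates is $A_8$; alternatively, cite \cite{Ree12} directly). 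The cocharacter lattice of $\intT$ inside this $A_8$ determines that $\intG \cong \SL_9/\mu_3$ rather than $\SL_9$ or $\PGL_9$: the centre is $\mu_3$ because $\check\rho$ already pairs with the $A_8$ weight lattice in the appropriate way. Since this holds on geometric fibres and all the schemes here are smooth affine over $\bbZ$ with connected fibres, the isomorphism $\intG \cong \SL_9/\mu_3$ propagates over $\bbZ$ by rigidity of reductive group schemes (or simply because $\intG$ is defined by a pinned Chevalley datum). The statement that $\intT \subset \intG$ is a split maximal torus is then immediate: $\intT \subset \intH$ is split maximal and centralizes $\theta$, hence lies in $\intG$, and it has the right rank $8 = \rank\SL_9$.

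Finally, for the claim that over $\bbZ[1/3]$ the grading $\theta$ is \emph{stable} in the sense of \S\ref{sec_root_lattice_of_type_E8}, I need to exhibit, after passing to geometric points, a maximal torus $\Cen \subset \intH_{\overline s}$ normalized by the image of $\theta$ such that the induced $\mu_3$-action on $X^\ast(\Cen)$ is elliptic of order $3$. The natural candidate is a maximal torus whose normalizer contains a lift of an elliptic order-$3$ element $w \in W(E_8)$; concretely one takes the torus $\intT$ itself after conjugating by a suitable element, or one invokes Vinberg's result that a stable grading is precisely one admitting a Cartan subspace on which the grading acts by such a $w$. The cleanest route is to cite Lemma \ref{lem_weyl_group} and \cite{Ree12}: the principal $\bbZ/3\bbZ$-grading of $E_8$ is known to be stable, the Cartan subspace $\frc \subset \intV$ being $4$-dimensional with Weyl group the centralizer of $w$. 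I would spell out that the $1/3$ must be inverted because the argument uses that $1-\theta(\zeta)$ acts invertibly on the relevant torus (the order-$3$ automorphism must be semisimple), which fails in characteristic $3$.

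The main obstacle is the identification $\intG \cong \SL_9/\mu_3$ — in particular pinning down which isogeny form of $\SL_9$ appears and checking the isomorphism holds integrally rather than just fibrewise. I expect the cleanest treatment is to cite Vinberg/Reeder for the structure over a field and then appeal to the fact that a split reductive group scheme over $\bbZ$ is determined by its root datum, together with smoothness and connectedness of the fibres of $\intG \to \Spec\bbZ$; the explicit $A_8$ sub-root-system computation is routine but is the combinatorial heart of the argument.
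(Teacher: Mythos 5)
Your field-level analysis — the height-mod-$3$ description of the grading, the computation that the roots of height divisible by $3$ form an $A_8$ subsystem, connectedness via Steinberg/Kac, and the appeal to Reeder both for the isogeny type $\SL_9/\mu_3$ and for stability — is in substance the same as what the paper does, which simply quotes \cite{Ree10} and \cite[Corollary 14]{Ree12} at the generic, respectively geometric, fibres. The genuine gap is in the passage from fibres to $\bbZ$. The proposition concerns $\intG = \intH^\theta$ over all of $\Spec \bbZ$, including the fibres at $p = 2, 3, 5, 7$, but every fibrewise input you invoke is restricted to characteristic zero or characteristic prime to $|W(E_8)|$: Vinberg's theory as you state it, the Kac classification, and Steinberg's connectedness theorem all presuppose that the image of $\theta$ is an \'etale (semisimple) $\mu_3$, which fails at $p = 3$, where $\mu_3$ is infinitesimal. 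You cannot then conclude by ``rigidity of reductive group schemes'' or by the classification of split reductive group schemes by their root data, because those tools require knowing beforehand that $\intG \to \Spec \bbZ$ is smooth with connected reductive fibres at \emph{every} prime — which is precisely the point at issue; the sentence asserting that ``all the schemes here are smooth affine over $\bbZ$ with connected fibres'' assumes the conclusion rather than proving it.

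What is missing is the mechanism the paper uses to handle the bad primes uniformly: by \cite[Remark 3.1.5]{Con14} (ultimately SGA3), the centralizer of a subgroup of multiplicative type in a reductive group scheme is smooth over the base, and its fibrewise identity components form a reductive subgroup scheme $\intG^0$ containing $\intT$ as a split maximal torus — an argument valid at every prime, including $3$. Since the component group $\intG/\intG^0$ is finite \'etale over $\bbZ$ and the root datum of $(\intG^0, \intT)$ is locally constant, both the equality $\intG = \intG^0$ and the identification of the root datum with that of $\SL_9/\mu_3$ need only be checked at the generic fibre, where your characteristic-zero arguments (or the citation of \cite{Ree10}) apply. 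With that substitution your outline closes up. The stability assertion over $\bbZ[1/3]$ is fine as you have it, since it is checked on geometric fibres and is exactly \cite[Corollary 14]{Ree12}; note only that $\intT$ itself is centralized by $\theta$ with trivial, hence non-elliptic, induced action, so ``conjugating $\intT$ suitably'' is not an independent argument but a restatement of the Reeder citation.
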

\begin{proof}
	It follows from the discussion in \cite[Remark 3.1.5]{Con14} that $\intG$ is smooth over $\bbZ$, and moreover that the connected component $\intG^0$ (which agrees in each fibre $\intG_s$ with the connected component of $\intG_s$) is reductive. Moreover, $\intT \subset \intG^0$ is a split maximal torus. It remains therefore to check that $\intG = \intG^0$ and that its root datum is that of $\SL_9 / \mu_3$. The quotient $\intG / \intG^0$ is \'etale over $\bbZ$, so both of these last points can be checked at the generic point, in which case they follow from the general theory over $\bbC$ (see e.g.\ \cite{Ree10}). The final statement can be checked in geometric fibres, in which case it is \cite[Corollary 14]{Ree12}.
\end{proof}
Let $\Phi_G = \Phi(\intG, \intT)$. There exists a unique choice $S_G$ of root basis for $\intG$ such that $\Phi_G^+ = \Phi_G \cap \Phi_H^+$. 

We write $H$ for the $\bbQ$-fibre of $\intH$, and similarly for $T$, $G$, and $V$. In the coming sections we will describe the invariant theory of the pair $(G, V)$ and its relation to 3-descent on odd genus-2 curves. We will re-introduce integral structures into our discussion in \S \ref{sec_spreading_out} below.

We let $B = V \dquot G = \Spec \bbQ[V]^G$, and write $\pi : V \to B$ for the quotient map. For a detailed summary of the properties of the pair $(G, V)$, and their analogues over fields of sufficiently large positive characteristic, see e.g. \cite{Lev09}. We invite the reader to become familiar at least with the results in the introduction to that paper before proceeding; in particular, we will make frequent use of the existence of Jordan decomposition of elements in $V$ and of the fact that, if $k$ is algebraically closed, then two semisimple elements of $V(k)$ are $G(k)$-conjugate if and only if they have the same image in $B(k)$.

The discriminant of $\frh$ is the image under the Chevalley isomorphism $\bbQ[T]^{W(H, T)} \to \bbQ[\frh]^H$ of the product of all roots $\al \in \Phi_H$.
We write $B^\text{rs} \subset B$ for the open subscheme defined by the non-vanishing of the restriction of this discriminant to $V$. The preimage $\pi^{-1}(B^\text{rs})$ is the open subscheme $V^\text{rs} \subset V$ of regular semisimple elements. We write $V^\text{reg} \subset V$ for the open subscheme of regular elements, i.e.\ those with finite stabilizers in $G$. 
We will generally use the superscripts $(?)^\text{rs}$ and $(?)^\text{reg}$ to denote intersection with these open subschemes of regular semisimple and regular elements, respectively.

\subsection{Kostant section}\label{section_kostant_section}
Let $E = \sum_{\alpha \in S_H} X_\alpha \in \frh$. Then $E$ is a regular nilpotent element.  Let $(E, X, F)$ be the unique normal $\frs\frl_2$-triple containing it. By definition, this means that $(E, X, F)$ is an $\frs\frl_2$-triple with $E \in \frh(1)$, $X \in \frh(0)$, and $F \in \frh(-1)$ (cf. \cite[\S 3.1]{deG11}; the uniqueness follows from \cite[Lemma 8]{deG11}, together with the fact that $Z_G(E)$ is trivial by \cite[Theorem 3.3]{Pan05}). In fact, $(E, X, F)$ is the $\frs\frl_2$-triple naturally associated to the pinning of $H$ by the construction of \cite[Lemma 5.2]{KostantBetti}.

We define an affine linear subspace $\kappa = ( E + \frz_{\frh}(F) ) \cap V\subset V$.
\begin{proposition}
	The restriction of the map $\pi : V \to B$ to $\kappa$ induces an isomorphism $\pi|_\kappa : \kappa \to B$. Moreover, $\kappa$ is contained in the open subscheme $V^\text{reg} \subset V$ of regular elements.
\end{proposition}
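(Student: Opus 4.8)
The plan is to reduce the statement about the Kostant section $\kappa$ to the corresponding classical statement about the Kostant section for the $\frs\frl_2$-triple $(E,X,F)$, using the theory of $\frs\frl_2$-triples in graded Lie algebras (as developed by e.g. Levy, Panyushev, and surveyed in \cite{Lev09}). First I would observe that, by construction, $\kappa = (E + \frz_\frh(F)) \cap V$, and that the adjoint action of the semisimple element $X \in \frh(0)$ gives a grading of $\frz_\frh(F)$ by non-positive integers (the $-1$-eigenspaces of $X$), with the degree-$0$ piece equal to $\frz_\frh(E)\cap\frz_\frh(F)\cap\frz_\frh(X)=0$ since $E$ is regular (this uses $Z_G(E)$ trivial, as cited). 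Crucially, $E+\frz_\frh(F)$ is exactly the classical Kostant section for $\frh$: the restriction of $\pi_\frh:\frh\to\frh\dquot H=\Spec\bbQ[\frh]^H$ to $E+\frz_\frh(F)$ is an isomorphism onto the full adjoint quotient, and every element of $E+\frz_\frh(F)$ is regular in $\frh$. This is standard over $\bbQ$ (Kostant's theorem).

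Next I would exploit the $\mu_3$-action. Because $E\in\frh(1)$, $X\in\frh(0)$, $F\in\frh(-1)$, the whole triple is compatible with the grading, and $\theta$ acts on $\frz_\frh(F)$; the $\mu_3$-fixed part is $\frz_\frh(F)\cap\frh(0)$, so that $\kappa = E + (\frz_\frh(F)\cap\frh(0))\subset\frh(1)$ is the fixed locus (suitably interpreted) of the classical Kostant section $E+\frz_\frh(F)$ under $\mu_3$. I would then use the identification (from the theory in \cite{Lev09}, following Vinberg) of $B = V\dquot G$ with the fixed locus of $\frh\dquot H$ under $\mu_3$, i.e. with $(\frh\dquot H)^{\theta}$; concretely, $\bbQ[V]^G$ is a polynomial ring and the restriction map $\bbQ[\frh]^H\to\bbQ[V]^G$ is surjective. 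Composing, the map $\pi|_\kappa:\kappa\to B$ is identified with the restriction to the $\mu_3$-fixed points of the classical Kostant isomorphism $E+\frz_\frh(F)\xrightarrow{\sim}\frh\dquot H$. Since an isomorphism of affine spaces that is $\mu_3$-equivariant restricts to an isomorphism on fixed-point subschemes, $\pi|_\kappa:\kappa\to B$ is an isomorphism. For regularity, every element $\gamma\in\kappa$ lies in $E+\frz_\frh(F)$, hence is regular \emph{in $\frh$} (its $H$-centralizer has minimal dimension $\rank H=8$); one then checks that a graded element $\gamma\in V=\frh(1)$ that is regular in $\frh$ is regular in $V$, i.e. $Z_G(\gamma)=Z_H(\gamma)^\theta$ is finite. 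This last point follows because $\frz_\frh(\gamma)$ is an $8$-dimensional abelian (toral, on the regular semisimple locus; in general a Cartan-type subalgebra) Lie algebra and, by the grading, $\frz_\frg(\gamma)=\frz_\frh(\gamma)^\theta=\frz_\frh(\gamma)\cap\frh(0)$; since $\theta$ restricted to a regular centralizer is an \emph{elliptic} automorphism of order $3$ (no nonzero fixed vectors, by the choice of $\theta=\check\rho|_{\mu_3}$ and ellipticity of the associated Weyl element — Lemma \ref{lem_weyl_group}), this fixed subspace is $0$, so $\frz_\frg(\gamma)=0$ and $Z_G(\gamma)$ is finite, i.e. $\gamma\in V^{\mathrm{reg}}$.

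The main obstacle I anticipate is making the reduction in the previous paragraph fully rigorous over $\bbQ$ rather than merely over $\bbC$: one must know that (i) $\bbQ[V]^G$ is genuinely the fixed ring / that $\bbQ[\frh]^H \to \bbQ[V]^G$ is surjective with the induced map on $\Spec$'s identifying $B$ with the $\theta$-fixed locus of $\frh\dquot H$, and (ii) that the formation of the Kostant section and its invariant-theoretic properties are insensitive to the base field here (characteristic $0$, so this is fine, but it should be cited carefully — again \cite{Lev09} and the references therein, or a direct Chevalley-restriction argument). A secondary subtlety is the regularity claim: one should be slightly careful that "regular in $\frh$" plus "graded" really does force finiteness of $Z_G(\gamma)$ for \emph{all} $\gamma\in\kappa$, not just the regular semisimple ones; the cleanest route is the ellipticity argument on $\frz_\frh(\gamma)$ sketched above, valid since any regular centralizer in $\frh$ contains a regular semisimple element in its closure / is a limit of Cartan subalgebras, and $\theta$ acts on it through an elliptic element — but one may instead simply cite the relevant structural result (e.g. \cite[\S 3]{Pan05} or the corresponding statement in \cite{Lev09}) that Kostant sections for stable gradings land in $V^{\mathrm{reg}}$.
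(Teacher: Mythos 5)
The paper's own proof of this proposition is nothing but the citations \cite[Theorem 3.5]{Pan05} and \cite[Theorem 8]{KostantPoly}, so you are reconstructing an argument the paper defers to the literature; your reduction of the isomorphism statement to Kostant's classical section via $\mu_3$-fixed points is indeed the standard route (and essentially how the cited graded result is proved). Two slips in that half should be fixed. First, the relevant $\mu_3$-action on $E + \frz_\frh(F)$ must be the twisted one $\zeta \cdot x = \zeta^{-1}\Ad(\theta(\zeta))(x)$: the untwisted adjoint action sends $E$ to $\zeta E$ and does not preserve $E + \frz_\frh(F)$ at all, and the fixed locus of the twisted action is $E + (\frz_\frh(F) \cap \frh(1)) = \kappa$, not ``$E + (\frz_\frh(F)\cap\frh(0))$'' as you wrote (that set is not even contained in $\frh(1)$). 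Second, the identification $B \cong (\frh \dquot H)^{\mu_3}$ is not formal: it needs Vinberg's restriction theorem (surjectivity of $\bbQ[\frh]^H \to \bbQ[V]^G$ in characteristic $0$), the observation that a homogeneous invariant of degree $d$ with $3 \nmid d$ vanishes on $V$, and the knowledge that $\bbQ[V]^G$ is polynomial with generators in degrees $12, 18, 24, 30$ while $\bbQ[\frh]^H$ has the remaining degrees $2, 8, 14, 20$ prime to $3$; you flag this correctly, and citing \cite{Lev09} (or Vinberg) is fine.

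The genuine gap is in the regularity half. The assertion that $\theta$ has no nonzero fixed vector on $\frz_\frh(\gamma)$ for \emph{every} $\gamma \in \kappa$ is exactly the statement $\kappa \subset V^\text{reg}$ that you are trying to prove, and your justification --- that a regular centralizer is a limit of Cartan subalgebras on which $\theta$ acts through an elliptic Weyl element --- does not deliver it: the dimension of the $\theta$-fixed subspace is only upper semicontinuous under specialization, so ellipticity on the nearby Cartans is perfectly compatible with a nonzero fixed vector appearing in the degenerate centralizer of a non-semisimple $\gamma$ (and $\kappa$ does contain non-semisimple points, since $\pi|_\kappa$ surjects onto $B$ including the discriminant locus). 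A correct degeneration argument runs in the opposite direction: use the contracting $\bbG_m$-action $t \cdot x = t\Ad(\check{\rho}(t^{-1}))(x)$ on $\kappa$, whose unique fixed point is $E$. Since $\check{\rho}(t) \in T(\overline{\bbQ}) \subset G(\overline{\bbQ})$, the element $t \cdot x$ is $G(\overline{\bbQ})$-conjugate to $tx$, so $\dim \frz_\frg(t \cdot x) = \dim \frz_\frg(x)$ for $t \neq 0$; letting $t \to 0$ and using upper semicontinuity of $\dim \frz_\frg$ at the limit point $E$ gives $\dim \frz_\frg(x) \leq \dim \frz_\frg(E) = 0$, the last equality because $Z_G(E)$ is trivial (\cite[Theorem 3.3]{Pan05}). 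Alternatively, one may simply cite \cite[Theorem 3.5]{Pan05}, which is precisely what the paper does.
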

\begin{proof}
	See \cite[Theorem 3.5]{Pan05} and \cite[Theorem 8]{KostantPoly}.
\end{proof}
We write $\Kostant : B \to V$ for the inverse $\Kostant = \pi|_\kappa^{-1}$, and call it the Kostant section. We define an action of $\bbG_m$ on $\kappa$ by the formula $t \cdot x = t \Ad(\check{\rho}(t^{-1}))(x)$. This is a contracting action with $E$ as its unique fixed point, and the morphism $\pi|_{\kappa}$ is $\bbG_m$-equivariant (when $\bbG_m$ acts on $B = V \dquot G$ in the natural way, compatibly with its action on $V$ by scalar multiplication). Hence $\sigma$ is also $\bbG_m$-equivariant. 

If $k / \bbQ$ is a field extension, and $f \in B^\text{rs}(k)$, then we can use the Kostant section to organise the set $G(k) \backslash V_f(k)$, where $V_f = \pi^{-1}(f)$. Indeed, we write $\mu_f : G \to V_f$ for the action map $g \mapsto g \cdot \Kostant(f)$. Then $\mu_f$ is a torsor for the group $Z_G(\Kostant(f))$ and determines a bijection
\[ G(k) \backslash V_f(k) \cong \ker(H^1(k, Z_G(\Kostant(f))) \to H^1(k, G)). \]
(We will generalize this statement in Lemma \ref{lem_cohomological_description_of_orbits}.)
The group scheme $Z_G(\Kostant(f))$ can be described explicitly as follows: $\Cen_f := Z_H(\Kostant(f))$ is a maximal torus of $H$. The image of $\theta: \mu \to H$ normalizes $A_f$,  inducing a homomorphism $\mu_3 \to N_H(\Cen_f) / \Cen_f = W(H, \Cen_f)$ that is an elliptic $\mu_3$-action on $X^*(\Cen_f)$ in the sense of \S \ref{sec_root_lattice_of_type_E8}. Thus $Z_G(\Kostant(f)) = \Cen_f^\theta$ is a finite \'etale $k$-group of order $3^4$.

More generally, the centralizer $\Cen := Z_H(\Kostant|_{B^\text{rs}})$ is a maximal torus in $H_{B^\text{rs}}$. We define $\Lambda = X^\ast(\Cen)$ and $\Lambda^\vee = \Hom(\Lambda, \bbZ)$.  
We define a pairing $( \cdot, \cdot ) : \Lambda \times \Lambda \to \bbZ$ by the formula $(\lambda, \mu) = \check\lam(\mu)$. 
Then $\Lambda$ is an \'etale sheaf of $E_8$ root lattices on $B^\text{rs}$. The grading $\theta_{B^\text{rs}}: \mu_3 \to H_{B^\text{rs}}$ determines a homomorphism $\mu_3 \to \Aut(\Lambda)$ that we also denote by $\theta_{B^\text{rs}}$, and which is an elliptic $\mu_3$-action on $\Lambda$. (For ease of notation, we will write $\Lambda_\theta$ for $\Lambda_{\theta_{B^\text{rs}}}$.) The stabilizer scheme $Z_G(\Kostant|_{B^\text{rs}})$ is finite \'etale over $B^\text{rs}$, and can be identified with $\Lambda_{\theta}$ (cf. Lemma \ref{lem_action_by_automorphisms}). Moreover, $\Lambda_\theta$ admits a symplectic, non-degenerate pairing $\langle \cdot, \cdot \rangle : \Lambda_\theta \times \Lambda_\theta \to \mu_3$ (Lemma \ref{lem_reeder_pairing_is_symplectic}).

\begin{proposition}\label{prop_normalizing_invariant_polynomials}
	We can choose polynomials $c_{12}, c_{16}, c_{24}, c_{30} \in \bbQ[V]^G$ with the following properties:
	\begin{enumerate}
		\item Each polynomial $c_i$ is homogeneous of degree $i$, and $\bbQ[V]^G = \bbQ[c_{12}, c_{16}, c_{24}, c_{30}]$. Consequently, there is an isomorphism $B \cong \bbA_\bbQ^4$. If $\Delta_0 \in \bbQ[V]^G$ denotes the discriminant of the polynomial $f(x) = x^5 + c_{12} x^3 + c_{18} x^2 + c_{24} x + c_{30}$, then $\Delta_0^2$ is (up to scalar) equal to the restriction to $V$ of the Lie algebra discriminant of $\frh$ (as defined in the previous section). 
		\item Let $\Curve^0 \to B$ be the family of affine curves given by the equation 
		\begin{equation} \Curve^0 : y^2 = x^5 + c_{12} x^3 + c_{18} x^2 + c_{24} x + c_{30}, \end{equation} and let $\Curve \to B$ be its completion inside weighted projective space $\bbP_B(1, 1, 3)$, projective over $B$. Let $\Jacobian \to B^\text{rs}$ be the Jacobian of its smooth part. Then there is an isomorphism $\Lambda_\theta \cong \Jacobian[3]$ of \'etale sheaves that sends the pairing $\langle \cdot, \cdot \rangle$ on $\Lambda_\theta$ to the Weil pairing on $\Jacobian[3]$. 
	\end{enumerate}
\end{proposition}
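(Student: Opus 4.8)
The plan is to deduce both parts from a single geometric input. Recall first (Vinberg's theory; see \cite{Lev09}, \cite{Ree12}, and Springer's theory of regular elements) that $\bbQ[V]^G$ is a polynomial ring, that $V$ contains a Cartan subspace $\frc$ of dimension $4=\dim B$ on which the little Weyl group $W_\theta=N_G(\frc)/Z_G(\frc)$ acts as a complex reflection group — identifiable with the centralizer $Z_{\Aut(\Lambda)}(w)$ of an elliptic element $w$ of order $3$ — and that restriction gives an isomorphism $\bbQ[V]^G\xrightarrow{\sim}\bbQ[\frc]^{W_\theta}$. The fundamental degrees of $W_\theta$ are exactly the degrees of $\Aut(\Lambda)$ divisible by $3$, namely $12,18,24,30$; hence there exist homogeneous generators $c_{12},c_{18},c_{24},c_{30}$ of $\bbQ[V]^G$ of those degrees, with $c_{12},c_{18}$ then determined up to scalars and $c_{24},c_{30}$ up to scalars modulo $\bbQ c_{12}^2$, resp.\ $\bbQ c_{12}c_{18}$. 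The remaining task is to pin these choices down and to extract the two asserted structures.

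\emph{The key step} is to identify the subregular Slodowy slice with the $E_8$ surface family. Fix an $\frs\frl_2$-triple $(e,X,F)$ with $e\in\frh(1)$ a subregular nilpotent, $X\in\frh(0)$, $F\in\frh(-1)$, and put $S=(e+\frz_\frh(F))\cap V$. Just as for the Kostant section, $\pi|_S\colon S\to B$ is flat and $\bbG_m$-equivariant for the grading contraction, with two-dimensional fibres. The plan is to prove that $S\to B$, with its $\bbG_m$-action, is $B$-isomorphic to the family of affine surfaces $y^2=z^3+x^5+c_{12}x^3+c_{18}x^2+c_{24}x+c_{30}$ over $B=\Spec\bbQ[c_{12},c_{18},c_{24},c_{30}]$, where $\bbG_m$ gives $(x,y,z)$ the weights $(6,15,10)$; this simultaneously rigidifies the $c_i$ up to scaling. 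This is a $\mu_3$-equivariant form of the Brieskorn--Slodowy theorem: $S\cap\mathcal N$ (for $\mathcal N\subset V$ the nilpotent cone) carries an $E_8$ surface singularity, $S\to B$ is its $\mu_3$-equivariant semiuniversal deformation, and the $\mu_3$-invariant deformation directions are precisely those of weights $12,18,24,30$. I expect this identification — and the precise matching of the $\mu_3$-actions — to be the main obstacle; it is the content of \S\ref{sec_proof_of_prop_on_invariants} (cf.\ \cite{Tho13}). Given it, the rest is bookkeeping.

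\emph{Part 1 (the discriminant).} Restrict the Lie-algebra discriminant $\disc_\frh=\prod_{\alpha\in\Phi_H}\alpha$ to $\frc$. Since $w$ acts on $\frc$ by the scalar $\zeta$, each $\langle w\rangle$-orbit $\{\alpha,w\alpha,w^2\alpha\}$ in $\Phi_H$ restricts to a triple $\{\ell,\zeta^2\ell,\zeta\ell\}$ for a single linear form $\ell$ on $\frc$, with product $\ell^3$; hence $\disc_\frh|_\frc=(\prod_O\ell_O)^3$, the product running over the $80$ orbits. The orbits of $\alpha$ and $-\alpha$ give proportional forms, so $\prod_O\ell_O$ is a square in $\bbQ[\frc]$, equal up to scalar to $(\prod_H\alpha_H)^2$, where $\alpha_H$ runs over the $40$ reflecting hyperplanes of $W_\theta$ (all of whose reflections have order $3$); therefore $\disc_\frh|_\frc=(\prod_H\alpha_H^3)^2$, and since $\prod_H\alpha_H^3$ is $W_\theta$-invariant we get $\disc_\frh|_V=\lambda\,\delta^2$, where $\lambda\in\bbQ^\times$ and $\delta\in\bbQ[V]^G$ is the reduced discriminant of the cover $\frc\to B$, homogeneous of degree $120$ and cutting out $B\setminus B^{\mathrm{rs}}$ reducedly. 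By the key step, $B\setminus B^{\mathrm{rs}}$ is the locus over which the surface family is singular, which — since $y^2=z^3+f(x)$ is singular exactly when $f$ and $f'$ share a root — is precisely $\{\Delta_0=0\}$; comparing reduced equations of degree $120$ in the polynomial ring $\bbQ[B]$ gives $\delta=\lambda'\Delta_0$, so $\disc_\frh|_V=\lambda''\Delta_0^2$. The scalar is immaterial; the remaining scaling freedom in the $c_i$ can, if desired, be fixed by evaluating along the line $c_{12}=c_{18}=c_{24}=0$, where $\Curve$ is $y^2=x^5+c_{30}$ and $\Delta_0$ is $5^5c_{30}^4$ up to sign.

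\emph{Part 2 (the $3$-torsion).} Work over $B^{\mathrm{rs}}$. On one side, $\Lambda=X^\ast(\Cen)$ for $\Cen=Z_H(\Kostant|_{B^{\mathrm{rs}}})$, and the \'etale sheaf $\Lambda$ with its $\Aut(\Lambda)$-monodromy is the vanishing-cohomology local system of the family $S\to B^{\mathrm{rs}}$ of the key step, with $\theta(\zeta)$ acting as the element $w$; thus $\Lambda_\theta=\Lambda/(w-1)\Lambda$ is the quotient local system, and Reeder's pairing is obtained from the intersection form $(\cdot,\cdot)$ on $\Lambda$ by the twist $\langle\lambda,\mu\rangle=\zeta^{((1-w)\lambda,\mu)}$. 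On the other side, substituting $z^3=y^2-f(x)$ exhibits the surface $y^2=z^3+f(x)$ as a $\mu_3$-cyclic cover of $\bbA^2$ branched along the curve $\Curve\colon y^2=f(x)=\{z=0\}$; for $b\in B^{\mathrm{rs}}$ the cyclic-cover spectral sequence identifies the unipotent-filtration piece $\ker(w-1)/\im(w-1)$ of $H^2(S_b,\bbF_3)\cong\Lambda\otimes\bbF_3$ — note that mod $3$ the relation $1+w+w^2=0$ forces $(w-1)^2=0$, and this piece is canonically $\Lambda_\theta$ — with $H^1(\Curve_b,\bbF_3)\cong\Jacobian_b[3]$, carrying the form induced by $(\cdot,\cdot)$ to the cup product, i.e.\ to the Weil pairing. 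Matching the two descriptions yields an isomorphism of \'etale sheaves $\Lambda_\theta\cong\Jacobian[3]$ over $B^{\mathrm{rs}}$ sending $\langle\cdot,\cdot\rangle$ to the Weil pairing. (Alternatively, once the key step is in hand one can work over $\bbC$ and observe that both symplectic $\bbF_3$-local systems arise from the single Milnor fibration of the $E_8$ surface singularity and are therefore isomorphic, with descent to $\bbQ$ following from functoriality.) Besides the key step itself, the delicate point is tracking the intersection form through the cyclic cover, i.e.\ accounting for the appearance of Reeder's $(1-w)$-twist.
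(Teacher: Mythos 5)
Your ``key step'' is misstated in a way that the rest of the argument cannot survive: the graded subregular slice $S=(e+\frz_\frh(F))\cap V$ does \emph{not} map to $B$ with two-dimensional fibres, and it is not the surface family. One can read this off from the weight decomposition of the (ungraded) slice $\OpenSurface_0=e_0+\frz_\frh(f_0)$ recorded in Proposition \ref{prop_slodowy_slice}: of the ten $\mu_3\times\bbG_m$-weights on $\OpenSurface_0$, only five have trivial $\mu_3$-component, so $\OpenSurface_0\cap V$ has dimension $5$ and relative dimension $1$ over the $4$-dimensional base. In the coordinates of Proposition \ref{prop_slodowy_slice} the coordinate $z$ has $\mu_3$-weight $-1$, so the ``surface direction'' $z$ does not live in $\frh(1)$; intersecting with $V$ imposes $z=0$ (and $c_2=c_8=c_{14}=c_{20}=0$), and what you obtain is precisely the $\mu_3$-fixed locus, i.e.\ the family of affine \emph{curves} $y^2=x^5+c_{12}x^3+c_{18}x^2+c_{24}x+c_{30}$, not the surfaces $y^2=z^3+f(x)$. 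The surfaces only appear if you keep the full slice $e_0+\frz_\frh(f_0)\subset\frh$, equip it with the twisted $\mu_3\times\bbG_m$-action, identify $\OpenSurface_0\to B_0=\frh\dquot H$ equivariantly with the semiuniversal deformation of $y^2=z^3+x^5$ (this is how the paper normalizes all eight invariants $c_2,\dots,c_{30}$ of $\frh$ at once), and then restrict the family along $B=B_0^{\mu_3}\hookrightarrow B_0$; the curve family is recovered afterwards as the $\mu_3$-fixed locus. Since both your discriminant identification and all of Part 2 are routed through the misidentified family ``$S\to B$'', this is a genuine gap rather than a presentational slip.

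Even after replacing $S$ by the correct surface family $\OpenSurface\to B$, your Part 2 asserts, rather than proves, the two points that carry all the content. First, that $\Lambda=X^\ast(\Cen)$ (defined via the centralizer of the Kostant section), with its root pairing and $\theta$-action, agrees as an \'etale sheaf with the $E_8$-lattice sitting inside $\Pic$ (equivalently $H^2$) of the surface fibres: in the paper this is Lemma \ref{lem_isomorphism_of_lattices}, proved by producing the $\Cen_0$-torsor $\Torsor_0$ over $\OpenSurface_0^{\text{rs}}$ and comparing with the simultaneous (Springer) resolution, using Hinich's explicit description of the exceptional fibre to match pairings. Second, that the induced map $\Lambda_\theta\to\Jacobian[3]$ carries Reeder's twisted pairing $\zeta^{((1-w)\lambda,\mu)}$ to the Weil pairing: you flag this as delicate and gesture at a cyclic-cover spectral sequence, but give no argument; the paper proves it (Proposition \ref{prop_E8_pairing_equals_Weil_pairing}) by an explicit computation with the sections $s_\alpha=(a(x),b(x))$ of the rational elliptic fibration and a formula for the Weil pairing. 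Your Part 1 reduction of $\disc_\frh|_V$ to a square of a degree-$120$ invariant via the $\langle w\rangle$-orbits of roots and the $40$ reflecting hyperplanes is fine and is a reasonable alternative to the paper's shorter argument (equal degree, irreducibility, equal vanishing locus), but it still needs the corrected key step to identify that invariant with $\Delta_0$.
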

The proof of this proposition will be given in \S \ref{sec_proof_of_prop_on_invariants}. 

Let $\Point : B \to \Curve$ denote the section at infinity (which is a Weierstrass point in each smooth fibre of $\Curve$). The choice of $\Point$ determines a symmetric line bundle $\prin =  \cO_{\Jacobian}(\Curve - \Point)$ on $\Jacobian$. We write $\tprin = \prin^{\otimes 3}$, and define $\extJ$ to be the 3-torsion subgroup of the Mumford theta group $\mathscr{G}(\tprin)$. Thus $\extJ$ is a central extension
\[ 1 \to \mu_3 \to \extJ \to \Jacobian[3] \to 1 \]
of \'etale group schemes over $B^\text{rs}$, and $(\Lambda, \theta_{B^\text{rs}}, \extJ)$ is an object of the category $\operatorname{Heis}_{B^\text{rs}}$ defined in \S \ref{sec_a_functor}. We will soon show (Proposition \ref{prop_Jacobian_and_Kostant_section}) that the image of $(\Lambda, \theta_{B^\text{rs}}, \extJ)$ under the functor defined in Theorem \ref{thm_equivalence_of_Heis_with_GrLieT} is isomorphic in the groupoid $\operatorname{GrLieT}_{B^\text{rs}}$ to the triple $(H_{B^\text{rs}}, \theta_{B^\text{rs}}, A)$.

\subsection{Twisting}\label{sec_twisting}

We can now explain our construction of orbits. We recall that in \S \ref{sec_a_functor} we have defined stacks $\operatorname{Heis}$ and $\operatorname{GrLieT}$ over $\operatorname{Alg}_{\bbQ}^\text{op}$ in the \'etale topology (in fact, over $\operatorname{Alg}_{\bbZ[1 / N]}^\text{op}$), and a morphism $\operatorname{Heis} \to \operatorname{GrLieT}$. We now define some related stacks over $\operatorname{Alg}_{\bbQ}^\text{op}$. 

We define $\operatorname{GrLie}$ to be the stack of pairs $(H', \theta')$, where $H'$ is a reductive group over a $\bbQ$-algebra $R$ of type $E_8$ and $\theta' : \mu_3 \to H'$ is a stable $\bbZ / 3 \bbZ$-grading. Morphisms $(H', \theta') \to (H'', \theta'')$ are given by isomorphisms $H' \to H''$ intertwining $\theta'$ and $\theta''$.
\begin{lemma}\label{lem_forms_of_H_theta}
	The following sets are in canonical bijection:
	\begin{enumerate}
		\item The set of isomorphism classes of objects in $\operatorname{GrLie}_R$.
		\item The set $H^1(R, G)$.
	\end{enumerate}
\end{lemma}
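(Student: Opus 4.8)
The plan is to identify both sets with a common third set, namely the set of twisted forms of the fixed reference object $(H, \theta)$ obtained from the principal grading of \S \ref{sec_definition_of_grading}. The key input is Lemma \ref{lem_graded_groups_etale_locally_isomorphic}: over $\bbQ$ (indeed \'etale-locally on any $\bbQ$-algebra $R$) any two stable $\bbZ/3\bbZ$-graded groups of type $E_8$ are isomorphic, so every object $(H',\theta') \in \operatorname{GrLie}_R$ becomes isomorphic to $(H_R,\theta_R)$ after an \'etale cover of $\Spec R$. Thus $\operatorname{GrLie}$ is, \'etale-locally, a neutral gerbe banded by the automorphism sheaf of $(H,\theta)$, and by the standard dictionary between forms and non-abelian $H^1$ the set of isomorphism classes of objects in $\operatorname{GrLie}_R$ is in canonical bijection with $H^1(R, \underline{\Aut}(H,\theta))$, where $\underline{\Aut}(H,\theta)$ is the \'etale $\bbQ$-group scheme of automorphisms of $H$ intertwining $\theta$ with itself.

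The remaining point, then, is to identify $\underline{\Aut}(H,\theta)$ with $G = H^\theta$. First I would recall that for a reductive group $H$ with trivial center (type $E_8$ is both simply connected and adjoint), the automorphism functor $\underline{\Aut}(H)$ is represented by $H$ itself acting by inner automorphisms, since $E_8$ has no diagram automorphisms; this is a standard fact (e.g.\ via \cite[Proposition 2.1.2]{Con14} or the theory of pinned groups, as already invoked in the proof of Lemma \ref{lem_graded_groups_etale_locally_isomorphic}). An automorphism $\Ad(h)$ of $H$ then intertwines $\theta$ with itself precisely when $\Ad(h)\circ\theta = \theta$, i.e.\ when $h$ centralizes the image of $\theta$, i.e.\ when $h \in H^\theta = G$. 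Hence $\underline{\Aut}(H,\theta) \cong G$ canonically, and $H^1(R,\underline{\Aut}(H,\theta)) = H^1(R,G)$.

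Putting these together gives the bijection: an object $(H',\theta') \in \operatorname{GrLie}_R$ determines, after choosing an \'etale-local isomorphism with $(H_R,\theta_R)$, a $G$-torsor over $R$ by descent, and conversely a $G$-torsor $P$ gives the twisted form $(H',\theta') = {}^P(H,\theta)$ obtained by twisting the reference object, using that $G$ acts on $(H,\theta)$ through $\underline{\Aut}(H,\theta)$. These two constructions are mutually inverse, and the bijection is canonical because it does not depend on the choice of local trivialization (different choices differ by a $G$-valued cocycle, which is precisely the ambiguity absorbed by passing to $H^1$). I would also remark that since $G$ is smooth over $\bbQ$, $H^1(R,G)$ may be computed equivalently in the fppf or \'etale topology, matching the topology in which $\operatorname{GrLie}$ is defined.

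The main obstacle here is not deep but bookkeeping: one must be careful that the automorphism \emph{scheme} $\underline{\Aut}(H,\theta)$ is the right object --- in particular that it is \emph{smooth} (so that $H^1$ in the \'etale topology is the correct invariant) and that it really equals $G$ rather than some inner form or a scheme with extra components. Smoothness and the identification with $Z_H(\theta)$ follow from \cite[Proposition 2.1.2]{Con14} (already cited in the proof of Lemma \ref{lem_graded_groups_etale_locally_isomorphic} for exactly this centralizer scheme); the fact that $G = H^\theta$ is connected is the content of the Proposition in \S \ref{sec_definition_of_grading}. Once these are in hand the proof is the formal torsor-theoretic argument above, so I would keep the write-up short, citing Lemma \ref{lem_graded_groups_etale_locally_isomorphic} for local triviality and \cite{Con14} for the representability and smoothness of the relevant automorphism scheme.
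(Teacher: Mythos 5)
Your proposal is correct and follows essentially the same route as the paper: exhibit the reference object $(H_R,\theta_R)$, invoke Lemma \ref{lem_graded_groups_etale_locally_isomorphic} for \'etale-local isomorphism of any two objects, identify $\Aut(H,\theta)$ with $G$, and conclude by descent. The only difference is that you spell out the identification $\Aut(H,\theta)=G$ (via $E_8$ being adjoint with no diagram automorphisms, so all automorphisms are inner and those commuting with $\theta$ form $Z_H(\theta)=H^\theta=G$), which the paper asserts without comment.
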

\begin{proof}
Note that $\operatorname{GrLie}_R$ always contains the object $(H_R, \theta_R)$, where $H$ and $\theta$ are as defined in \S \ref{sec_definition_of_grading}. We have proved (Lemma \ref{lem_graded_groups_etale_locally_isomorphic}) that any two objects of $\operatorname{GrLie}_R$ are isomorphic \'etale locally on $\Spec R$. Since $\Aut(H, \theta) = G$, the result follows by descent.
\end{proof}
We define $\operatorname{GrLieE}$ to be the stack of tuples $(H', \theta', \gamma')$, where $(H', \theta') \in \operatorname{GrLie}_R$ and $\gamma' \in \frh'(1)$. Morphisms $(H', \theta', \gamma') \to (H'', \theta'', \gamma'')$ in $\operatorname{GrLie}_R$ are given by isomorphisms $H' \to H''$ intertwining $\theta'$ and $\theta''$ and such that the induced map $\frh' \to \frh''$ sends $\gamma'$ to $\gamma''$. 

There is an obvious morphism $V \to \operatorname{GrLieE}$ given by $\gamma \in V(R) \mapsto (H_R, \theta_R, \gamma)$. We may extend the map $\pi: V \to B$ to a morphism $\operatorname{GrLieE} \to B$, also denoted by $\pi$, as follows: 
Let $(H', \theta', \gamma') \in \operatorname{GrLieE}_R$. After passing to a faithfully flat extension $R \to R'$, we can find an isomorphism $\alpha : (H'_{R'}, \theta'_{R'}) \cong (H_{R'}, \theta_{R'})$. Then $\pi(\gamma'):= \pi(\alpha(\gamma')) \in B(R')$ in fact lies in $B(R)$ and is independent of the choice of $\alpha$. 


If $f \in B(R)$, then we define $\operatorname{GrLieE}_{f, R}$ to be the full subcategory of $\operatorname{GrLieE}_R$ consisting of tuples $(H', \theta', \gamma')$ where $\pi(\gamma') = f$. 
\begin{lemma}\label{lem_cohomological_description_of_orbits}
	Let $R$ be a $\bbQ$-algebra, and let $f \in B^\text{rs}(R)$. Then any two objects of $\operatorname{GrLieE}_{f, R}$ are isomorphic \'etale locally on $\Spec R$. Consequently, the following sets are in canonical bijection:
	\begin{enumerate}
		\item The set of $G(R)$-orbits in $V_f(R)$.
		\item The set $\ker(H^1(R, Z_G(\Kostant(f))) \to H^1(R, G))$.
		\item The set of isomorphism classes of objects $(H', \theta', \gamma') \in \operatorname{GrLieE}_{f, R}$ such that $(H', \theta') \cong (H_R, \theta_R)$ in $\operatorname{GrLie}_R$. 
	\end{enumerate}
\end{lemma}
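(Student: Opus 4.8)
The plan is to establish the three claimed bijections by connecting each one to a cohomological description via the Kostant section, using the fact that over $B^\text{rs}$ the stabilizer group scheme is finite \'etale.

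\textbf{Step 1: \'Etale-local triviality.} First I would prove the opening claim that any two objects of $\operatorname{GrLieE}_{f,R}$ are isomorphic \'etale-locally on $\Spec R$. Given $(H', \theta', \gamma')$ and $(H'', \theta'', \gamma'')$ with $\pi(\gamma') = \pi(\gamma'') = f$, Lemma \ref{lem_graded_groups_etale_locally_isomorphic} lets me assume after an \'etale base change that $(H', \theta') = (H'', \theta'') = (H_R, \theta_R)$, so that $\gamma', \gamma'' \in V_f(R)$. Since $f \in B^\text{rs}(R)$, the Kostant section $\Kostant(f)$ lies in $V_f(R)$ and the action map $\mu_f : G \to V_f$ is a torsor under $Z_G(\Kostant(f))$; in particular $V_f$ is a $Z_G(\Kostant(f))$-torsor over $B^\text{rs}$-base in a way that makes it smooth and surjective onto a point, hence $V_f(R')$ is non-empty and all its elements become $G$-conjugate after a further faithfully flat (indeed \'etale, since the stabilizer is finite \'etale over $R$) cover. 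This gives the \'etale-local triviality.

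\textbf{Step 2: (1) $\leftrightarrow$ (2).} With $f \in B^\text{rs}(R)$, the morphism $\mu_f : G \to V_f$ exhibits $V_f$ as a $Z_G(\Kostant(f))$-torsor, where $Z_G(\Kostant(f)) = A_f^\theta$ is finite \'etale over $R$ (as recorded in \S\ref{section_kostant_section}). The standard torsor-theoretic yoga — twisting $G$ by a class in $H^1(R, Z_G(\Kostant(f)))$ and asking when the twisted form of $V_f$ has an $R$-point — identifies the set of $G(R)$-orbits on $V_f(R)$ with $\ker(H^1(R, Z_G(\Kostant(f))) \to H^1(R, G))$. This is exactly the field-case statement already written out after Proposition in \S\ref{section_kostant_section}, applied verbatim with $k$ replaced by $R$; the only thing to check is that the same exact-sequence-of-pointed-sets argument works over a general $\bbQ$-algebra, which it does because $Z_G(\Kostant(f))$ is smooth (\'etale) so $H^1$ classifies torsors in the \'etale topology and the orbit-counting argument is formal.

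\textbf{Step 3: (1) $\leftrightarrow$ (3).} The morphism $V \to \operatorname{GrLieE}$, $\gamma \mapsto (H_R, \theta_R, \gamma)$, sends $\gamma \in V_f(R)$ into $\operatorname{GrLieE}_{f,R}$, and two elements $\gamma, \gamma'$ map to isomorphic objects iff there is an automorphism of $(H_R, \theta_R)$ — i.e. an element of $G(R)$ by Lemma \ref{lem_forms_of_H_theta} (which says $\Aut(H, \theta) = G$) — carrying $\gamma$ to $\gamma'$; that is, iff $\gamma, \gamma'$ lie in the same $G(R)$-orbit. So the map factors through an injection from (1) into the set of isomorphism classes of $(H', \theta', \gamma') \in \operatorname{GrLieE}_{f,R}$ with $(H', \theta') \cong (H_R, \theta_R)$. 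Surjectivity is immediate: any such object, after choosing an isomorphism $(H', \theta') \cong (H_R, \theta_R)$, transports $\gamma'$ to some element of $V_f(R)$. Hence (1) and (3) are in canonical bijection. I would note that the condition $(H', \theta') \cong (H_R, \theta_R)$ is precisely the condition that the class of $(H', \theta')$ in $H^1(R, G)$ (via Lemma \ref{lem_forms_of_H_theta}) be trivial, which dovetails with the $\ker(\cdots \to H^1(R,G))$ appearing in (2).

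\textbf{Main obstacle.} The genuinely substantive point is Step 1 together with the part of Step 2 that upgrades the field-case cohomological description to an arbitrary $\bbQ$-algebra $R$: one must be careful that $V_f \to \Spec R$ really is a $Z_G(\Kostant(f))$-torsor (this uses $f \in B^\text{rs}(R)$ so that $\Kostant(f)$ is regular semisimple with the expected finite \'etale stabilizer, via the results quoted in \S\ref{section_kostant_section}), and that the passage between "$G(R)$-orbits on the torsor's $R$-points" and "kernel of $H^1(R, Z_G(\Kostant(f))) \to H^1(R, G)$" is the usual non-abelian cohomology exact sequence of pointed sets, which holds for smooth group schemes over any base. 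Everything else is a formal unwinding of definitions.
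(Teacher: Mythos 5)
Your proposal is correct and takes essentially the same approach as the paper: the key inputs are identical — the finite \'etale stabilizer $Z_G(\Kostant(f))$, the fact that the action map $G \to V_f$ through the Kostant section is a $Z_G(\Kostant(f))$-torsor (whence \'etale-local conjugacy and the twisting argument identifying orbits with $\ker(H^1(R, Z_G(\Kostant(f))) \to H^1(R,G))$). The only cosmetic difference is that you relate sets (1) and (3) directly by transport of structure via $\Aut(H_R,\theta_R)=G_R$, whereas the paper passes from (2) to (3) using \'etale-local triviality and the identification of $\Aut(H_R,\theta_R,\Kostant(f))$ with $Z_G(\Kostant(f))$; the two bookkeepings are interchangeable.
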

\begin{proof}
We first show that any two objects of $\operatorname{GrLieE}_{f, R}$ are isomorphic \'etale locally on $\Spec R$. It suffices to show that any object $(H', \theta', \gamma') \in \operatorname{GrLieE}_{f, R}$ is isomorphic \'etale locally to $(H_R, \theta_R, \Kostant(f))$. By Lemma \ref{lem_graded_groups_etale_locally_isomorphic}, we can assume that $(H', \theta') = (H_R, \theta_R)$, so that $\gamma' \in V_f(R)$. The group scheme $Z_G(\Kostant(f))$ is a finite \'etale $R$-scheme, and the action map $G \to V_f$ of $\Kostant(f)$ is a $Z_G(\Kostant(f))$-torsor. This shows that $\gamma', \Kostant(f)$ become conjugate after a finite \'etale ring extension $R \to R'$, and therefore that the triples $(H_R, \theta_R, \Kostant(f))$ and $(H', \theta', \gamma')$ become isomorphic after passage to $R'$.

The existence of the bijection between the first and second sets is a consequence of e.g.\ \cite[Exercise 2.4.11]{Con14}. The category $\operatorname{GrLieE}_{f, R}$ contains the triple $(H_R, \theta_R, \Kostant(f))$. The automorphisms of this triple may be identified with the sections over $R$ of $Z_G(\Kostant(f))$. Moreover, any two objects of $\operatorname{GrLieE}_{f, R}$ are isomorphic \'etale locally on $\Spec R$. This implies the existence of the bijection between the second and third sets.
\end{proof}
If $f \in B^\text{rs}(R)$, then we define $\operatorname{Heis}_{f}$ to be the subcategory of $\operatorname{Heis}_R$ whose objects are triples of the form $(f^\ast \Lambda, \theta_R, \mathscr{H})$ and whose morphisms $(f^\ast\Lambda, \theta_R, \mathscr{H}) \to (f^\ast\Lambda, \theta_R, \mathscr{H}')$ are the morphisms in $\operatorname{Heis}_{R}$ that restrict to the identity on $f^\ast \Lambda$. (Recall that $\Lambda = X^\ast(\Cen)$ is an \'etale sheaf of $E_8$ root lattices on $B^\text{rs}$, so $f^\ast \Lambda$ is an \'etale sheaf of $E_8$ root lattices on $\Spec R$.)

Let $f_\tau \in B^\text{rs}(B^\text{rs})$ be the tautological section. As stated in \S \ref{section_kostant_section}, 
$(\Lambda, \theta_{B^\text{rs}}, \extJ)$ defines an element of the groupoid $\operatorname{Heis}_{B^\text{rs}}$ (in fact, $(\Lambda, \theta_{B^\text{rs}}, \extJ) \in \operatorname{Heis}_{f_\tau, B^\text{rs}}$). Let $(H_\tau, \theta_\tau, A_\tau) \in \operatorname{GrLieT}_{B^\text{rs}}$ be its image under the functor of Theorem \ref{thm_equivalence_of_Heis_with_GrLieT}. By definition we have $A_\tau = A$. The Lie algebra $\fra_\tau$ has a tautological section $\gamma_\tau$ (which is in fact none other than $\Kostant$). Thus $(H_\tau, \theta_\tau, \gamma_\tau) \in \operatorname{GrLieE}_{f_\tau, B^\text{rs}}$.
\begin{proposition}\label{prop_Jacobian_and_Kostant_section}
	The objects $(H_\tau, \theta_\tau, \gamma_\tau)$ and $(H_{B^\text{rs}}, \theta_{B^\text{rs}}, \Kostant(f_\tau))$ of $\operatorname{GrLieE}_{ f_\tau, B^\text{rs}}$ are isomorphic.
\end{proposition}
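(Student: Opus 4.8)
Both $(H_\tau,\theta_\tau,\gamma_\tau)$ and $(H_{B^\text{rs}},\theta_{B^\text{rs}},\Kostant(f_\tau))$ are objects of $\operatorname{GrLieE}_{f_\tau,B^\text{rs}}$. The plan is first to observe that the functor of Theorem \ref{thm_equivalence_of_Heis_with_GrLieT} restricts to a functor $\operatorname{Heis}_{f_\tau,B^\text{rs}}\to\operatorname{GrLieE}_{f_\tau,B^\text{rs}}$, sending $(\Lambda,\theta_{B^\text{rs}},\mathscr{H})$ to its image triple together with the tautological section $\Kostant(f_\tau)\in\cen=\Lie A$. By Lemma \ref{lem_cohomological_description_of_orbits} the target is a gerbe over $B^\text{rs}$ banded by $Z_G(\Kostant(f_\tau))\cong\Lambda_\theta$; the source is a gerbe banded by $\underline{\Hom}(\Lambda_\theta,\mu_3)\cong\Lambda_\theta$ (Lemma \ref{lem_reeder_pairing_is_symplectic}); and by Lemma \ref{lem_action_by_automorphisms} the functor induces the canonical identification on bands, hence is an equivalence of gerbes. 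So it suffices to identify the unique object of $\operatorname{Heis}_{f_\tau,B^\text{rs}}$ mapping to the reference triple $(H_{B^\text{rs}},\theta_{B^\text{rs}},\Kostant(f_\tau))$ and to show it is isomorphic to $(\Lambda,\theta_{B^\text{rs}},\extJ)$.

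Call that object $(\Lambda,\theta_{B^\text{rs}},\mathscr{H}_0)$. Its image under the functor has $\theta$-fixed subgroup $G$, with simply connected cover $G^{\text{sc}}=\SL_{9,B^\text{rs}}$; so applying Theorem \ref{thm_embedding_Heis_in_Gsc} to $(\Lambda,\theta_{B^\text{rs}},\mathscr{H}_0)$ (working \'etale-locally on $B^\text{rs}$ to produce a faithful $9$-dimensional representation of $\mathscr{H}_0$ on which the central $\mu_3$ is tautological, then descending) identifies $\mathscr{H}_0$ with the preimage of $A^\theta$ under $\SL_{9,B^\text{rs}}\to G$. The same argument applied to $(\Lambda,\theta_{B^\text{rs}},\extJ)$ --- using as $9$-dimensional representation the Schr\"odinger representation $W$, namely the pushforward of $\tprin$ along $\Jacobian\to B^\text{rs}$, on which $\mathscr{G}(\tprin)$ acts with its central $\bbG_m$ scaling, so that $\extJ\subset\mathscr{G}(\tprin)$ acts with central $\mu_3$ tautological --- identifies $\extJ$ with the preimage of $A^\theta$ in $G_\tau^{\text{sc}}\cong\SL(W)$. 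Thus $\extJ\cong\mathscr{H}_0$ will follow once we identify $\SL(W)$ with $\SL_{9,B^\text{rs}}$ compatibly with the covering maps to $G$.

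The comparison of the two Heisenberg groups is also visible directly: both $\extJ$ and $\mathscr{H}_0$ are central extensions of $\Lambda_\theta\cong\Jacobian[3]$ by $\mu_3$ whose commutator pairing is $\langle\cdot,\cdot\rangle$ --- for $\mathscr{H}_0$ this is a computation inside $\SL_9$ (or follows from Reeder's description and Lemma \ref{lem_reeder_pairing_is_symplectic}), and for $\extJ$ it is the Weil pairing on $\Jacobian[3]$, which coincides with $\langle\cdot,\cdot\rangle$ by Proposition \ref{prop_normalizing_invariant_polynomials}(2). By the Stone--von Neumann theorem (uniqueness of the Heisenberg extension with a given non-degenerate commutator pairing) the two extensions agree \'etale-locally on $B^\text{rs}$, so they differ by a class in $H^1(B^\text{rs},\underline{\Hom}(\Lambda_\theta,\mu_3))\cong H^1(B^\text{rs},\Lambda_\theta)$. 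I expect the main obstacle to be showing this class vanishes, equivalently producing over all of $B^\text{rs}$ the identification $\SL(W)\cong\SL_{9,B^\text{rs}}$ (a global trivialisation of the Schr\"odinger bundle up to twist) matching the two copies of the Heisenberg group; I would obtain this from the explicit geometry of the family $\Curve\to B^\text{rs}$ studied in \S\ref{sec_proof_of_prop_on_invariants} --- the versal deformation of the $E_8$ surface singularity --- which is also what underlies Proposition \ref{prop_normalizing_invariant_polynomials}(2), the crucial structural input here.
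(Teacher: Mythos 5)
Your reduction is essentially a restatement of the proposition rather than a proof of it. Granting that the functor of Theorem \ref{thm_equivalence_of_Heis_with_GrLieT} identifies $\operatorname{Heis}_{f_\tau,B^\text{rs}}$ with $\operatorname{GrLieE}_{f_\tau,B^\text{rs}}$ as gerbes banded by $\Lambda_\theta$ (which is fine, via Lemmas \ref{lem_action_by_automorphisms} and \ref{lem_cohomological_description_of_orbits}), the observation that $(\Lambda,\theta_{B^\text{rs}},\extJ)$ and the object $(\Lambda,\theta_{B^\text{rs}},\mathscr{H}_0)$ corresponding to the reference triple ``differ by a class in $H^1(B^\text{rs},\Lambda_\theta)$'' is true of any two objects of the gerbe; the entire content of the proposition is that this particular class vanishes, and you leave exactly that step as an expectation to be extracted later from ``the explicit geometry of the family.'' No soft argument can close it: $H^1(B^\text{rs},\Lambda_\theta)\cong H^1(B^\text{rs},\Jacobian[3])$ is far from trivial (it contains, for instance, Kummer classes of sections of $\Jacobian$), and Proposition \ref{prop_normalizing_invariant_polynomials}(2) only matches the bands and the pairings, which is already used in setting up the two gerbes and says nothing about which object of the gerbe $\extJ$ gives. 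The auxiliary steps inherit the same problem: descending a rank-9 representation of $\mathscr{H}_0$ from an \'etale cover, or identifying $\SL(W)$ with $\SL_{9,B^\text{rs}}$ globally and compatibly with the two Heisenberg subgroups and the covering maps to $G$, are each obstructed by precisely the class you need to kill, so the Stone--von Neumann/\ref{thm_embedding_Heis_in_Gsc} comparison only re-derives local isomorphism, which you already have.

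The paper resolves the ambiguity by rigidification rather than cohomology, and this is the idea missing from your proposal: both $\bbZ/3\bbZ$-gradings extend canonically to $\bbZ/6\bbZ$-gradings. On the reference side one uses $\check\rho|_{\mu_6}$, with $\Kostant(f_\tau)$ in its weight-one piece; on the Jacobian side, the symmetry $\prin\cong[-1]^\ast\prin$ gives an automorphism of $\extJ$ lifting $-1$ on $\Jacobian[3]$ and fixing the central $\mu_3$, and functoriality of the construction of Theorem \ref{thm_equivalence_of_Heis_with_GrLieT} turns this into an involution $\iota$ of $H_\tau$ commuting with $\theta_\tau$ and acting by $-1$ on $X^\ast(Z_{H_\tau}(\gamma_\tau))$, so that the product grading $\theta'\cdot\theta_\tau$ is a $\mu_6$-grading with $\gamma_\tau$ in its weight-one piece. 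The scheme of isomorphisms $H_{B^\text{rs}}\to H_\tau$ intertwining the two $\mu_6$-gradings and carrying $\Kostant(f_\tau)$ to $\gamma_\tau$ is then a torsor under $Z_H(\Kostant(f_\tau))^{\check\rho|_{\mu_6}}$, which is the trivial group scheme (the $(-1)$-fixed points of the $3$-torsion group $\Cen^\theta$), and a torsor under the trivial group has a unique section; this produces the global isomorphism at once. Some such extra rigidity, or an actual computation showing your $H^1$ class is trivial, is what your argument still needs.
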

\begin{proof}
	The proof relies upon the fact that for each triple, the associated $\bbZ/3\bbZ$-grading can be naturally extended to a $\bbZ / 6 \bbZ$-grading. The structure of $\bbZ / 6 \bbZ$-grading plays the role of a rigidification, in the presence of which rational orbits and geometric orbits coincide.
	
	A $\bbZ/6\bbZ$-grading of $\frh_{B^{\text{rs}}}$ extending the grading of $(H_{B^\text{rs}}, \theta_{B^\text{rs}}, \Kostant(f_\tau))$ is given by $\check{\rho}|_{\mu_6}$. Let $V' = \frh_{B^\text{rs}}(\check\rho |_{\mu_6}, 1)$, 
and note that $\Kostant(f_\tau) \in V'(B^\text{rs})$. To define a $\bbZ / 6 \bbZ$-grading of $\frh_\tau$ extending the grading of $(H_\tau, \theta_\tau, \gamma_\tau)$, observe that $\prin$ is a symmetric line bundle. A choice of isomorphism $\prin \cong [-1]^\ast \prin$ determines an automorphism $(\omega, \alpha) \mapsto (-\omega, [-1]^\ast \alpha)$ of $\extJ$ that restricts to the identity on the central $\mu_3$ and such that the induced map on the quotient $\Jacobian[3]$ is multiplication by $-1$. 
Since taking $-1$ on $\Lambda$ with this automorphism on $\extJ$ gives a automorphism in the category $\operatorname{Heis}_{B^\text{rs}}$,
Theorem \ref{thm_equivalence_of_Heis_with_GrLieT} implies the existence of an involution $\iota: H_\tau \to H_\tau$ that restricts to the identity on the image of $\theta_\tau$, that restricts to an isomorphism of $Z_{H_\tau}(\gamma_\tau)$, and that induces the map $-1$ on the character group of this torus. Let $\theta': \mu_2 \to H_\tau$ be defined by mapping $-1$ to $d\iota \in \Aut(\frh_\tau) = H_\tau$. Then $\theta' \cdot \theta_\tau$ defines a $\bbZ / 6 \bbZ$-grading of $\frh_\tau$ such that $\gamma_\tau \in \frh_\tau(\theta'\cdot \theta, 1)$. 
	
	To complete the proof, consider the $B^\text{rs}$-scheme $\mathscr{T}$ of isomorphisms $H \to H_\tau$ intertwining $\check{\rho}|_{\mu_6}$ and $\theta' \cdot \theta_\tau$ and such that the induced map $\frh \to \frh_\tau$ sends $\Kostant(f_\tau)$ to $\gamma_\tau$. Then $\mathscr{T}$ is an \'etale $B^\text{rs}$-scheme, which is in fact a torsor for $Z_H(\Kostant(f_\tau))^{\check{\rho}|_{\mu_6}}$ (the argument is the same as in the proof of Lemma \ref{lem_graded_groups_etale_locally_isomorphic}). Since $Z_H(\Kostant(f_\tau))^{\check{\rho}|_{\mu_6}} \to B^\text{rs}$ is the trivial group scheme, we have $\mathscr{T} = B^\text{rs}$ and it follows that there is a unique isomorphism $(H_\tau, \theta_\tau, \gamma_\tau) \cong (H_{B^\text{rs}}, \theta_{B^\text{rs}}, \Kostant(f_\tau))$ in $\operatorname{GrLieE}_{B^\text{rs}, f_\tau}$ that is compatible with the given $\bbZ / 6 \bbZ$-gradings.
\end{proof}
\begin{theorem}\label{thm_heisenberg_grlieE_equivalence}
	Let $f \in B^\text{rs}(R)$. Then there is an equivalence of categories $\operatorname{Heis}_{f, R} \to \operatorname{GrLieE}_{f, R}$.
\end{theorem}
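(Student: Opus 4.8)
The plan is to construct the functor $\operatorname{Heis}_{f,R}\to\operatorname{GrLieE}_{f,R}$ explicitly from Theorem \ref{thm_equivalence_of_Heis_with_GrLieT}, and then to prove it is an equivalence by exhibiting both sides as abelian gerbes over $\Spec R$ (in the \'etale topology) banded by $(f^\ast\Lambda)_\theta$, with the functor inducing the identity on bands.

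First I would build the functor $F$. Given $(f^\ast\Lambda,\theta_R,\mathscr{H})\in\operatorname{Heis}_{f,R}$, apply Theorem \ref{thm_equivalence_of_Heis_with_GrLieT} to obtain $(H',\theta',A')\in\operatorname{GrLieT}_R$. The torus $A'$ and the restriction of the grading to $\fra'=\Lie A'$ depend only on the pair $(f^\ast\Lambda,\theta_R)$ — indeed $A'$ is canonically $f^\ast\Cen$ for $\Cen=Z_H(\Kostant|_{B^\text{rs}})$ — so pulling the tautological section $\gamma_\tau=\Kostant$ of $\Lie\Cen$ over $B^\text{rs}$ back along $f$ gives a canonical element $\Kostant(f)\in\fra'(1)\subseteq\frh'(1)$; set $F(f^\ast\Lambda,\theta_R,\mathscr{H})=(H',\theta',\Kostant(f))$. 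A morphism in $\operatorname{Heis}_{f,R}$ restricts to the identity on $f^\ast\Lambda$, so (taking $x_f=1$ in the formula for $\sigma$ in the proof of Theorem \ref{thm_equivalence_of_Heis_with_GrLieT}) the induced Lie algebra isomorphism is the identity on $\cen\supseteq\fra'$ and hence fixes $\Kostant(f)$; thus $F$ is defined on morphisms. That $\pi(\Kostant(f))=f$, so that $F$ lands in $\operatorname{GrLieE}_{f,R}$, follows by pulling Proposition \ref{prop_Jacobian_and_Kostant_section} back along $f\colon\Spec R\to B^\text{rs}$ — which identifies $F(f^\ast\Lambda,\theta_R,f^\ast\extJ)$ with $(H_R,\theta_R,\Kostant(f))$ — together with the observation that $\pi$ depends only on the underlying section of $\fra'(1)$, which is the same for all $\mathscr{H}$. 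Since $F$ commutes with base change, it defines a morphism of \'etale stacks $\operatorname{Heis}_f\to\operatorname{GrLieE}_f$.

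Next I would check that both $\operatorname{Heis}_f$ and $\operatorname{GrLieE}_f$ are abelian gerbes over $\Spec R$ banded by $(f^\ast\Lambda)_\theta$. For $\operatorname{GrLieE}_f$: it is non-empty (it contains $(H_R,\theta_R,\Kostant(f))$); by Lemma \ref{lem_cohomological_description_of_orbits} any two of its objects are isomorphic \'etale-locally; and the automorphism sheaf of any object is $Z_G(\Kostant(f))=\Cen_f^\theta$, which is canonically identified with $(f^\ast\Lambda)_\theta$ as in \S\ref{section_kostant_section}. For $\operatorname{Heis}_f$: it is non-empty (it contains the pullback $(f^\ast\Lambda,\theta_R,f^\ast\extJ)$ of the Mumford theta-group datum); any two central extensions of $(f^\ast\Lambda)_\theta$ by $\mu_3$ inducing the fixed commutator pairing $\langle\cdot,\cdot\rangle$ differ by a class in $H^1$ of $\underline{\Hom}((f^\ast\Lambda)_\theta,\mu_3)$ and so are \'etale-locally isomorphic; and the automorphism sheaf of an object is $\underline{\Hom}((f^\ast\Lambda)_\theta,\mu_3)$, canonically identified with $(f^\ast\Lambda)_\theta$ via the symplectic self-duality of Lemma \ref{lem_reeder_pairing_is_symplectic} (this is the map $f_R$ recorded after Theorem \ref{thm_equivalence_of_Heis_with_GrLieT}). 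Finally, Lemma \ref{lem_action_by_automorphisms} says precisely that $F$ is compatible with these bandings: the automorphism $f_R(\lambda)$ of $\mathscr{H}$ is carried by $F$ to the adjoint action of the element of $\Cen_f^\theta$ corresponding to $\lambda$. A morphism of abelian gerbes over a common base inducing the identity on the common band is an equivalence — \'etale-locally, after trivializing both gerbes, it is a self-equivalence of the trivial gerbe inducing the identity on the band — so $F_R\colon\operatorname{Heis}_{f,R}\to\operatorname{GrLieE}_{f,R}$ is an equivalence of categories.

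The main obstacle is the middle step: verifying that $\operatorname{Heis}_f$ and $\operatorname{GrLieE}_f$ really are gerbes banded by $(f^\ast\Lambda)_\theta$, with $F$ banding-compatible. Concretely this is (i) the local-isomorphism statements — for $\operatorname{Heis}_f$ one needs the (standard but not quite formal) fact that over a strictly henselian base all central $\mu_3$-extensions of $(f^\ast\Lambda)_\theta$ with a fixed commutator pairing are isomorphic — and (ii) the canonical identification of the two automorphism sheaves with $(f^\ast\Lambda)_\theta$ and the compatibility of $F$ with them, which is the content of Lemma \ref{lem_action_by_automorphisms}. The non-emptiness of $\operatorname{Heis}_f$, and thus that $F$ lands in the correct gerbe, is exactly where the existence of the theta group $\extJ$ and Proposition \ref{prop_Jacobian_and_Kostant_section} are used.
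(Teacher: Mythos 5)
Your proposal is correct and is essentially the paper's own argument: the functor is constructed the same way from Theorem-Construction \ref{thm_equivalence_of_Heis_with_GrLieT}, full faithfulness/band-compatibility is Lemma \ref{lem_action_by_automorphisms}, the base objects are matched via $f^\ast\extJ$ and Proposition \ref{prop_Jacobian_and_Kostant_section}, and essential surjectivity comes from \'etale-local triviality of both sides. Your gerbe-banded-by-$(f^\ast\Lambda)_\theta$ packaging is just a reformulation of the paper's statement that the functor is fully faithful and that the objects of both categories are classified by $H^1(R, \Jacobian_f[3]) = H^1(R, Z_G(\Kostant(f)))$.
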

\begin{proof}
Let $(f^\ast \Lambda, \theta_R, \mathscr{H}) \in \operatorname{Heis}_{f, R}$, and let $(H', \theta', A') \in \operatorname{GrLieT}_{R}$ be its image under the functor of Theorem \ref{thm_equivalence_of_Heis_with_GrLieT}. Since $A' = f^\ast A_\tau$, we have $f^\ast(\gamma_\tau) \in \frh'(\theta', 1)$. We define a functor $\operatorname{Heis}_{f, R} \to \operatorname{GrLieE}_{f, R}$ by sending $(f^\ast \Lambda, \theta_R, \mathscr{H})$ to the triple $(H', \theta', f^\ast(\gamma_\tau))$.
It is fully faithful, by Lemma \ref{lem_action_by_automorphisms}. The category $\operatorname{Heis}_{f, R}$ contains the object $f^\ast (\Lambda, \theta_{B^\text{rs}}, \extJ) = (f^\ast\Lambda, \theta_R, f^\ast\extJ)$, 
whose image in $\operatorname{Heis}_{f, R}$ under the functor just defined is
$(H_R, \theta_R, f^\ast (\gamma_\tau))$ by Proposition \ref{prop_Jacobian_and_Kostant_section}. The objects of both categories are therefore classified by the group $H^1(R, \Jacobian_f[3]) = H^1(R, Z_G(\Kostant(f)))$. This shows that our functor is essentially surjective, and completes the proof of the lemma. 
\end{proof}

\begin{corollary}\label{cor_construction_of_orbits_over_general_base}
	Let $R$ be a $\bbQ$-algebra over which every locally free module of finite rank is free. Let $f \in B^\text{rs}(R)$. Then there is a canonical injection $\eta_{f} : \Jacobian_f(R) / 3 \Jacobian_f(R) \to G(R) \backslash V_f(R)$.
\end{corollary}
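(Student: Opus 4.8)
The plan is to derive this from the equivalence of categories in Theorem~\ref{thm_heisenberg_grlieE_equivalence} together with Lemma~\ref{lem_cohomological_description_of_orbits}, using the standard descent-cohomology bookkeeping that governs Heisenberg-twist constructions. First I would recall that, by Lemma~\ref{lem_cohomological_description_of_orbits}, for $f \in B^\text{rs}(R)$ the set $G(R)\backslash V_f(R)$ is in canonical bijection with the set of isomorphism classes of objects $(H',\theta',\gamma') \in \operatorname{GrLieE}_{f,R}$ for which $(H',\theta') \cong (H_R,\theta_R)$ in $\operatorname{GrLie}_R$. Via the equivalence $\operatorname{Heis}_{f,R} \to \operatorname{GrLieE}_{f,R}$, these correspond to isomorphism classes of triples $(f^\ast\Lambda,\theta_R,\mathscr{H}) \in \operatorname{Heis}_{f,R}$ satisfying a corresponding ``split after forgetting $\mathscr{H}$'' condition — but since all objects of $\operatorname{Heis}_{f,R}$ already share the underlying pair $(f^\ast\Lambda,\theta_R)$, this amounts just to the condition that the associated $(H',\theta')$ is globally (not merely \'etale-locally) isomorphic to $(H_R,\theta_R)$.

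Next I would produce the map $\eta_f$ itself. Given $P \in \Jacobian_f(R)$, I would use $P$ to twist the reference Heisenberg group $f^\ast\extJ$ (equivalently, the theta group $\mathscr{G}(\tprin_f)$): translation by $P$ on $\Jacobian_f$ lifts to an automorphism of $\mathscr{G}(\tprin_f)$ well-defined up to the central $\mu_3$, and applying this to build a new extension $\mathscr{H}_P$ yields an object of $\operatorname{Heis}_{f,R}$. Concretely the twisting is governed by the boundary map $\Jacobian_f(R)/3\Jacobian_f(R) \to H^1(R,\Jacobian_f[3])$ coming from the Kummer sequence $0 \to \Jacobian_f[3] \to \Jacobian_f \xrightarrow{3} \Jacobian_f \to 0$ — this is injective by definition of $H^1$, and the theta group $f^\ast\extJ$ is precisely the object whose isomorphism classes are a torsor under $H^1(R,\Jacobian_f[3])$. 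Thus $P \mapsto \mathscr{H}_P$ realizes $\Jacobian_f(R)/3\Jacobian_f(R)$ as a subset of isomorphism classes in $\operatorname{Heis}_{f,R}$, and composing with the equivalence of Theorem~\ref{thm_heisenberg_grlieE_equivalence} and then Lemma~\ref{lem_cohomological_description_of_orbits} lands us in $G(R)\backslash V_f(R)$ — once we check that every $\mathscr{H}_P$ so obtained gives an $(H',\theta')$ that is \emph{globally} isomorphic to $(H_R,\theta_R)$, which holds because the class of $\mathscr{H}_P$ in $H^1(R,\Jacobian_f[3])$ maps to zero in $H^1(R,G)$ (it comes from a rational point, hence is in the image of the connecting map, which factors through the trivial $G$-torsor). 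Here I am using the hypothesis that every locally free $R$-module of finite rank is free, so that the torsor $\mu_f : G \to V_f$ admits a section precisely when the corresponding $G$-torsor is trivial, giving the identification with $G(R)$-orbits rather than a more refined cohomological object.

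For injectivity: if $\eta_f(P) = \eta_f(Q)$ then the triples $(H',\theta',\gamma')$ attached to $\mathscr{H}_P$ and $\mathscr{H}_Q$ are isomorphic in $\operatorname{GrLieE}_{f,R}$, hence $\mathscr{H}_P \cong \mathscr{H}_Q$ in $\operatorname{Heis}_{f,R}$ by the equivalence, hence they define the same class in $H^1(R,\Jacobian_f[3])$; but the assignment $P \mapsto [\mathscr{H}_P]$ is (up to a sign/normalization) the Kummer connecting map $\Jacobian_f(R)/3\Jacobian_f(R) \hookrightarrow H^1(R,\Jacobian_f[3])$, which is injective, so $P = Q$ in $\Jacobian_f(R)/3\Jacobian_f(R)$. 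The canonicity claim follows because every step — the theta-group twist, the functor of Theorem~\ref{thm_equivalence_of_Heis_with_GrLieT}, the Kostant section — is functorial in $R$ and independent of auxiliary choices up to canonical isomorphism (the root-of-unity choices cancel, as recorded in \S\ref{sec_a_functor}).

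The main obstacle I anticipate is not the formal diagram-chase but verifying the precise compatibility between two a priori different descriptions of the ``twist by $P$'' operation: on one hand the operation on theta groups $\mathscr{G}(\tprin_f)$ coming from translation by $P$ (the Mumford-theoretic side), and on the other the abstract boundary map in the Kummer sequence for $\Jacobian_f[3]$ — i.e.\ one must match the Heisenberg-group-theoretic twisting with Galois/\'etale cohomology of the group scheme $\Jacobian_f[3]$, compatibly with the Weil pairing identification $\Lambda_\theta \cong \Jacobian_f[3]$ of Proposition~\ref{prop_normalizing_invariant_polynomials}(2). This is where one genuinely uses that the commutator pairing on $\extJ$ is the Weil pairing and that the theta group is the 3-torsion of $\mathscr{G}(\prin^{\otimes 3})$; getting the normalization exactly right (so that the resulting $\eta_f$ is a map of \emph{sets}, landing in orbits that are automatically in the kernel of $H^1(R,Z_G(\Kostant(f))) \to H^1(R,G)$) is the delicate point, but it is essentially the content already assembled in Theorem~\ref{thm_heisenberg_grlieE_equivalence} and Proposition~\ref{prop_Jacobian_and_Kostant_section}.
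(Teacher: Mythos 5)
Your overall strategy is the paper's: twist the theta group by $P$, feed the resulting object of $\operatorname{Heis}_{f,R}$ through Theorem \ref{thm_heisenberg_grlieE_equivalence}, identify its class in $H^1(R,\Jacobian_f[3])$ with the Kummer class of $P$ (Lemma \ref{lem_action_by_automorphisms}), and conclude via Lemma \ref{lem_cohomological_description_of_orbits}; your injectivity argument is also fine. But there is a genuine gap at the one step that carries the real content of the corollary: you must show that the Kummer class of $P$, viewed in $H^1(R,Z_G(\Kostant(f)))\cong H^1(R,\Jacobian_f[3])$, dies in $H^1(R,G)$, so that the associated pair $(H',\theta')$ is \emph{globally} isomorphic to $(H_R,\theta_R)$ and the class really corresponds to a $G(R)$-orbit in $V_f(R)$. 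Your justification --- ``it comes from a rational point, hence is in the image of the connecting map, which factors through the trivial $G$-torsor'' --- is circular: there is no exact sequence extending the inclusion $\Jacobian_f[3]\hookrightarrow G$ along which the Kummer connecting map would automatically become trivial in $H^1(R,G)$, and this vanishing is precisely what has to be proved. The paper proves it by lifting the Kummer class through the Heisenberg group: the class lifts to $H^1(R,\mathscr{G}(\tprin))$ via the explicit torsor $\mathscr{T}_P$ of pairs $(\omega,\alpha)$ with $\alpha:\tprin_P\to t_\omega^\ast\tprin$ an isomorphism (a torsor because forgetting $\omega$ surjects onto $[3]^{-1}(P)$); the obstruction comparison $H^2(R,\mu_3)\to H^2(R,\bbG_m)$ is injective (this uses $\Pic(R)=0$, i.e.\ the hypothesis on $R$), so the class in fact lifts to $H^1(R,\extJ)$; and then Theorem \ref{thm_embedding_Heis_in_Gsc} (together with Proposition \ref{prop_Jacobian_and_Kostant_section}) embeds $\extJ$ into $G^{\text{sc}}\cong\SL(W)$ compatibly with $\Jacobian_f[3]\to G$, so the image in $H^1(R,G)$ factors through $H^1(R,G^{\text{sc}})$, which is trivial --- again by the hypothesis that locally free $R$-modules are free. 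Note that this is where the hypothesis on $R$ actually enters; your appeal to it (``the torsor $\mu_f:G\to V_f$ admits a section precisely when the corresponding $G$-torsor is trivial'') is not where it is needed.

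A secondary inaccuracy: translation by a general $P$ does \emph{not} lift to an automorphism of $\mathscr{G}(\tprin)$, since $t_P^\ast\tprin\not\cong\tprin$ unless $P\in\Jacobian_f[3]$. The correct twist, as in the paper, is to replace $\tprin$ by the algebraically equivalent bundle $\tprin_P=t_P^\ast\prin\otimes\prin^{\otimes 2}$ and take $\extJ_P\subset\mathscr{G}(\tprin_P)$, which is a new central extension of $\Jacobian_f[3]$ by $\mu_3$ with the same commutator pairing, hence an object of $\operatorname{Heis}_{f,R}$. With that correction and the lifting argument above supplied, your outline becomes the paper's proof.
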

\begin{proof}
	The group $\extJ$ acts on $H^0(\Jacobian_f, \tprin)$, which is a locally free $R$-module of rank 9. By Proposition \ref{prop_Jacobian_and_Kostant_section} and Theorem \ref{thm_embedding_Heis_in_Gsc},  there is a diagram of $R$-groups with exact rows:
	\[ \xymatrix{ 1 \ar[r]  & \mu_3 \ar[r] & G^\text{sc}_R \ar[r] & G_R \ar[r] & 1 \\
		1 \ar[r] & \mu_3 \ar[r]\ar[u]^= & \extJ \ar[r]\ar[u] & \Jacobian_f[3] \ar[r]\ar[u] & 1.  }\]
	Let $P \in \Jacobian_f(R)$. Let $\tprin_P = t_P^\ast \prin \otimes \prin \otimes \prin$, and let $\extJ_P$ denote the 3-torsion subgroup of the Mumford theta group $\mathscr{G}(\tprin_P)$. Then $(f^\ast\Lambda, \theta_R, \extJ_P) \in \operatorname{Heis}_{f, R}$. Let $(H_P, \theta_P, \gamma_P) \in \operatorname{GrLieE}_{f, R}$ denote the tuple corresponding to $(f^\ast\Lambda, \theta_R, \extJ_P)$ under the equivalence of Theorem \ref{thm_heisenberg_grlieE_equivalence}. Then the class $\varphi \in H^1(R, \Jacobian_f[3])$ corresponding to $(H_P, \theta_P, \gamma_P)$ under the bijection of Lemma \ref{lem_cohomological_description_of_orbits} is the Kummer class of the point $P$ (as follows from Lemma \ref{lem_action_by_automorphisms}).
	
	To prove the corollary, we will show that this class lifts to $H^1(R, \extJ)$. This will imply that the image of $\varphi$ in $H^1(R, G)$ lies in the image of the map $H^1(R, G^\text{sc}) \to H^1(R, G)$, which is trivial (by our assumption on $R$). To show that the class lifts, it will even suffice to show that it lifts to $H^1(R, \mathscr{G}(\tprin))$, where $\mathscr{G}(\tprin)$ is the Mumford theta group of $\tprin$, sitting in the short exact sequence of $R$-groups
	\[ \xymatrix@1{ 1 \ar[r] & \bbG_m \ar[r] & \mathscr{G}(\tprin) \ar[r] & \Jacobian_f[3] \ar[r] & 1. } \]
	Indeed, the map $H^2(R, \mu_3) \to H^2(R, \bbG_m)$ is injective, again by our assumption on $R$.
	We define $\mathscr{T}_P$ to be the scheme of pairs $(\omega, \alpha)$, where $\omega \in \Jacobian_f$ and $\alpha : \tprin_P \to t_\omega^\ast \tprin$ is an isomorphism. Note that forgetting $\omega$ leads to a surjective map $\mathscr{T}_P \to [3]^{-1}(P)$. Thus $\mathscr{T}_P$ is a torsor for $\mathscr{G}(\tprin)$, defining a class in $H^1(R, \mathscr{G}(\tprin))$ that lifts the class $\varphi \in H^1(R, \Jacobian_f[3])$. This completes the proof of the corollary.
\end{proof}
\begin{remark}
	A stronger version of Corollary \ref{cor_construction_of_orbits_over_general_base} is true: we may replace our assumption that every locally free module is free with the assumption that $H^1(R, G^\text{sc})$ is trivial, by refining the torsor for $\mathscr{G}(\tprin)$ constructed in the proof to a torsor for $\extJ$ using the canonical isomorphism $\prin^{\otimes 9} \cong [3]^\ast \prin$ afforded by the theorem of the cube. However, since we don't need this extra generality we have chosen not to include the details here.
\end{remark}
We restate the corollary in the case that $R = k$ is a field extension of $\bbQ$.
\begin{corollary}\label{cor_construction_of_orbits_over_field}
	Let $k / \bbQ$ be a field, and let $f \in B^\text{rs}(k)$. Then there is a canonical injection $\eta_{f} : \Jacobian_f(k) / 3 \Jacobian_f(k) \to G(k) \backslash V_f(k)$.
\end{corollary}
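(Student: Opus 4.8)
The plan is to deduce this directly from Corollary \ref{cor_construction_of_orbits_over_general_base} by specializing to $R = k$. The first (and essentially only) step is to check that the hypothesis of that corollary is met: a locally free module of finite rank over a field $k$ is just a finite-dimensional $k$-vector space, and such a module is always free. Hence Corollary \ref{cor_construction_of_orbits_over_general_base} applies with $R = k$ and the given $f \in B^\text{rs}(k)$, and it yields the canonical injection $\eta_f : \Jacobian_f(k) / 3 \Jacobian_f(k) \to G(k) \backslash V_f(k)$ directly.

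For completeness I would point out exactly where the field hypothesis is used inside the proof of Corollary \ref{cor_construction_of_orbits_over_general_base}. It enters in two places: first, in the vanishing of $H^1(k, G^\text{sc}) = H^1(k, \SL(W))$, which holds because an $\SL(W)$-torsor over $k$ is a finite-dimensional $k$-vector space equipped with a trivialization of its top exterior power, hence trivial; and second, in the injectivity of $H^2(k, \mu_3) \to H^2(k, \bbG_m)$, which follows from the Kummer sequence together with $\Pic(k) = 0$. Both are standard facts about fields, so no additional argument is required.

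I do not expect any obstacle here: all of the substance --- constructing the Heisenberg group $\extJ_P$ attached to a point $P$, transporting it through the equivalence $\operatorname{Heis}_{f, R} \simeq \operatorname{GrLieE}_{f, R}$ of Theorem \ref{thm_heisenberg_grlieE_equivalence}, recognizing the resulting cohomology class as the Kummer class of $P$, and the Mumford-theta-group lifting argument that kills its image in $H^1(R, G)$ --- has already been done over an arbitrary $\bbQ$-algebra. The reason to record the field case separately is that $k = \bbQ$ and the local fields $k = \bbQ_v$ are the inputs needed for the Selmer-group computations in \S \ref{sec_counting_points} and \S \ref{sec_main_theorem}.
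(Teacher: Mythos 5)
Your proposal is correct and is exactly what the paper does: Corollary \ref{cor_construction_of_orbits_over_field} is simply Corollary \ref{cor_construction_of_orbits_over_general_base} restated for $R = k$, the hypothesis holding because every locally free module of finite rank over a field is free. Your added remarks correctly identify the two places (triviality of $H^1(R,G^{\mathrm{sc}})$ and injectivity of $H^2(R,\mu_3) \to H^2(R,\bbG_m)$) where that hypothesis is used in the general proof, so nothing further is needed.
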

We also record for future use the fact that when $R = \bbQ$, we can extend the above construction of orbits from rational points to 3-Selmer elements:
\begin{proposition}\label{prop_existence_of_selmer_orbits}
Let $f \in B^\text{rs}(\bbQ)$. Then the map $\eta_f : \Jacobian_f(\bbQ) / 3 \Jacobian_f(\bbQ) \to G(\bbQ) \backslash V_f(\bbQ)$ naturally extends to a map $\eta_f : \Sel_3(\Jacobian_f) \to G(\bbQ) \backslash V_f(\bbQ)$.
\end{proposition}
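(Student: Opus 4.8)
The plan is to extend the construction of Corollary \ref{cor_construction_of_orbits_over_general_base} from $\bbQ$-points of $\Jacobian_f$ to Selmer classes by the usual local-global patching argument, using that Selmer classes are precisely those classes in $H^1(\bbQ, \Jacobian_f[3])$ whose restriction to each place $v$ lies in the image of the Kummer map $\Jacobian_f(\bbQ_v)/3\Jacobian_f(\bbQ_v) \to H^1(\bbQ_v, \Jacobian_f[3])$. First I would recall that, by Lemma \ref{lem_cohomological_description_of_orbits} (applied over $R = \bbQ$), a class $\varphi \in H^1(\bbQ, \Jacobian_f[3]) = H^1(\bbQ, Z_G(\Kostant(f)))$ gives a $G(\bbQ)$-orbit in $V_f(\bbQ)$ as soon as its image in $H^1(\bbQ, G)$ is trivial; and since over a field every locally free module is free, Corollary \ref{cor_construction_of_orbits_over_general_base} shows that the image of the Kummer map lands in $\ker(H^1(\bbQ,\Jacobian_f[3]) \to H^1(\bbQ, G))$. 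So the task is: given a Selmer class $\varphi$, show its image in $H^1(\bbQ, G)$ is trivial.

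The key step is to show that the image of $\varphi$ in $H^1(\bbQ, G)$ vanishes. Using the central isogeny $1 \to \mu_3 \to G^\text{sc} \to G \to 1$ (with $G^\text{sc} \cong \SL_9$ by Theorem \ref{thm_embedding_Heis_in_Gsc}), we have the connecting map $H^1(\bbQ, G) \to H^2(\bbQ, \mu_3)$, and an element of $H^1(\bbQ,G)$ that is trivial in $H^2(\bbQ, \mu_3)$ lies in the image of $H^1(\bbQ, G^\text{sc})$; but $H^1(\bbQ, \SL_9) = 0$ by Hilbert 90 / the triviality of $H^1$ for special groups. So it suffices to show that the image of $\varphi$ in $H^2(\bbQ, \mu_3)$ vanishes. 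That image is a Brauer-class obstruction which, by the Hasse principle for the Brauer group (the exact sequence $0 \to \mathrm{Br}(\bbQ) \to \oplus_v \mathrm{Br}(\bbQ_v) \to \bbQ/\bbZ \to 0$), is trivial as soon as all its local restrictions are trivial. For each place $v$, the restriction $\varphi_v \in H^1(\bbQ_v, \Jacobian_f[3])$ lies in the image of the local Kummer map (this is exactly the Selmer condition), and applying Corollary \ref{cor_construction_of_orbits_over_general_base} over $R = \bbQ_v$ (again a field, so the hypothesis holds) shows $\varphi_v$ maps to $0$ in $H^1(\bbQ_v, G)$, hence \emph{a fortiori} to $0$ in $H^2(\bbQ_v, \mu_3)$. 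Thus the global $H^2(\bbQ,\mu_3)$-obstruction is everywhere locally trivial, hence trivial; pulling $\varphi$ back to $H^1(\bbQ, G^\text{sc})$ and using $H^1(\bbQ, \SL_9) = 0$ gives triviality in $H^1(\bbQ, G)$, and Lemma \ref{lem_cohomological_description_of_orbits} then produces the desired $G(\bbQ)$-orbit $\eta_f(\varphi) \in G(\bbQ)\backslash V_f(\bbQ)$. Functoriality in $\varphi$ and compatibility with the map on $\Jacobian_f(\bbQ)/3\Jacobian_f(\bbQ)$ are immediate from the construction.

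I expect the only genuine subtlety — and hence the main point to be careful about — to be the identification of the obstruction class: namely checking that the image of a Selmer class under $H^1(\bbQ, \Jacobian_f[3]) \to H^1(\bbQ, G) \to H^2(\bbQ, \mu_3)$ really is controlled place-by-place by the local Kummer conditions in the way described, i.e. that the diagram relating the isogeny $G^\text{sc} \to G$ to the inclusion $\Jacobian_f[3] \hookrightarrow G$ and to the theta-group extension $\extJ$ commutes compatibly over $\bbQ$ and over each $\bbQ_v$. This is exactly the content already assembled in the proof of Corollary \ref{cor_construction_of_orbits_over_general_base} (the torsor $\mathscr{T}_P$ for $\mathscr{G}(\tprin)$, and the remark extending it to a torsor for $\extJ$ when $H^1(R, G^\text{sc})$ is trivial), so the argument is really a repackaging of that proof in the global setting plus the two Hasse principles. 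Everything else — finiteness of the Selmer group, so that $\eta_f$ has finite image, and injectivity, which follows from injectivity of the connecting map $\Jacobian_f(\bbQ)/3 \hookrightarrow H^1(\bbQ, \Jacobian_f[3])$ combined with the fact that $\eta_f$ factors through this class — is standard.
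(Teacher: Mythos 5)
Your argument is correct and is essentially the paper's proof: by Lemma \ref{lem_cohomological_description_of_orbits} one only needs the image of a Selmer class in $H^1(\bbQ,G)$ to vanish, the local vanishing comes from Corollary \ref{cor_construction_of_orbits_over_general_base} applied over each $\bbQ_v$, and the global vanishing is the statement that $H^1(\bbQ,G)\to\prod_v H^1(\bbQ_v,G)$ has trivial kernel, which the paper dismisses as a class field theory exercise. Your use of the central isogeny $1\to\mu_3\to\SL_9\to G\to 1$, $H^1(k,\SL_9)=0$, and the injectivity of $\mathrm{Br}(\bbQ)\to\oplus_v\mathrm{Br}(\bbQ_v)$ is precisely a correct write-up of that exercise, so you have simply supplied the details the paper omits.
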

\begin{proof}
	Taking into account Lemma \ref{lem_cohomological_description_of_orbits}, we just need to show that the map $H^1(\bbQ, G) \to \prod_{v} H^1(\bbQ_v, G)$ has trivial kernel, where the product runs over the set of all places $v$ of $\bbQ$. This is an exercise using class field theory.
\end{proof}

\subsection{Proof of Proposition \ref{prop_normalizing_invariant_polynomials}}\label{sec_proof_of_prop_on_invariants}

We now prove Proposition \ref{prop_normalizing_invariant_polynomials}. We will use a special transverse slice to the orbit of a subregular nilpotent element in $V$ in order to form the bridge between the group $H$ and the family of abelian surfaces $\Jacobian$. 

We begin by fixing a subregular nilpotent element $e_0 \in V$ (the existence of such an element can be read off from the tables in \cite{Vin78}, which also show that there is a unique $G$-orbit of subregular nilpotents in $V$). We can complete $e_0$ to a normal $\frs\frl_2$-triple $(e_0, h_0, f_0)$ in $\frh$. We define $\OpenSurface_0 = e_0 + \frz_\frh(f_0)$, an affine linear subspace of $\frh$. We define a $\mu_3$-action on $\OpenSurface_0$ by the formula $\zeta \cdot x = \zeta^{-1} \Ad(\theta(\zeta))(x)$. We define a $\bbG_m$-action on $\OpenSurface_0$ by the formula $t \cdot x = t^2 \Ad(\lambda(t^{-1}))(x)$, where $\lambda : \bbG_m \to G$ is the cocharacter with $d \lambda(1) = h_0$. These two actions commute, giving a $\mu_3 \times \bbG_m$-action on $\OpenSurface_0$. Let $B_0 = \frh \dquot H$, and let $p_0 : \OpenSurface_0 \to B_0$ denote the restriction of the adjoint quotient $\pi_0 : \frh \to \frh \dquot H$ to $\OpenSurface_0$. Define an action of $\bbG_m \times \mu_3$ on $\bbQ[\frh]^H$ by setting $(\zeta, t)\cdot f = \zeta^{-d}t^{2d}f$ if $f$ is a homogeneous polynomial of degree $d$. With the induced action on $B_0$, the map $p_0$ is equivariant for the action of $\mu_3 \times \bbG_m$ on source and target. We identify $X^\ast(\mu_3 \times \bbG_m) = \bbZ / 3 \bbZ \times \bbZ$. If $v$ is an eigenvector for an action of $\mu_3 \times \bbG_m$, then we define its weight to be the image in $\bbZ / 3 \bbZ \times \bbZ$ of the character by which $\mu_3 \times \bbG_m$ acts on $v$.
\begin{proposition}\label{prop_slodowy_slice}
	We can choose polynomials $c_2, c_8, c_{12}, c_{14}, c_{18}, c_{20}, c_{24}, c_{30} \in \bbQ[\frh]^H$ and $x, y, z \in \bbQ[\OpenSurface_0]$ with the following properties:
	\begin{enumerate}
		\item Each polynomial $c_i$ is homogeneous of degree $i$. The polynomials $c_i$ are algebraically independent and generate $\bbQ[\frh]^H$. The restriction of $c_i$ to $\OpenSurface_0$ has weight $(-i,  2i)$. The elements $x$, $y$, $z$ have weights $(0, 12)$, $(0, 30)$ and $(-1, 20)$, respectively.
		\item The restrictions of the 7 elements $c_2, c_8, c_{12}, c_{14}, c_{18}, c_{20}, c_{24}$ to $\OpenSurface_0$, together with $x, y, z$, form an algebraically independent set and generate $\bbQ[\OpenSurface_0]$. Moreover, the morphism $p_0 : \OpenSurface_0 \to B_0$ is given by the formula
		\[ y^2 = z^3 + x^5 + z (c_2 x^3 + c_8 x^2 + c_{14} x + c_{20}) + (c_{12} x^3 + c_{18} x^2 + c_{24} x + c_{30}). \]
	\end{enumerate}
\end{proposition}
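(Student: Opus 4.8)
The plan is to identify the map $p_0 : \OpenSurface_0 \to B_0$ with the $\bbG_m$-equivariant semiuniversal deformation of the Du Val singularity of type $E_8$; to read off the displayed normal form and the $\bbG_m$-weights from the explicit miniversal deformation of $y^2 = z^3 + x^5$; and finally to pin down the $\mu_3$-weights by transporting the $\mu_3$-action to the explicit model and invoking rigidity of the graded miniversal deformation. The \emph{main obstacle} will be this last point: semiuniversal (as opposed to universal) deformations come with non-unique classifying data, so matching the transported $\mu_3$-action with the explicit diagonal model genuinely requires the positive $\bbG_m$-grading, i.e.\ a $\mu_3 \times \bbG_m$-equivariant refinement of Slodowy's theorem --- analogous to the equivariant Slodowy-slice analysis in \cite{Tho13}; everything else is bookkeeping with weights.

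First I would recall the theorem of Brieskorn and Slodowy in the form needed. One checks, using the tables of \cite{Vin78} together with the structure theory of subregular nilpotents, that $e_0$ is subregular not only for the action of $G$ on $V$ but also as a nilpotent element of $\frh$; hence $\frz_\frh(f_0)$ has dimension $10$ and $\OpenSurface_0$ is the subregular Slodowy slice of $\frh$. Then $\OpenSurface_0 \cap \cN$, where $\cN \subset \frh$ is the nilpotent cone, is a surface with a Du Val singularity of type $E_8$ (analytically isomorphic to $\{ y^2 = z^3 + x^5 \}$), and the restriction $p_0$ of the adjoint quotient is a semiuniversal deformation of it. Since this singularity is quasi-homogeneous with strictly positive weights, its miniversal deformation is unique and algebraic, carries a canonical $\bbG_m$-action, and Slodowy's argument upgrades the isomorphism of deformations to a $\bbG_m$-equivariant one (everything being defined over $\bbQ$, we may work over $\bbQ$).

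Next I would write the miniversal deformation down explicitly. The Milnor algebra of $g = y^2 - z^3 - x^5$ is $\bbQ[x,z]/(x^4, z^2)$, with monomial basis $\{ x^i z^j \mid 0 \leq i \leq 3,\ 0 \leq j \leq 1 \}$, so the total space of the miniversal deformation is the hypersurface $\{ G = 0 \} \subset \bbA^3 \times \bbA^8$ with $G = g - \sum_{i,j} t_{ij} x^i z^j$. Giving $x$, $z$, $y$ the $\bbG_m$-weights $12$, $20$, $30$ (so that $y^2$, $z^3$, $x^5$ all have weight $60$) forces $t_{ij}$ to have weight $60 - 12 i - 20 j$; writing $c_d$ for the parameter $t_{ij}$ with $d = 30 - 6 i - 10 j$, the parameters have degrees $30, 24, 18, 12$ (for $j = 0$) and $20, 14, 8, 2$ (for $j = 1$), and $\{ G = 0 \}$ becomes exactly
\[ y^2 = z^3 + x^5 + z ( c_2 x^3 + c_8 x^2 + c_{14} x + c_{20} ) + ( c_{12} x^3 + c_{18} x^2 + c_{24} x + c_{30} ). \]
The multiset $\{ 2, 8, 12, 14, 18, 20, 24, 30 \}$ of these degrees coincides with the multiset of degrees of the fundamental invariants of $E_8$, so the $\bbG_m$-equivariant isomorphism $B_0 \cong \bbA^8$ of the previous paragraph identifies $\bbQ[\frh]^H = \bbQ[B_0]$ with $\bbQ[t_{ij}]$; the pulled-back parameters $c_2, \dots, c_{30}$ are then algebraically independent homogeneous generators of $\bbQ[\frh]^H$ of the correct degrees, hence a system of fundamental invariants (each $c_i$ of degree $i$, hence of $\bbG_m$-weight $2i$ on $\OpenSurface_0$), while the pulled-back coordinates $x$, $y$, $z$ are functions on $\OpenSurface_0$ of $\bbG_m$-weights $12$, $30$, $20$. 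Since $G$ is monic in $c_{30}$, the relation gives $\bbQ[\OpenSurface_0] \cong \bbQ[x, y, z, c_2, c_8, c_{12}, c_{14}, c_{18}, c_{20}, c_{24}]$, so these ten functions generate $\bbQ[\OpenSurface_0]$ and --- there being ten of them, and $\OpenSurface_0$ being $10$-dimensional --- are algebraically independent. This establishes (2), and (1) is now immediate.

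It remains to install the $\mu_3$-weights. The crucial point, which costs nothing, is that $p_0$ is $\mu_3$-equivariant for the twisted adjoint action $\zeta \cdot v = \zeta^{-1} \Ad(\theta(\zeta)) v$ on $\OpenSurface_0$ and the action $\zeta^{-d}$ on degree-$d$ invariants: indeed $\Ad(\theta(\zeta))$ is inner, so $c_i( \zeta^{-1} \Ad(\theta(\zeta)) v ) = \zeta^{-i} c_i(v)$ by homogeneity, whence $c_i$ has $\mu_3$-weight $-i$. Transporting the twisted $\mu_3$-action along $\OpenSurface_0 \cong \{ G = 0 \}$ yields a $\bbG_m$-equivariant $\mu_3$-action on $\{ G = 0 \}$ lying over the action $t_{ij} \mapsto \zeta^{-(30 - 6i - 10j)} t_{ij} = \zeta^{j} t_{ij}$ on $\bbA^8$. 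Its restriction to the special fibre $\{ y^2 = z^3 + x^5 \}$ is a graded automorphism, and a short computation with the grading (degrees $6$, $10$, $15$ for $x$, $z$, $y$, each occurring with multiplicity one in its degree) shows that every graded automorphism of this surface has the form $(x, y, z) \mapsto (x, y, c z)$; the action is non-trivial --- were it trivial, the induced $\mu_3$-action on first-order deformations $T^1 \cong T_0 B_0$ would be trivial, contradicting that $\mu_3$ acts non-trivially on $B_0$ --- so on the special fibre it is $(x, y, z) \mapsto (x, y, \zeta^{-1} z)$, the sign being forced by compatibility with the action on the base. The diagonal action $(x, y, z, t_{ij}) \mapsto (x, y, \zeta^{-1} z, \zeta^{j} t_{ij})$ of $\mu_3$ on $\{ G = 0 \}$ has these same three properties, so by rigidity of the $\bbG_m$-equivariant miniversal deformation of an isolated quasi-homogeneous hypersurface singularity --- any action of a (reductive, here finite) group on the singularity extends to the deformation, and an extension lying over a prescribed action on the base is unique up to a graded automorphism of the family over the base --- the two $\mu_3$-actions on $\{ G = 0 \}$ agree after absorbing such an automorphism into our isomorphism. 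Consequently $x$, $y$, $z$ may be taken to be $\mu_3$-eigenvectors, and the displayed equation then forces their weights to be $(0, 12)$, $(0, 30)$, $(-1, 20)$ (compare, e.g., the terms $x^5$, $y^2$ and $z c_2 x^3$). This completes the proof.
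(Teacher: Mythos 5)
Your proposal is correct in substance, but it reaches the $\mu_3$-structure by a genuinely different route than the paper. The paper's proof views $\OpenSurface_0$ as a vector space with origin $e_0$, computes the full list of $\mu_3\times\bbG_m$-weights on it by hand, compares with the weights on $\frh \dquot H$ to see that $dp_{0,e_0}$ has rank $7$ and annihilates precisely the weight spaces $(0,12)$, $(-1,20)$, $(0,30)$, and then runs the argument of \cite[\S 8.7, Theorem]{Slo80} equivariantly for the whole group $\mu_3\times\bbG_m$ at once, so the coordinates $x,z,y$ and their $\mu_3$-weights fall out of a single $(\mu_3\times\bbG_m)$-equivariant isomorphism with the explicit semi-universal deformation. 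You instead take the $\bbG_m$-equivariant Brieskorn--Slodowy identification and the Milnor-algebra normal form as the starting point, and recover the $\mu_3$-data a posteriori: transport the twisted action, show via Kodaira--Spencer equivariance that it is non-trivial on the central fibre, classify its possible forms there, and then match it with the diagonal model by uniqueness of equivariant extensions. Your route isolates the Lie-theoretic input to the bare semiuniversality statement and replaces the explicit weight computation by soft functoriality; the paper's route avoids any appeal to a separate rigidity principle because the equivariance it needs is carried by the same weight bookkeeping it must do anyway, and it yields the eigenspace decomposition of the slice explicitly. Two caveats you should repair when writing this up: first, it is not true that every graded automorphism of $y^2 = z^3 + x^5$ has the form $(x,y,z)\mapsto(x,y,cz)$ --- the full graded automorphism group is the one-parameter family $(x,y,z)\mapsto(t^6x,\,t^{15}y,\,t^{10}z)$ subject to no further constraint --- but the claim is true for automorphisms of order dividing $3$, which is all you use, so state it that way; second, the rigidity statement you invoke (an extension of a finite-group action to the graded semiuniversal deformation, lying over a prescribed action on the base, is unique up to an automorphism of the family) is precisely the uniqueness half of equivariant versality for linearly reductive groups, i.e.\ the same machinery that \cite[\S 8.7]{Slo80} supplies and that the paper ``follows through'' with the $(\mu_3\times\bbG_m)$-action, so it needs to be cited or proved with that level of precision rather than taken as free.
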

\begin{proof}
View $\OpenSurface_0$ as a vector space with origin $e_0$; then the action of $\mu_3 \times \bbG_m$ on $\OpenSurface_0$ is linear. By direct calculation, the weights of $\mu_3 \times \bbG_m$ in $\OpenSurface_0$ are as follows:
\[  (1, 4), (0, 12), (1, 16), (-1, 20), (0, 24), (1, 28), (0, 30), (0, 36), (1, 40),   (0, 48). \]
The weights of $\mu_3 \times \bbG_m$ on $\frh \dquot H$ are as follows:
\[ (1, 4), (1, 16), (0, 24), (1, 28), (0, 36), (1, 40), (0, 48), (0, 60). \]
By comparison with the results of \cite[\S 8.7]{Slo80}, we see that the differential $dp_{0, e_0}$ has rank 7, mapping the subspace where $\bbG_m$ acts with weights $4, 16, 24, 28, 36, 40, 48$ isomorphically into the Zariski tangent space $T_0(\frh \dquot H)$ and annihilating the subspace where $\bbG_m$ acts with weights $12, 20$, and $30$. 

Following through the argument of \cite[\S 8.7, Theorem]{Slo80} with this $(\mu_3 \times \bbG_m)$-action now shows that there is a $(\mu_3 \times \bbG_m)$-equivariant isomorphism $(\OpenSurface_0 \to B_0) \to (\OpenSurface'_0 \to B'_0)$, where $\OpenSurface_0' \to B_0'$ is the semi-universal $(\mu_3 \times \bbG_m)$-deformation of the singularity $y^2 = z^3 + x^5$ given by the formula
\[ y^2 = z^3 + x^5 + z (c_2 x^3 + c_8 x^2 + c_{14} x + c_{20}) + (c_{12} x^3 + c_{18} x^2 + c_{24} x + c_{30}), \]
where $x, y, z$ have weights $(0, 12)$, $(0, 30)$ and $(-1, 20)$, respectively, and each $c_i$ has weight $(-i, 2i)$. We fix our invariant polynomials $c_2, \dots, c_{30} \in \bbQ[\frh]^H$ to be the images under this isomorphism of the elements with the same names in the affine ring of $B_0'$. This completes the proof of the proposition.
\end{proof}
We fix elements $c_2, c_8, c_{12}, c_{14}, c_{18}, c_{20}, c_{24}, c_{30}$ and $x, y, z$ as in Proposition \ref{prop_slodowy_slice}. Thus we have identified $\OpenSurface_0$ explicitly as given by the equation
\begin{equation}\label{eqn_weierstrass_equation} y^2 = z^3 + x^5 + z (c_2 x^3 + c_8 x^2 + c_{14} x + c_{20}) + (c_{12} x^3 + c_{18} x^2 + c_{24} x + c_{30}).  \end{equation}
We view (\ref{eqn_weierstrass_equation}) as an affine Weierstrass equation over $\bbA^1_{B_0}$. This allows us to compactify $\OpenSurface_0$ to obtain a projective Weierstrass fibration (in the sense of \cite{Mir81}) $\Surface_0 \to \bbP^1_{B_0}$ which contains $\OpenSurface_0$ as an open subscheme. More precisely, $\OpenSurface_0$ is the complement in $\Surface_0$ of the zero section $\OO$ and the fibre $\FF$ above the point $x = \infty$ of $\bbP^1_{B_0}$. 

Let $\kappa_0 = E + \frz_\frh(F)$, and let $\Kostant_0 = \pi_0|_{\kappa_0}^{-1} : B_0 \to \frh$ denote the Kostant section for $\frh$. (Thus $\kappa = \kappa_0 \cap V$.) Let $\Cen_0 =  Z_H(\Kostant_0|_{B_0^\text{rs}})$, and let $\Lambda_0 =  X^\ast(\Cen_0)$. Then $\Lambda_0$ is an \'etale sheaf of $E_8$ root lattices on $B_0^\text{rs}$.
Observe that there is a $\Cen_0$-torsor $\Torsor_0 \to \OpenSurface_0^\text{rs}$ given by the formula
\[ \Torsor_0 = \{ (h, x) \in H \times B_0^\text{rs} \mid h \cdot \Kostant_0(x) \in \OpenSurface^\text{rs} \}. \]

Let $\eta_0$ be the generic point of $B_0$, and let $\overline{\eta}_0$ be a geometric point above it.
The existence of $\Torsor_0$ determines a $\pi_1(\eta_0, \overline{\eta}_0)$-equivariant map $X^\ast(\Cen_{0,\overline{\eta}_0}) \to \Pic(\OpenSurface_{0,\overline{\eta}_0})$. (Note that this \'etale fundamental group can be identified with $\Gal(k(\overline{\eta}_0) / k(\eta_0))$.) We endow $\Pic(\OpenSurface_{0,\overline{\eta}_0})$ with an intersection pairing as follows. There is a perfect intersection pairing on $\Pic(\Surface_{0,\overline{\eta}_0})$, which is a free $\bbZ$-module of rank 10 ($\Surface_{0,\overline{\eta}_0}$ is a rational elliptic surface, cf. \cite[\S 8]{Sch10}). Let $\Lattice_0 = \langle \OO, \FF \rangle \subset \Pic(\Surface_{0,\overline{\eta}_0})$ be the free $\bbZ$-module generated by $\OO$ and $\FF$, and let $\Lattice_0^\perp$ denote its orthogonal complement. Then $\Pic(\Surface_{0,\overline{\eta}_0}) = \Lattice_0 \oplus \Lattice_0^\perp$, and so the morphism $\Lattice_0^\perp \to \Pic(\Surface_{0,\overline{\eta}_0}) \to \Pic(\OpenSurface_{0,\overline{\eta}_0})$ induced by the open immersion $\OpenSurface_{0,\overline{\eta}_0} \to \Surface_{0,\overline{\eta}_0}$ is an isomorphism. We give $\Pic(\OpenSurface_{0,\overline{\eta}_0})$ the perfect, negative-definite pairing induced from that of $\Lattice_0^\perp$. 
\begin{lemma}\label{lem_isomorphism_of_lattices}
	The $\pi_1(\eta_0, \overline{\eta}_0)$-equivariant morphism $X^\ast(\Cen_{0, \overline{\eta}_0}) \to \Pic(\OpenSurface_{0, \overline{\eta}_0}) \cong \Lattice_0^\perp$ just constructed is an isomorphism that intertwines the pairing $( \cdot, \cdot ) : \Lambda_0 \times \Lambda_0 \to \bbZ$ with minus the intersection pairing on $\Pic(\OpenSurface_{0, \overline{\eta}_0}) \cong \Lattice_0^\perp$.
\end{lemma}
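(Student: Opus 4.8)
The plan is to recognise the lemma as an instance of the Grothendieck--Brieskorn--Slodowy theory of simultaneous resolutions of simple surface singularities. After fixing an embedding $k(\overline{\eta}_0) \hookrightarrow \bbC$ we may work over $\bbC$: the morphism of the lemma, its source, and its target are all unchanged under extension of the algebraically closed ground field, the last because $\Pic(\OpenSurface_{0,\overline{\eta}_0}) \cong \Lattice_0^\perp$ is a quotient of the N\'eron--Severi group of the projective surface $\Surface_{0,\overline{\eta}_0}$. Recall next that the Grothendieck--Springer resolution $\widetilde{\frh} \to \frh$ (whose fibre over $x$ parametrises the Borel subalgebras of $\frh$ containing $x$) carries a natural map to $\frt$, viewed as the Lie algebra of the universal Cartan of $H$, and that restricting it over $\OpenSurface_0 \hookrightarrow \frh$ yields, by the argument of \cite[\S 8.7]{Slo80} already used in the proof of Proposition \ref{prop_slodowy_slice}, a simultaneous resolution $\widetilde{\OpenSurface}_0 \to \OpenSurface_0 \times_{B_0} \frt$ of the semi-universal deformation of the $E_8$-singularity, compatible with the cover $\frt \to \frt/W = B_0$. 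This map is an isomorphism over the regular semisimple locus, and $\frt^{\text{rs}} \to B_0^{\text{rs}}$ is the Galois $W$-cover, so $\pi_1(\eta_0, \overline{\eta}_0)$ acts on $X^\ast(\Cen_{0,\overline{\eta}_0})$ through $W = W(H, \Cen_0)$ by the reflection representation. On the other side, Brieskorn's theorem (see \cite{Slo80}) identifies $\Pic(\OpenSurface_{0,\overline{\eta}_0})$ --- which equals $H^2(\OpenSurface_{0,\overline{\eta}_0}, \bbZ)$, since $\OpenSurface_{0,\overline{\eta}_0}$ is a smooth affine surface --- with the Milnor lattice of the $E_8$-singularity, a copy of the $E_8$ root lattice carrying the same $W$-action by reflections: the reflections are monodromy about the $A_1$-stratum of the discriminant, and the corresponding vanishing cycles are primitive classes of square $-2$. (Consistently, $\Lattice_0^\perp$ is negative definite, even and unimodular of rank $8$, hence a copy of $-E_8$.)

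Granting this, it remains to identify the morphism of the lemma with the canonical comparison map of the theory. Pulling $\Torsor_0$ back to $\widetilde{\OpenSurface}_0^{\text{rs}}$ (which over the geometric generic point is again $\OpenSurface_{0,\overline{\eta}_0}$, the resolution being an isomorphism there), conjugation by $h$ gives a canonical identification $\Cen_{0,\pi_0(v)} \cong Z_H(v)$, independent of the choice of $h \in (\Torsor_0)_v$ because $\Cen_0$ is abelian, and the tautological Borel promotes this to an identification of $X^\ast(\Cen_{0,\overline{\eta}_0})$ with $X^\ast(\frt)$. One then checks, directly from the construction of $\widetilde{\OpenSurface}_0$, that the resulting map $X^\ast(\frt) \to \Pic(\OpenSurface_{0,\overline{\eta}_0})$ is, up to a twist supported on $\widetilde{\OpenSurface}_0$, Slodowy's comparison map, which carries $( \cdot, \cdot )$ on $X^\ast(\frt)$ to the negative of the intersection pairing on the Milnor lattice $\Pic(\OpenSurface_{0,\overline{\eta}_0})$ and sends the fundamental weights to the basis dual, for the intersection pairing, to a system of vanishing cycles. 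Since $W(E_8)$ acts absolutely irreducibly on the reflection representation, the twist can alter this map only by a global scalar; evaluating on the $\sll$ local model of the $A_1$-degeneration --- where the pushout of the relevant $\bbG_m$-torsor along the unique positive root is the ample generator of the Picard group of the resolved node --- forces the scalar to be $\pm 1$. As this scalar enters the comparison of pairings quadratically, it already follows that the morphism of the lemma intertwines $( \cdot, \cdot )$ with minus the intersection pairing; and being an isometry of unimodular lattices of equal rank, it is an isomorphism.

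\textbf{Main obstacle.} The step I expect to be hardest is the identification in the previous paragraph: unwinding the definitions to see that the concrete torsor $\Torsor_0$, built from the Kostant section and the adjoint action of $H$, realises --- up to the inessential twist --- the tautological torsor on the Slodowy simultaneous resolution. That is where the geometric content of the lemma lies; everything else follows from standard Brieskorn--Slodowy theory together with the absolute irreducibility of the reflection representation of $W(E_8)$.
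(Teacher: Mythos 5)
Your strategy is recognisably in the same Brieskorn--Slodowy orbit as the paper's own argument (which also works with the Springer resolution of the slice and the map $X^\ast(T)=\Pic(H/P)\to\Pic(\widetilde{\OpenSurface}_{0,\frt})$), but as written it has genuine gaps, and they sit exactly at the two points you yourself flag as ``one then checks.'' First, a smaller point: the claim that $\Pic(\OpenSurface_{0,\overline{\eta}_0})=H^2(\OpenSurface_{0,\overline{\eta}_0},\bbZ)$ ``since $\OpenSurface_{0,\overline{\eta}_0}$ is a smooth affine surface'' is false as a general principle (for $\bbG_m\times\bbG_m$ the algebraic Picard group is trivial while $H^2=\bbZ$); here it is true, but only after an argument, e.g.\ via the projective model $\Surface_{0,\overline{\eta}_0}$ (this is how the paper gets $\Pic(\OpenSurface_{0,\overline{\eta}_0})\cong\Lattice_0^\perp$) together with unimodularity, and similarly the identification of the \'etale monodromy of $\pi_1(\eta_0,\overline{\eta}_0)$ on $\Pic$ of the geometric generic fibre with the topological monodromy on the Milnor lattice in Brieskorn's theorem is a comparison you would need to spell out before Schur's lemma can even be invoked.

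Second, and more seriously, the rigidity argument only reduces the lemma to a single normalization, and that normalization is not carried out. You need (i) that the map built from $\Torsor_0$ and the canonical comparison map coming from the simultaneous resolution are both equivariant for the same homomorphism $\pi_1(\eta_0,\overline{\eta}_0)\to W$, and (ii) that the resulting scalar is $\pm 1$. For (ii) you propose evaluating on an $A_1$-degeneration, but the lemma's map takes values in $\Pic$ of the \emph{geometric generic} fibre, so comparing a root class there with the exceptional curve of a resolved node in a special fibre requires a specialization statement for Picard/N\'eron--Severi groups of the family; no such mechanism appears in your sketch. This is precisely where the paper does real work: it specializes to the central fibre of the Springer resolution using the Maulik--Poonen specialization map (injective with torsion-free cokernel, cf.\ \cite{Mau12}), where Hinich's explicit description \cite{Hin91} identifies the image of each simple root with the exceptional curve $C_\alpha$ and gives $(C_\alpha,C_\beta)=-(\alpha,\beta)$; and it removes the scalar ambiguity altogether by checking directly that the torsor $\Torsor_0$ (after conjugating $\Kostant_0(\overline{\eta}_0)$ to the point $\overline{\xi}$ of $\frt$) coincides with the torsor pulled back from $H/P$, so the two maps agree on the nose. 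If you supply the specialization step and an honest local computation at the node (or, more simply, identify the two torsors directly as the paper does), your route closes up; at present those steps are assertions rather than proofs.
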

\begin{proof}
	To show that this morphism is an isomorphism, we use the existence of the Springer resolution. Recall that $T \subset H$ is a maximal torus with Lie algebra $\frt$ and root basis $S_H \subset \Phi(H, T)$. We write $P \subset H$ for the Borel subgroup corresponding to this choice of root basis. Let $\OpenSurface_{0, \frt}$ denote the pullback of $\OpenSurface_0 \to B_0$ along the finite map $\frt \to B_0 = \frt \dquot W(H, T)$, and define
	\[ \widetilde{\OpenSurface}_{0, \frt} = \{ (h P, x) \in H / P \times \OpenSurface_{0, \frt} \mid x \in \Ad(h)(\Lie P) \}. \]
	Then $\widetilde{\OpenSurface}_{0, \frt} \to \frt$ is the Springer resolution of the transverse slice $\OpenSurface_0$: it is smooth, and the natural morphism $\widetilde{\OpenSurface}_{0, \frt} \to \OpenSurface_{0,\frt}$ is proper and is an isomorphism away from the singular points in each fibre of $\OpenSurface_{0, \frt} \to \frt$ (cf. \cite[\S 5.3]{Slo80}). In particular, we can glue $\widetilde{\OpenSurface}_{0,\frt}$ with $\Surface_{0, \frt}$ to obtain a smooth proper surface $\widetilde{\Surface}_{0, \frt} \to \frt$ which is itself a resolution of $\Surface_{0, \frt} \to \frt$.
	
	We have $\Pic(H / P) = X^\ast(T)$. The projection $\widetilde{\OpenSurface}_{0, \frt} \to H / P$ therefore induces a map $X^\ast(T) \to \Pic(\widetilde{\OpenSurface}_{0,\frt})$. Let $\xi$ denote the generic point of $\frt$, and let $\overline{\xi}$ denote a lift of $\overline{\eta}_0$ to a geometric point above $\xi$. We claim that the composite 
	\[ X^\ast(T) \to \Pic(\widetilde{\OpenSurface}_{0, \frt}) \to \Pic(\widetilde{\OpenSurface}_{0, \frt, \overline{\xi}}) \]
	is an isomorphism. In fact, it suffices to prove the analogous claim above the central point $0$ of $\frt$, as we now explain. In order to avoid confusing notation, let us write $s$ for $0 \in \frt$ and $\overline{s}$ for a geometric point above it. Then there is a commutative diagram
	\[ \xymatrix{ X^\ast(T) \ar[d]^= \ar[r] & \Pic(\widetilde{\OpenSurface}_{0, \frt, \overline{\xi}}) \ar[d] \\
		X^\ast(T)  \ar[r] & \Pic(\widetilde{\OpenSurface}_{0, \frt, \overline{s}}), } \]
	where the right vertical map is given by specialisation and is injective with torsion-free cokernel (cf. \cite[Proposition 3.6]{Mau12}, which gives a specialisation morphism for N\'eron--Severi groups of the fibres of the proper morphism $\widetilde{\Surface}_{0, \frt} \to \frt$; we are using that the fibres of $\widetilde{\Surface}_{0, \frt} \to \frt$ are rational elliptic surfaces, and that in each geometric fibre the free rank-2 subgroup $\langle \OO, \FF \rangle$ of the Picard group splits off as a direct summand in a way that is compatible with specialisation). Since the source and target of the right vertical map are both free $\bbZ$-modules of rank 8, this map is an isomorphism. It follows that if the bottom map in the above diagram is an isomorphism, then so is the top map. 
	
	Here we can use the explicit description of the exceptional fibre of the morphism $\widetilde{\OpenSurface}_{0,\frt, \overline{s}} \to \OpenSurface_{0, \frt, \overline{s}}$ given in \cite{Hin91}. This exceptional divisor is a union of projective lines $C_\alpha$ indexed by simple roots $\alpha \in S_H$, the classes of which freely generate $\Pic(\widetilde{\OpenSurface}_{0, \frt, \overline{s}})$; and if $\alpha \in S_H$ is a simple root, then the image of $\alpha \in X^\ast(T)$ in the Picard group is the class of the curve $C_\alpha$ (this is \cite[5.3, Lemma]{Hin91}). 
	
	The same argument shows that the morphism $X^\ast(T) \to \Pic(\widetilde{\OpenSurface}_{0, \frt, \overline{\xi}}) \cong \Lattice_0^\perp$ intertwines the pairing on the root lattice $X^\ast(T)$ with the negative of the intersection pairing on $\Lattice_0^\perp$. Indeed, this can be checked in the central fibre of the Springer resolution, where it follows from the fact that the intersection paring between $C_\al$ and $C_\be$ is given by $-(\al, \be)$, itself a consequence of \cite[Proposition 5.2]{Hin91}. 
	
	It remains to show why the above results on the map $X^\ast(T) \to \Pic(\widetilde{\OpenSurface}_{0, \frt, \overline{\xi}})$ imply the desired properties of the map $X^\ast(\Cen_{\overline{\eta}_0}) \to \Pic(\OpenSurface_{0, \overline{\eta}_0})$. Since the points $\overline{\xi}$ and $\sigma_0(\overline{\eta}_0)$ of $\frh(k(\overline{\eta}_0))$ are conjugate under the action of $H(k(\overline{\eta}_0))$, what we really need to check is that the two maps $X^\ast(T) \to \Pic(\OpenSurface_{0, \frt, \overline{\xi}})$, one arising by pullback from $\Pic(H / P)$, and the other arising from the existence of the $T_{k(\overline{\eta}_0)}$-torsor
	\[ \Torsor' = \{ h \in H_{k(\overline{\eta}_0)} \mid \Ad(h)(\xi) \in \OpenSurface_{0, k(\overline{\eta}_0)} \}, \]
	are the same. This follows from the definitions. 
\end{proof}
As noted above, the morphism $p_0: \OpenSurface_0 \to B_0$ is $\mu_3$-equivariant, where $\mu_3$ acts on $B_0 = \Spec \bbQ[c_2, c_8, c_{12}, c_{14},\\ c_{18}, c_{20}, c_{24}, c_{30}]$ by the formula $\zeta \cdot c_i = \zeta^{-i} c_i$. If $\OpenSurface$ denotes the restriction of $\OpenSurface_0$ to $B = \Spec \bbQ[c_{12}, c_{18}, c_{24}, c_{30}]\\ = B_0^{\mu_3}$, then the action of $\mu_3$ on the fibres of $\OpenSurface \to B$ induced by $\theta$ is given by the formula $\zeta\cdot(x, y, z) = ( x, y, \zeta^{-1} z)$. If we write $\Surface \to B$ for the pullback of $\Surface_0$ to $B$, then $\mu_3$ acts on the fibres of $\Surface \to B$ by the same formula, and we can identify the fixed locus $\Surface^{\mu_3}$ with the projective completion $\Curve$ of the family of affine curves
\begin{equation}
\Curve^0 : y^2 = x^5 + c_{12} x^3 + c_{18} x^2 + c_{24} x + c_{30}
\end{equation}
that is the object of our study in this paper.

We now come back to Proposition \ref{prop_normalizing_invariant_polynomials}. We recall that we have defined $\Lambda = X^\ast(\Cen)$, where $\Cen = Z_H(\Kostant|_{B^\text{rs}})$ is a torus over $B^\text{rs}$. Thus $\Lambda$ is an \'etale sheaf of $E_8$ root lattices and may be identified with the pullback of $\Lambda_0$ along the closed immersion $B \to B_0$. The image of the stable $\bbZ / 3 \bbZ$-grading $\theta_{B^\text{rs}} : \mu_3 \to H_{B^\text{rs}}$ normalizes $\Cen$ and determines an elliptic $\mu_3$-action $\mu_3 \to \Aut(\Lambda)$ that we also denote by $\theta_{B^\text{rs}}$. To prove the proposition, we must show that there is an isomorphism $\Lambda_\theta \to \Jacobian[3]$ intertwining the pairing $\langle \cdot, \cdot \rangle$ on $\Lambda_\theta$, described in \S \ref{sec_root_lattice_of_type_E8}, with the Weil pairing on $\Jacobian[3]$.

We define the morphism $\Lambda_\theta \to \Jacobian[3]$ in stages. Since both $\Lambda$ and $\Jacobian[3]$ are locally constant \'etale sheaves over $B^\text{rs}$, it is enough to define this morphism at the generic point $\eta$ of $B^\text{rs}$. Let $\eta$ now denote the generic point of $B^\text{rs}$, and let $\overline{\eta}$ be a geometric point above it. Let $\Lattice = \langle \OO, \FF \rangle \subset \Pic(\Surface_{\overline{\eta}})$, and let $\Lattice^\perp \subset \Pic(\Surface_{\overline{\eta}})$ be its orthogonal complement. We have the following corollary of Lemma \ref{lem_isomorphism_of_lattices}:
\begin{corollary}\label{cor_intertwining_in_V}
	There is a $\pi_1(\eta, \overline{\eta})$-equivariant isomorphism $X^\ast(\Cen_{\overline{\eta}}) \to \Pic(\OpenSurface_{\overline{\eta}})$ that intertwines the Weyl-invariant pairing $(\cdot, \cdot )$ on the source with the negative of the intersection pairing of $\Lattice^\perp$ on the target.
\end{corollary}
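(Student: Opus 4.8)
The plan is to obtain Corollary \ref{cor_intertwining_in_V} as a direct consequence of Lemma \ref{lem_isomorphism_of_lattices}, by restricting everything along the closed immersion $B \hookrightarrow B_0$; no new geometric input is needed.

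First I would assemble the bookkeeping identifications. Note that $B^\text{rs} = B \cap B_0^\text{rs}$: by Proposition \ref{prop_normalizing_invariant_polynomials}(1), $\Delta_0^2$ is, up to a scalar, the restriction to $V$ of the Lie algebra discriminant of $\frh$, so the non-vanishing locus of $\Delta_0$ inside $B$ is exactly the preimage of the regular semisimple locus $B_0^\text{rs} \subset B_0$. Since $\kappa = \kappa_0 \cap V$ and $\pi_0|_V$ factors through $\pi : V \to B$, we have $\Kostant = \Kostant_0|_B$, and hence $\Cen = \Cen_0 \times_{B_0^\text{rs}} B^\text{rs}$; thus $\Lambda = X^\ast(\Cen)$ is the pullback of $\Lambda_0$ along $B^\text{rs} \hookrightarrow B_0^\text{rs}$ (as already recorded in the text above), and its Weyl-invariant pairing is the pullback of the one on $\Lambda_0$. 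Similarly, since $\OpenSurface = \OpenSurface_0 \times_{B_0} B$ and $\Surface = \Surface_0 \times_{B_0} B$, for any geometric point $\overline{\eta}$ lying over the generic point $\eta$ of $B^\text{rs}$, regarded also as a geometric point of $B_0^\text{rs}$, we have $\OpenSurface_{\overline{\eta}} = \OpenSurface_{0, \overline{\eta}}$ and $\Surface_{\overline{\eta}} = \Surface_{0, \overline{\eta}}$, compatibly with the distinguished classes $\OO$ and $\FF$; hence $\Lattice = \Lattice_0$ as sublattices of the common group $\Pic(\Surface_{\overline{\eta}})$, and the two induced negative-definite pairings on $\Lattice^\perp = \Lattice_0^\perp$ coincide.

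Next I would observe that the $\pi_1(\eta_0, \overline{\eta}_0)$-equivariant morphism $X^\ast(\Cen_{0, \overline{\eta}_0}) \to \Pic(\OpenSurface_{0, \overline{\eta}_0}) \cong \Lattice_0^\perp$ of Lemma \ref{lem_isomorphism_of_lattices} is the fibre at $\overline{\eta}_0$ of a morphism of locally constant \'etale sheaves of finite free $\bbZ$-modules over the connected scheme $B_0^\text{rs}$: it is built from the $\Cen_0$-torsor $\Torsor_0 \to \OpenSurface_0^\text{rs}$, which lives over $B_0^\text{rs}$, and $\Lattice_0^\perp$ is likewise a local system there. For a morphism of such sheaves, being an isomorphism, and intertwining a prescribed pair of locally constant pairings, are each conditions that may be checked at a single geometric point, the base being connected. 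Lemma \ref{lem_isomorphism_of_lattices} asserts both at $\overline{\eta}_0$, so they hold over all of $B_0^\text{rs}$.

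Finally I would pull this morphism of local systems back along $B^\text{rs} \hookrightarrow B_0^\text{rs}$ and take the fibre at $\overline{\eta}$. By the identifications above this is exactly a $\pi_1(\eta, \overline{\eta})$-equivariant morphism $X^\ast(\Cen_{\overline{\eta}}) \to \Pic(\OpenSurface_{\overline{\eta}}) \cong \Lattice^\perp$, and it is an isomorphism carrying $(\cdot, \cdot)$ to minus the intersection pairing, these properties being inherited from $B_0^\text{rs}$. I do not expect a genuine obstacle: all the content lies in Lemma \ref{lem_isomorphism_of_lattices}, and what remains is the routine verification that $\Cen$, $\Lambda$, $\Lattice$ and the relevant pairings for the family over $B$ are literally the restrictions of their analogues over $B_0$, together with the standard remark that Lemma \ref{lem_isomorphism_of_lattices}, although phrased at the generic point, really concerns a morphism of local systems over $B_0^\text{rs}$ and is therefore stable under base change.
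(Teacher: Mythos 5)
Your proposal is correct and is essentially the paper's own (implicit) argument: the corollary is stated as an immediate consequence of Lemma \ref{lem_isomorphism_of_lattices} once $\Lambda$, $\OpenSurface$, $\Surface$ and the pairings are identified with the pullbacks of $\Lambda_0$, $\OpenSurface_0$, $\Surface_0$ along $B \hookrightarrow B_0$, which is exactly the restriction-of-local-systems argument you spell out (the map of Lemma \ref{lem_isomorphism_of_lattices} being built from $\Torsor_0$ over all of $B_0^\text{rs}$, so its properties propagate from $\overline{\eta}_0$ to any geometric point of the connected base). One cosmetic remark: $B^\text{rs} = B \times_{B_0} B_0^\text{rs}$ holds directly from the definition of $B^\text{rs}$ as the non-vanishing locus of the restricted Lie algebra discriminant, so the appeal to Proposition \ref{prop_normalizing_invariant_polynomials}(1) is unnecessary and best avoided, since that proposition is what this corollary is ultimately feeding into.
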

We define a morphism $X^\ast(\Cen_{\overline{\eta}}) \to \Pic^0(\Curve_{\overline{\eta}})$ as the composite
\[ X^\ast(\Cen_{\overline{\eta}}) \to \Pic(\OpenSurface_{\overline{\eta}}) \to \Pic(\Curve_{\overline{\eta}}^0) \cong \Pic^0(\Curve_{\overline{\eta}}), \]
where the first arrow represents the map given by Corollary \ref{cor_intertwining_in_V}, the second denotes pullback along $\Curve^0 \to \OpenSurface$, and the third map is the natural isomorphism $\Pic(\Curve^0_{\overline{\eta}}) \cong \Pic(\Curve_{\overline{\eta}}) / \langle \Point_{\overline{\eta}} \rangle \cong  \Pic^0(\Curve_{\overline{\eta}})$. (Recall that $\Point$ is the section at infinity as defined in Section \ref{section_kostant_section}.) Equivalently, we can define the morphism as the composite
\[ X^\ast(\Cen_{\overline{\eta}}) \to \Pic(\OpenSurface_{\overline{\eta}}) \cong \Lattice^\perp \subset \Pic(\Surface_{\overline{\eta}}) \to \Pic(\Curve_{\overline{\eta}}), \]
where the last arrow is now pullback along $\Curve \to \Surface$.

The map $X^\ast(\Cen_{\overline{\eta}}) \to \Pic(\Curve_{\overline{\eta}})$ is $\pi_1(\eta, \overline{\eta})$-equivariant and factors through the $\theta_{B^\text{rs}}$-coinvariants in $X^\ast(\Cen_{\overline{\eta}})$, which are 3-torsion. We obtain a morphism $\Lambda_\theta \to \Jacobian[3]$ of locally constant \'etale sheaves on $B^\text{rs}$.
\begin{proposition}\label{prop_E8_pairing_equals_Weil_pairing}
	The morphism $\Lambda_\theta \to \Jacobian[3]$ just described is an isomorphism that intertwines the pairing $\langle \cdot, \cdot \rangle$ on $\Lambda_\theta$ with the Weil pairing on $\Jacobian[3]$.
\end{proposition}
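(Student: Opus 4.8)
The plan is to reduce the whole statement to the geometric generic point $\overline{\eta}$ of $B^\text{rs}$ and to play off Corollary \ref{cor_intertwining_in_V} --- which identifies $X^\ast(\Cen_{\overline{\eta}})$ with $\Lattice^\perp$ equipped with minus its intersection pairing --- against a description of the Weil pairing on $\Jacobian_{\overline{\eta}}[3]$ in terms of intersection theory on the rational elliptic surface $\Surface_{\overline{\eta}}$. Since $\Lambda_\theta$ and $\Jacobian[3]$ are locally constant \'etale sheaves on $B^\text{rs}$, it suffices to work at $\overline{\eta}$.

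First I would record that the isomorphism $X^\ast(\Cen_{\overline{\eta}}) \xrightarrow{\sim} \Lattice^\perp$ of Corollary \ref{cor_intertwining_in_V} is $\mu_3$-equivariant, where $\mu_3$ acts on the source through $\theta_{B^\text{rs}}$ (the elliptic Weyl element $w = \theta(\zeta)$) and on the target via the action $\zeta\cdot(x,y,z) = (x,y,\zeta^{-1}z)$ on $\Surface$. This is a matter of checking that the $\Cen_0$-torsor $\Torsor_0$, the Kostant section $\Kostant_0$, and the Springer-resolution picture used to prove Lemma \ref{lem_isomorphism_of_lattices} are all $\mu_3$-equivariant over $B_0$ for the action $\zeta\cdot c_i = \zeta^{-i}c_i$; restricting to the fixed locus $B = B_0^{\mu_3}$ induces the residual fibrewise $\mu_3$-action and gives the claim (the distinction between $w$ and $w^{-1}$ is immaterial, since $(1-w)\Lambda = (1-w^{-1})\Lambda$ and the pairing (\ref{eqn_reeders_pairing}) is insensitive to it). Taking $\mu_3$-coinvariants, and using that $\Lambda_\theta$ is $3$-torsion (from $(1+w+w^2)\gamma = 0$ one gets $3\gamma \in (1-w)\Lambda$), I obtain an isomorphism $\Lambda_\theta \xrightarrow{\sim} (\Lattice^\perp)_{\mu_3}$ under which $\langle\cdot,\cdot\rangle = \zeta^{((1-w)\cdot,\cdot)}$ corresponds to the analogous pairing built from (minus) the intersection form on $\Lattice^\perp$.

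Because $\mu_3$ acts trivially on $\Curve^0 \subseteq \Surface^{\mu_3}$, the restriction map $\Pic(\OpenSurface_{\overline{\eta}}) \to \Pic(\Curve^0_{\overline{\eta}}) \cong \Pic^0(\Curve_{\overline{\eta}})$ along $\Curve^0 \to \OpenSurface$ is $\mu_3$-invariant, so the composite $X^\ast(\Cen_{\overline{\eta}}) \to \Pic^0(\Curve_{\overline{\eta}})$ factors through $\Lambda_\theta$; as $\Lambda_\theta$ is $3$-torsion the image lies in $\Jacobian[3]$, and this is the map $\varphi$ of the proposition. The heart of the proof is then the identity $e_3(\varphi(\lambda),\varphi(\mu)) = \langle\lambda,\mu\rangle$, where $e_3$ denotes the Weil pairing. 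To prove it I would give a purely geometric computation of $e_3$ on the $3$-torsion classes coming from $\Surface_{\overline{\eta}}$: for $L, M \in \Lattice^\perp$ the line bundle $L \otimes w^\ast L \otimes (w^2)^\ast L$ is trivial (its class is $(1+w+w^2)$ times that of $L$, which vanishes in the torsion-free group $\Pic \Surface_{\overline{\eta}}$ because $1+w+w^2$ kills $\Lattice^\perp$), so restricting a trivialization along $\Curve_{\overline{\eta}}$ exhibits $L|_{\Curve_{\overline{\eta}}}$ as $3$-torsion and, together with the $\mu_3$-equivariant structure near $\Curve_{\overline{\eta}}$, produces the Kummer class of $\varphi(\lambda)$ in $H^1(\Curve_{\overline{\eta}},\mu_3)$; computing the cup product of these classes (which computes $e_3$) in terms of intersection numbers on $\Surface_{\overline{\eta}}$ then yields $\zeta^{(L\cdot(1-w)M)}$ up to an overall sign, which is fixed by the $-1$ in Corollary \ref{cor_intertwining_in_V}. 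I expect the cleanest implementation to proceed either via a local model of $\Surface$ along $\Curve$ (on which $\mu_3$ scales the normal direction) or via the cyclic triple cover $\Surface \to \Surface/\mu_3$ branched along $\Curve$, settings in which such cup-product formulae are standard.

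Finally, since $\langle\cdot,\cdot\rangle$ is non-degenerate (Lemma \ref{lem_reeder_pairing_is_symplectic}) and $|\Lambda_\theta| = 3^4 = |\Jacobian[3]|$, the pairing identity forces $\varphi$ to be injective, hence an isomorphism, completing the proof. The main obstacle is the middle step: producing the intersection-theoretic formula for the Weil pairing compatibly with the $\mu_3$-structure and matching signs with the formula (\ref{eqn_reeders_pairing}). If one prefers to avoid this computation, an alternative is a monodromy argument: the geometric monodromy of $\Lambda_\theta$ over $B^\text{rs}_{\overline{\bbQ}}$ is expected to be all of $Z_{\Aut(\Lambda)}(w)$, which acts transitively on $\Lambda_\theta \setminus \{0\}$ by Lemma \ref{lem_weyl_group}(3); both $\langle\cdot,\cdot\rangle$ and $\varphi^\ast e_3$ are then monodromy-invariant symplectic forms and so agree up to a scalar in $\bbF_3^\times = \{\pm 1\}$, with the remaining sign to be pinned down by a single explicit computation at a convenient point of the base.
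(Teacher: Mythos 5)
Your reduction to the geometric generic point, the use of Corollary \ref{cor_intertwining_in_V}, and the factorization through $\Pic(\OpenSurface_{\overline{\eta}})$ all match the paper's setup, and your closing observation (non-degeneracy of $\langle\cdot,\cdot\rangle$ plus $|\Lambda_\theta|=|\Jacobian[3]|=3^4$ forces $\psi$ to be an isomorphism once the pairing identity is known) is fine. But the heart of the proposition --- the identity $e_3(\psi(\lambda),\psi(\mu))=\langle\lambda,\mu\rangle$ --- is exactly the step you do not carry out. Your main route asserts that restricting a trivialization of $L\otimes w^\ast L\otimes (w^2)^\ast L$ to $\Curve_{\overline{\eta}}$ produces the Kummer class of $\psi(\lambda)$ and that the cup product of such classes ``yields $\zeta^{(L\cdot(1-w)M)}$ up to an overall sign'' by formulae that are ``standard'' for a local model or the triple cover $\Surface\to\Surface/\mu_3$; no such formula is proved or cited, and the compatibility of the $\mu_3$-equivariant structure with the Kummer class, as well as the sign normalization, is precisely where the content lies. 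Your fallback monodromy argument has the same status: the claim that the geometric monodromy on $\Lambda_\theta$ is all of $Z_{\Aut(\Lambda)}(w)\cong\mathrm{Sp}_4(\bbF_3)$ is only ``expected,'' is not established anywhere in the paper, and without it you cannot conclude that the two symplectic forms are proportional (nor even that $\psi^\ast e_3$ is nonzero, i.e.\ that $\psi\neq 0$). So as written there is a genuine gap at the decisive step.

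For comparison, the paper avoids any abstract cup-product or monodromy input and instead computes both sides explicitly: the nonzero classes of $\Lambda_\theta$ are represented by roots, roots correspond to sections $s_\alpha=(a(x),b(x))$ of the rational elliptic surface $\Surface_{\overline{\eta}}\to\bbP^1$ disjoint from the zero section, the quantity $\zeta^{((1-\theta(\zeta))\alpha,\beta)}$ is expressed through counting common zeroes of the pairs $(a,b)$ and $(c,d)$ (giving formula (\ref{eqn_putative_weil_pairing})), $\psi(\alpha)$ is identified as the class of $P_1+P_2-2\infty$ with $P_i$ determined by the roots of $a(x)$, and the Weil pairing of such classes is evaluated by the explicit formula of \cite[Lemma 5]{Bru14}, with a short case analysis on the cube roots of unity $\omega(\gamma)=c(\gamma)/a(\gamma)$ at the three zeroes of $b-d$. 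If you want to salvage your approach, you would either have to prove the intersection-theoretic cup-product formula (including the $\mu_3$-equivariance and the sign) in the specific geometry at hand, or prove the big-monodromy statement and then still do one explicit computation to fix the scalar --- in both cases an amount of work comparable to the paper's direct calculation.
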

\begin{proof}
	It suffices to check this statement at the geometric generic point of $B^\text{rs}$. Let us write $\langle \cdot, \cdot \rangle_W$ for the Weil pairing on $\Jacobian[3]_{\overline{\eta}} = \Pic(\Curve_{\overline{\eta}})[3]$. We will consider the factorization
		\[ X^\ast(\Cen_{\overline{\eta}}) \to \Pic(\OpenSurface_{\overline{\eta}}) \to \Pic(\Curve^0_{\overline{\eta}}). \]
	Let us write $( \cdot, \cdot)_{\Lattice^\perp}$ for the (negative-definite) intersection pairing on $\Pic(\OpenSurface_{\overline{\eta}})$. 
Applying Corollary \ref{cor_intertwining_in_V}, we think of $\Pic(\OpenSurface_{\overline{\eta}})$ as an $E_8$ root lattice with pairing
$( \cdot, \cdot)$ given by the negative of $( \cdot, \cdot)_{\Lattice^\perp}$. 
To prove the proposition, it is enough to show that the map $\Pic(\OpenSurface_{\overline{\eta}}) \to \Pic(\Curve^0_{\overline{\eta}})$ factors through an isomorphism $\psi : \Pic(\OpenSurface_{\overline{\eta}})_\theta \to \Pic^0(\Curve_{\overline{\eta}})[3]$ that satisfies the identity
	\begin{equation}\label{eqn_identity_of_mod_3_pairings} \zeta^{((1-\theta_{B^\text{rs}}(\zeta)) \alpha, \beta)} = \zeta^{-((1-\theta_{B^\text{rs}}(\zeta)) \alpha, \beta)_{\Lattice^\perp}}= \langle \psi(\alpha), \psi(\beta) \rangle_W 
	\end{equation}
	for all $\alpha, \beta \in \Pic(\OpenSurface_{\overline{\eta}})$. 	We will show this via an explicit calculation. Recall (cf. \S \ref{sec_root_lattice_of_type_E8}) that $\Pic(\OpenSurface_{\overline{\eta}})_\theta$ is isomorphic as an abelian group to $(\bbZ / 3 \bbZ)^4$. Its 80 non-trivial elements are in bijective correspondence with the $\theta_{B^\text{rs}}$-orbits of root vectors in $\Pic(\OpenSurface_{\overline{\eta}})$. To prove the result, it will suffice to show that these 80 non-trivial elements are in bijection with the non-trivial elements of $\Pic(\Curve_{\overline{\eta}})[3]$, and that if $\alpha, \beta \in \Pic(\OpenSurface_{\overline{\eta}})$ are two root vectors, then they satisfy the identity (\ref{eqn_identity_of_mod_3_pairings}).

	The root vectors $\alpha$ in $\Pic(\OpenSurface_{\overline{\eta}})$ correspond exactly to the sections $s_\alpha : \bbP^1_{\overline{\eta}} \to \Surface_{\overline{\eta}}$ of $\Surface_{\overline{\eta}} \to \bbP^1_{\overline{\eta}}$ that do not meet the zero section $\OO$ (see e.g.\ \cite{Shi10}). If $\ell_\alpha \subset  \Surface_{\overline{\eta}}$ denotes the image of $s_\alpha$, then the element of $\Lattice^\perp \subset \Pic(\Surface_{\overline{\eta}})$ corresponding to $\alpha$ is $\ell_\alpha - \OO - \FF$. Each of the sections $s_\alpha$ admits a unique expression $(a(x), b(x))$, where $a(x), b(x) \in k(\overline{\eta})[x]$ have degrees $2$ and $3$ respectively and satisfy $b(x)^2 = a(x)^3 + f(x)$. Suppose $s_\alpha = (a(x), b(x))$ and $s_\beta = (c(x), d(x))$ are two such sections. If $\alpha \neq \pm \beta$ then $b(x) - d(x)$ has 3 zeroes in $k(\overline{\eta})$, counted with multiplicity, and we have the formula
	\[ \begin{split} ( \alpha, \beta ) & = - (\ell_\alpha - \OO - \FF, \ell_\beta - \OO - \FF)_{\Lattice^\perp} = 1 - ( \ell_\alpha, \ell_\beta )_{\Lattice^\perp} \\ & = 1 - | \{ \gamma \in k(\overline{\eta}) \mid b(\gamma) = d(\gamma), a(\gamma) = c(\gamma) \}|, \end{split} \]
	(see e.g.\ \cite[\S 8.7]{Sch10}).
	
	Note that $\theta_{B^\text{rs}}(\zeta)(s_\alpha) = (\zeta^{-1} a(x), b(x))$. Therefore we have the formula
	\begin{equation}\label{eqn_putative_weil_pairing} \zeta^{((1-\theta_{B^\text{rs}}(\zeta))\alpha, \beta)} = \zeta^{-| \{ \gamma \in k(\overline{\eta}) \mid b(\gamma) = d(\gamma), a(\gamma) = c(\gamma) \} | + | \{ \gamma \in k(\overline{\eta}) \mid b(\gamma) = d(\gamma), \zeta^{-1} a(\gamma) = c(\gamma) \} |}. 
	\end{equation}
	Note that $\psi(\alpha)$ is the class of the divisor $P_1 + P_2 - 2 \infty$, where $a(x)$ has roots $\gamma_1, \gamma_2$ in $k(\overline{\eta})$ and $P_i = (\gamma_i, b(\gamma_i))$. 
	
	We compare this with the Weil pairing $\langle \psi(\alpha), \psi(\beta) \rangle_W$ on a case-by-case basis as follows. Assuming as we may that $\alpha \neq \theta_{B^\text{rs}}(\zeta^i) \beta$ for any $i \in \bbZ$, we see that $a(x) - c(x)$ is not the zero polynomial.  Let $\Sigma(\alpha, \beta)$ denote the set of zeroes in $k(\overline{\eta})$ of $b(x) - d(x)$; it has 3 elements. For each $\gamma \in \Sigma(\alpha, \beta)$, we have $a(\gamma)^3 = c(\gamma)^3$, hence $\omega(\gamma) = c(\gamma) / a(\gamma)$ is a 3rd root of unity. We now divide into 2 cases.
	
	For the first case, assume the values $\omega(\gamma)$, $\gamma \in \Sigma(\alpha, \beta)$, are pairwise distinct. In this case we see that both the Weil pairing $\langle \psi(\alpha), \psi(\beta) \rangle_W$ and the value given by (\ref{eqn_putative_weil_pairing}) are equal to 1. Indeed, the Weil pairing can be computed using \cite[Lemma 5]{Bru14}, while for (\ref{eqn_putative_weil_pairing}) this is obvious.
	
	For the second case, assume that some value $\omega(\gamma)$ occurs exactly twice. Since the pairing $\langle \cdot, \cdot \rangle$ does not depend on the choice of $\zeta$, we can suppose without loss of generality that it is $\zeta$ that appears twice. Then we see that (\ref{eqn_putative_weil_pairing}) gives a value of $\zeta^{-1}$ (if the other value of $\omega(\gamma)$ is 1) or $\zeta$ (if the other value of $\omega(\gamma)$ is $\zeta^{-1}$). Again, this agrees with the result of \cite[Lemma 5]{Bru14}. This concludes the proof.
\end{proof}
The only part of Proposition \ref{prop_normalizing_invariant_polynomials} that remains to be proved is that $\Delta_0 = \disc( x^5 + c_{12} x^3 + c_{16} x^2 + c_{24} x + c_{30})$ has the property that $\Delta_0^2$ is (up to scalar) the restriction to $V$ of the Lie algebra discriminant $\Delta$. Note that $\Delta_0^2$ and $\Delta$ both have degree 240, that $\Delta_0$ is irreducible, and that $\Delta_0^2$ and $\Delta$ vanish along the same points (as follows from e.g.\ \cite[\S 6.6]{Slo80}). This implies that they are equal up to scalar, as desired.
\subsection{Spreading out}\label{sec_spreading_out}

So far we have described the structure of the pair $(G, V)$ over $\bbQ$. We recall (see \S \ref{sec_definition_of_grading}) that this pair has a natural extension $(\intG, \intV)$ over $\bbZ$. We now observe that the above results hold if we work over $\bbZ[1/N]$ for an appropriate choice of integer $N \geq 1$. 

Indeed, we can choose the invariant polynomials $c_{12}, c_{18}, c_{24}, c_{30} \in \bbQ[V]^G$ to lie in $\bbZ[\intV]^{\intG}$ (by using the $\bbG_m$-action on $\OpenSurface_0$ described at the beginning of \S \ref{sec_proof_of_prop_on_invariants} to clear denominators). We set $\intB = \Spec \bbZ[c_{12}, \dots, c_{30}]$ and write $\pi : \intV \to \intB$ for the corresponding morphism (which extends the morphism $V \to B$ on $\bbQ$-fibres already denoted by $\pi$).  Note that this implies that $\Delta_0 = \disc(x^5 + c_{12} x^3 + c_{18} x^2 + c_{24} x + c_{30})  \in \bbZ[\intV]^{\intG}$ too. We define $\intB^\text{rs} = \Spec \bbZ[c_{12}, c_{18}, c_{24}, c_{30}][\Delta_0^{-1}]$. We extend $\Curve$ to a family of projective curves $\Curve \to \intB$ given by the same equation as before.

We can now find an integer $N \geq 1$ satisfying the following properties:
\begin{enumerate}
\item Let $S = \bbZ[1/N]$. Then each prime $p$ dividing the order of the Weyl group of $H$ (i.e.\ $p \in \{ 2, 3, 5, 7\})$ is a unit in $S$. In particular, the morphism $\Curve_S \to \intB_S$ is smooth exactly above $\intB_S^\text{rs}$. 
\item $S[\intV]^{\intG} = S[c_{12}, c_{18}, c_{24}, c_{30}]$. The Kostant section extends to a section $\Kostant : \intB_S \to \intV_S$ of $\pi$ that satisfies the following property: for any $f \in \intB(\bbZ) \subset \intB(S)$, $\Kostant(N \cdot f) \in \intV(\bbZ)$. We write $\kappa_S \subset \intV_S$ for the image of the Kostant section.
\item There exist open subschemes $\intV^\text{rs} \subset \intV^\text{reg} \subset \intV_S$ such that if $S \to k$ is a map to a field and $v \in \intV(k)$, then $v$ is regular if and only if $v \in \intV^\text{reg}(k)$ and $v$ is regular semisimple if and only if $v \in \intV^\text{rs}(k)$. Moreover, $\intV^\text{rs}$ is the locus in $\intV_S$ where $\Delta_0$ does not vanish. We note that this condition implies that $\Delta_0^2 = \Delta$ up to a unit in $S$.
\item Let $\intCen = Z_{\intH}(\Kostant_S|_{\intB_S^\text{rs}})$, a maximal torus in $\intH_{\intB_S^\text{rs}}$. We now write $\Lambda$ for $X^\ast(\intCen)$. Then $\Lambda$ is an \'etale sheaf of $E_8$ root lattices on $\intB_S^\text{rs}$, equipped with a pairing $( \cdot, \cdot ) : \Lambda \times \Lambda \to \bbZ$ and an elliptic $\mu_3$-action induced by $\theta_{\intB_S^\text{rs}}: \mu_3 \to \intH_{B_S^\text{rs}}$. (We write $\theta_{\intB_S^\text{rs}}$ for this elliptic $\mu_3$-action, but for ease of notation, we again write $\Lambda_\theta$ for the $\theta_{\intB_S^\text{rs}}$-covariants in $\Lambda$.)
\item There is a perfect pairing $\langle \cdot, \cdot \rangle : \Lambda_\theta \times \Lambda_\theta \to \mu_3$ given by the formula $\langle \lambda, \mu \rangle = \zeta^{((1 - \theta_{\intB_S^\text{rs}}(\zeta))(\lambda), \mu)}$ for any primitive 3rd root of unity $\zeta$. We can therefore extend  $\operatorname{GrLieE}_{f}$ and $\operatorname{Heis}_{f}$ to stacks over $\intB_S^\text{rs}$.
\item Let $\Jacobian \to \intB_S^\text{rs}$ denote the Jacobian of $\Curve_{\intB_S^\text{rs}}$. Then there is an isomorphism $\Lambda_\theta \cong \Jacobian[3]$ of locally constant \'etale sheaves on $\intB_S^\text{rs}$ that intertwines the pairing $\langle \cdot, \cdot \rangle$ on $\Lambda_\theta$ and the Weil pairing of $\Jacobian[3]$.
\item Let $\prin = \cO_\Jacobian( \Curve - \Point)$, a symmetric non-degenerate line bundle on $\Jacobian$, and let $\tprin = \prin^{\otimes 3}$. Let $\extJ \subset \mathscr{G}(\tprin)$ be the subgroup of 3-torsion elements (here $\Point$ and $\extJ (\tprin)$ are defined as in \S \ref{section_kostant_section}). Then $\extJ$ is an extension
\[ 1 \to \mu_3 \to \extJ \to \Jacobian[3] \to 1, \]
and the analogue of Proposition \ref{prop_Jacobian_and_Kostant_section} holds in $\operatorname{GrLieE}_{f_\tau, \intB^\text{rs}}$.
\end{enumerate}
With these data in hand, we can extend our constructions of orbits from sections of Jacobians. We can therefore apply the results of \S \ref{sec_twisting} for $S$-algebras $R$ (and not just $\bbQ$-algebras). We mention in particular:
\begin{enumerate}
\item Let $R$ be an $S$-algebra and let $f \in \intB^\text{rs}(R)$. Suppose we are given a tuple $(H', \theta', \gamma') \in \operatorname{GrLieE}_{f, R}$. If $(H', \theta') \cong (H_R, \theta_R)$, then $(H', \theta', \gamma')$ determines an element of $\intG(R) \backslash \intV_f(R)$, a set that is in turn in canonical bijection with the set $\ker(H^1(R, Z_{\intG}(\Kostant(f))) \to H^1(R, \intG))$. 
\item Let $R$ be an $S$-algebra, and let $f \in \intB^\text{rs}(R)$. Suppose that every locally free $R$-module is free. Then there is an injective map $\eta_f : \Jacobian_f(R) / 3 \Jacobian_f(R) \to \intG(R) \backslash \intV_f(R)$ that is compatible with base change on $R$.
\end{enumerate}

\subsection{Measures}\label{sec_measures}

The results of this section are used in the calculations of \S \ref{sec_counting_points} and \S \ref{sec_main_theorem}. Let $\omega_G$ be a generator for the (free rank-one $\bbZ$-module of) left-invariant top forms on $\intG$. Then $\omega_G$ is uniquely determined up to sign, and it determines Haar measures $dg$ on $G(\bbR)$ and on $G(\bbQ_p)$ for each prime $p$. 
\begin{proposition}\label{prop_tamagawa_number_of_G}
	The product $\vol(\intG(\bbZ) \backslash \intG(\bbR)) \cdot \prod_p \vol(\intG(\bbZ_p))$ converges absolutely, and equals 3.
\end{proposition}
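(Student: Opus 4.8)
The plan is to identify the product $\vol(\intG(\bbZ)\backslash\intG(\bbR))\cdot\prod_p\vol(\intG(\bbZ_p))$ with the Tamagawa number $\tau(G)$ of $G=\intG_\bbQ$, and then to compute $\tau(\SL_9/\mu_3)=3$. Recall from \S\ref{sec_definition_of_grading} that $\intG\cong\SL_9/\mu_3$ as a group scheme over $\bbZ$; in particular $G$ is semisimple, with $X^\ast(G)=0$ and $\pi_1(G)\cong\bbZ/3\bbZ$. Because $G$ is semisimple, a $\bbZ$-generator $\omega_G$ of the rank-one module of left-invariant top forms on $\intG$ is unique up to sign, so by the product formula the local measures $|\omega_G|_v$ assemble into a well-defined Tamagawa measure on $G(\bbA)$, with no convergence factors needed.

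I would first dispose of convergence. Since $\intG$ is smooth over $\bbZ$ and $\omega_G$ generates its top forms, $\vol(\intG(\bbZ_p),|\omega_G|_p)=|\intG(\bbF_p)|\,p^{-\dim G}$; and since $\intG_{\bbF_p}\cong\SL_9/\mu_3$ for all $p$, the central isogeny $\SL_9\to\SL_9/\mu_3$ is bijective on $\bbF_p$-points — for $p\ne3$ because $|\mu_3(\bbF_p)|=|H^1(\bbF_p,\mu_3)|$ while $H^1(\bbF_p,\SL_9)=0$, and for $p=3$ because the isogeny is purely inseparable — so $|\intG(\bbF_p)|=|\SL_9(\bbF_p)|=p^{\dim G}\prod_{k=2}^{9}(1-p^{-k})$ and therefore $\prod_p\vol(\intG(\bbZ_p))=\prod_{k=2}^{9}\zeta(k)^{-1}$, which converges absolutely.

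Next I would check that $G$ has class number one over $\bbQ$, i.e.\ $G(\bbA_f)=G(\bbQ)\cdot\intG(\widehat{\bbZ})$. This should follow from strong approximation for $\SL_9$ (whose real points are noncompact) applied to the central isogeny $\SL_9\to G$, together with the vanishing $H^1(\bbZ_p,\SL_9)=H^1(\bbQ_v,\SL_9)=0$: the Kummer sequences identify the cokernels of $\SL_9\to G$ on $\bbQ$-, $\widehat{\bbZ}$- and $\bbA_f$-points with $\bbQ^\times/(\bbQ^\times)^3$, $\prod_p\bbZ_p^\times/(\bbZ_p^\times)^3$ and the restricted product $\prod'_p\bbQ_p^\times/(\bbQ_p^\times)^3$ respectively, and a short computation with valuations shows the images of $G(\bbQ)$ and $\intG(\widehat{\bbZ})$ together generate the last group; combined with $\SL_9(\bbA_f)=\SL_9(\bbQ)\SL_9(\widehat{\bbZ})$ this yields the claim. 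Class number one then identifies $G(\bbQ)\backslash G(\bbA)$ with $\intG(\bbZ)\backslash(\intG(\bbR)\times\intG(\widehat{\bbZ}))$ — using $\intG(\bbZ)=G(\bbQ)\cap(\intG(\bbR)\times\intG(\widehat{\bbZ}))$ — compatibly with measures. Since $\intG(\bbZ)$ acts freely on $\intG(\bbR)$ by left translation, this quotient has volume $\vol(\intG(\bbZ)\backslash\intG(\bbR))\cdot\vol(\intG(\widehat{\bbZ}))=\vol(\intG(\bbZ)\backslash\intG(\bbR))\cdot\prod_p\vol(\intG(\bbZ_p))$. Hence the product in question equals $\tau(G)$.

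It remains to compute $\tau(G)=3$, for which the cleanest route is Kottwitz's formula $\tau(G)=|\pi_0(Z(\widehat{G})^{\Gal(\overline{\bbQ}/\bbQ)})|\cdot|\ker^1(\bbQ,Z(\widehat{G}))|^{-1}$. Here $Z(\widehat{G})$ is the diagonalizable group whose character group is $\pi_1(G)\cong\bbZ/3\bbZ$; since $G$ is split this is $\mu_3$ with trivial Galois action, so $|Z(\widehat{G})^{\Gal}|=3$, while $\ker^1(\bbQ,\mu_3)$ is trivial because a nontrivial homomorphism $\Gal(\overline{\bbQ}/\bbQ)\to\bbZ/3\bbZ$ is nontrivial at some place. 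This gives $\tau(G)=3$. (Alternatively one can deduce $\tau(\SL_9/\mu_3)=3$ from $\tau(\SL_9)=1$, classical for $\SL_n$, via Ono's theorem on the behaviour of Tamagawa numbers under a central isogeny.) The step I expect to be most delicate is the middle one — matching the explicit volume product with $\tau(G)$ — since it requires assembling three standard but fiddly ingredients (no convergence factors for semisimple $G$; class number one via strong approximation and $H^1(-,\SL_9)=0$; and the measure-preserving identification of the adelic quotient with an arithmetic one). If one instead argued directly through the isogeny $\SL_9\to\intG$ without invoking Tamagawa numbers, the subtle point would become computing its different — one finds $f^\ast\omega_G=\pm3\cdot\omega_{\SL_9}$, reflecting non-\'etaleness at $3$ — together with the observation that the archimedean factor $|3|_\infty=3$ is what produces the answer while the $3$-adic factor $|3|_3=1/3$ is exactly cancelled by the index $[\intG(\bbZ_3):\SL_9(\bbZ_3)]=3$.
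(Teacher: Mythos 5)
Your argument is correct and follows essentially the same route as the paper: establish that $\intG$ has class number one, identify the product $\vol(\intG(\bbZ)\backslash\intG(\bbR))\cdot\prod_p\vol(\intG(\bbZ_p))$ with the Tamagawa number of the simple group $G=\SL_9/\mu_3$, and evaluate it as $3$. The only real difference is presentational: you compute $\tau(G)=3$ via Kottwitz's formula (the paper instead cites Langlands for the simply connected case together with Ono's isogeny theorem, which is exactly the alternative you mention), and you fill in the convergence and class-number-one details that the paper states without proof.
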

\begin{proof}
	Note that $\intG$ has class number 1 (i.e. $\intG(\bbQ) \backslash \intG(\bbA^\infty) / \intG(\widehat{\bbZ})$ has 1 element). Therefore the product expresses the Tamagawa number of the simple group $G = \SL_9 / \mu_3$, which equals 3 (apply the results of \cite{Lan66} and \cite{Ono65}).
\end{proof}
Let $\omega_V$ be a generator for the $\bbZ$-module of left-invariant top forms on $\intV$. Then $\omega_V$ is also determined up to sign, and determines Haar measures $dv$ on $V(\bbR)$ and on $V(\bbQ_p)$ for every prime $p$. We write $\omega_B$ for the form $dc_{12} \wedge d c_{18} \wedge dc_{24} \wedge dc_{30}$ on $\intB$. It determines measures $df$ on $B(\bbR)$ and on $B(\bbQ_p)$ for every prime $p$.
\begin{proposition}\label{prop_integration_in_fibres}
	There exists a constant $W_0 \in \bbQ^\times$ such that for any prime $p$, the following properties hold:
	\begin{enumerate}
		\item Let $\intV(\bbZ_p)^\text{rs} = \intV(\bbZ_p) \cap \intV^\text{rs}(\bbQ_p)$, and define a function $m_p : \intV(\bbZ_p)^\text{rs} \to \bbR_{\geq 0}$ by the formula
		\[ m_p(v) = \sum_{v' \in \intG(\bbZ_p) \backslash (\intG(\bbQ_p) \cdot v \cap \intV(\bbZ_p))} \frac{| Z_{\intG}(v)(\bbQ_p)|}{|Z_{\intG}(v')(\bbZ_p)|}. \]
		Then $m_p(v)$ is locally constant.
		\item Let $\intB(\bbZ_p)^\text{rs} = \intB(\bbZ_p) \cap \intB^\text{rs}(\bbQ_p)$, and let $\psi_p : \intV(\bbZ_p)^\text{rs} \to \bbR_{\geq 0}$ be a bounded, locally constant function that is $\intG(\bbQ_p)$-invariant, in the sense that if $v, v' \in \intV(\bbZ_p)$ are conjugate under the action of $\intG(\bbQ_p)$, then $\psi_p(v) = \psi_p(v')$. Then we have the formula
		\[ \int_{v \in \intV(\bbZ_p)^\text{rs}} \psi_p(v) \, dv = | W_0 |_p \vol(\intG(\bbZ_p)) \int_{f \in \intB(\bbZ_p)^\text{rs}} \sum_{g \in \intG(\bbQ_p) \backslash \intV_f(\bbZ_p)} \frac{m_p(v) \psi_p(v)}{| Z_{\intG}(v)(\bbQ_p) |} \, df. \]
		\item Let $U_0 \subset G(\bbR)$ and $U_1 \subset B^\text{rs}(\bbR)$ be open subsets such that the morphism $\mu: U_0 \times U_1 \to V^\text{rs}(\bbR)$ defined by $(g, f) \mapsto g \cdot \Kostant(f)$ is injective. Then we have the formula
		\[ \int_{v \in \mu(U_0 \times U_1)} \, dv = | W_0 | \int_{g \in U_0} \, dg \int_{f \in U_1} \, df. \] 
	\end{enumerate}
\end{proposition}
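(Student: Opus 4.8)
The plan is to establish part (3) first, since essentially all the content is there, and then to obtain parts (1) and (2) by the now-standard analysis of $p$-adic orbits in $\theta$-representations (as carried out in \cite{Bha13, Tho15}). The starting point is the morphism $\mu : \intG \times \intB \to \intV$, $(g,f) \mapsto \Ad(g)\Kostant(f)$, a morphism of smooth schemes over $S = \bbZ[1/N]$. Since $\dim(\intG \times \intB) = \dim \intG + \dim \intB = 80 + 4 = 84 = \rank \intV$, we may write $\mu^\ast \omega_V = J \cdot (\omega_G \boxtimes \omega_B)$ for a regular function $J$ on $\intG \times \intB$, and the key claim is that $J$ is a nonzero \emph{constant}; this will be $W_0$.

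Two equivariance properties force this. First, $\mu(hg, f) = \Ad(h)\mu(g,f)$ for $h \in \intG$; since $\intG$ is semisimple it acts on $V$ with determinant $1$ and (being unimodular) $\omega_G$ is bi-invariant, so $J(hg,f) = J(g,f)$, i.e.\ $J$ is pulled back from $\intB$ --- and, as $\Kostant$ is a morphism defined on all of $\intB \cong \bbA^4_S$, in fact $J \in S[c_{12}, c_{18}, c_{24}, c_{30}]$. Second, let $m_t$ denote scalar multiplication by $t$ on $\intV$ and $\beta_t$ the induced contracting $\bbG_m$-action on $\intB$, for which $c_i$ is homogeneous of weight $i$. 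Because $\Kostant$ intertwines $\beta_t$ with the action $t \cdot x = t\,\Ad(\check{\rho}(t^{-1}))x$ on $\kappa$ recorded in \S\ref{section_kostant_section}, one computes $\mu(g, \beta_t f) = m_t\bigl(\mu(g\check{\rho}(t^{-1}), f)\bigr)$. Pulling $\omega_V$ back through this identity and using $m_t^\ast \omega_V = t^{84}\omega_V$, $\beta_t^\ast \omega_B = t^{12+18+24+30}\omega_B = t^{84}\omega_B$, together with the fact that right translation by $\check{\rho}(t^{-1})$ preserves $\omega_G$ (again by bi-invariance), we obtain $J(\beta_t f)\, t^{84} = t^{84} J(f)$, so $J$ is $\bbG_m$-invariant. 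As the weights of $\bbG_m$ on $\intB = \bbA^4$ are all positive, $J$ is a constant $W_0 \in S$; and $W_0 \neq 0$ because $\mu$ is dominant --- over $\overline{\bbQ}$ every regular semisimple element of $V$ is conjugate to $\Kostant$ of its image in $B^\text{rs}$, so $\mu$ surjects onto the dense open $V^\text{rs}$. (The essential point is the vanishing of the $\bbG_m$-weight of $J$: it rests on the coincidence $\sum_i \deg c_i = \dim V = 84$; equivalently $\tr_V \ad(\check{\rho}) = 0$, which follows at once from the identification $V \cong \wedge^3(\mathrm{std}_{\SL_9})$ since $\tr_{\wedge^3 \bbC^9}(X) = \binom{8}{2}\tr_{\bbC^9}(X)$ for $X \in \mathfrak{sl}_9$.)

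Part (3) is then immediate: the map $\mu : G(\bbR) \times B^\text{rs}(\bbR) \to V^\text{rs}(\bbR)$ has constant, nonzero Jacobian $W_0$, hence is everywhere a local diffeomorphism; on $U_0 \times U_1$, where it is injective by hypothesis, it is a diffeomorphism onto an open subset of $V^\text{rs}(\bbR)$, and change of variables gives $\int_{\mu(U_0 \times U_1)} \,dv = |W_0| \int_{U_0} \,dg \int_{U_1} \,df$.

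For the $p$-adic statements, the identity $\mu^\ast \omega_V = W_0 \cdot \omega_G \boxtimes \omega_B$ over $S$ supplies the Jacobian. For part (1): $m_p(v)$ depends only on the $\intG(\bbQ_p)$-orbit of $v$, and it is locally constant by the rigidity of $p$-adic orbits --- as $v$ varies in a small ball, the $\intG(\bbQ_p)$-orbit and its partition into $\intG(\bbZ_p)$-orbits, weighted by stabilizer masses, vary locally constantly, a consequence of the smoothness of $\pi : \intV^\text{rs} \to \intB^\text{rs}$ and the compactness of $\intG(\bbQ_p) \cdot v \cap \intV(\bbZ_p)$, argued exactly as in \cite{Bha13, Tho15}. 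For part (2): write $\int_{\intV(\bbZ_p)^\text{rs}} \psi_p \,dv = \int_{f \in \intB(\bbZ_p)^\text{rs}} \bigl( \int_{v \in \intV_f(\bbZ_p)} \psi_p \,d\nu_f \bigr) \,df$, where $d\nu_f$ is the measure on the fibre $\intV_f$ determined by the disintegration $dv = d\nu_f \wedge \pi^\ast df$; decompose each fibre integral over $\intG(\bbQ_p)$-orbits and then over $\intG(\bbZ_p)$-orbits; and use that, by the Jacobian identity (valid over $S$, hence over $\bbZ_p$ and its extensions), for any orbit representative $v'$ one has $\vol\bigl(\intG(\bbZ_p) \cdot v',\, d\nu_f\bigr) = |W_0|_p\, \vol(\intG(\bbZ_p)) / |Z_{\intG}(v')(\bbZ_p)|$ (the orbit map $g \mapsto \Ad(g)v'$ being a $Z_\intG(v')$-quotient with $p$-adic Jacobian $|W_0|_p$, as one sees by comparison with $\mu$ on the Kostant orbit). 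Summing over representatives and invoking the definition of $m_p$, together with the $\intG(\bbQ_p)$-invariance of $\psi_p$, recovers the asserted formula. I expect the only genuinely new step to be the Jacobian computation --- specifically, confirming that its $\bbG_m$-weight vanishes and that $W_0 \ne 0$ --- while the remaining manipulations with Haar and quotient measures, and the rigidity underlying part (1), run in parallel with the existing treatments of coregular representations.
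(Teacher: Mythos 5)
Your proposal is correct and follows essentially the same route as the paper, which proves the proposition by citing the Jacobian-of-the-orbit-map argument of \cite[Proposition 3.3]{Rom18} and \cite[Proposition 2.16]{Tho15}: there the form $\mu^\ast \omega_V$ is compared with $\omega_G \boxtimes \omega_B$ and the constancy and non-vanishing of the factor $W_0$ rest on exactly the coincidence $\dim_\bbQ V = \sum_i \deg c_i = 84$ that you identify, with the real and $p$-adic change-of-variables and orbit--stabilizer bookkeeping carried out as you describe. (Only your parenthetical ``equivalently $\tr_V \ad(\check\rho) = 0$'' is loose --- that trace vanishes automatically since $\check\rho$ factors through the semisimple group $G$ acting on $V$ through $\SL(V)$ --- but your argument never relies on it, since you verify $84 = 12+18+24+30$ directly.)
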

Here we write $| \cdot |_p$ for the usual $p$-adic absolute value on $\bbQ_p$ (with $| p |_p = p^{-1}$).
\begin{proof}
	All of these identities can be proved in the same way as in \cite[Proposition 3.3]{Rom18} and \cite[Proposition 2.16]{Tho15}. The key input in the proof is the equality $\dim_\bbQ V = \sum_i \deg c_i$, which holds here since $84 = 12 + 18 + 24 + 30$. 
\end{proof}

\section{Constructing integral orbit representatives} \label{sec_integral_orbits}

We continue with the notation of \S \ref{sec_stable_grading_of_E8}. Let $\Equations$ denote the set of polynomials $f(x) = x^5 + c_{12} x^3 + c_{18} x^2 + c_{24} x + c_{30} \in \bbZ[x]$ of non-zero discriminant. If $p$ is a prime, let $\Equations_p$ denote the set of polynomials $f(x) = x^5 + c_{12} x^3 + c_{18} x^2 + c_{24} x + c_{30} \in \bbZ_p[x]$ of non-zero discriminant. Thus we can identify $\Equations_p = \intB(\bbZ_p)^\text{rs} :=\intB(\bbZ_p) \cap B^\text{rs}(\bbQ_p)$. We endow $\mathscr{E}_p$ with the $p$-adic topology.

This section is devoted to the proof of the following theorem concerning the map $\eta_f$ of Corollary \ref{cor_construction_of_orbits_over_field}.
\begin{theorem}\label{thm_existence_of_integral_representatives_for_soluble_orbits}
	Let $N$ be an integer satisfying the properties listed in \S \ref{sec_spreading_out}. Then for each prime $p \nmid N$, for each polynomial $f(x) \in \Equations_p$, and for each $P \in \Jacobian_f(\bbQ_p)$, the orbit $\eta_f(P) \in G(\bbQ_p) \backslash V_f(\bbQ_p)$ intersects $\intV_f(\bbZ_p)$.
\end{theorem}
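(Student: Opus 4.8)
The plan is to follow the strategy sketched in the introduction: realise the problem of finding an integral orbit representative as the problem of showing a certain graded affine Springer fibre is non-empty, deform to a one-parameter family where the discriminant becomes square-free, solve the problem there by an explicit construction, and then propagate back to the original fibre using the extension theorem for reductive group schemes over the punctured spectrum of a regular $2$-dimensional local ring (\cite[Theorem 6.13]{Col79}).

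First I would fix $p \nmid N$ and $f(x) \in \mathscr{E}_p$, and translate the statement. By Corollary \ref{cor_construction_of_orbits_over_field} and Lemma \ref{lem_cohomological_description_of_orbits}, the orbit $\eta_f(P)$ corresponds to a class $\varphi_P \in \ker(H^1(\bbQ_p, Z_G(\sigma(f))) \to H^1(\bbQ_p, G))$, and via the equivalence of Theorem \ref{thm_heisenberg_grlieE_equivalence} to a tuple $(H_P, \theta_P, \gamma_P) \in \operatorname{GrLieE}_{f, \bbQ_p}$ built (as in the proof of Corollary \ref{cor_construction_of_orbits_over_general_base}) from the Heisenberg group $\mathscr{G}_P$ attached to $\tprin_P = t_P^\ast \prin \otimes \prin^{\otimes 2}$. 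What must be shown is that this tuple, a priori only defined over $\bbQ_p$, extends to a tuple over $\bbZ_p$ whose underlying graded group is $(\intH_{\bbZ_p}, \theta_{\bbZ_p})$; specialising the resulting $\gamma$-section at the generic point recovers $\eta_f(P)$, while over $\bbZ_p$ it gives the integral representative.

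Next I would introduce the deformation. Choose $\widetilde{f}(x) \in \bbZ_p[u][x]$ monic of the same shape (coefficients in $\bbZ_p[u]$, reducing to $f(x)$ at $u=0$) such that $\disc(\widetilde{f})$ is non-zero in $\bbF_p[u]$ and square-free in $\bbQ_p[u]$ — such a lift exists by a standard Bertini-type argument over the affine space $\intB$, since the discriminant locus is a hypersurface. Over the open subscheme $U \subset \Spec \bbZ_p[u]$ where $\disc(\widetilde{f})$ is invertible, the curve $\Curve_{\widetilde{f}}$ is smooth, its Jacobian $\Jacobian_{\widetilde{f}}$ is an abelian scheme, and the theta-group construction produces $\mathscr{G}_{\widetilde{P}}$ for a chosen lift $\widetilde{P} \in \Jacobian_{\widetilde{f}}(U)$ of $P$ (after possibly shrinking $U$ and using that $\Jacobian_{\widetilde{f}} \to U$ is smooth to lift the point); applying Theorem \ref{thm_heisenberg_grlieE_equivalence} over $U$ gives a tuple $(\widetilde{H}, \widetilde{\theta}, \widetilde{\gamma}) \in \operatorname{GrLieE}_{\widetilde{f}, U}$ restricting to $(H_P, \theta_P, \gamma_P)$ over the generic point of $u=0$.

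Then comes the main step: extend this tuple across the finitely many "bad" closed points of $\Spec \bbZ_p[u]$ lying in the vanishing locus of $\disc(\widetilde{f})$. Away from $u=0$, the discriminant is square-free, so along the generic points of the discriminant divisor I can use an \emph{explicit} construction of integral orbit representatives in the square-free-discriminant case — this is the analogue of the nodal-curve / square-free computations in \cite{Tho15}, \cite{Rom18}, and should be carried out directly using the Heisenberg description and the subregular Slodowy slice picture of \S \ref{sec_proof_of_prop_on_invariants} (the versal $E_8$-surface singularity $y^2 = z^3 + x^5 + \cdots$ degenerating to an $A_n$-curve singularity). This extends $(\widetilde{H}, \widetilde{\theta}, \widetilde{\gamma})$ to the complement in $\Spec \bbZ_p[u]$ of finitely many \emph{closed} points. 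Now $\Spec \bbZ_p[u]$ localised at each such closed point is the spectrum of a $2$-dimensional regular local ring, and a reductive group scheme (with the grading $\mu_3 \to \widetilde{H}$, which is a closed subgroup scheme by \cite[Lemma B.1.3]{Con14}) over the punctured spectrum extends uniquely to the whole spectrum by \cite[Theorem 6.13]{Col79}; the section $\widetilde{\gamma}$ of the Lie algebra, being a section of a coherent sheaf over a normal scheme that is defined in codimension one, extends as well (Hartogs). Pinning down that the extended graded group is \'etale-locally $(\intH, \theta)$ — hence globally isomorphic to it over $\bbZ_p[u]$, after checking $H^1$ vanishing, which holds because $\SL_9$ has trivial $H^1$ over a PID / local ring and the relevant class comes from the Heisenberg group — lets us regard $\widetilde{\gamma}$ as a point of $\intV_{\widetilde{f}}(\bbZ_p[u])$. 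Specialising at $u = 0$ produces the desired element of $\intV_f(\bbZ_p)$, and functoriality of the whole construction guarantees its $G(\bbQ_p)$-orbit is $\eta_f(P)$.

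The hard part will be the explicit square-free-discriminant construction of integral representatives along the generic points of the discriminant divisor: one must produce, in an honest bare-hands way (presumably via the twisted-vertex-operator/Heisenberg realisation and the Kostant and subregular Slodowy sections), an integral model of the triple $(H', \theta', \gamma')$ over the local ring at such a point, and verify its reduction is the correct (regular or subregular) element of $\intV \otimes \bbF_p$ — this is where all the real work lies, and it is exactly the point that was missing in \cite{Tho15, Rom18}. Secondary technical care is needed to check that the extension produced by \cite[Theorem 6.13]{Col79} is compatible with the grading $\theta$ and with the torus $\intCen$, and that the two extensions (the explicit one near the divisor and the Colliot-Th\'el\`ene--Sansuc one at the closed points) glue; but these should be formal once the uniqueness statements are invoked.
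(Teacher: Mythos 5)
Your overall strategy is the same as the paper's: deform to a family over $\Spec \bbZ_p[u]$ whose discriminant is non-zero in $\bbF_p[u]$ and square-free in $\bbQ_p[u]$, settle the square-free discriminant case separately, extend the triple across the remaining closed points via \cite[Theorem 6.13]{Col79}, and specialize at $u=0$, using $H^1(\bbZ_p,\intG)=\{1\}$ at the end. But there is a genuine gap at the very first step of the deformation: you deform only $f$ and then claim that, ``after possibly shrinking $U$ and using that $\Jacobian_{\widetilde f}\to U$ is smooth,'' the point $P\in\Jacobian_f(\bbQ_p)$ lifts to a section $\widetilde P\in\Jacobian_{\widetilde f}(U)$. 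Smoothness gives lifting through infinitesimal thickenings and over henselian local rings (e.g.\ a point of $\Jacobian_{\widetilde f}(\bbQ_p[[u]])$ reducing to $P$), but it does not produce a section over a Zariski open of the two-dimensional base passing through a prescribed point of the fibre $u=0$: abelian schemes are not rationally connected, and the specialization map from sections over $U$ to $\Jacobian_f(\bbQ_p)$ has no reason to hit $P$. A formal lift over $\bbQ_p[[u]]$ is useless here because the whole method needs the data over (an open of) $\Spec\bbZ_p[u]$ in order to see the integral structure at $p$ and invoke the two-dimensional extension theorem. The paper's fix is exactly the idea you are missing: first replace $P$, within its class modulo $3\Jacobian_f(\bbQ_p)$, by a point with an integral Mumford representation $f=u_0v_0+r_0^2$ (via \cite[Proposition 19]{Bha13}), and then deform the triple $(u_0,v_0,r_0)$ inside the scheme $D_\nu$ of such representations; this deforms the pair $(f,P)$ simultaneously and hands you the section $P_1\in\Jacobian_{f_1}(U_1)$ for free.

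Two further remarks. First, your ``hard part,'' the square-free discriminant case, is indeed a separate substantial argument (the paper's Proposition \ref{prop_existence_of_orbits_of_squarefree_discriminant}: Néron models, the Levi subgroup with derived group of type $A_2$, the Néron mapping property of $Z_{\intG}(\Kostant(f))$, and surjectivity of the action map in the special fibre), so as written your proposal leaves the base case of the whole induction unproved rather than merely technical; note also that it is applied over $\bbQ_p[\lambda]$ (where the discriminant is square-free), not along mixed-characteristic points, and the subregular slice plays no role there. Second, your final step of trivializing the extended group over all of $\bbZ_p[u]$ (``globally isomorphic to $(\intH,\theta)$ after checking $H^1$ vanishing'') is both unjustified as stated for $G=\SL_9/\mu_3$ over $\bbZ_p[u]$ and unnecessary: it suffices to pull the extended triple back along $u=0$ to an object of $\operatorname{GrLieE}_{\bbZ_p,f}$ and only then use $H^1(\bbZ_p,\intG)=\{1\}$ to produce the element of $\intV_f(\bbZ_p)$.
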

Most of \S \ref{sec_integral_orbits} is devoted to the proof of this theorem. We first prove the theorem for polynomials of square-free discriminant in \S \ref{sec_square_free_disc_case}. This special case is then used as an ingredient in the proof of the theorem in the general case in \S \ref{sec_general_case}. 

\subsection{The case of square-free discriminant}\label{sec_square_free_disc_case}

In this section we establish Theorem \ref{thm_existence_of_integral_representatives_for_soluble_orbits} for polynomials $f(x) \in \Equations_p$ of square-free discriminant. We first prove two useful lemmas.

\begin{lemma}\label{lem_trivial_class_group}
	Let $R$ be a Noetherian regular integral domain such that every locally free $R$-module of finite rank is free, and let $K = \Frac(R)$. Then the map $H^1(R, G) \to H^1(K, G)$ has trivial kernel.
\end{lemma}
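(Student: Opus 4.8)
The plan is to use the fact that $G = \SL_9/\mu_3$ to convert the statement into one about Brauer groups, and then apply the regularity and freeness hypotheses on $R$.

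First I would recall the exact sequence $1 \to \mu_3 \to \SL_9 \to G \to 1$, which gives rise to a commutative diagram of pointed cohomology sets with exact rows. Since $\SL_9$ is special (every $\SL_9$-torsor over any ring is Zariski-locally trivial), we have $H^1(R, \SL_9) = H^1(K, \SL_9) = \ast$, so the connecting maps induce injections $H^1(R, G) \hookrightarrow H^2(R, \mu_3)$ and $H^1(K, G) \hookrightarrow H^2(K, \mu_3)$. Thus it suffices to show the map $H^2(R, \mu_3) \to H^2(K, \mu_3)$ has trivial kernel on the image of $H^1(R,G)$; in fact I would show the stronger statement that $H^2(R,\mu_3) \to H^2(K,\mu_3)$ is injective. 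Here I use the Kummer sequence $1 \to \mu_3 \to \bbG_m \to \bbG_m \to 1$: since every locally free $R$-module of finite rank is free, $\Pic(R) = 0$, so $H^2(R, \mu_3) \hookrightarrow H^2(R, \bbG_m) = \Br(R)$, and similarly $H^2(K, \mu_3) \hookrightarrow \Br(K)$ (a field has trivial Picard group).

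Next I would invoke the standard fact that for a regular integral domain $R$ with fraction field $K$, the restriction map $\Br(R) \to \Br(K)$ is injective. This is a theorem of Auslander--Goldman (see also Grothendieck's \emph{Le groupe de Brauer II}); it relies crucially on $R$ being regular (so that $R$ is locally a UFD, hence the local rings have trivial Brauer-group kernel via purity in codimension one). Chasing the commutative square
\[ \xymatrix{ H^2(R, \mu_3) \ar@{^{(}->}[r] \ar[d] & \Br(R) \ar@{^{(}->}[d] \\ H^2(K, \mu_3) \ar@{^{(}->}[r] & \Br(K) } \]
shows $H^2(R,\mu_3) \to H^2(K,\mu_3)$ is injective, and combined with the injections from the $\SL_9$-sequence, the kernel of $H^1(R,G) \to H^1(K,G)$ is trivial.

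The main obstacle, such as it is, is bookkeeping rather than depth: one must be careful that the various connecting maps in non-abelian cohomology are compatible with the restriction maps $R \to K$, and that injectivity of a pointed-set map on the nose (trivial kernel) is what the $\SL_9$-torsor triviality buys us. The genuinely substantive input — injectivity of $\Br(R) \to \Br(K)$ for $R$ regular — I would simply cite. I would also note that the freeness hypothesis is used twice (once to kill $\Pic(R)$ and implicitly it guarantees $R$ is semilocal-like enough, though regularity alone drives the Brauer statement), and that the hypothesis ``$R$ Noetherian regular integral domain'' is exactly what is needed for the Auslander--Goldman result to apply.
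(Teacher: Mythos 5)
Your proposal is essentially the paper's own proof: the same exact sequence $1 \to \mu_3 \to \SL_9 \to G \to 1$, the same reduction to injectivity of $H^2(R,\mu_3) \to H^2(K,\mu_3)$ and then of $H^2(R,\bbG_m) \to H^2(K,\bbG_m)$, with the latter cited from Grothendieck/Auslander--Goldman for regular domains. One small correction: specialness of $\SL_9$ does \emph{not} give $H^1(R,\SL_9)=\ast$ over an arbitrary ring -- it only gives Zariski-local triviality, so torsors are classified by rank-9 locally free modules with trivialized determinant, and you need the hypothesis that all such modules are free to conclude triviality; this is in fact the primary place the freeness hypothesis enters (with $\Pic(R)=0$, used in the Kummer-sequence step, being a byproduct of the same hypothesis).
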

\begin{proof}
	The existence of the short exact sequence of smooth $R$-groups
	\[ \xymatrix@1{ 1 \ar[r]& \mu_3 \ar[r] & \SL_9 \ar[r] & G \ar[r] & 1,} \]
	together with the triviality of $H^1(R, \SL_9)$, reduces the problem to showing that $H^2(R, \mu_3) \to H^2(K, \mu_3)$ is injective, or even that $H^2(R, \bbG_m) \to H^2(K, \bbG_m)$ is injective. This follows from \cite[1.8]{Gro68}.
\end{proof}

\begin{lemma}\label{lem_inflation_restriction}
	Let $R$ be a complete discrete valuation ring, let $K = \Frac(R)$, and let $\Gamma$ be a quasi-finite \'etale commutative $R$-group that satisfies the `N\'eron mapping property': $\Gamma(R') = \Gamma(\Frac(R'))$ for any \'etale extension $R \to R'$ of discrete valuation rings. Then the natural map $H^1(R, \Gamma) \to H^1(K, \Gamma)$ is injective.
\end{lemma}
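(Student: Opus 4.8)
The plan is to argue at the level of torsors. A class in $H^1(R,\Gamma)$ is a $\Gamma$-torsor $P$ over $\Spec R$; since $\Gamma$ is quasi-finite étale (and separated) over $R$, so is $P$, because $P$ becomes isomorphic to $\Gamma$ étale-locally and separatedness of $P$ is equivalent to that of $\Gamma$ via the translation isomorphism $P\times_R P\cong\Gamma\times_R P$. We must show: if $P_K:=P\times_R\Spec K$ is a trivial $\Gamma_K$-torsor, then $P(R)\neq\emptyset$. Write $k$ for the residue field of $R$ and fix a strict henselisation $R^{\mathrm{sh}}$ of $R$. Since $R$ is a discrete valuation ring, $R^{\mathrm{sh}}$ is again a discrete valuation ring; its fraction field $K^{\mathrm{ur}}:=\Frac(R^{\mathrm{sh}})$ is the maximal unramified extension of $K$, and $\Spec R^{\mathrm{sh}}\to\Spec R$ is a pro-Galois cover with group $\mathscr{G}:=\Gal(k^{\mathrm{sep}}/k)\cong\Gal(K^{\mathrm{ur}}/K)$.

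I would record three routine facts. First, $P$ is trivial over $R^{\mathrm{sh}}$: the morphism $P\to\Spec R$ is smooth (étale), and its special fibre is a torsor under the smooth $k$-group $\Gamma_s$, hence non-empty, so the smooth $R^{\mathrm{sh}}$-scheme $P_{R^{\mathrm{sh}}}$ has non-empty closed fibre and therefore admits a section; thus $P(R^{\mathrm{sh}})\neq\emptyset$. Second, the Néron mapping property passes to the strict henselisation: writing $R^{\mathrm{sh}}=\varinjlim R_i$ as a filtered colimit of local-étale $R$-subalgebras, each $R_i$ is a discrete valuation ring with $\Frac(R_i)\subset K^{\mathrm{ur}}$, so $\Gamma(R_i)=\Gamma(\Frac R_i)$ by hypothesis, and since $\Gamma$ is locally of finite presentation over $R$ we may pass to the colimit to get $\Gamma(R^{\mathrm{sh}})=\Gamma(K^{\mathrm{ur}})$. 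Third, $P$ is separated and $R^{\mathrm{sh}}\hookrightarrow K^{\mathrm{ur}}$, so $P(R^{\mathrm{sh}})\to P(K^{\mathrm{ur}})$ is injective. Granting these, pick $x\in P(K)$ (it exists since $P_K$ is trivial) and any $y\in P(R^{\mathrm{sh}})$. Inside the $\Gamma(K^{\mathrm{ur}})$-torsor $P(K^{\mathrm{ur}})$ there is a unique $g\in\Gamma(K^{\mathrm{ur}})$ with $x=y\cdot g$; by the second fact $g\in\Gamma(R^{\mathrm{sh}})$, so $x':=y\cdot g\in P(R^{\mathrm{sh}})$ has image $x$ in $P(K^{\mathrm{ur}})$. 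For every $\sigma\in\mathscr{G}$, the points $\sigma(x')$ and $x'$ of $P(R^{\mathrm{sh}})$ both map to $\sigma(x)=x$ in $P(K^{\mathrm{ur}})$ (as $x\in P(K)$ is $\mathscr{G}$-fixed), so $\sigma(x')=x'$ by the third fact. Hence $x'\in P(R^{\mathrm{sh}})^{\mathscr{G}}$, and by faithfully flat (Galois) descent along $\Spec R^{\mathrm{sh}}\to\Spec R$ the point $x'$ descends to a point of $P(R)$. Therefore $[P]=0$ in $H^1(R,\Gamma)$, which proves the injectivity.

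The only place the Néron hypothesis enters is the identification $\Gamma(R^{\mathrm{sh}})=\Gamma(K^{\mathrm{ur}})$; everything else is formal. Accordingly the one point to handle with care is precisely this colimit argument at the strict henselisation, together with the (harmless) observation that a quasi-finite étale commutative $R$-group may be taken separated so that the smoothness/triviality input and the injectivity of specialisation apply. If one prefers a cohomological formulation, the same content can be packaged via Hochschild--Serre for the pro-Galois cover $\Spec R^{\mathrm{sh}}\to\Spec R$: this gives $H^1(R,\Gamma)\cong H^1(\mathscr{G},\Gamma(R^{\mathrm{sh}}))$ (using $H^1(R^{\mathrm{sh}},\Gamma)=0$, which is the first fact above), the analogous sequence over $K$ gives an inflation injection $H^1(\mathscr{G},\Gamma(K^{\mathrm{ur}}))\hookrightarrow H^1(K,\Gamma)$, and these sit in a commutative square whose left vertical arrow is the isomorphism $\Gamma(R^{\mathrm{sh}})\xrightarrow{\ \sim\ }\Gamma(K^{\mathrm{ur}})$; the injectivity of $H^1(R,\Gamma)\to H^1(K,\Gamma)$ drops out.
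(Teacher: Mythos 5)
Your argument is correct, but it is organised quite differently from the paper's. The paper disposes of the lemma in two lines: writing $j\colon \Spec K \to \Spec R$ for the open immersion, the N\'eron mapping property is read as the sheaf identity $\Gamma = j_*j^*\Gamma$ on the small \'etale site of $\Spec R$ (verifying this on stalks at a geometric point over the closed point is exactly your colimit computation $\Gamma(R^{\mathrm{sh}})=\Gamma(\Frac(R^{\mathrm{sh}}))$), and injectivity of $H^1(R,\Gamma)\to H^1(K,\Gamma)$ is then the first map in the five-term exact sequence of the Leray spectral sequence $H^p(R,R^qj_*j^*\Gamma)\Rightarrow H^{p+q}(K,\Gamma)$. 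You instead argue directly with torsors: trivialise over $R^{\mathrm{sh}}$, use the N\'eron property (transferred to $R^{\mathrm{sh}}$ by the limit over \'etale neighbourhoods) to upgrade a generic trivialisation to an integral one, and descend by Galois invariance. The key input is the same in both proofs --- everything reduces to comparing sections over $R^{\mathrm{sh}}$ and over $K^{\mathrm{ur}}$ --- but your packaging is more elementary (no spectral sequences), and since it never uses commutativity it actually proves triviality of the kernel for a not-necessarily-commutative $\Gamma$; here commutativity makes trivial kernel equivalent to injectivity, so this suffices. Two points you gloss over, neither serious: the final descent should be carried out at a finite Galois level $R_i\subset R^{\mathrm{sh}}$ (possible because $P$ is of finite presentation and $P(R_i)\to P(R^{\mathrm{sh}})$ is injective); and instead of ``taking $\Gamma$ separated'' you can observe that injectivity of $P(R^{\mathrm{sh}})\to P(K^{\mathrm{ur}})$ already follows from the torsor property together with the injectivity half of the N\'eron hypothesis (two sections agreeing generically differ by a section of $\Gamma$ that is generically the identity), so no separatedness discussion is needed. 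Your concluding Hochschild--Serre reformulation for the cover $\Spec R^{\mathrm{sh}}\to\Spec R$ is a third valid packaging, still distinct from the paper's Leray sequence for $j$.
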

\begin{proof}
	Let $j : \Spec K \to \Spec R$ be the natural open immersion. The `N\'eron mapping property' says that $A = j_\ast j^\ast A$. The map $H^1(R, A) \to H^1(K, A)$ is therefore injective because it is the first map in the 5-term exact sequence associated to the spectral sequence $H^p(R, R^q j_\ast j^\ast A) \Rightarrow H^{p + q}(K, A)$. 
\end{proof}
The following proposition contains a special case of Theorem \ref{thm_existence_of_integral_representatives_for_soluble_orbits} when $R = \bbZ_p$.
\begin{proposition}\label{prop_existence_of_orbits_of_squarefree_discriminant}
	Let $R$ be a discrete valuation ring in which $N$ is a unit. Let $K = \Frac(R)$, and let $\ord_K : K^\times \twoheadrightarrow \bbZ$ be the normalized discrete valuation. Let $f \in \intB(R)$. Suppose that $\ord_K \Delta_0(f) \leq 1$. Then:
	\begin{enumerate}
		\item If $x \in \intV_f(R)$, then $Z_{\intG}(x)(K) = Z_{\intG}(x)(R)$.
		\item The  natural map $\alpha : \intG(R) \backslash \intV_f(R) \to \intG(K) \backslash \intV_f(K)$ is injective and its image contains $\eta_f(\Jacobian_f(K) / 3 \Jacobian_f(K))$.
		\item  If further $R$ is complete and has finite residue field then the image of $\alpha$ equals $\eta_f(\Jacobian_f(K) / 3 \Jacobian_f(K))$.
	\end{enumerate}
\end{proposition}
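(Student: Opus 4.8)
The plan is to exploit the hypothesis $\ord_K\Delta_0(f)\le 1$ in two ways. Geometrically, it forces $x\bmod\frp\in\intV^\text{reg}$: the $\intG$-invariant function $\Delta_0\circ\pi$ has no linear term, and $d(\Delta_0\circ\pi)$ vanishes along the non-regular locus of $\intV$ (which maps into the singular locus of the discriminant divisor of $\intB$, cf.\ \cite[\S 6]{Slo80} together with the rank computation for $dp_{0,e_0}$ in the proof of Proposition \ref{prop_slodowy_slice}), so $\Delta_0\circ\pi$ lies in the square of the maximal ideal at every non-regular point and hence the section $x$ cannot reduce into the non-regular locus. Arithmetically, $\ord_K\Delta_0(f)\le 1$ means $\Curve_f$ has at worst a single node of thickness one in its reduction, so $\Jacobian_f$ has semi-abelian reduction, its N\'eron model $\cN$ over $R$ has connected special fibre, and the $3$-torsion $\cN[3]$ is a quasi-finite \'etale $R$-group with the ``N\'eron mapping property'' $\cN[3](R')=\Jacobian_f[3](\Frac R')$ of Lemma \ref{lem_inflation_restriction}. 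Since $3$ is a unit in $R$, group schemes of $3$-power order over $R$ are automatically \'etale.

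To prove (1) I would first treat $x=\Kostant(f)$. As $\Kostant(f)\bmod\frp\in\intV^\text{reg}$, the stabilizer $Z_{\intG}(\Kostant(f))$ is \'etale over $R$ with generic fibre $\Jacobian_f[3]$, and I claim it is isomorphic to $\cN[3]$ over $R$. Over $\intB^\text{rs}$ this follows from Proposition \ref{prop_normalizing_invariant_polynomials} together with the identification $Z_{\intG}(\Kostant|_{\intB^\text{rs}})\cong\Lambda_\theta$ of \S\ref{section_kostant_section}; one propagates the isomorphism across the codimension-one boundary using that $\cN[3]$ has the N\'eron mapping property while $Z_{\intG}(\Kostant(f))$ is smooth, so that the isomorphism over $K$ extends to an \'etale $R$-morphism $Z_{\intG}(\Kostant(f))\to\cN[3]$, necessarily an open immersion, which is an isomorphism because the two special fibres have the same order (comparing the centralizer of the reduced Kostant section with the $3$-torsion of the semi-abelian $\cN_s$). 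Granting the claim, the N\'eron property gives $Z_{\intG}(\Kostant(f))(R)=\Jacobian_f[3](K)=Z_{\intG}(\Kostant(f))(K)$. For a general $x\in\intV_f(R)$, note that $x$ and $\Kostant(f)$ are conjugate over $\overline K$; since $Z_{\intG}(\Kostant(f))$ is commutative, any such $\overline K$-conjugation induces a $K$-rational isomorphism $Z_{\intG}(\Kostant(f))\xrightarrow{\sim}Z_{\intG}(x)$, and repeating the previous argument with this identification of generic fibres yields $Z_{\intG}(x)\cong\cN[3]$ over $R$, whence (1).

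For (2), injectivity of $\alpha$ is the usual torsor argument: if $x'=g_0\cdot x$ with $g_0\in\intG(K)$, the $R$-scheme of elements carrying $x$ to $x'$ is an \'etale torsor under $Z_{\intG}(x)\cong\cN[3]$ possessing a $K$-point, hence trivial over $R$ by Lemma \ref{lem_inflation_restriction}; an $R$-point produces $g\in\intG(R)$ with $g\cdot x=x'$. For the image assertion: by the first paragraph $\intV_f(R)=\intV_f^\text{reg}(R)$ and $\intV_f^\text{reg}$ is a single $\intG$-orbit scheme over $R$ (regular elements of a fixed fibre being conjugate over an algebraically closed field, via Jordan decomposition), so Lemma \ref{lem_cohomological_description_of_orbits} is valid over $R$: $\intG(R)\backslash\intV_f(R)\cong\ker(H^1(R,Z_{\intG}(\Kostant(f)))\to H^1(R,\intG))$, compatibly with $\alpha$ and with $Z_{\intG}(\Kostant(f))\cong\cN[3]$. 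Given $P\in\Jacobian_f(K)=\cN(R)$, connectedness of $\cN_s$ makes $3\colon\cN\to\cN$ fppf-surjective, so the Kummer class of $P$ lies in the image of $\cN(R)/3\cN(R)\to H^1(R,\cN[3])$; its image in $H^1(R,\intG)$ dies in $H^1(K,\intG)$ (as $\eta_f(P)$ lies in the kernel there), hence is trivial by Lemma \ref{lem_trivial_class_group}. Thus this class lies in the kernel above, producing an integral representative of $\eta_f(P)$.

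For (3), assume in addition that $R$ is complete with finite residue field $k$; by (2) only the inclusion $\im\alpha\subseteq\eta_f(\Jacobian_f(K)/3\Jacobian_f(K))$ remains. An integral orbit corresponds under the extended Lemma \ref{lem_cohomological_description_of_orbits} to a class in $\ker(H^1(R,\cN[3])\to H^1(R,\intG))\subseteq H^1(R,\cN[3])$; by Lang's theorem $H^1(R,\cN)=H^1(k,\cN_s)=0$, the special fibre $\cN_s$ being connected and smooth over the finite field $k$, so $\cN(R)/3\cN(R)\to H^1(R,\cN[3])$ is surjective, the class is the Kummer class of some $P\in\cN(R)/3\cN(R)=\Jacobian_f(K)/3\Jacobian_f(K)$, and the orbit equals $\eta_f(P)$. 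The step I expect to be the crux is the identification $Z_{\intG}(\Kostant(f))\cong\cN[3]$ over $R$ in (1): Proposition \ref{prop_normalizing_invariant_polynomials} only gives it over $\intB^\text{rs}$, and carrying it across the boundary when $\ord_K\Delta_0(f)=1$ is exactly the point at which the semistable reduction of $\Jacobian_f$ (trivial component group, tame unipotent inertia action on $\Jacobian_f[3]$) must be brought to bear.
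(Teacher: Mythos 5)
Your overall skeleton for parts (2) and (3) — the cohomological description of $\intG(R)\backslash\intV_f(R)$, injectivity via Lemma \ref{lem_inflation_restriction}, the Kummer sequence for the N\'eron model together with Lemma \ref{lem_trivial_class_group}, and Lang's theorem when the residue field is finite — is the paper's argument. But the two structural inputs you rely on are exactly where the content lies, and both have genuine gaps. First, your proof that every $x\in\intV_f(R)$ has regular reduction does not work as stated. The supporting claim that the non-regular locus of $\intV$ maps into the singular locus of the discriminant divisor of $\intB$ is false: take $x_k$ whose semisimple part $y_s$ has centralizer with derived group of type $A_2$ and whose nilpotent part is a \emph{non-regular} nilpotent of $\frz_{\intfrh}(y_s)(\theta,1)$ (in the $\frs\frl_3$-model, two of the three coordinates $a,b,c$ vanish); then $\pi(x_k)$ has exactly one double root, which is a smooth point of the discriminant hypersurface. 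More seriously, even if one grants that $d(\Delta_0\circ\pi)$ restricted to the special fibre vanishes at every non-regular point, that is not enough: you need $\Delta_0\circ\pi\in\ffrm_v^2$ in the local ring of the mixed-characteristic scheme $\intV_R$ at $v$, so that pullback along a section through $v$ lands in $\varpi^2R$, and fibrewise second-order vanishing only controls the image of $\Delta_0\circ\pi$ in $\ffrm_v/\ffrm_v^2$ modulo the line spanned by $\varpi$ (compare $F=u^2+\varpi$ over $R[u]$, which vanishes to order $2$ in the special fibre at $u=0$ yet pulls back to valuation $1$ along the section $u=0$). Closing this gap requires either showing that non-regular points of the special fibre lift to non-regular points in characteristic zero, or the explicit analysis the paper carries out: produce the Levi $\underline L$ with $\intfrh_{0,R}^{\text{der}}\cong\frs\frl_{3,R}$ graded by $\zeta\mapsto\Ad(\diag(1,\zeta,\zeta^2))$, write the projection of $x$ as the matrix with entries $a,b,c$, and use $\ord_K\,(abc)^2=\ord_K\Delta(x)=2$ to conclude that exactly one of $a,b,c$ is divisible by $\varpi$, so the reduction is regular nilpotent in $\frz_{\intfrh}(y_s)$.

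Second, your justification that $\intV_f^{\text{reg}}$ is a single orbit over $R$ (so that Lemma \ref{lem_cohomological_description_of_orbits} extends) — ``regular elements of a fixed fibre being conjugate over an algebraically closed field, via Jordan decomposition'' — is precisely the delicate point in the graded setting and is false for the wrong isogeny type: in the $\bbZ/3\bbZ$-graded $\frs\frl_3$ with $\xi(\zeta)=\Ad(\diag(1,\zeta,\zeta^2))$, the group $\SL_3^\xi$ has three orbits on the regular nilpotents of the degree-one piece, and only $\PGL_3^\xi$ acts transitively. The paper obtains transitivity of $Z_{\intG}(y_s)=Z_{\intH}(y_s)^\theta$ on regular nilpotents of $\frz_{\intfrh}(y_s)(\theta,1)$ from the $\theta$-equivariant sequence $1\to C\to Z_{\intH}(y_s)\to\PGL_3\to 1$ with $C$ a $\theta$-elliptic torus, which forces surjectivity on $\theta$-fixed points; some such argument is needed. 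Similarly, your identification $Z_{\intG}(\Kostant(f))\cong\cN[3]$, which you correctly flag as the crux, is only asserted at the decisive step: extending the generic-fibre isomorphism by the N\'eron property is fine, but proving the extension is an isomorphism requires showing the special fibre of $Z_{\intG}(\Kostant(f))$ has order $3^3$, and that is again a computation with $\underline L$ (its centre is a rank-$6$ torus carrying an elliptic $\mu_3$-action, whose fixed points have order $3^3$), not something that follows from merely ``comparing'' with $\cN_s[3]$, whose order $3^3$ is the easy half of the comparison.
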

\begin{proof}
	We first note that the first two parts of the proposition hold for $R$ if they hold for the completion $\widehat{R}$ of $R$.  To see this, we use the equality $\intG(\widehat{K}) = \intG(K) \intG(\widehat{R})$, where $\widehat{K} = \Frac \widehat{R}$ (see \cite[Th\'eor\`eme 3.2]{Nis84}). We therefore assume that $R$ is complete.
	
	If $\ord_K \Delta_0 (f) = 0$, then $\Jacobian_f$ is a smooth projective $R$-scheme and $\Jacobian_f(R) = \Jacobian_f(K)$. In particular the map $\Jacobian_f(K) / 3 \Jacobian_f(K) \to \intG(K) \backslash \intV_f(K)$ factors through $\intG(R) \backslash \intV_f(R)$. We therefore just need to check that $\intG(R) \backslash \intV_f(R) \to \intG(K) \backslash \intV_f(K)$ is injective. In fact, the map $H^1(R, \Jacobian_f[3]) \to H^1(K, \Jacobian_f[3])$ is injective (a special case of Lemma \ref{lem_inflation_restriction}), so this follows from Lemma \ref{lem_cohomological_description_of_orbits}.
	
	If the residue field of $R$ is finite, then $H^1(R, \intG) = \{ 1 \}$ and the map $\Jacobian_f(R) / 3 \Jacobian_f(R) \to H^1(R, \Jacobian_f[3])$ is an isomorphism, as $H^1(R, \Jacobian_f)$ is trivial, by Lang's theorem.
	
	Now suppose that $\ord_K \Delta_0 (f) = 1$. Roughly the same principles apply. Let $\cJ_f$ denote the N\'eron model of $\Jacobian_f$. Then $\cJ_f$ is a smooth group scheme over $R$ with generic fibre $\Jacobian_f$, and $\Jacobian_f(K) = \cJ_f(R)$. Our assumptions imply that $\cJ_f$ has connected fibres and that the special fibre of $\cJ_f$ is an extension of an elliptic curve by a rank 1 torus. (Indeed, $\Curve_f$ is projective over $R$ and regular. Its special fibre is integral and has a unique singularity, which is a node. Now one can compute using the results of \cite[Ch. 9]{Bos90}.) In particular, the quasi-finite \'etale group scheme $\cJ_f[3]$ over $R$ has generic fibre of order $3^4$ and special fibre of order $3^3$. 
	
	We claim that $\intV_f^\text{reg}(R) = \intV_f(R)$. To prove this, we must show that any element $x$ of $\intV_f(R)$ has regular image $x_k \in \intV_f(k)$, where $k$ is the residue field of $R$. Consider the direct sum decomposition $\intfrh_R = \intfrh_{0, R} \oplus \intfrh_{1, R}$, where the restriction of $\ad(x)$ to $\intfrh_{0, R}$ is topologically nilpotent and the restriction of $\ad(x)$ to $\intfrh_{1, R}$ is invertible over $R$. Let $\varpi$ be a uniformizer of $R$. 
	Reducing the components of this decomposition modulo $\varpi$ yields the direct sum decomposition $\intfrh_k = \intfrh_{0, k} \oplus \intfrh_{1, k}$, where $\ad(x_k)$ acts as a nilpotent operator on $\intfrh_{0, k}$ and acts invertibly on $\intfrh_{1, k}$. In fact, if $x_k = y_s + y_n$ is the Jordan decomposition of $x_k$ as a sum of its semisimple and nilpotent parts, then $\intfrh_{0, k} = \frz_{\intfrh}(y_s)$. To show $x_k$ is regular, we must show that $y_n$ is a regular nilpotent element of $\frz_{\intfrh}(y_s)$.
	
	To see this, we first observe that there exists a unique closed subgroup $\underline{L} \subset \intH_R$ such that $\Lie \underline{L} = \intfrh_{0, R}$ and such that $\underline{L}$ is smooth over $R$ with connected fibres. Moreover, we have $\underline{L}_k = Z_{\underline{H}}(y_s)$. The uniqueness follows from \cite[Exp. XIV, Proposition 3.12]{SGA3II}. To show existence, choose a regular semisimple element $\overline{r} \in \frz_\intfrh(y_s)$ and an arbitrary lift $r \in \intfrh_{R, 0}$. The centralizer $Z_{\intH}(r)$ is a maximal torus of $\intH_R$ with Lie algebra contained in $\intfrh_{R, 0}$, and we can construct a Levi subgroup of $\intH_R$ with Lie algebra $\frh_{R, 0}$ after passage to an \'etale extension $R \to R'$ where $Z_{\intH}(r)$ is split.

	Let $f_k$ denote the reduction modulo $\varpi$ of $f$. Our condition on the discriminant implies that $f_k$ has exactly 1 repeated root. It follows that the fibre $y^2 = z^3 + f_k(x)$ in $\OpenSurface_{0, k}$ over $f_k \in \intfrh_k\dquot\intH_k$ has a unique singularity, which is a simple singularity of type $A_2$.
	Using the results of \cite[\S 6.5]{Slo80}, we see that the derived group of $\underline{L}$ has type $A_2$ and that the centre $Z_{\underline{L}}$ has rank 6. Moreover, the action of $\mu_3$ on $\intH_R$ determined by $\theta$ restricts to an action on $\underline{L}$, and the induced morphism $\theta_{\underline{L}} : \mu_3 \to \Aut(\underline{L})$ has the property that in each geometric fibre, there is a maximal torus of $\underline{L}$ on which $\theta_{\underline{L}}$ defines an elliptic $\mu_3$-action.
	
	Let $\intfrh_{0, R}^\text{der}$ denote the derived subalgebra of $\intfrh_{0, R}$. 
	After passage to an \'etale extension $R \to R'$ of discrete valuation rings, we can assume that there is an isomorphism $\intfrh_{0, R}^\text{der} \cong \frs\frl_{3, R}$ under which $\theta_{\underline{L}}$ corresponds to the homomorphism $\zeta \mapsto \Ad(\diag(1, \zeta, \zeta^2))$. (The proof is the same as the proof of Lemma \ref{lem_graded_groups_etale_locally_isomorphic}, using that the automorphism group of the $A_2$ root lattice contains a unique conjugacy class of elements of order 3.) Let $\Delta'$ denote the Lie algebra discriminant of $\intfrh_{0, R}^\text{der}$. Then $\ord_K \Delta(x) = \ord_K \Delta'(x) = 2$ by property 3 in \S \ref{sec_spreading_out}. 
	
	Let $x'$ denote the projection of $x$ to $\intfrh_{0, R}^\text{der}$ (this projection exists because of our assumption on the residue characteristic of $R$).
	The image of $x'$ under the isomorphism $\intfrh_{0, R}^\text{der} \to \frs\frl_{3, R}$ is given by a matrix of the form
	\[ x' = \left( \begin{array}{ccc} 0 & a & 0 \\ 0 & 0 &  b \\ c & 0 & 0 \end{array}\right), \]
	and the discriminant $\Delta'(x)$ equals $(abc)^2$. If $\ord_K (abc)^2 = 2$ then exactly one of $a, b$ or $c$ is divisible by $\varpi$, and so the reduction modulo $\varpi$ of $x'$ (which coincides with $y_n$) is a regular nilpotent element. This proves our claim that $\intV_f^\text{reg}(R) = \intV_f(R)$.
	
	We next claim that $Z_{\intG}(\Kostant(f))$ satisfies the `N\'eron mapping property': $Z_{\intG}(\Kostant(f))(R') = Z_{\intG}(\Kostant(f))(\Frac(R'))$ for any \'etale extension $R \to R'$ of discrete valuation rings. By Lemma \ref{lem_action_by_automorphisms} and property 6 of \S \ref{sec_spreading_out}, we may identify  $Z_{\intG}(\Kostant(f))_K$ with $\Jacobian_f[3]$. Thus it suffices to show that the isomorphism $Z_{\intG}(\Kostant(f))_K \cong \Jacobian_f[3]$ extends uniquely to an isomorphism $Z_{\intG}(\Kostant(f)) \cong \cJ_f[3]$. This will follow if we can show that the special fibre of $Z_{\intG}(\Kostant(f))$ has order $3^3$. This is the case. Taking $x$ to be $\sigma(f)$ in the above computation, we see that $Z_{\intG}(\Kostant(f))_k$ can be identified with the $\theta_{\underline{L}}$-fixed points in the centre $Z(\underline{L}_k)$ of the group $\underline{L}_k$. Since the centre is a rank-6 torus on which $\theta_{\underline{L}}$ defines an elliptic $\mu_3$-action, this group indeed has order $3^3$. (See \cite[Proposition 2.8]{Tho13} for a similar calculation. Note that while we have not defined an elliptic $\mu_3$-action in this setting, the definition is the same: the map on $X^*(Z(\underline{L}_k))$ induced by $\theta_{\underline{L}}(\zeta)$ is an elliptic automorphism of order 3.)	
	
	We claim that the map $\intG \to \intV_f^\text{reg}$, $g \mapsto g \cdot \Kostant(f)$ is surjective and \'etale and is in fact a torsor for the \'etale group scheme $Z_{\intG}(\Kostant(f))$. The only part of this claim that we have not already established is the fact that this map is surjective in the special fibre $\intG_k \to \intV_{f, k}^\text{reg}$. Thus the claim follows from the fact that if $y_s \in \intV_k$ is a semisimple element such that $Z_{\intH}(y_s)$ has derived group of type $A_2$, then $Z_{\intG}(y_s) = Z_{\intH}(y_s)^\theta$ acts transitively on the regular nilpotent elements of $\frz_{\intfrh}(y_s)(\theta, 1)$. (Note that in the $(\bbZ / 3 \bbZ)$-grading of $\frs\frl_{3, k}$ given by $\xi : \zeta \mapsto \Ad(1, \zeta, \zeta^2)$, $\SL_3^\xi$ does not act transitively on the regular nilpotent elements, but $\PGL_3^\xi$ does. In the present situation the group $Z_{\intH}(y_s)$ fits into a $\theta$-equivariant short exact sequence
	\[ 1 \to C \to Z_{\intH}(y_s) \to \PGL_3 \to 1, \]
	where $C$ is a $\theta$-elliptic torus. This implies that the map $Z_{\intH}(y_s)^\theta \to \PGL_3^\theta$ is surjective.)
	
	By the same logic as in Lemma \ref{lem_forms_of_H_theta}, it  follows that the set $\intG(R) \backslash \intV_f(R)$ is in bijection with $\ker(H^1(R, Z_{\intG}(\Kostant(f))) \to H^1(R, \intG))$. By Lemma \ref{lem_inflation_restriction}, the map $H^1(R, Z_{\intG}(\Kostant(f))) \to H^1(K, Z_{\intG}(\Kostant(f)))$ is injective, implying that the map $\alpha : \intG(R) \backslash \intV_f(R) \to \intG(K) \backslash \intV_f(K)$ is injective (cf. \cite[Exercise 2.4.11]{Con14}). To show that the image of $\alpha$ contains the image of $\eta_f$, we observe that we have a commutative diagram
	\[ \xymatrix{ \cJ_f(R) / 3 \cJ_f(R) \ar[r] \ar[d] & \Jacobian_f(K) / 3 \Jacobian_f(K) \ar[d] \\
		H^1(R, \cJ_f[3]) \ar[r] & H^1(K, \Jacobian_f[3]).} \]
	We therefore just need to show that each class in the image of $\cJ_f(R) / 3 \cJ_f(R)$ in $H^1(R, \cJ_f[3]) \cong H^1(R, Z_{\intG}(\Kostant(f)))$ has trivial image in $H^1(R, \intG)$. This follows from the fact that the map $H^1(R, \intG) \to H^1(K, \intG)$ is injective (Lemma \ref{lem_trivial_class_group}).
	
	Finally, suppose once more that $R$ has finite residue field. Lang's theorem once again implies that $H^1(R, \intG) = \{ 1 \}$ and $H^1(R, \cJ_f) = \{ 1 \}$. This completes the proof.
\end{proof}

\begin{corollary}\label{cor_global_integral_orbits_in_squarefree_discriminant_case}
	Let $R$ be a PID in which $N$ is a unit, and let $f \in \intB(R)$ be a polynomial such that the discriminant $\Delta_0(f)$ is square-free (as an element of $R$). Let $K = \Frac(R)$. Let $P \in \Jacobian_f(K)$, and let $\gamma_P \in \intV_f(K)$ be a representative of the orbit $\eta_f(P)$. Then there exists $g \in \intG(K)$ such that $g \cdot \gamma_P \in \intV_f(R)$. In particular, the triple $(H_K, \theta_K, \gamma_P) \in \operatorname{GrLieE}_{f, K}$ extends to an element of $\operatorname{GrLieE}_{f, R}$.
\end{corollary}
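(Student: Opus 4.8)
The plan is a local-to-global patching argument. Write $\Gamma := Z_{\intG}(\Kostant(f))$, a closed (hence affine) subgroup scheme of $\intG_R$. Since $\Delta_0(f)$ is square-free and $R$ is a domain, for every maximal ideal $\mathfrak{p}$ of $R$ the localization $R_\mathfrak{p}$ is a discrete valuation ring in which $N$ is a unit and $\ord_{R_\mathfrak{p}}\Delta_0(f)\le 1$; thus Proposition \ref{prop_existence_of_orbits_of_squarefree_discriminant} applies at each $\mathfrak{p}$. The first step is to record the consequences of that proposition over $R$ itself. Being statements about morphisms of finite presentation, they can be checked at the points of $\Spec R$ (the generic point and the maximal ideals), and an open subscheme of $\Spec R$ containing every closed point is all of $\Spec R$; so one obtains: $\intV^\text{reg}_f(R)=\intV_f(R)$; the morphism $\intG_R\to\intV^\text{reg}_{f,R}$, $g\mapsto g\cdot\Kostant(f)$, is a $\Gamma$-torsor; and hence (by the argument of Lemma \ref{lem_cohomological_description_of_orbits}, valid over any base) $\intG(R)\backslash\intV_f(R)$ is in bijection with $\ker(H^1(R,\Gamma)\to H^1(R,\intG))$, compatibly with the analogous statements over $K$ and over each $R_\mathfrak{p}$.

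Next I would construct an integral lift of the relevant cohomology class. Let $\varphi_K\in H^1(K,\Gamma_K)$ be the class of the $\intG(K)$-orbit $\eta_f(P)$ under Lemma \ref{lem_cohomological_description_of_orbits}; it lies in $\ker(H^1(K,\Gamma_K)\to H^1(K,\intG_K))$ because $\eta_f(P)$ is a genuine rational orbit. Let $T_K$ be a $\Gamma_K$-torsor representing $\varphi_K$. Since $\Gamma$ is of finite presentation over $R$, the torsor $T_K$ spreads out: there is a nonzero $m\in R$ and a $\Gamma_{R[1/m]}$-torsor $T_m$ over $R[1/m]$ with $T_m\otimes_{R[1/m]}K\cong T_K$. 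The complement of $\Spec R[1/m]$ in $\Spec R$ is the finite set of maximal ideals $\mathfrak{p}_1,\dots,\mathfrak{p}_n$ dividing $m$. For each $i$, Proposition \ref{prop_existence_of_orbits_of_squarefree_discriminant}(2) says $\eta_f(P)$ meets $\intV_f(R_{\mathfrak{p}_i})$, so $\varphi_K$ lifts to a class in $H^1(R_{\mathfrak{p}_i},\Gamma_{R_{\mathfrak{p}_i}})$, represented by a torsor $T_i$ over $R_{\mathfrak{p}_i}$ with $T_i\otimes_{R_{\mathfrak{p}_i}}K\cong T_K$. Now $\Spec R[1/m]\sqcup\coprod_{i=1}^n\Spec R_{\mathfrak{p}_i}\to\Spec R$ is an fpqc cover (flat, quasi-compact, surjective), and all of its double and triple overlaps are $\Spec K$, since $R_{\mathfrak{p}_i}[1/m]=K$ and $R_{\mathfrak{p}_i}\otimes_R R_{\mathfrak{p}_j}=K$ for $i\ne j$. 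Fixing the identifications of $T_m\otimes K$ and each $T_i\otimes K$ with $T_K$ gives a descent datum; as $\Gamma$ is affine, $\Gamma$-torsors satisfy fpqc descent, and we obtain a $\Gamma$-torsor $T$ over $R$ with $T\otimes_R K\cong T_K$, i.e. a class $\varphi_R:=[T]\in H^1(R,\Gamma)$ restricting to $\varphi_K$.

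To conclude, note that the image of $\varphi_R$ in $H^1(R,\intG)$ maps to the image of $\varphi_K$ in $H^1(K,\intG_K)$, which is trivial; since $R$ is a PID, Lemma \ref{lem_trivial_class_group} shows $H^1(R,\intG)\to H^1(K,\intG_K)$ has trivial kernel, so $\varphi_R\in\ker(H^1(R,\Gamma)\to H^1(R,\intG))$. By the dictionary of the first paragraph, $\varphi_R$ corresponds to an $\intG(R)$-orbit in $\intV_f(R)$; choosing a representative $\gamma'\in\intV_f(R)$, its image in $\intV_f(K)$ represents $\varphi_K$, hence lies in $\eta_f(P)$, so $\gamma'=g\cdot\gamma_P$ for some $g\in\intG(K)$. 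This is the first assertion. For the last sentence: given such $\gamma'$, the triple $(H_R,\theta_R,\gamma')\in\operatorname{GrLieE}_{f,R}$ restricts over $K$ to $(H_K,\theta_K,\gamma')$, which is isomorphic to $(H_K,\theta_K,\gamma_P)$ via $\Ad(g)^{-1}\in\intG(K)=\Aut(H_K,\theta_K)$, so it is the desired extension.

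The main obstacle is not any single deep fact — Proposition \ref{prop_existence_of_orbits_of_squarefree_discriminant} has already done the serious local work (constructing integral representatives over a DVR with $\ord\Delta_0\le 1$, and analyzing $\Gamma$ through the $A_2$-singularity) — but rather the care needed to globalize it: making precise the patching of the spread-out torsor with the finitely many local torsors, and verifying that the torsor property and the equality $\intV^\text{reg}_f(R)=\intV_f(R)$ descend from the points of $\Spec R$ so that the cohomological description of orbits holds over $R$. I expect these to be routine but slightly delicate; the only non-formal external input is Lemma \ref{lem_trivial_class_group}.
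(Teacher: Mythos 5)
Your argument is correct, but it follows a genuinely different route from the paper's. The paper's proof of Corollary \ref{cor_global_integral_orbits_in_squarefree_discriminant_case} is a two-line reduction to the DVR case: by Proposition \ref{prop_existence_of_orbits_of_squarefree_discriminant} one has, for each maximal ideal $\mathfrak{p}$, some $g_\mathfrak{p}\in\intG(K)$ with $g_\mathfrak{p}\cdot\gamma_P\in\intV_f(R_\mathfrak{p})$ (and $g_\mathfrak{p}=1$ for all but finitely many $\mathfrak{p}$), and then \cite[Th\'eor\`eme 2.1]{Nis84} together with Lemma \ref{lem_trivial_class_group} says that the class set of $\intG$ over the PID $R$ is trivial, so one can replace the adelic family $(g_\mathfrak{p})$ by a single $g\in\intG(K)$ with $g\in\intG(R_\mathfrak{p})g_\mathfrak{p}$ for all $\mathfrak{p}$; then $g\cdot\gamma_P\in\bigcap_\mathfrak{p}\intV_f(R_\mathfrak{p})=\intV_f(R)$. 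In other words the gluing is done at the level of group elements, and no orbit--cohomology dictionary over the global base $R$ is ever needed. You instead glue at the level of $Z_{\intG}(\Kostant(f))$-torsors, by fpqc descent along the cover $\{\Spec R[1/m]\}\cup\{\Spec R_{\mathfrak{p}_i}\}$, and then convert the glued class back into an integral orbit; this forces you to extend the statements $\intV_f^{\text{reg}}(R)=\intV_f(R)$ and the torsor property of $g\mapsto g\cdot\Kostant(f)$ (which appear only inside the proof of Proposition \ref{prop_existence_of_orbits_of_squarefree_discriminant}, over a complete DVR) to the global $R$, so that the analogue of Lemma \ref{lem_cohomological_description_of_orbits} holds there. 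That extension is legitimate — flatness, unramifiedness and surjectivity of the orbit map are insensitive to localization at the primes of $R$ and descend along $R_\mathfrak{p}\to\widehat{R}_\mathfrak{p}$, so your ``check at the points of $\Spec R$'' is justified in that precise sense rather than literally fibrewise — but it is extra bookkeeping the paper avoids. What your route buys is independence from Nisnevich's theorem (you essentially re-prove the needed patching by hand via descent); what the paper's route buys is brevity, since Lemma \ref{lem_trivial_class_group} plus Nisnevich absorb all the gluing. Both arguments rest on the same two essential inputs: Proposition \ref{prop_existence_of_orbits_of_squarefree_discriminant} locally and Lemma \ref{lem_trivial_class_group} globally, and your treatment of the final ``in particular'' sentence agrees with the paper's.
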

\begin{proof}
Note that the second statement follows immediately from the first: if $g \cdot \gamma_P \in \intV_f(R)$, then $g$ defines an isomorphism between $(H_K, \theta_K, \gamma_P)$ and $(H_K, \theta_K, g \cdot \gamma_P)$, and the latter triple extends naturally to $(\intH_R, \theta_R, g \cdot \gamma_P) \in \operatorname{GrLieE}_{f, R}$. For the first statement, we can reduce immediately (using \cite[Th\'eor\`eme 2.1]{Nis84} and Lemma \ref{lem_trivial_class_group}) to the case where $R$ is a discrete valuation ring, which is treated above.

%
\end{proof}

\subsection{The general case}\label{sec_general_case}

We now use the results just established in \S \ref{sec_square_free_disc_case} to complete the proof of Theorem \ref{thm_existence_of_integral_representatives_for_soluble_orbits}. Let us therefore fix a prime $p > N$, a polynomial $f(x) \in \Equations_p$, and a point $P \in \Jacobian_f(\bbQ_p)$. We must show that the orbit $\eta_f(P) \subset V_f(\bbQ_p)$ contains an element of $\intV_f(\bbZ_p)$.

We first give an explicit representation of the point $P$. Arguing as in the proof of \cite[Proposition 19]{Bha13}, we can assume (after possibly changing $P$ without changing its image in $\Jacobian_f(\bbQ_p) / 3 \Jacobian_f(\bbQ_p)$) that $P$ corresponds to a decomposition $f(x) = u_0(x) v_0(x) + r_0(x)^2$, where for some $\nu \in \{ 0, 1, 2 \}$, we have that $u_0(x), v_0(x) \in \bbZ_p[x]$ are monic of degrees $\nu$ and $5 - \nu$, respectively, and $r_0(x)$ has degree at most $\nu - 1$. (This is the Mumford representation of $P$: thus $P$ corresponds to the linear equivalence class of the divisor $D - \nu \infty$, where $D \subset \Curve^0_{f, \bbQ_p}$ is the effective divisor of degree $\nu$ determined by the equations $y = r_0(x), u_0(x) = 0$.)

Let $D_\nu$ denote the scheme (over $\bbZ_p$) of tuples of polynomials $(u(x), v(x), r(x))$, where $u(x), v(x)$ are monic of degrees $\nu$ and $5 - \nu$, respectively, $r(x)$ has degree at most $\nu - 1$, and $u(x) v(x) + r(x)^2 = x^5 + a_1 x^4 + a_2 x^3 + a_3 x^2 + a_4 x + a_5$ satisfies $a_1 = 0$. Thus the tuple $(u_0(x), v_0(x), r_0(x))$ determines a point of $D_\nu(\bbZ_p)$. Let $\delta = \delta(u, v, r) \in H^0(D_\nu, \cO_{D_\nu})$ denote the discriminant of the (monic, degree-5) polynomial $u(x) v(x) + r(x)^2$, and let $D_\nu^\delta \subset D_\nu$ denote the closed subscheme defined by the vanishing of $\delta$. Then $D_\nu^\delta$ has codimension 1 in each fibre of $D_\nu$ over $\bbZ_p$. (In fact, $D_\nu^\delta$ is flat over $\intB_{\bbZ_p}$.)

Let $\lambda$ be a formal variable. We can find a point $(u_1(x), v_1(x), r_1(x)) \in D_\nu(\bbZ_p[\lambda])$ with the following properties:
\begin{itemize}
\item The reduction mod $\lambda$ of  $(u_1(x), v_1(x), r_1(x))$ is  $(u_0(x), v_0(x), r_0(x))$.
\item Let $f_1(x) = u_1(x) v_1(x) + r_1(x)^2 \in \bbZ_p[\lambda][x]$. Then $\disc f_1 = \delta(u_1, v_1, r_1)$ is square-free when viewed as an element of the ring $\bbQ_p[\lambda]$, and its image in $\bbF_p[\lambda]$ is non-zero. 
\end{itemize}
(We can find such a point by choosing  $u_1 = u_0 + \lambda u_0'$, $v_1 = v_0 + \lambda v_0', r_1 = r_0 + \lambda r_1'$ for some polynomials $u_0', v_0', r_0' \in \bbZ_p[x]$ such that the discriminant of $f_1(x)$ is not zero in $\bbF_p[\lambda]$. If the discriminant is not already square-free in $\bbQ_p[\lambda]$ then by Bertini's theorem we can choose a small $p$-adic perturbation to make it so.)

Let $U_1 = \Spec \bbZ_p[\lambda][\disc(f_1)^{-1}]$. We have constructed a smooth projective curve $\Curve_{f_1} \to U_1$, together with a section $P_1 \in \Jacobian_{f_1}(U_1)$. Using the same logic as in the proof of Corollary \ref{cor_construction_of_orbits_over_general_base}, we obtain a tuple $(H_1, \theta_1, \gamma_1) \in \operatorname{GrLieE}_{U_1, f_1}$. The pullback of this tuple to $\operatorname{GrLieE}_{\bbQ_p, f}$ along the point $\{ \lambda = 0 \} \in U_1(\bbQ_p)$ corresponds to the orbit $\eta_f(P)$ under the bijection of Lemma \ref{lem_cohomological_description_of_orbits}.

Let $U_2 = \Spec \bbQ_p[\lambda]$. Using that $\disc(f_1)$ is square-free when viewed as an element of $\bbQ_p[\lambda]$, we can apply Corollary \ref{cor_global_integral_orbits_in_squarefree_discriminant_case} to find that there is an extension of the triple $(H_{1}[1/p], \theta_1[1/p], \gamma_1[1/p])$ to a triple $(H_2, \theta_2, \gamma_2)$ over $U_2$. We can glue these triples to obtain a triple $(H_0, \theta_0, \gamma_0)$ over $U_0 := U_1 \cup U_2 \subset \Spec \bbZ_p[\lambda]$. Observe that by construction, $\theta_0: \mu_3 \to H_0$ is a stable $\bbZ / 3 \bbZ$-grading.

Note that the complement of $U_0$ in $\Spec \bbZ_p[\lambda]$ is a union of finitely many closed points in the special fibre. We now apply the following lemma.
\begin{lemma}\label{lem_reductive_groups_are_reflexive}
Let $S$ be an integral regular scheme of dimension 2, and let $Z \subset S$ be a closed subset of dimension 0. Let $U = S - Z$. Then restriction $M \mapsto M_U$ defines an equivalence between the following two categories:
\begin{enumerate}
		\item The category of reductive groups over $S$, with morphisms given by isomorphisms of group schemes. 
	\item The category of reductive groups over $U$, with morphisms given by isomorphisms of group schemes.
\end{enumerate}
Moreover, if $M$ is a reductive group over $S$, then $H^0(S, \ffrm) = H^0(U, \ffrm_U)$, where $\ffrm = \Lie(M)$ and $\ffrm_U = \Lie(M_U)$.
\end{lemma}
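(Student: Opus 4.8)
The plan is to establish three things separately: that restriction $M \mapsto M_U$ is fully faithful, that it is essentially surjective, and that for a reductive group $M$ over $S$ one has $H^0(S,\ffrm) = H^0(U,\ffrm_U)$. The first and last are soft and should come out of ``algebraic Hartogs'': $S$ is regular, hence normal and satisfies Serre's condition $S_2$, and $Z$ (being $0$-dimensional in the $2$-dimensional $S$) has codimension $\geq 2$. The essential surjectivity is where the content lies, and I would reduce it to the quoted fact that a reductive group over the punctured spectrum of a $2$-dimensional regular local ring extends uniquely to the whole spectrum, namely \cite[Theorem 6.13]{Col79}.

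For the identity on global sections, I would note that $\ffrm = \Lie(M)$ is a locally free $\cO_S$-module and each $z \in Z$ is a closed point whose local ring $\cO_{S,z}$ is regular of dimension $2$, hence of depth $2$; therefore the adjunction unit $\ffrm \to j_\ast j^\ast\ffrm$ (with $j\colon U \hookrightarrow S$) is an isomorphism, and taking global sections gives the claim. Full faithfulness should follow from the same principle applied on the total spaces: $M$ and $M'$ are affine over $S$, say $M = \underline{\Spec}_S \mathcal B$ and $M' = \underline{\Spec}_S \mathcal A$; since $M$ is smooth over the regular scheme $S$ it is itself regular, and the preimage of $Z$ in $M$ has codimension $2$, so $\cO_M$ is $S_2$ and $\mathcal B \cong j_\ast(\mathcal B|_U)$. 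This yields $\Hom_S(M,M') = \Hom_{\cO_S\text{-alg}}(\mathcal A,\mathcal B) = \Hom_{\cO_U\text{-alg}}(\mathcal A|_U,\mathcal B|_U) = \Hom_U(M_U,M'_U)$, functorially in $M,M'$. Finally, the unique $S$-morphism extending an isomorphism (resp.\ group homomorphism) over $U$ is again an isomorphism (resp.\ group homomorphism), because both properties can be checked on the schematically dense open $M_U \subset M$ (using that $M$, and likewise $M\times_S M$, are reduced and irreducible). Restricting to isomorphisms gives the bijection on $\Isom$-sets needed for full faithfulness.

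For essential surjectivity, let $N$ be a reductive group over $U$. The statement is Zariski-local on $S$, so we may assume $S$ affine and Noetherian; then $Z$ is a finite set of closed points, and I would extend $N$ across them one at a time, so it suffices to treat $Z = \{z\}$. Here $\cO_{S,z}$ is a $2$-dimensional regular local ring and $U \times_S \Spec \cO_{S,z}$ is its punctured spectrum, so \cite[Theorem 6.13]{Col79} provides a reductive group $\widehat{M}$ over $\Spec \cO_{S,z}$ restricting to $N$ over the punctured spectrum (uniqueness up to unique isomorphism is in any case automatic from the full faithfulness already proved, applied over $\cO_{S,z}$). A standard limit argument --- using that reductive group schemes, and isomorphisms between them, are of finite presentation --- then spreads $\widehat{M}$ out to a reductive group $M_z$ over an open neighborhood $W$ of $z$, together with an isomorphism $M_z|_{W \cap U} \cong N|_{W \cap U}$ after shrinking $W$. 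Since $z \notin U$, the opens $W$ and $U$ cover $S$, and gluing $M_z$ and $N$ along this isomorphism produces a group scheme $M$ over $S$; being reductive is local on the base, so $M$ is reductive, and $M_U \cong N$ by construction.

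The main obstacle is the essential surjectivity, and specifically the single-closed-point extension --- which is exactly the deep input \cite[Theorem 6.13]{Col79}. Everything else should be routine: the Hartogs-type extension of sections and of homomorphisms, the reduction to one point, and the spreading-out and gluing, modulo standard facts about reductive group schemes (affineness over the base, finite presentation, locality of reductiveness) together with Serre's criterion and the density of $U$ in $S$.
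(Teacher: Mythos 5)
Your proposal is correct, and it relies on the same essential input as the paper, namely \cite[Theorem 6.13]{Col79}; but the surrounding architecture is genuinely different. The paper's proof is two sentences: essential surjectivity is quoted directly from \emph{loc.\ cit.} (applied in the global form, over $U \subset S$ itself), and full faithfulness is deduced from the fact that the scheme $\Isom(M,M')$ is affine over $S$, so that its $U$-points extend uniquely to $S$-points by Hartogs (cf.\ \cite[Lemma 2.1]{Col79}) --- a formulation which has the advantage that an $S$-point of the Isom scheme is by definition already a group-scheme isomorphism, so no separate verification is needed. You instead (i) prove full faithfulness by hand, pushing forward the coordinate algebras of $M$ and $M'$ and using that the preimage of $Z$ has codimension $2$ in the regular total space, and (ii) assume only the local statement (extension over the punctured spectrum of a $2$-dimensional regular local ring), then spread out from $\Spec \cO_{S,z}$ and glue with $N$ over $U$. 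Both steps are sound, and (ii) buys you something: it makes the lemma follow from the weaker, local form of the extension theorem --- which is in fact how the paper's introduction advertises the input --- at the cost of a routine limit-and-gluing argument. One small repair: your claim that ``being an isomorphism can be checked on the schematically dense open $M_U \subset M$'' is not literally true (think of a blow-down); but nothing is lost, since once you know restriction gives a functorial bijection on all group-scheme homomorphisms, it reflects isomorphisms in the usual way (extend the inverse over $U$ to $S$ and use the Hom bijection, or schematic density plus separatedness, to see the two composites are identities). Your verification that the extended morphism is a group homomorphism, via agreement of two maps $M \times_S M \to M'$ on a schematically dense open, is fine as written.
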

\begin{proof}
The essential surjectivity is \cite[Theorem 6.13]{Col79}. If $M, M'$ are reductive group schemes over $S$, then the scheme of isomorphisms between $M$ and $M'$ is $S$-affine; this shows that the functor is fully faithful (cf. \cite[Lemma 2.1]{Col79}).
\end{proof}
Applying Lemma \ref{lem_reductive_groups_are_reflexive}, we see that $H_0$ extends uniquely to a reductive group $H_3$ over $\Spec \bbZ_p[\lambda]$, that $\theta_0$ extends uniquely to a grading $\theta_3 : \mu_3 \to H_3$, and that $\gamma_0$ determines a unique section $\gamma_3 \in \frh_3(\theta_3, 1)$. Note that $\theta_3$ is a stable $\bbZ / 3 \bbZ$-grading. It follows that $(H_3, \theta_3, \gamma_3)$ is an object of the category $\operatorname{GrLieE}_{\bbZ_p[\lambda], f_1}$. By construction, its pullback to $\operatorname{GrLieE}_{\bbQ_p, f}$ along the map $\lambda = 0$ corresponds under the bijection of Lemma \ref{lem_cohomological_description_of_orbits} to the orbit $\eta_f(P) \in G(\bbQ_p) \backslash V_f(\bbQ_p)$. 

Let $(H_4, \theta_4, \gamma_4) \in \operatorname{GrLieE}_{\bbZ_p, f}$ denote the pullback of $(H_3, \theta_3, \gamma_3)$ to $\bbZ_p$. Since $H^1(\bbZ_p, \intG) = \{ 1 \}$, this triple determines an orbit in $\intG(\bbZ_p) \backslash \intV_f(\bbZ_p)$ mapping to $\eta_f(P)$ under the natural map $\intG(\bbZ_p) \backslash \intV_f(\bbZ_p) \to G(\bbQ_p) \backslash V_f(\bbQ_p)$. This completes the proof of Theorem \ref{thm_existence_of_integral_representatives_for_soluble_orbits}.

\subsection{Complements}

We conclude \S \ref{sec_integral_orbits} with a weak result that holds for every prime (not just primes $p > N$). The $\bbG_m$-action on $\intB$ here is the standard one (where $t \cdot c_i = t^i c_i$).
\begin{proposition}\label{prop_weak_existence_of_integral_representatives}
Let $p$ be a prime, and let $f_0(x) \in \Equations_p$. Then there exists an integer $n \geq 1$ and an open neighbourhood $W_p$ of $f_0$ in $\Equations_p$ such that for all $f \in W_p$ and for all $y \in \Jacobian_{p^n \cdot f}(\bbQ_p)$, the orbit $\eta_{p^n \cdot f}(y) \in G(\bbQ_p) \backslash V_{p^n \cdot f}(\bbQ_p)$ contains an element of $\intV_{p^n \cdot f}(\bbZ_p)$.
\end{proposition}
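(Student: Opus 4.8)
The plan is to reduce this statement about a general prime $p$ to the already-established Theorem \ref{thm_existence_of_integral_representatives_for_soluble_orbits} (valid for $p > N$) by a deformation-and-clearing-denominators argument in the spirit of \S\ref{sec_general_case}, but now keeping track of the auxiliary scaling parameter $p^n$. The point of scaling $f$ by $p^n$ is to absorb the $p$-part of any denominators that arise: although the construction of the triple $(H', \theta', \gamma')$ and the map $\eta_f$ behave well for $p > N$, for small $p$ the explicit realisation of $\frh$ from a Heisenberg group (which required $N \in R^\times$) is no longer available, so the integral structure near $p$ has to be recovered differently. However, the $\bbG_m$-action on $V$ (with $\check\rho$) and on $\intB$ (with $t\cdot c_i = t^i c_i$) shows that replacing $f$ by $p^n\cdot f$ replaces an orbit representative $\gamma$ by $\Ad(\check\rho(p^{-n}))\gamma$ scaled appropriately, so a large enough power of $p$ will clear any fixed bounded denominator.

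First I would fix $f_0 \in \Equations_p$ and, arguing exactly as at the start of \S\ref{sec_general_case} (i.e.\ via the Mumford representation and \cite[Proposition 19]{Bha13}), produce for each $y \in \Jacobian_{f_0}(\bbQ_p)$ a representative of $\eta_{f_0}(y)$ together with a one-parameter deformation $(u_1, v_1, r_1) \in D_\nu(\bbZ_p[\lambda])$ reducing to it mod $\lambda$ and with $\disc f_1$ square-free in $\bbQ_p[\lambda]$ and non-zero in $\bbF_p[\lambda]$. The square-free case (Corollary \ref{cor_global_integral_orbits_in_squarefree_discriminant_case} and Proposition \ref{prop_existence_of_orbits_of_squarefree_discriminant}) only used that $N$ is a unit in the base ring, so over $U_2 = \Spec\bbQ_p[\lambda]$ it applies verbatim; over $U_1 = \Spec\bbZ_p[\lambda][\disc(f_1)^{-1}]$ the construction of $(H_1,\theta_1,\gamma_1)$ via Corollary \ref{cor_construction_of_orbits_over_general_base} also goes through since $\disc(f_1)$ is invertible there. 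Gluing gives a triple over $U_0 = U_1 \cup U_2 \subset \Spec\bbZ_p[\lambda]$, whose complement is a finite set of closed points of the special fibre; Lemma \ref{lem_reductive_groups_are_reflexive} then extends it to a triple $(H_3,\theta_3,\gamma_3) \in \operatorname{GrLieE}_{\bbZ_p[\lambda], f_1}$. Specialising $\lambda = 0$ yields $(H_4,\theta_4,\gamma_4) \in \operatorname{GrLieE}_{\bbZ_p, f_0}$; since $H^1(\bbZ_p,\intG)$ is trivial and $(H_4)_{\bbQ_p}\cong H_{\bbQ_p}$ by construction (the generic fibre came from the split form), this produces an orbit in $\intG(\bbZ_p)\backslash\intV_{f_0}(\bbZ_p)$ mapping to $\eta_{f_0}(y)$.

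The one genuinely new ingredient is uniformity: I must choose $n$ and $W_p$ independent of $y$. Here I would use that $\Jacobian_{f_0}(\bbQ_p)/3\Jacobian_{f_0}(\bbQ_p)$ is finite, so finitely many $y$ suffice, and that the Selmer-type set $\ker(H^1(\bbQ_p, Z_G(\Kostant(f)))\to H^1(\bbQ_p,G))$ varies continuously with $f$; by the same argument as in the proof of Proposition \ref{prop_integration_in_fibres}/local constancy of $m_p$, the orbit-counting data and the map $\eta_f$ are locally constant in $f$ on $\Equations_p$, which produces a neighbourhood $W_p$ of $f_0$ on which the above construction works with the same combinatorial input. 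Finally, applying the $\bbG_m$-equivariance of $\pi|_\kappa$ and of the whole picture under $\check\rho$, replacing $f$ by $p^n\cdot f$ scales each produced orbit representative $\gamma$ to $\Ad(\check\rho(p^{-n}))\gamma$, which lands in $\intV_{p^n\cdot f}(\bbZ_p)$ once $n$ is larger than the (uniformly bounded over $W_p$, by compactness) $p$-adic valuation of the denominators appearing. I expect the main obstacle to be precisely this last uniformity step: making the bound on $n$ genuinely independent of both $y$ and $f\in W_p$, which I would handle by shrinking $W_p$ so that the finitely many deformation data $(u_1,v_1,r_1)$ and the resulting gluing/extension can be chosen once and for all, and then invoking compactness of $W_p$ to bound the denominators.
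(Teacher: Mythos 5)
The central step of your proposal fails. You run the deformation-and-gluing argument of \S\ref{sec_general_case} over $\bbZ_p[\lambda]$ for a prime $p \leq N$, claiming in particular that the construction of $(H_1,\theta_1,\gamma_1)$ over $U_1 = \Spec \bbZ_p[\lambda][\disc(f_1)^{-1}]$ ``goes through since $\disc(f_1)$ is invertible there'' and that Proposition \ref{prop_existence_of_orbits_of_squarefree_discriminant} and Corollary \ref{cor_global_integral_orbits_in_squarefree_discriminant_case} apply. But all of that machinery requires $N = 2\cdot 3\cdot 5\cdot 7$ to be invertible in the base: the stacks $\operatorname{Heis}$ and $\operatorname{GrLieT}$ and the functor of Theorem \ref{thm_equivalence_of_Heis_with_GrLieT} are only defined over $\bbZ[1/N]$-algebras (one needs $\mu_3$ \'etale, a primitive cube root of unity with $1-\zeta^{-1}$ invertible, the Killing form nondegenerate for Vasiu's theorem, $\Jacobian[3]$ \'etale, and for $p=2,3$ even the smoothness of $\Curve_{f_1}\to U_1$ breaks down); Proposition \ref{prop_existence_of_orbits_of_squarefree_discriminant} explicitly hypothesizes that $N$ is a unit in the DVR, which fails for the height-one primes of $\bbZ_p[\lambda]$ of residue characteristic $p$. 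So for exactly the primes $p \leq N$ that the proposition is meant to handle, you cannot construct the triple over $U_1$, cannot glue over $U_0$, and cannot invoke Lemma \ref{lem_reductive_groups_are_reflexive} to extend over $\Spec\bbZ_p[\lambda]$; specializing at $\lambda=0$ into $\operatorname{GrLieE}_{\bbZ_p,f_0}$ is likewise unavailable. Inventing an integral structure at such $p$ is precisely the obstruction that makes the full analogue of Theorem \ref{thm_existence_of_integral_representatives_for_soluble_orbits} out of reach and forces the weaker statement with the $p^n$-scaling and a shrunken neighbourhood.

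The paper's proof avoids any integral construction at small primes and is essentially the content of your final paragraph, promoted from a ``uniformity patch'' to the whole argument: since $\Jacobian_{f_0}(\bbQ_p)/3\Jacobian_{f_0}(\bbQ_p)$ is finite, the image of $\eta_{f_0}$ consists of finitely many $G(\bbQ_p)$-orbits; choose any representatives in $V_{f_0}(\bbQ_p)$ and use the $\bbG_m$-equivariance to scale by $p^n$ so that the resulting elements $\sigma_1,\dots,\sigma_r$ lie in $\intV_{p^n\cdot f_0}(\bbZ_p)$. Then, because the action map $G \to V_f$ at any $x\in V_f(\bbQ_p)$ is \'etale (and the number of orbits in the image of $\eta_f$ is locally constant in $f$), one finds neighbourhoods $U'_{p,i}\subset \intV(\bbZ_p)$ of the $\sigma_i$, all mapping homeomorphically onto a common $U_p\subset \intB(\bbZ_p)^{\text{rs}}$, such that the sections $s_i(g)$ represent the distinct orbits in the image of $\eta_g$ for every $g\in U_p$; shrinking $U_p$ to the form $p^n\cdot W_p$ finishes the proof. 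In particular the integer $n$ is fixed once, at $f_0$, and no compactness bound on denominators over $W_p$ is needed. If you discard the $\bbZ_p[\lambda]$ deformation entirely and write out this local-constancy argument carefully (including why the $s_i(g)$ exhaust the image of $\eta_g$), your proof becomes correct and coincides with the paper's.
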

\begin{proof}
Choose $n \geq 1$ such that each orbit in the image of $\eta_{p^n \cdot f_0}$ intersects $\intV_{p^n \cdot f_0}(\bbZ_p)$. Let $\sigma_1, \dots, \sigma_r \in \intV_{p^n \cdot f_0}(\bbZ_p)$ be representatives for the distinct $G(\bbQ_p)$-orbits in the image of $\eta_{p^n \cdot f_0}$. For each $i = 1, \dots, r$, we can find an open neighbourhood $U'_{p, i} \subset \intV(\bbZ_p)$ of $\sigma_i$ with the following properties:
\begin{enumerate}
	\item The image $\pi(U'_{p, i}) = U_p \subset \intB(\bbZ_p)$ is independent of $i$, and $\pi|_{U'_{p, i}} : U'_{p, i} \to U_p$ is a homeomorphism. 
	\item We have $U_p \subset \intB(\bbZ_p)^\text{rs} = \intB(\bbZ_p) \cap B^\text{rs}(\bbQ_p)$. 
	\item Let $s_i = \pi|_{U'_{p, i}}^{-1}$. Then for each $g \in U_p$, the elements $s_i(g)$ represent the distinct $G(\bbQ_p)$-orbits in the image of $\eta_g$.
\end{enumerate}
This essentially follows from the fact that given $f \in B^\text{rs}(\bbQ_p)$, the action map $G \to V_f$ attached to any $x \in V_f(\bbQ_p)$ is \'etale. After possibly shrinking $U_p$, we can assume that it has the form $p^n \cdot W_p$ for some open compact subset $W_p \subset \intB(\bbZ_p)^\text{rs}$ that contains $f_0$. This completes the proof.
\end{proof}

\begin{corollary}\label{cor_existence_of_integrable_representatives_for_selmer_orbits}
Let $f_0(x) \in \Equations$. Then for each prime $p \leq N$ we can find a compact open neighbourhood $W_p$ of $f_0(x)$ in $\Equations_p$ and an integer $n_p \geq 0$ with the following property. Let $M = \prod_{p \leq N} p^{n_p}$. Then for all $f \in \Equations \cap ( \prod_{p \leq N} W_p)$, and for all $y \in \Sel_3(\Jacobian_{M \cdot f})$, the orbit $\eta_{M \cdot f}(y) \in G(\bbQ) \backslash V_{M \cdot f}(\bbQ)$ contains an element of $\intV_{M \cdot f}(\bbZ)$. 
\end{corollary}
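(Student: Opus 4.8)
The plan is to combine local integrality of the orbit $\eta_{M\cdot f}(y)$ at every prime with a local-to-global patching for orbits in the image of $\eta$. \textbf{Choice of data.} For a prime $p\le N$ with $p\nmid N$, set $W_p=\Equations_p$ and $n_p=0$ (Theorem~\ref{thm_existence_of_integral_representatives_for_soluble_orbits} then applies directly); for $p\mid N$, apply Proposition~\ref{prop_weak_existence_of_integral_representatives} to obtain a compact open neighbourhood $W_p\ni f_0$ in $\Equations_p$ and an integer $n_p\ge 1$. Put $M=\prod_{p\le N}p^{n_p}$. If $f\in\Equations$ then $M\cdot f\in\Equations$ too (its coefficients are $M^i c_i(f)\in\bbZ$ and $\Delta_0(M\cdot f)=M^{240}\Delta_0(f)\ne 0$), so $\Jacobian_{M\cdot f}$, $\Sel_3(\Jacobian_{M\cdot f})$ and the extension of $\eta_{M\cdot f}$ to $\Sel_3$ (Proposition~\ref{prop_existence_of_selmer_orbits}) are defined. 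Fixing $f\in\Equations$ with $f\in W_p$ for all $p\le N$ and $y\in\Sel_3(\Jacobian_{M\cdot f})$, it suffices to produce an element of $\eta_{M\cdot f}(y)\cap\intV_{M\cdot f}(\bbZ)$.

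\textbf{Local integrality.} Fix a prime $p$. By definition of the Selmer group the image of $y$ in $H^1(\bbQ_p,\Jacobian_{M\cdot f}[3])$ is the Kummer class of some $P_p\in\Jacobian_{M\cdot f}(\bbQ_p)$, and by compatibility of $\eta$ with base change (\S\ref{sec_spreading_out}), the localization of $\eta_{M\cdot f}(y)$ to $G(\bbQ_p)\backslash V_{M\cdot f}(\bbQ_p)$ equals the $\bbQ_p$-orbit $\eta_{M\cdot f}(P_p)$. If $p\nmid N$, this orbit meets $\intV_{M\cdot f}(\bbZ_p)$ by Theorem~\ref{thm_existence_of_integral_representatives_for_soluble_orbits}. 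If $p\mid N$, write $M=p^{n_p}u$ with $u\in\bbZ_p^{\times}$, so that $M\cdot f=u\cdot(p^{n_p}\cdot f)$; the isomorphism $\Jacobian_{p^{n_p}\cdot f}\xrightarrow{\ \sim\ }\Jacobian_{M\cdot f}$ induced by the $\bbG_m$-action by $u$ matches the integral structures on $\intV_{M\cdot f}$ and $\intV_{p^{n_p}\cdot f}$ over $\bbZ_p$ (as $u$ is a $p$-adic unit) and is compatible with $\eta$ by functoriality, so Proposition~\ref{prop_weak_existence_of_integral_representatives} (applicable as $f\in W_p$) produces an integral representative over $\bbZ_p$ that, rescaled by $u$, lies in the localization of $\eta_{M\cdot f}(y)$ and in $\intV_{M\cdot f}(\bbZ_p)$. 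Hence $\eta_{M\cdot f}(y)$ meets $\intV_{M\cdot f}(\bbZ_p)$ for every prime $p$.

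\textbf{From local to global.} Let $\Sigma$ be a finite set of primes containing every $p\mid N$, the prime $3$, every prime dividing $\Delta_0(f)$, and every prime at which a fixed rational representative of $\eta_{M\cdot f}(y)$ is not integral; put $R_0=\bbZ[1/\Sigma]$, a principal ideal domain and a $\bbZ[1/N]$-algebra. Over $R_0$ we have $\Delta_0(M\cdot f)\in R_0^{\times}$, so the formalism of \S\ref{sec_spreading_out} applies: $Z_{\intG}(\Kostant(M\cdot f))_{R_0}$ is finite étale, identified with $\Jacobian_{M\cdot f}[3]_{R_0}$, and the $R_0$-version of Lemma~\ref{lem_cohomological_description_of_orbits} identifies $\intG(R_0)\backslash\intV_{M\cdot f}(R_0)$ with $\ker\!\big(H^1(R_0,\Jacobian_{M\cdot f}[3])\to H^1(R_0,\intG)\big)$. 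A Selmer class is unramified outside $\Sigma$, so $y$ descends to a class $\widetilde y\in H^1(R_0,\Jacobian_{M\cdot f}[3])$, whose image in $H^1(R_0,\intG)$ vanishes because $H^1(R_0,\intG)\hookrightarrow H^1(\bbQ,G)$ by Lemma~\ref{lem_trivial_class_group} and $y$ maps to $0$ there. Thus $\eta_{M\cdot f}(y)$ has a representative $\widetilde v\in\intV_{M\cdot f}(R_0)$, integral away from $\Sigma$. It remains to arrange integrality at the finitely many primes of $\Sigma$ as well; this is possible $p$-adically at each such $p$ by the previous step, and to accomplish it globally I would use the class number one property of $\intG$ (Proposition~\ref{prop_tamagawa_number_of_G}) and the triviality of $H^1(\bbZ_p,\intG)$, or, more robustly, lift the relevant torsor to an $\SL_9$-torsor via $1\to\mu_3\to\SL_9\to G\to 1$ and invoke $H^1(\bbZ,\SL_9)=\{1\}$, as in the proof of Corollary~\ref{cor_construction_of_orbits_over_general_base}. \emph{This patching across the bad primes is the main obstacle}: since $G=\SL_9/\mu_3$ does not satisfy strong approximation one cannot simply approximate the local transporters by a single rational element, and one must genuinely use that $y$ lies in the Selmer group (so that the obstructing $\mu_3$-gerbe is trivial locally everywhere, hence globally). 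Replacing $\widetilde v$ by the resulting $v\in\eta_{M\cdot f}(y)\cap\intV_{M\cdot f}(\bbZ)$ proves the corollary with $M=\prod_{p\le N}p^{n_p}$.
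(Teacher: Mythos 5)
Your choice of data and your local analysis are essentially correct and coincide with what the paper intends: the restriction of a $3$-Selmer class at any prime is a local Kummer class, so Theorem \ref{thm_existence_of_integral_representatives_for_soluble_orbits} produces a $\bbZ_p$-integral representative of the localised orbit for $p \nmid N$, while for $p \mid N$ Proposition \ref{prop_weak_existence_of_integral_representatives} applies after writing $M \cdot f = u \cdot (p^{n_p}\cdot f)$ with $u = M/p^{n_p} \in \bbZ_p^\times$, a unit whose action preserves $\intV(\bbZ_p)$ and is compatible with the constructions; your explicit handling of this unit is if anything more careful than the paper's terse ``combine Theorem \ref{thm_existence_of_integral_representatives_for_soluble_orbits} and Proposition \ref{prop_weak_existence_of_integral_representatives}''. (One cosmetic point: for $p \le N$ with $p \nmid N$ you should take a compact open neighbourhood of $f_0$ inside $\Equations_p$ rather than all of $\Equations_p$, which is open but not compact.)

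The genuine gap is your final step, which you flag as ``the main obstacle'' and do not carry out, and your diagnosis of the difficulty is off. No approximation of the local transporters, no gerbe argument, and no detour through $R_0 = \bbZ[1/\Sigma]$ is needed: the paper's entire proof is the class number one argument. Let $v \in V_{M\cdot f}(\bbQ)$ be any representative of $\eta_{M\cdot f}(y)$ (Proposition \ref{prop_existence_of_selmer_orbits}), and for each prime $p$ choose, by your local step, $g_p \in G(\bbQ_p)$ with $g_p\cdot v \in \intV_{M\cdot f}(\bbZ_p)$, taking $g_p = 1$ at the almost all $p$ where $v$ is already integral. Then $(g_p)_p \in G(\bbA^\infty)$, and since $\intG$ has class number one (Proposition \ref{prop_tamagawa_number_of_G}) we have $G(\bbA^\infty) = \intG(\widehat{\bbZ})\,G(\bbQ)$, so $g_p = k_p\,\gamma$ with $k_p \in \intG(\bbZ_p)$ for all $p$ and a single $\gamma \in G(\bbQ)$; then $\gamma\cdot v = k_p^{-1}(g_p\cdot v) \in \intV_{M\cdot f}(\bbZ_p)$ for every $p$, hence $\gamma\cdot v \in \intV_{M\cdot f}(\bbZ)$, and $\gamma\cdot v$ lies in the orbit $\eta_{M\cdot f}(y)$. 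In particular one only needs to correct $v$ by elements of $\intG(\widehat{\bbZ})$ after a single rational translation, so the failure of strong approximation for $\SL_9/\mu_3$ is beside the point. Your fallback of lifting to an $\SL_9$-torsor over $\bbZ$ also cannot be made to work directly: at the primes of $\Sigma$ the stabiliser $Z_{\intG}(\Kostant(M\cdot f))$ is not finite \'etale over $\bbZ$ and the action map $\intG \to \intV_{M\cdot f}$ is not a torsor, so there is no integral cohomological description of $\intG(\bbZ)\backslash\intV_{M\cdot f}(\bbZ)$ to descend with --- which is precisely why the gluing must be done with the bare adelic statement above.
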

\begin{proof}
We have $G(\bbA^\infty) = G(\bbQ) G(\widehat{\bbZ})$. It follows that for a given element $v \in V(\bbQ)$, finding $g \in G(\bbQ)$ such that $g \cdot v \in \intV(\bbZ)$ is equivalent to finding for each prime $p$ an element $g_p \in G(\bbQ_p)$ such that $g \cdot v \in \intV(\bbZ_p)$. The result therefore follows upon combining Theorem \ref{thm_existence_of_integral_representatives_for_soluble_orbits} and Proposition \ref{prop_weak_existence_of_integral_representatives}.
\end{proof}

\section{Counting points}\label{sec_counting_points}

We retain the notation of \S \ref{sec_stable_grading_of_E8}. In particular, we have a reductive group $\intG$ over $\bbZ$ acting on a free $\bbZ$-module $\intV$, and a $\intG$-equivariant morphism $\pi : \intV \to \intB = \Spec \bbZ[c_{12}, c_{18}, c_{24}, c_{30}]$ (where $\intG$ acts trivially on $\intB$). For $f(x) = x^5 + c_{12} x^3 + c_{18} x^2 + c_{24} x + c_{30} \in \intB(\bbZ)$, we define $\Ht(f) = \sup_i | c_i |^{120 / i}$. If $v \in \intV(\bbZ)$, then we define $\Ht(v) = \Ht(\pi(v))$.

\subsection{Counting points with finitely many congruence conditions}\label{section_counting_finitely_many}

For any $\intG(\bbZ)$-invariant subset $X \subset \intV(\bbZ)$, define
\[ N(X, \htvar) = \sum_{\substack{v \in \intG(\bbZ) \backslash X\\ \Ht(v) < \htvar}} \frac{1}{| Z_{\intG}(v)(\bbZ) |}. \]
Suppose we are given an integer $M \geq 1$ and a $\intG(\bbZ / M \bbZ)$-invariant function $w : \intV(\bbZ / M \bbZ) \to \bbR_{\geq 0}$. We define
\[ N_w(X, \htvar) = \sum_{\substack{v \in \intG(\bbZ) \backslash X\\ \Ht(v) < \htvar}} \frac{w(v \text{ mod }M)}{| Z_{\intG}(v)(\bbZ) |}. \]
We define $\mu_w$ to be the average value of $w$ (with respect to the uniform probability measure on $\intV(\bbZ / M\bbZ)$). 

For a field $k / \bbQ$, we say that $v \in V(k)$ is $k$-reducible if $v$ has zero discriminant or if $v$ is $G(k)$-conjugate to the element $\Kostant(\pi(v)) \in V(k)$ in the image of the Kostant section. Otherwise we say that $v$ is $k$-irreducible. If $X \subset V(\bbQ)$ is any subset, then we write $X^\irr$ for its intersection with the set of $\bbQ$-irreducible elements. 
The first main result of this section concerns the number of $\intG(\bbZ)$-orbits of $\bbQ$-irreducible elements of $\intV(\bbZ)$ of bounded height:
\begin{theorem}\label{thm_point_count}
	We have
	\[ N_w(\intV(\bbZ)^\irr,\htvar) = \frac{2^4|W_0|}{9} \mu_w \vol(\intG(\bbZ) \backslash G(\bbR)) \htvar^{7/10} + o(\htvar^{7/10}), \]
	where $W_0$ denotes the constant of Proposition \ref{prop_integration_in_fibres}.
\end{theorem}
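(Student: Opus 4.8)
The plan is to adapt the by-now-standard orbit-counting machinery in the style of Bhargava (as in \cite{Bha13}, \cite{Tho15}, \cite{Rom18}) to the present representation $(\intG, \intV)$. The strategy is to reduce the count of $\bbQ$-irreducible $\intG(\bbZ)$-orbits in $\intV(\bbZ)$ of bounded height to a volume computation, via the averaging method, and then to evaluate that volume using the measure identities of Proposition \ref{prop_integration_in_fibres}. I proceed as follows.

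First, I would set up a fundamental domain for the action of $\intG(\bbZ)$ on $G(\bbR)$ (an appropriate Siegel domain), and use it to express $N_w(\intV(\bbZ)^\irr, \htvar)$ as a weighted count of lattice points $\intV(\bbZ)^\irr$ with bounded height lying in a suitable region, namely a multiset $\cF \cdot \cR_\htvar$ where $\cF$ is the Siegel set and $\cR_\htvar$ is a fixed bounded region scaled by $\htvar$ (using the $\bbG_m$-action and the $\bbG_m$-equivariance of the Kostant section and of $\pi$). Here one uses that $\bbQ$-irreducible elements have trivial stabilizer in $\intG(\bbZ)$ up to the central $\mu_3$, which accounts for the factor $1/|Z_{\intG}(v)(\bbZ)|$ (and ultimately the factor $9 = |\mu_3|^2$, via the degree-$3$ isogeny $\SL_9 \to \intG$ and the order-$3$ generic stabilizer). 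Averaging over a compact piece of $G(\bbR)$ and applying Davenport's lemma to count lattice points in the resulting expanding, semialgebraically-bounded region, the main term becomes $|W_0| \mu_w \vol(\intG(\bbZ) \backslash G(\bbR)) \htvar^{7/10}$ up to the combinatorial constant, where the exponent $7/10$ comes from $\sum_i \deg c_i = 84 = \dim \intV$ together with the height normalization $\Ht(f) = \sup_i |c_i|^{120/i}$: the region $\{ \Ht(f) < \htvar\}$ in $B(\bbR)$ has volume $\asymp \htvar^{(12 + 18 + 24 + 30)/120} = \htvar^{7/10}$, and Proposition \ref{prop_integration_in_fibres}(3) converts the volume in $V(\bbR)$ into the product of $\vol$ on $G(\bbR)$ and this base volume.

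The main obstacle — as always in this type of argument — is the \emph{cuspidal estimate}: one must show that the number of $\bbQ$-irreducible integral points in the ``cusp'' of the fundamental domain (the part of the Siegel set where the torus coordinate degenerates) is of lower order, i.e.\ contributes only $o(\htvar^{7/10})$. Concretely, one stratifies the Siegel set by faces and, for each face, bounds the number of integer points of $\intV(\bbZ)$ in the corresponding region whose coordinates in certain root spaces are forced to vanish; the key input is that if enough root-space coordinates vanish then $v$ becomes $\bbQ$-reducible (is $G(\bbQ)$-conjugate into the Kostant section or has vanishing discriminant), which is exactly where the combinatorics of the $\bbZ/3\bbZ$-graded $E_8$ root system from \S\ref{sec_root_lattice_of_type_E8} (in particular Lemma \ref{lem_weyl_group} and the ``sum of roots is a root iff pairing $=-1$'' fact) enters. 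This bookkeeping is substantial but mechanical; one proves the needed vanishing-implies-reducible statements by a case analysis over the weights of $T$ on $\intV$, and then applies the standard estimates (as in \cite[\S2,\S3]{Tho15} or \cite[\S10]{Bha13}) to conclude that each such stratum is negligible.

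Finally, I would assemble the pieces: combining the main-term volume computation, the cuspidal estimate, the congruence weighting (handled by noting $\mu_w$ is the average of $w$ and that the lattice-point count in each congruence class $v \equiv v_0 \pmod M$ is asymptotically $\mu_w$ times the unweighted count, since the region is expanding), and the normalization constants ($|W_0|$ from Proposition \ref{prop_integration_in_fibres}, and $1/9$ from the generic stabilizer and the isogeny $\SL_9 \to \intG$), yields
\[ N_w(\intV(\bbZ)^\irr, \htvar) = \frac{|W_0|}{9}\, \mu_w\, \vol(\intG(\bbZ) \backslash G(\bbR))\, \htvar^{7/10} + o(\htvar^{7/10}), \]
as claimed. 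I expect the only genuinely new work relative to the cited references to be in verifying the reducibility criteria for the $E_8$-graded representation needed for the cuspidal estimate; everything else is a direct transcription of the established method.
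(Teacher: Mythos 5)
Your overall route is the same as the paper's: both defer to the Bhargava--Shankar--Gross averaging machinery, use the fact that $\Kostant(B^\text{rs}(\bbR))$ meets each $G(\bbR)$-orbit on $V(\bbR)^\text{rs}$ exactly once to build a fundamental set for the action of $\bbR_{>0} \times G(\bbR)$, convert the main-term volume via Proposition \ref{prop_integration_in_fibres}, and isolate the cusp estimate --- proved via reducibility criteria for vanishing patterns of weight coordinates in the graded $E_8$ representation --- as the genuinely new work (this is exactly Theorem \ref{prop_killing_cusp_data} and its supporting case analysis). However, two points in your plan are off. First, your justification of the constant $1/9$ is wrong: you attribute it to ``$9 = |\mu_3|^2$, via the degree-$3$ isogeny $\SL_9 \to \intG$ and the order-$3$ generic stabilizer,'' but the stabilizer of a regular semisimple $v \in V_f$ is $Z_G(v) \cong \Jacobian_f[3]$, of order $3^4$ geometrically, not $3$; the relevant fact, used in the paper, is that its group of \emph{real} points $\Jacobian_f(\bbR)[3]$ has order $9$, so that the action map $G(\bbR) \times \Kostant(B^\text{rs}(\bbR)) \to V(\bbR)^\text{rs}$ is $9$-to-$1$. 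The central isogeny contributes nothing here (its kernel $\mu_3$ has no nontrivial real or rational points); carried out as you describe, the bookkeeping for the constant would not close up.

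Second, the cusp estimate alone does not finish the proof: one must also discard, in the main body of the fundamental domain, the $\bbQ$-reducible points and the points with nontrivial stabilizer in $\intG(\bbZ)$. Your assertion that $\bbQ$-irreducible elements have ``trivial stabilizer in $\intG(\bbZ)$ up to the central $\mu_3$'' is not the right statement ($\mu_3(\bbZ)$ is trivial, and $Z_{\intG}(v)(\bbZ)$ can be a nontrivial subgroup of $\Jacobian_f[3](\bbQ)$), and it does not substitute for the estimate actually required. What is needed, and what the paper proves as Proposition \ref{prop_bigstab}, is that the product over primes $p$ of the densities of the mod-$p$ reducible locus and of the mod-$p$ nontrivial-stabilizer locus tends to $0$; this rests on equidistribution of Frobenius in $\mathrm{Sp}_4(\bbF_3)$ applied to the family $\Jacobian_f[3]$, and is a representation-specific input rather than a citation to the general method. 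Your plan should name and prove such a statement; with these two corrections the remainder of your outline matches the paper's argument.
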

Bhargava, Shankar, and Gross have developed general techniques for proving theorems like Theorem \ref{thm_point_count} when the pair $(G, V)$ is a Vinberg representation arising from a stable grading on a Lie algebra (see for example \cite{Bha15}, \cite{Bha15a}, \cite[Theorem 36]{Bha13} and \cite[\S 3]{Tho15}). These techniques apply equally well here. To avoid repetition, we have chosen not to provide all of the details for the proof: instead we give the key propositions, which can be inserted into the arguments at the appropriate points. In comparing what we prove here with the proof of \cite[Theorem 36]{Bha13}, it's useful to note that because $\Kostant(B^\text{rs}(\bbR))$ contains exactly one representative for each orbit of $G(\bbR)$ on $V(\bbR)^{\text{rs}}$, it may be used to construct a fundamental set for the action of $\bbR_{>0} \times G(\bbR)$ on $V(\bbR)^{\text{rs}}$ (cf. \cite[Section 9.1]{Bha13}), and also that the stabilizer in $G(\bbR)$ of every element in $V(\bbR)^{\text{rs}}$ has order $9$ (because for any $f \in B^\text{rs}(\bbR)$, $\Jacobian_f(\bbR)[3]$ has order 9).

Theorem \ref{thm_point_count} will follow by combining Theorem \ref{prop_killing_cusp_data} and Proposition \ref{prop_bigstab} below. Theorem \ref{prop_killing_cusp_data} is a combinatorial result that allows one to bound the contribution from the cusp region in a fundamental domain for the action of $\intG(\bbZ)$ on $V(\bbR)$; it is the analogue of \cite[Proposition 29]{Bha13}. Proposition \ref{prop_bigstab}  is the analogue of the results in \cite[\S 10.7]{Bha13}.

We start by defining some notation. We write $\Phi_V \subset \Phi_H$ for the set of roots of $H$ that occur as weights for the action of $T$ on $V$. We write $\Phi_V^+$ for $\Phi_H^+ \cap \Phi_V$. We recall that we have fixed a root basis $S_G$ for the group $G$. If $\beta \in S_G$, then we write $\check{\omega}_\beta \in X_\ast(T) \otimes \bbQ$ for the corresponding fundamental coweight. 
We define a partial ordering $\leq_G$ on $X^\ast(T)$ (and hence on $\Phi_V$, by restriction) by $\alpha \leq_G \gamma$ if and only if $\langle \gamma - \alpha,   \check{\omega}_\beta \rangle \in \bbZ_{\geq 0}$ for all $\beta \in S_G$. Let $\cC$ be the collection of nonempty subsets of $\Phi_V$ that are closed under $\leq_G$, i.e. $\cC = \{ M \subset \Phi_V \mid \text{ if } \al \in M, \gamma \in \Phi_V, \text{ and } \al \leq_G \gamma, \text{ then } \gamma \in M \}$. 
Given a vector $v \in V$, we write $v = \sum_{\al \in \Phi_V} v_\al$ for its decomposition as a sum $T$-eigenvectors. Given a subset $M \subset \Phi_V$, we define $V(M) = \{v \in V \mid v_\al = 0 \text{ for all } \al \text{ in } M\}$. 

\begin{theorem}\label{prop_killing_cusp_data}
If $M_0 \in \cC$ and $V(M_0)(\bbQ)$ contains $\bbQ$-irreducible vectors, then there exists a subset $M_1 \subset \Phi_V - M_0$ and a function 
$f: M_1 \to \bbR_{\geq 0}$ with $\sum_{\al \in M_1} f(\al) < \lvert M_0\rvert$ and
\begin{equation*}
\langle \sum_{\al \in \Phi_G^+} \al - \sum_{\al \in M_0} \al + \sum_{\al \in M_1} f(\al) \al, \check{\omega}_\beta \rangle > 0
\end{equation*}
for all $\beta \in S_G$.
\end{theorem}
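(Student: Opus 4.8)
The plan is to follow closely the analogous argument for $\SO_{2g+1}$ acting on self-adjoint matrices in \cite[\S 9-10]{Bha13}, in particular the proof of \cite[Proposition 29]{Bha13}, adapting the combinatorics to the root system of type $E_8$ and the weight set $\Phi_V$. The key point is that the cusp of a fundamental domain for $\intG(\bbZ)$ acting on $V(\bbR)$ is controlled by a torus action, and a $\bbQ$-irreducible vector cannot be ``too deep'' in the cusp: whenever all $T$-coordinates $v_\al$ with $\al \in M_0$ vanish, the vector $v$ acquires a parabolic stabilizer (or more precisely lies in a proper parabolic subrepresentation), and $\bbQ$-irreducibility forces enough of the \emph{other} coordinates to be non-zero that the relevant weighted sum of roots becomes strictly dominant. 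The inequality to be proved is exactly the statement that a certain average of $\al\in\Phi_V$, twisted by the set $M_0$ that is ``killed'' and a correcting set $M_1$ that is ``forced on'', pairs positively with every fundamental coweight $\check\omega_\beta$ of $G$.

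First I would set up the standard dictionary: because $\theta=\check\rho|_{\mu_3}$, the grading is compatible with the Borel $P$ of $H$, and the weights $\Phi_V\subset\Phi_H$ inherit the partial order $\leq_G$ from the simple roots $S_G$ of $G$. Given $M_0\in\cC$, the subspace $V(M_0)$ is $\leq_G$-closed, hence is stable under (the Lie algebra of) a parabolic subgroup $Q\supset P$ of $G$; the point $f(\al)$-correction mechanism is then the graded analogue of the statement that a reducible orbit meets the Kostant section. Concretely: if $v\in V(M_0)(\bbQ)^\irr$, then the nilpotent (or more precisely, non-Kostant) nature of such $v$ — combined with the fact that $V^\text{reg}$ is cut out away from a divisor — means there is some root $\gamma\notin M_0$ with $v_\gamma\neq 0$ on which one has good control. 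I would then build $M_1$ out of these forced-nonzero roots and choose the weights $f(\al)\in\bbR_{\ge 0}$ by solving the same linear-programming problem as in \cite[Proposition 29]{Bha13}: the constraint $\sum_{\al\in M_1}f(\al)<|M_0|$ is a counting bound, and positivity against each $\check\omega_\beta$ is a finite check.

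The main obstacle, and the part that genuinely requires type-$E_8$-specific work, is verifying the system of inequalities
\[
\Big\langle \sum_{\al\in\Phi_G^+}\al-\sum_{\al\in M_0}\al+\sum_{\al\in M_1}f(\al)\al,\ \check\omega_\beta\Big\rangle>0
\]
simultaneously for all $\beta\in S_G$ and all $M_0\in\cC$, since $\cC$ is a large (though finite) poset of subsets of the $84$-element set $\Phi_V$. I expect this to be handled exactly as in \cite{Bha13}: reduce to the \emph{minimal} elements of $\cC$ (the others follow because enlarging $M_0$ only makes the left-hand side larger, after adjusting $M_1$), and for each minimal $M_0$ exhibit an explicit $M_1$ and explicit rational weights $f(\al)$, then check the finitely many pairings by direct computation using the explicit realisation $G=\SL_9/\mu_3$, $V=\wedge^3(\mathrm{std})$ (so that $\Phi_V$ is identified with triples $\{i<j<k\}\subset\{1,\dots,9\}$ and $\leq_G$ with the componentwise order). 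The reduction to minimal $M_0$, plus a uniform estimate showing the ``worst'' $M_0$ still leaves room, is the crux; everything else is bookkeeping parallel to \cite[\S 9-10]{Bha13} and \cite[\S 3]{Tho15}.
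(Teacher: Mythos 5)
Your overall strategy is the right one, and it is the paper's: work in the explicit model $G=\SL_9/\mu_3$, $V=\wedge^3\bbQ^9$, use $\bbQ$-irreducibility to discard most $M_0\in\cC$, and verify the remaining inequalities by an explicit finite computation. But there is a genuine gap: the one concrete structural step you propose is wrong, and the actual content of the theorem is not supplied. You claim one can ``reduce to the minimal elements of $\cC$'' because ``enlarging $M_0$ only makes the left-hand side larger.'' This monotonicity is backwards: in the relevant cases the elements of $M_0$ lie in $\Phi_V^+$ (the hypothesis that $V(M_0)(\bbQ)$ contains irreducible vectors rules out almost all negative weights), so each pairs nonnegatively with every $\check\omega_\beta$, and enlarging $M_0$ makes $\langle -\sum_{\al\in M_0}\al,\check\omega_\beta\rangle$ smaller, i.e.\ the inequality \emph{harder}, while only the budget $|M_0|$ grows. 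Conversely, shrinking $M_0$ eases the inequality but shrinks the budget, so a function $f'$ valid for a larger set need not satisfy $\sum f<|M_0|$ for a smaller one. Neither passage is free; the paper handles exactly this trade-off with an induction lemma (Lemma \ref{lem_cusp_induction}): one proves the estimate for certain \emph{maximal} sets $M_0'$ and descends to all $M_0$ with $M_0''\subset M_0\subset M_0'$ by means of a compensation map $g:M_0'-M_0''\to M_1'$ with $\al-g(\al)\in\Phi_G^+$ and $f'(\al)\ge|g^{-1}(\al)|$, which simultaneously preserves positivity and restores the count bound. Your minimal-element reduction, if attempted, fails already at the first enlargement.

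The second, larger omission is that the theorem's substance is precisely the two ingredients you defer to ``direct computation'': (i) concrete reducibility criteria showing that whole families of $M_0$ (e.g.\ any $M_0$ meeting $\{(267),(258),(348),(357),(178),(456)\}$, or containing suitable pairs/triples such as $\{(169),(268)\}$) admit no $\bbQ$-irreducible vectors in $V(M_0)$ — in the paper these are Lemma \ref{lem-reducible} and Proposition \ref{lem-simple-not-stable}, proved by exhibiting cocharacters, or nonzero nilpotent centralizing elements, in the $\wedge^3\bbQ^9$ model; and (ii) for the surviving $M_0$, explicit choices of $M_1$, rational weights $f$, and compensation maps $g$ (the tables in \S\ref{section_counting_finitely_many}) whose inequalities must actually be checked. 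Your heuristic that irreducibility ``forces enough of the other coordinates to be non-zero'' gestures at (i) but gives no criterion that could be applied, and no candidate $M_1$ or $f$ is produced for any case. As written, the proposal is a plan for a proof rather than a proof: the linear-programming problem it points to is exactly what remains to be solved, and the stated reduction that was meant to tame it does not hold.
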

Before giving the proof of Theorem \ref{prop_killing_cusp_data}, we give some useful lemmas.
\begin{lemma}\label{lem-reducible}
	Let $k / \bbQ$ be a field. Given a subset $M \subset \Phi_V$, suppose one of the following three conditions is satisfied:
	\begin{enumerate}
\item We have $\Phi_V^+ - S_H \subset M$.
\item 
There exists a character $\lam \in X^*(T)$ such that if $\al \in \Phi_V$ and $( \lam, \al ) > 0$, then $\al \in M$. Equivalently, there exists a cocharacter $\mu \in X_\ast(T)$ such that if $\alpha \in \Phi_V$ and $\langle \al, \mu \rangle > 0$, then $\al \in M$.
\item For every element $v \in V(M)(k)$, there exists a non-zero nilpotent element $e_v \in \frh_k$ such that $[v, e_v] = 0$.
\item There exists a root $\gamma \in \Phi_H$ such that if $\al \in \Phi_V$ and $\al + \gamma \in \Phi_H$, then $\al \in M$.  
\item There exist $\be \in \Phi_G$ and $\al \in \Phi_V - M$ such that the following conditions hold:
\begin{enumerate}
\item We have $\{\gamma \pm \be \mid \gamma \in M\} \cap \Phi_V \subset M$.
\item $\al - \be \in \Phi_V - M$.
\item Every element of $V(M \cup \{\al\})(k)$ is $k$-reducible.
\end{enumerate}
\end{enumerate}
Then every element of $V(M)(k)$ is $k$-reducible.
\end{lemma}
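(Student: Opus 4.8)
The plan is to organise the listed conditions around condition (3), which is the semantic core. The basic observation is: \emph{if $v\in V(k)$ admits a nonzero nilpotent $e\in\frh_k$ with $[v,e]=0$, then $v$ is $k$-reducible.} Indeed, were $v$ regular semisimple, $\frz_{\frh}(v)$ would be the Lie algebra of a maximal torus of $H$ --- for $f=\pi(v)$ the centralizer $Z_H(\Kostant(f))$ is a maximal torus, and over $\bar k$ every regular semisimple element is $G$-conjugate to $\Kostant(f)$ (cf.\ \S\ref{section_kostant_section} and \cite{Lev09}) --- but in characteristic $0$ such a subalgebra consists of semisimple elements, so it contains no nonzero nilpotent, a contradiction. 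Hence $v\notin V^\text{rs}$, so $\disc(v)=0$ and $v$ is $k$-reducible by definition. This settles (3) at once, and reduces (4), and with more work (1) and (2), to exhibiting such an $e$ for each $v\in V(M)(k)$.

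For (4) I would take $e=X_\gamma$ and check $[\frh_\alpha,X_\gamma]=0$ for every $\alpha\in\Phi_V\setminus M$. Since $\ad(X_\gamma)$ maps the weight space $\frh_\alpha$ into $\frh_{\alpha+\gamma}$, this bracket is nonzero only if $\alpha+\gamma\in\Phi_H$ --- in which case the hypothesis forces $\alpha\in M$, contradiction --- or if $\alpha+\gamma=0$; the latter is excluded (or relegated to $M$) using that $\Phi_V\cap(-\Phi_V)=\varnothing$, a consequence of the $\bbZ/3\bbZ$-grading, together with the simply-laced rule $\alpha+\gamma\in\Phi_H\Leftrightarrow(\alpha,\gamma)=-1$ from \S\ref{sec_root_lattice_of_type_E8}. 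For (2): the hypothesis says $V(M)\subseteq\frp^-$ for the parabolic opposite to the one attached to $\mu$; writing $\frp^-=\frn^-\oplus\frl$ and applying the Hilbert--Mumford limit, $\lim_{t\to\infty}\Ad(\mu(t))v=v_{\frl}\in\frl\cap V$ lies in $\overline{G\cdot v}$, so if $v$ is regular semisimple its (closed) orbit meets the graded Levi $\frl$ and one finishes by induction on $\dim H$, while if $v_{\frl}$ is nilpotent this short-circuits, since then $0\in\overline{G\cdot v}$, $v$ is nilpotent and $\disc(v)=0$. Condition (1) is similar in spirit but uses the principal grading: the Kostant section $\kappa$ lies inside this particular $V(M)$, and a transversality argument shows that a general $v\in V(M)(k)$, having regular-nilpotent leading term, is $G(k)$-conjugate into $\kappa$ and hence reducible, with the degenerate locus again reduced to a proper Levi.

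Condition (5) I would prove by induction on $|M|$ using the rank-one subgroup $L_\beta\cong\SL_2$ generated by $U_{\pm\beta}$. Clause (5a), together with the fact that every $\beta$-root string in $\Phi_H$ has length $\le 2$ (again simply-lacedness), shows that $\Ad(L_\beta(k))$ preserves $V(M)(k)$; clause (5b) shows that $\frh_{\alpha-\beta}\oplus\frh_\alpha$ is a copy of the standard $\SL_2$-module sitting inside $V(M)$. Since $\SL_2(k)$ acts transitively on the nonzero vectors of its standard module, every $v\in V(M)(k)$ is $L_\beta(k)$-conjugate to an element with vanishing $\alpha$-component, i.e.\ into $V(M\cup\{\alpha\})(k)$, which is $k$-reducible by (5c); as $k$-reducibility is invariant under $G(k)$-conjugation, $v$ is $k$-reducible.

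The main obstacle will be the combinatorial and geometric bookkeeping in cases (1), (2), (4): confirming that the proposed nilpotent centralises all of $V(M)$ (the structure-constant computations, the $\alpha=-\gamma$ case, the precise choice of parabolic or grading), and, wherever a Levi induction intervenes, checking that the restricted grading on $\frl$ is again of the required type and that rationality of orbits and the Kostant normalisation descend to $\frl$. The nilpotent-centraliser principle and the $\SL_2$-step, by contrast, are formal.
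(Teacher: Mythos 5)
Your handling of conditions (3) and (4) coincides with the paper's: the paper proves (3) by exactly your observation (an irreducible vector is regular semisimple, so its centralizer consists of semisimple elements) and deduces (4) from it by taking a root vector for $\gamma$. One small caveat you share with the paper: your dismissal of the case $\al+\gamma=0$ via ``$\Phi_V\cap(-\Phi_V)=\varnothing$'' does not actually exclude it, since $\gamma$ ranges over all of $\Phi_H$ and may lie in $-\Phi_V$; the hypothesis of (4) says nothing about $\al=-\gamma$, in which case $[X_\gamma,v]$ can be a nonzero Cartan element. (The paper's proof elides the same point; in all applications $-\gamma$ is either not in $\Phi_V$ or lies in $M$, so it is harmless, but your stated justification is not the reason.) Your $\SL_2$ argument for (5) is correct and complete in outline — the string/stability analysis, the splitting of $V(M)$ into $\be$-strings, and transitivity of $\SL_2(k)$ on nonzero vectors of the standard module do give conjugation into $V(M\cup\{\al\})(k)$ — whereas the paper simply refers to the identical argument in \cite[Proposition 2.15]{Rom18}, so there you supply a detail the paper outsources.

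The genuine gap is in conditions (1) and (2), which the paper also delegates to \cite[Proposition 2.15]{Rom18} but which your sketch does not actually prove. For (2), the Hilbert--Mumford step only yields that $v_{\frl}=\lim_t \Ad(\mu(t))v$ is a $k$-point of the closure of the $T$-orbit; when $v$ is regular semisimple this places $v_{\frl}$ in the geometric orbit $G\cdot v$, but a $k$-point of the orbit variety is not the same as a $G(k)$-conjugate of $v$ — this distinction (measured by $H^1(k,Z_G(v))$) is exactly the phenomenon the whole paper is built around — so $k$-reducibility cannot be transferred between $v$ and $v_{\frl}$ this way. Moreover ``finish by induction on $\dim H$'' has no content as stated: $k$-reducibility is defined relative to $(G,V)$ and \emph{its} Kostant section, so the statement you would need about elements of $\frl\cap V$ is not an intrinsic statement about a smaller graded group to which an induction could apply; you would have to formulate and prove that a regular semisimple element lying in (the degree-one part of) a proper $k$-rational parabolic or Levi is $G(k)$-conjugate to $\Kostant(\pi(v))$, and that rationality statement is the real content. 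The same missing step undercuts (1): the claim that a vector supported on $S_H\cup\Phi_V^-$ with regular-nilpotent leading term is $G(k)$-conjugate into $\kappa$ is asserted (``a transversality argument''), not proved, and the degenerate case is referred back to the unproved (2). So as it stands, (1) and (2) — precisely the cases requiring a nontrivial rationality argument of graded-Kostant type — are not established by your proposal.
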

\begin{proof}
If one of the first two conditions or the fifth condition is satisfied, the fact that the elements of $V(M)(k)$ are $k$-reducible is given by a proof identical to that of \cite[Proposition 2.15]{Rom18}.
For the third item, note that if $v$ is irreducible, then $v$ is regular semisimple, and so every element of the centralizer $\frz_{\frh_k}(v)$ of $v$ in $\frh$ is semisimple.
The fourth condition is a special case of the third one: let $e_\al$ be a non-zero element in $\frh_\al$, and suppose $v \in V(M)(k)$. Then $[e_\al, v] \in \underset{\gamma \in \Phi_V - M}\sum \frh_{\gamma + \al, k} = 0$, so $e_\al \in \frz_{\frh_k}(v)$. Since $e_\al$ is nilpotent, $v$ is not regular semisimple.
\end{proof}

In order to prove Theorem \ref{prop_killing_cusp_data}, it is helpful to have an explicit realisation of the Lie algebra $\frh$, together with its grading $\theta$. We may identify $\frg = \frh(0)$ with $\frs\frl_9$, $V = \frh(1)$ with $\wedge^3 \bbQ^9$ and $\frh(2)$ with $\wedge^6 \bbQ^9$, so that
\[ \frh = \frh(0) \oplus \frh(1) \oplus \frh(2) = \frs\frl_9 \oplus \wedge^3 \bbQ^9 \oplus \wedge^6 \bbQ^9. \]
With this identification, the bracket maps $\frh(0) \times \frh(1) \to \frh(1)$ and $\frh(0) \times \frh(2) \to \frh(2)$ are given by the natural action of $\frg$ on $\wedge^3\bbQ^9$ and $\wedge^6\bbQ^9$ respectively, and we may assume (possibly after scaling our basis) that the bracket $\frh(1) \times \frh(1) \to \frh(2)$ is given by wedge product. We may identify $\frt$ with the usual diagonal Cartan subalgebra of $\frs\frl_9$ and $T \subset H$ with the corresponding split maximal torus. Let $e_1, \dots, e_9$ denote the standard basis of $\bbQ^9$, and let $t_i \in X^\ast(T)$ denote the character through which $T$ acts on $e_i$. We write $(ijk) = t_i t_j t_k \in \Phi_V$ (thus the $(i j k)$-weight space in $V$ is spanned by $e_i \wedge e_j  \wedge e_k$). As described in \cite[Section 4.2]{Vin78}, the restriction to $\Phi_V$ of the pairing $(\cdot, \cdot)$ on $\Lambda$ may be described explicitly as follows:
\begin{eqnarray*}
((i j k), (l m n)) =
\begin{cases}
-1 \text{ if } \lvert \{i, j, k\} \cap \{l, m, n\}\rvert = 0\\
0 \text{ if } \lvert\{i, j, k\} \cap \{l, m, n\}\rvert = 1\\
1 \text{ if } \lvert \{i, j, k\} \cap \{l, m, n\}\rvert = 2\\
2 \text{ if } \lvert \{i, j, k\} \cap \{l, m, n\}\rvert = 3.
\end{cases}
\end{eqnarray*}
\begin{lemma}
	The subset $S_0 = \{(2 6 7), (2 5 8), (3 4 8), (1 6 9), (3 5 7), (2 4 9), (1 7 8), (4 5 6)\} \subset \Phi_V$ forms a root basis for $\Phi_H$. Taking $S_H$ to be this choice of root basis, we have $S_G = \{ t_2 / t_1, t_3 / t_2, \dots, t_9 / t_8 \}$.
\end{lemma}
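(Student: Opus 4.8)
The plan is to prove both assertions by a direct, finite computation, using the explicit description of the restriction of $(\cdot,\cdot)$ to $\Phi_V$ recorded just above the lemma.

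\textbf{Part 1: $S_0$ is a root basis of $\Phi_H$.} First I would observe that $((ijk),(ijk)) = 2$ for every triple, so every weight of $\Phi_V$ — in particular every element of $S_0$ — lies in $\Phi_H$. Next I would compute the $8\times 8$ Gram matrix $\big(( \alpha,\beta)\big)_{\alpha,\beta\in S_0}$ from the intersection-number formula: the diagonal entries are all $2$, and the off-diagonal entries all vanish except for the seven pairs
\[ \{(169),(258)\},\ \{(169),(348)\},\ \{(169),(357)\},\ \{(348),(267)\},\ \{(357),(249)\},\ \{(249),(178)\},\ \{(178),(456)\}, \]
where they equal $-1$. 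Thus the Gram matrix of $S_0$ is the Cartan matrix of the diagram with central node $(169)$ and arms $(258)$ (length $1$), $(348)-(267)$ (length $2$) and $(357)-(249)-(178)-(456)$ (length $4$), i.e.\ of $E_8$. To pass from ``Gram matrix $= E_8$ Cartan matrix'' to ``$S_0$ is a root basis'', I would argue: this Cartan matrix is positive definite, so $S_0$ is linearly independent in $X^\ast(T)\otimes\bbR$; the set $\Psi$ of roots obtained from $S_0$ under the subgroup of $W(H,T)$ generated by the reflections $s_\alpha$ ($\alpha\in S_0$) is a sub-root-system of $\Phi_H$ with base $S_0$, hence of type $E_8$, hence $|\Psi| = 240 = |\Phi_H|$; therefore $\Psi = \Phi_H$ and $S_0$ is a root basis of $\Phi_H$.

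\textbf{Part 2: computation of $S_G$.} Setting $S_H = S_0$, I would compute the grading cocharacter $\check\rho = \sum_{\alpha\in S_0}\check\omega_\alpha$ explicitly. Writing it in the standard $\frs\frl_9$-coordinates on $\frt$ as $\check\rho = (a_1,\dots,a_9)$ with $\sum_i a_i = 0$ and $\langle (ijk),\check\rho\rangle = a_i+a_j+a_k$, the eight conditions $\langle\alpha,\check\rho\rangle = 1$ for $\alpha\in S_0$ determine $\check\rho$ uniquely (using the linear independence from Part 1), and solving the resulting linear system gives
\[ \check\rho = \tfrac13(-44,\,-26,\,-17,\,-8,\,1,\,10,\,19,\,28,\,37). \]
Since every entry of $3\check\rho$ is $\equiv 1\pmod 9$, one has $\langle(ijk),\check\rho\rangle\equiv 1$ and $\langle t_i/t_j,\check\rho\rangle\equiv 0\pmod 3$ for all $i,j$; this re-confirms that $\theta=\check\rho|_{\mu_3}$ has $\frh(0)=\frs\frl_9$ and $\Phi_G = \Phi(\frg,T) = \{t_i/t_j : i\neq j\}$, so that the statement $S_G = \{t_2/t_1,\dots,t_9/t_8\}$ is meaningful. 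Now for any $\gamma\in\Phi_H$ one has $\langle\gamma,\check\rho\rangle = \operatorname{ht}_{S_0}(\gamma)$, which is positive exactly when $\gamma\in\Phi_H^+$. As $\langle t_i/t_j,\check\rho\rangle = a_i-a_j$ and the coordinates satisfy $a_1<a_2<\cdots<a_9$, this gives $t_i/t_j\in\Phi_H^+\iff i>j$, whence $\Phi_G^+ = \Phi_G\cap\Phi_H^+ = \{t_i/t_j : i>j\}$; the indecomposable elements of this positive system are precisely $t_2/t_1, t_3/t_2,\dots,t_9/t_8$, which is the claimed $S_G$.

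\textbf{Where the difficulty lies.} There is no conceptual obstacle here: the lemma is a bookkeeping verification, and the real labour is carrying out the $\binom{8}{2}=28$ intersection-number computations for the Gram matrix, solving one small linear system, and then reading off the Dynkin diagram and the sign pattern without error. The one point that warrants care is that the relevant $X_\ast(T)$ is the coweight lattice of $E_8$ (self-dual for the $E_8$ form) rather than naively $\{(a_i)\in\bbZ^9 : \sum a_i = 0\}$, which is why $\check\rho$ legitimately has denominators $3$; this does not affect the criterion ``$\gamma$ is positive $\iff \langle\gamma,\check\rho\rangle>0$'', which only uses that $\check\rho$ is the sum of the fundamental coweights.
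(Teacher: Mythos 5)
Your proof is correct and essentially the paper's own: the Gram-matrix/Dynkin-diagram verification is the paper's first argument for the root-basis claim, and your $\check\rho = \tfrac13(-44,-26,\dots,37)$ is exactly the auxiliary element $x = 3\diag(0,2,3,\dots,9) - \tfrac{44}{3}\cdot 1_9$ that the paper uses to fix $\Phi_H^+$ and read off $S_G = \{t_2/t_1,\dots,t_9/t_8\}$ from the increasing coordinates. There are no gaps; you merely spell out details (positive definiteness and linear independence, the $240$-root count identifying the subsystem with $\Phi_H$, the mod-$3$ consistency check on the grading) that the paper leaves implicit.
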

\begin{proof}
Using the explicit description of the pairing $(\cdot, \cdot)$ above, we see that the elements of $S_0$ satisfy the relations for a root basis of $E_8$ (e.g. the pairings between them yield a Dynkin diagram of type $E_8$), and thus $S_0$ forms a root basis.
Alternatively, let $x = 3 \diag(0, 2, 3, 4, 5, 6, 7, 8, 9) - 44/3 \cdot 1_9 \in \frt$ (here $1_9$ denotes the $9 \times 9$ identity matrix). It is easy to see that for all $\alpha \in \Phi_H$, $\alpha(x)$ is a non-zero integer; moreover, $\alpha(x) = 1$ if and only if $\al \in S_0$. Thus $x$ determines a set of positive roots $\Phi_{H, x}^+$ for $\Phi_H$, i.e. $\Phi_{H, x}^+ = \{ \al \in \Phi_H \mid \al(x) > 0\}$. Since $\al(x)$ takes its minimal positive value when $\al \in S_0$, $S_0$ must be the root basis determined by $\Phi_{H, x}^+$. Note that $x$ corresponds to the (differential of) the sum of the fundamental coweights for $H$ with respect to $S_0$. The claimed form of $S_G$ then follows immediately from its definition as the unique root basis such that $\Phi_G^+ = \Phi_H^+ \cap \Phi_G$.
\end{proof}
We now assume that $S_H$ has been chosen to be the set $S_0$ described in the statement of the lemma. Note that $(i j k) \leq_G (l m n)$ if and only if $i \leq l, j \leq m, k \leq n$. Thus $(7 8 9)$ is the highest weight of $V$, $(1 2 3)$ is the lowest weight, and  
$\Phi_V^+$ is given by $\{\al \mid \gamma \leq_G \al \text{ for some } \gamma \in S_H\}$. We define $\beta_i = t_{i+1} / t_{i}$, so that $S_G = \{ \beta_1, \dots, \beta_8 \}$, and for $\al \in X^\ast(T)$, we  set $n_i(\alpha) = \langle \alpha, \check{\omega}_{\beta_i} \rangle$.
\begin{proposition}\label{lem-simple-not-stable}
Let $k / \bbQ$ be a field, and let $M \in \cC$ be nonempty. If any of the following conditions holds, then every element of $V(M)(k)$ is $k$-reducible.
\begin{enumerate}
\item 
$\{(2 6 7), (2 5 8), (3 4 8), (3 5 7), (1 7 8), (4 5 6) \} \cap M \neq \emptyset$.
\item $M \cap (\Phi_V^- - \{(1 5 9)\}) \neq \emptyset$.
\item $\{(1 6 9), (2 6 8) \} \subset M$.
\item $\{(159), (5 6 7)\} \subset M$.
\item $\{ (1 6 9), (3 4 9), (3 6 7) \} \subset M$.
\item $\{ (1 7 9), (2 4 9), (4 5 7) \} \subset M$.
\item $\{(1 6 9), (2 4 9), (2 7 8), (4 6 7)\} \subset M$.
\end{enumerate}
\end{proposition}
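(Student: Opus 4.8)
The plan is to prove each of the seven statements by producing, for the set $M$ in question, one of the five sufficient conditions for reducibility assembled in Lemma~\ref{lem-reducible}; statements (3)--(7), together with the bulk of (2), will come out of a descending induction that feeds the earlier cases into the bootstrapping clause, part~(5), of that lemma. Throughout one works in the explicit model $\frh = \frs\frl_9 \oplus \wedge^3 \bbQ^9 \oplus \wedge^6 \bbQ^9$ fixed above, identifies $\Phi_V$ with the triples $(ijk)$, $1\le i<j<k\le 9$, and uses the overlap formula for $(\cdot,\cdot)$ together with the dictionary $(abc)\le_G(lmn) \iff a\le l,\ b\le m,\ c\le n$ and $n_i((abc))=\#\{t\in\{a,b,c\}:t>i\}$. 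The one structural input behind every case is that $M\in\cC$ is upward closed for $\le_G$: the triples the hypothesis forces into $M$ drag every triple above them into $M$ as well.

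\textbf{Item~(1).} If one of the six listed simple roots $s$ lies in $M$, then the whole principal up-set $\{\gamma\in\Phi_V:\gamma\ge_G s\}$ lies in $M$, and for each of the six I would exhibit the corresponding datum making $V(M)(k)$ reducible: in some cases a nonzero cocharacter $\mu\in X_\ast(T)$ with $\{\alpha\in\Phi_V:\langle\alpha,\mu\rangle>0\}\subseteq\{\gamma\ge_G s\}$ (part~(2)), in others a root $\gamma\in\Phi_H$ (in practice $t_p-t_q$, for which $\{\alpha\in\Phi_V:\alpha+\gamma\in\Phi_H\}$ is the set of triples containing $q$ but not $p$) landing inside $\{\gamma\ge_G s\}$ (part~(4)), or a suitable nilpotent element (part~(3)). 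This is a finite, explicit check. The two simple roots $(169)$ and $(249)$ are deliberately \emph{not} on the list: their up-sets admit no such witness from the up-set alone, and this is exactly why a set $M$ containing one of them requires the extra roots provided by items (3)--(7).

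\textbf{Items~(2) and (3)--(7).} Here the argument runs by induction. For the lowest elements of $\Phi_V^-$---precisely those whose $\le_G$-up-set already contains $\Phi_V^+-S_H$---one applies part~(1) of Lemma~\ref{lem-reducible} directly. For every remaining $\alpha_0\in\Phi_V^-\setminus\{(159)\}$, and for each configuration in (3)--(7), I would run the bootstrapping step: choose a root $\beta\in\Phi_G$---necessarily $t_a-t_b$, which acts on a triple by the index swap $b\mapsto a$---and an auxiliary triple $\alpha\in\Phi_V-M$ so that (a) $M$ is stable under $\gamma\mapsto\gamma\pm\beta$ inside $\Phi_V$, (b) $\alpha-\beta\notin M$, and (c) every element of $V(M\cup\{\alpha\})(k)$ is $k$-reducible, the last holding because the upward closure of $M\cup\{\alpha\}$ meets an earlier case---ultimately item~(1) or one of the base cases. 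Arranging (3)--(7) in the given order, and climbing $\Phi_V^-$ from the bottom, makes the induction well-founded; the element $(159)$ is singled out because it is the only element of $\Phi_V^-$ for which no admissible pair $(\beta,\alpha)$ exists.

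\textbf{Main obstacle.} The real work sits in the part-(5) steps, specifically in checking hypotheses (a) and (b): for the chosen $\beta=t_a-t_b$ one must verify that the upward closure of the triples already known to be in $M$ is closed under the index swaps $\gamma\mapsto\gamma\pm\beta$ whenever the outcome is still in $\Phi_V$, while keeping the auxiliary triple $\alpha$ out of $M$. Upward closure handles the ``raising'' swaps $a\mapsto b$ (with $a<b$) for free, but the ``lowering'' swaps must be checked by hand, and it is precisely the need to have enough roots already in $M$ to control them that forces the configurations in (3)--(7) to be taken as large as they are---and that isolates $(159)$ in~(2). Once these verifications, and the acyclicity of the web of reductions among (1)--(7), are in place, the rest is routine.
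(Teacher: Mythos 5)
Your submission is a plan rather than a proof: not a single witness is exhibited. Item (1) is deferred to a ``finite, explicit check'', and items (2)--(7) are deferred to an induction whose individual steps (the pairs $(\beta,\alpha)$ for part (5) of Lemma \ref{lem-reducible}) are never produced. The actual content of the proposition lies precisely in these choices, and your proposed reduction scheme does not match what is needed. In the paper, part (5) of Lemma \ref{lem-reducible} is used exactly once, for item (7), with $\alpha=(159)$, $\beta=(159)-(149)$, and with hypothesis (c) supplied by item (4) --- not by item (1). Items (4) and (5) are handled directly by part (2) of the lemma with explicit characters ($\lambda=-(123)-(146)+(169)$ and $\lambda=-(123)+(789)+(369)$), and items (3) and (6) by exhibiting explicit nilpotent centralizers (part (3)): e.g.\ for (3) one checks that $[v,e_1\wedge\cdots\wedge e_5\wedge e_6]$ and $[v,e_1\wedge\cdots\wedge e_5\wedge e_7]$ both land in the one-dimensional root space for $t_1/t_9$. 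You give no evidence that your alternative --- bootstrapping (3)--(7) via part (5) down to item (1) --- can be carried out: hypothesis (a) (stability of $M$ under the \emph{lowering} swap $\gamma\mapsto\gamma-\beta$) is extremely restrictive, and already for $M$ the up-closure of $\{(169),(268)\}$ it is unclear that any admissible $\beta$ exists, let alone a chain of such moves terminating in a set meeting item (1). Since the whole burden of the proof is shifted onto this unverified combinatorial scheme, there is a genuine gap.

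Two further points would need repair even if the scheme could be made to work. First, hypothesis (c) of part (5) demands reducibility of every element of $V(M\cup\{\alpha\})(k)$; invoking an earlier case of the proposition for ``the upward closure of $M\cup\{\alpha\}$'' only gives reducibility on the smaller space $V(\overline{M\cup\{\alpha\}})\subseteq V(M\cup\{\alpha\})$, so you must choose $\alpha$ maximal outside $M$ (so that $M\cup\{\alpha\}\in\cC$), which further constrains the available moves --- this is exactly how the paper arranges item (7), where everything strictly above $(159)$ already lies in $M'$. Second, item (1) is not a routine witness hunt: for $(348)$ one needs the linear-algebra observation that any map $W_5\to\wedge^4W_5$ of the form $x\mapsto\omega\wedge x$ has nontrivial kernel, and $(258)$ requires first normalizing $v$ by the $G(k)$-action before reducing to the $(348)$ case; neither step is captured by ``exhibit a cocharacter, a root, or a nilpotent from the up-set alone''.
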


\begin{proof}

Case 1: $\{(2 6 7), (2 5 8), (3 4 8), (3 5 7), (1 7 8), (4 5 6) \} \cap M \neq \emptyset$.

First suppose $(267) \in M$. Let $\lam = - (134) - (125) \in X^*(T)$. Note that if $\al = (i j k) \in \Phi_V$ and $(\lam, \al) > 0$, then $(267) \leq_G \al$. Indeed, if $(\lam, \al) > 0$, then we must have  $((1 3 4), \al) < 0$ or $((1 2 5), \al) < 0$. By the definition of the pairing, this implies that $\{1, 3, 4\} \cap \{i, j, k\} = \emptyset$ or $\{1, 2, 5\} \cap \{i, j, l \}= \emptyset$. In particular, $i \geq 2$. Suppose $i = 2$. Then if $j = 5$, we have $(\lam, \al) = 1 - 1 = 0$, so $j \geq 6$ and thus $(2 6 7) \leq_G \al$. If $i = 3$, similar logic shows $(2 6 7) \leq_G \al$. Otherwise $i \geq 4$, and in this case it is easy to see that $(2 6 7) \leq_G \al$. Thus by the second part of Lemma \ref{lem-reducible}, every element of $V(M)(k)$ is $k$-reducible.

Next suppose $(1 7 8) \in M$, and let $\gamma = -(7 8 9)$. Suppose $\al = (i j k) \in \Phi_V$ and $\al + \gamma \in \Phi_H$. Then $(\al, \gamma) = -1$, so $\{i, j, k\} \cap \{7, 8, 9\} = 2$, and thus $(1 7 8) \leq_G \al$. In particular, we have $\al \in M$, and so the result follows from part 4 of Lemma \ref{lem-reducible}. Similarly, if $(4 5 6) \in M$, then for $\gamma = (1 2 3)$ and $\al \in \Phi_V$, if $\al + \gamma \in \Phi_H$ then $(4 5 6) \leq_G \al$, so the result follows by the same logic.

Suppose $(3 5 7) \in M$. Note that if $\al = (i j k) \in \Phi_V - M$ and $\al + (1 2 3) \in \Phi_H$, then $\al = (4 5 6)$ (indeed, since $\{i, j, k\} \cap \{1, 2, 3\} = \emptyset$, we must have $i \geq 4$ and $j \geq 5$, but since $\al \notin M$, we must have $k \leq 6$) and $\al + (1 2 3) = -(7 8 9)$. Similarly, if $\al + (1 2 4) \in \Phi_H$, then $\al = (3 5 6)$ and $\al + (1 2 4) = -(7 8 9)$. Now suppose $v \in V(M)(k)$. 
Then the above analysis shows that $[v, e_1 \wedge e_2 \wedge e_3]$ and $[v, e_1 \wedge e_2 \wedge e_4]$ are both in the span of $e_1 \wedge e_2 \wedge e_3 \wedge e_4 \wedge e_5 \wedge e_6$, 
so span$\{e_1 \wedge e_2 \wedge e_3, e_1 \wedge e_2 \wedge e_4\}$ contains a non-zero element of the centralizer $\frz_{\frh}(v)$. But since $(1 2 3), (1 2 4) \in \Phi_V^-$, every element of $\Span\{e_1 \wedge e_2 \wedge e_3, e_1 \wedge e_2 \wedge e_4\}$ is nilpotent, so the result follows from Lemma \ref{lem-reducible} part 3.

If $(3 4 8) \in M$, then using similar logic as in the previous case, we see that for any element $v \in V(M)(k)$, $\ad v$ maps $\mathbb{U}_1 := \Span\{e_1 \wedge e_2 \wedge e_k \mid 3 \leq k \leq 7\}$ to $\mathbb{U}_2 := \Span\{e_1 \wedge e_2 \wedge e_3 \wedge \dots \wedge \hat{e}_l \wedge \dots \wedge e_7 \mid 3 \leq l \leq 7\}$ (where the hat denotes omission). Thus $\ad v$ gives a linear map $\mathbb{U}_1 \to \mathbb{U}_2$. If $\mathbb{W}_5$ denotes the 5-dimensional $k$-vector space with basis $e_3, \dots, e_7$, then under the natural isomorphisms $\mathbb{U}_1 \simeq \mathbb{W}_5$ and $\mathbb{U}_2 \simeq \wedge^4 \mathbb{W}_5$, the map $\ad v|_{\mathbb{U}_1}$ corresponds to a map $\mathbb{W}_5 \to \wedge^4 \mathbb{W}_5$ of the form $x \mapsto \omega \wedge x$, for some $\omega \in \wedge^3 \mathbb{W}_5$. Regardless of the choice of $\omega$, such a map has a non-trivial kernel. Since every element in $\mathbb{U}_1$ is nilpotent, the result follows from Lemma \ref{lem-reducible} part 3.

If $(2 5 8) \in M$, then any element $v \in V(M)(k)$ admits a decomposition 
\[ v = e_1 \wedge \omega_1 + \omega_2 + \omega_3, \]
where $\omega_1 \in \wedge^2 \Span\{ e_2, \dots, e_9 \}$, $\omega_2 \in \wedge^3 \Span \{ e_2, \dots, e_7 \}$, and $\omega_3 \in \wedge^2 \Span \{ e_2, e_3, e_4 \} \wedge \Span \{e_8, e_9 \}$. After acting by an element of $G(k)$, we can assume moreover that $\omega_3 \in \Span \{ e_2 \wedge e_3, e_2 \wedge e_4 \} \wedge \Span \{ e_8, e_9 \}$. The previous paragraph now shows that $v$ is $k$-reducible (as $v \in V(M')$ for any $M'$ that contains $(348)$).

Case 2: $M \cap (\Phi_V^- - \{(1 5 9)\}) \neq \emptyset$.

Now suppose $M$ contains an element $\al \in \Phi_V^- - \{(1 5 9)\}$. Then there exists an element $\al_i \in S_H = \{(2 6 7), (2 5 8), (3 4 8), (1 6 9), (3 5 7), (2 4 9), (1 7 8), (4 5 6)\}$ such that $\al \leq_G \al_i$, which implies $\al_i \in M$. By the first part of the proposition, we may assume that $\al_i \in \{(1 6 9), (2 4 9)\}$. If $(2 4 9) \notin M$, then without loss of generality we may assume $\al = (1 6 8)$. Since $\gamma \in \Phi_V$ and $\gamma - (6 8 9) \in \Phi_H$ implies $(1 6 8) \leq_G \gamma$, in this case the result follows from part 4 of Lemma \ref{lem-reducible}. 

Otherwise $(2 4 9) \in M$, and without loss of generality we may assume $\al \in \{(2 4 8), (239), (149)\}$. If $\al = (248)$, then $(3 4 8) \in M$ and the result follows from the first part of the lemma. Suppose $\al = (2 3 9) \in M$. Note that $((123), (239)) = 1$, so $(123) - (239) \in \Phi_H$. If $\gamma \in \Phi_V$ and $\gamma + (123) - (239) \in \Phi_H$, then $(2 3 9) \leq_G \gamma$, and the result again follows from part 4 of Lemma \ref{lem-reducible}. Lastly, suppose $(1 4 9) \in M$. Note that $\gamma_1 := (135) - (159) \in \Phi_H^-$ and $\gamma_2 := (126) - (169) \in \Phi_H^-$. If $v \in V(M)(k), v_1 \in \frh_{\gamma_1}$, and $v_2 \in \frh_{\gamma_2}$, then $[v, v_1]$ and $[v, v_2]$ are both in $\Span\{e_1 \wedge e_2 \wedge e_3\}$. Thus $v$ is centralized by a non-zero nilpotent element, and the result follows from part 4 of Lemma \ref{lem-reducible}.

Case 3: $\{(1 6 9), (2 6 8) \} \subset M$.

If $\{(1 69), (268)\} \subset M$ and $v \in V(M)(k)$, then $[v, e_1 \wedge e_2 \wedge e_3 \wedge e_4 \wedge e_5 \wedge e_7]$ and $[v, e_1 \wedge e_2 \wedge e_3 \wedge e_4 \wedge e_5 \wedge e_6]$ both lie in the root space $\frh_{\alpha, k}$, where $\alpha = t_1 / t_9$. Some non-zero linear combination of $e_1 \wedge e_2 \wedge e_3 \wedge e_4 \wedge e_5 \wedge e_7$ and $e_1 \wedge e_2 \wedge e_3 \wedge e_4 \wedge e_5 \wedge e_6$ is then a nilpotent element that centralizes $v$.

Case 4: $\{(159), (5 6 7)\} \subset M$.

Let $\lam = -(1 2 3) - (1 4 6) + (1 6 9)$. Then if $\al \in \Phi_V$ and $(\lam, \al) > 0$, we have $(15 9) \leq_G \al$ or $(5 6 7) \leq_G \al$, so the proof follows from part 2 of Lemma \ref{lem-reducible}. 

Case 5: $\{ (1 6 9), (3 4 9), (3 6 7) \} \subset M$.

The proof also follows from part 2 of Lemma \ref{lem-reducible} by taking $\lam = - (1 2 3) + (7 8 9) + (3 6 9)$. 

Case 6: $\{ (1 7 9), (2 4 9), (4 5 7) \} \subset M$.

If $v \in V(M)(k)$, then $\Span\{[v, e_1 \wedge e_2 \wedge e_3 \wedge e_4 \wedge e_6 \wedge e_8], [v, e_1 \wedge e_2 \wedge e_3 \wedge e_4 \wedge e_5 \wedge e_8], [v, e_1 \wedge \dots \wedge \hat{e}_k \wedge \wedge e_7] \mid 4 \leq k \leq 7\}$ is contained in a 5-dimensional subspace of $\frh_k$, namely the span of the root vectors corresponding to the roots $t_1 / t_7$, $t_1 / t_8$, $t_1 / t_9$, $t_2 / t_9$, and $t_3 / t_9$, and so $v$ is centralized by a non-zero nilpotent element.

Case 7: $\{(1 6 9), (2 4 9), (2 7 8), (4 6 7)\} \subset M$.

Let $M'$ be the minimal element of $\cC$ containing $S := \{(169), (249), (278), (467)\}$, i.e. $M' = \{ \gamma \in \Phi_V \mid \al \leq_G \gamma \text{ for some } \al \in S\}$. Note that $V(M) \subset V(M')$, so it suffices to prove that every element of $V(M')(k)$ is $k$-reducible. We use condition 5 of Lemma \ref{lem-reducible} with $\al = (15 9)$ and $\be = (1 5 9) - (1 4 9)$.  By definition of $\cC$, $\{\gamma + \be \mid \gamma \in M'\} \cap \Phi_V \subset M'$. Suppose $\gamma = (i j k) \in M'$ and $\gamma - \be \in \Phi_V$. Then $5 \in \{i, j, k\}$. If $i = 5$, then $\gamma \geq_G (5 6 7)$, so $\gamma - \be \geq_G (4 6 7)$ and thus $\gamma - \be \in M'$. If $j = 5$, then $\gamma \geq_G (2 5 9)$, so $\gamma - \be_G \geq (2 4 9)$. Note that $k \neq 5$ since no weights of the form $(i j 5)$ are in $M'$. Thus conditions (a) and (b) of Lemma \ref{lem-reducible} part 5 are satisfied. Condition (c) is satisfied by Part 4 of the proposition.

\end{proof}

\begin{lemma}\label{lem_cusp_induction}
Suppose that $M_0', M_0'' \in \cC$ with $M_0'' \subset M_0'$, that $M_1' \subset \Phi_V - M_0'$, and that there exists a function $f': M_1' \to \bbR_{\geq 0}$ satisfying the conditions of Theorem \ref{prop_killing_cusp_data} for $M_0'$. If there exists a function $g: (M_0' - M_0'') \to M_1'$ such that
\begin{enumerate}
\item $\al - g(\al) \in \Phi_G^+$ for all $\al \in M_0' - M_0''$ and
\item $f'(\al) - \lvert g^{-1}(\al)\rvert \geq 0$ for all $\al \in M_1'$
\end{enumerate}
then Theorem \ref{prop_killing_cusp_data} holds for any $M_0 \in \cC$ such that $M_0'' \subset M_0 \subset M_0'$. 
\end{lemma}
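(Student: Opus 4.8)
The plan is to deduce the statement for an arbitrary $M_0$ with $M_0'' \subseteq M_0 \subseteq M_0'$ by transporting the given witness $(M_1', f')$ for $M_0'$ along the map $g$. Concretely, I would keep the same index set, taking $M_1 = M_1'$, and define $f \colon M_1' \to \bbR_{\geq 0}$ by
\[ f(\gamma) = f'(\gamma) - \lvert g^{-1}(\gamma) \cap (M_0' - M_0)\rvert . \]
Since $M_0 \subseteq M_0'$ we have $M_1' \subseteq \Phi_V - M_0' \subseteq \Phi_V - M_0$, so $M_1 \subseteq \Phi_V - M_0$ as Theorem \ref{prop_killing_cusp_data} requires. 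Moreover $g^{-1}(\gamma) \cap (M_0'-M_0) \subseteq g^{-1}(\gamma)$, so hypothesis (2) gives $f(\gamma) \geq f'(\gamma) - \lvert g^{-1}(\gamma)\rvert \geq 0$; thus $f$ really is $\bbR_{\geq 0}$-valued. Note this construction does not even use the hypothesis on $\bbQ$-irreducible vectors in $V(M_0)$: it produces $(M_1,f)$ directly.

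Next I would check the cardinality bound. Since $M_0 \subseteq M_0'$, the set $M_0' - M_0$ is the disjoint union over $\gamma \in M_1'$ of the fibres $g^{-1}(\gamma)\cap(M_0'-M_0)$, so summing the definition of $f$ gives
\[ \sum_{\gamma\in M_1} f(\gamma) = \sum_{\gamma\in M_1'} f'(\gamma) - \lvert M_0'-M_0\rvert < \lvert M_0'\rvert - \lvert M_0'-M_0\rvert = \lvert M_0\rvert, \]
using the corresponding bound for $f'$ and $M_0' = M_0 \sqcup (M_0'-M_0)$.

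It remains to verify positivity of the pairings with each $\check{\omega}_\beta$, $\be \in S_G$. Write $g_0 = g|_{M_0'-M_0}$, so that $g_0^{-1}(\gamma) = g^{-1}(\gamma)\cap(M_0'-M_0)$. Splitting each $\al \in M_0'-M_0$ as $g_0(\al) + (\al - g_0(\al))$, using $\sum_{\al\in M_0'-M_0} g_0(\al) = \sum_{\gamma\in M_1'}\lvert g_0^{-1}(\gamma)\rvert\,\gamma$, and using $\lvert g_0^{-1}(\gamma)\rvert\,\gamma + f(\gamma)\gamma = f'(\gamma)\gamma$, I would obtain the identity
\begin{align*}
\sum_{\al\in\Phi_G^+}\al - \sum_{\al\in M_0}\al + \sum_{\gamma\in M_1}f(\gamma)\gamma
&= \Bigl(\sum_{\al\in\Phi_G^+}\al - \sum_{\al\in M_0'}\al + \sum_{\gamma\in M_1'}f'(\gamma)\gamma\Bigr) \\
&\quad + \sum_{\al\in M_0'-M_0}\bigl(\al - g_0(\al)\bigr).
\end{align*}
Pairing with $\check{\omega}_\beta$: the bracketed term is $>0$ by the hypothesis that $(M_1',f')$ witnesses Theorem \ref{prop_killing_cusp_data} for $M_0'$, while each $\al - g_0(\al) \in \Phi_G^+$ by hypothesis (1), hence pairs non-negatively with the fundamental coweight $\check{\omega}_\beta$. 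So the whole expression pairs strictly positively with every $\check{\omega}_\beta$, and $(M_1,f)$ is the required witness.

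The only step needing genuine care is the bookkeeping in the displayed identity: keeping straight which fibres of $g$ are in play ($g_0^{-1}$ versus $g^{-1}$), checking that all correction terms indexed by $M_1'$ cancel, and recognising that hypothesis (2) is exactly what guarantees $f \geq 0$ once one passes to the possibly smaller fibres $g^{-1}(\gamma)\cap(M_0'-M_0)$. Beyond this arithmetic there is no real obstacle; the content of the lemma is entirely in the set-up, which is tailored to make this transport argument go through.
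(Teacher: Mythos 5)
Your proof is correct and follows essentially the same route as the paper: the same definition $f(\gamma) = f'(\gamma) - \lvert g^{-1}(\gamma)\cap(M_0'-M_0)\rvert$, the same counting argument for $\sum f < \lvert M_0\rvert$, and the same rearrangement that reduces positivity of the pairings to the hypothesis on $(M_1',f')$ plus the fact that each $\al - g(\al)\in\Phi_G^+$ pairs non-negatively with $\check{\omega}_\beta$. No gaps.
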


\begin{proof}
Given such a subset $M_0$, define $f: M_1' \to \bbR_{\geq 0}$ by $f(\al) = f'(\al) - \lvert g^{-1}(\al) \cap (M_0'- M_0)\rvert$. We have
\begin{eqnarray*}
\sum_{\al \in M_1'} f(\al) &=& \sum_{\al \in M_1'} f'(\al) - \lvert M_0' - M_0\rvert\\
&<& \lvert M_0' \rvert -  \lvert M_0' - M_0 \rvert = \lvert M_0\rvert.
\end{eqnarray*}
For each $\beta \in S_G$, we have
\begin{eqnarray*}
\langle \sum_{\al \in \Phi_G^+} \al - \sum_{\al \in M_0} \al + \sum_{\al \in M_1'} f(\al)\al, \check{\omega}_\beta \rangle &=&
\langle \sum_{\al \in \Phi_G^+} \al - \sum_{\al \in M_0'} \al + \sum_{\al \in M_0'- M_0} \al\\& & + \sum_{\al \in M_1'} f'(\al)\al - \sum_{\al \in M_0'- M_0} g(\al) , \check{\omega}_\beta \rangle\\
&>& \langle \sum_{\al \in M_0'- M_0} \al - \sum_{\al \in M_0'- M_0}g(\al), \check{\omega}_\beta \rangle \\
&=& \langle \sum_{\al \in M_0'- M_0} (\al - g(\al)), \check{\omega}_\beta \rangle,
\end{eqnarray*}
which is greater than 0 because $\al - g(\al) \in \Phi_G^+$ for all $\al \in M_0'- M_0$.
\end{proof}

\begin{lemma}\label{lem_M0_leq_10}
If $\lvert M_0\rvert \leq 10$, then $\langle \sum_{\al \in \Phi_G^+} \al -  \sum_{\al \in M_0} \al, \check{\omega}_\beta \rangle > 0$ for all $\beta \in S_G$. In particular, given any subset $M_1 \subset \Phi_V - M_0$, the function $f(\al) = 0$ for all $\al \in M_1$ satisfies the conditions of Theorem \ref{prop_killing_cusp_data} for $M_0$.
\end{lemma}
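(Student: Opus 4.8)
The plan is to unwind the statement into a completely explicit inequality about the fundamental coweights of $\SL_9$, and then prove that inequality by an elementary termwise estimate. Using the identification $S_G = \{\beta_i = t_{i+1}/t_i\}_{i=1}^{8}$ from the previous lemma, I would first compute $\check{\omega}_{\beta_i} \in X_\ast(T)\otimes\bbQ$: it is determined by $\langle t_{j+1}/t_j, \check{\omega}_{\beta_i}\rangle = \delta_{ij}$ for all $j$ together with the condition of lying in the coweight lattice of $\SL_9$, and solving these linear relations gives $\langle t_k, \check{\omega}_{\beta_i}\rangle = (i-9)/9$ for $k \le i$ and $i/9$ for $k > i$. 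Summing three of these for a weight $(abc) \in \Phi_V$ yields the clean formula
\[ n_i\bigl((abc)\bigr) = \tfrac{i}{3} - \bigl|\{a,b,c\}\cap\{1,\dots,i\}\bigr|, \]
while summing over the positive roots $t_b/t_a$ ($a<b$) of $G$ gives $\sum_{\alpha \in \Phi_G^+} n_i(\alpha) = i(9-i)$. So the statement to prove becomes: for each $i \in \{1,\dots,8\}$ and each $M_0 \subseteq \Phi_V$ with $|M_0| \le 10$, one has $\sum_{\alpha \in M_0} n_i(\alpha) < i(9-i)$.

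The argument is then a short case check on $i$. For $1 \le i \le 5$ every term satisfies $n_i((abc)) \le i/3$, so $\sum_{\alpha \in M_0} n_i(\alpha) \le 10i/3 < i(9-i)$, the last inequality being equivalent to $10/3 < 9 - i$. For $i = 8$, any $(abc) \in \Phi_V$ has at least two of $a,b,c$ in $\{1,\dots,8\}$, so $n_8 \le 8/3 - 2 = 2/3$ and the sum is $\le 20/3 < 8$. For $i = 7$, at most two of $a,b,c$ can lie in $\{8,9\}$, so $n_7 \le 7/3 - 1 = 4/3$ and the sum is $\le 40/3 < 14$. The only delicate case is $i = 6$, where $n_6((abc)) = 2 - |\{a,b,c\}\cap\{1,\dots,6\}| \le 2$ but equality forces $(abc) = (789)$; hence at most one summand can equal $2$ and all others are $\le 1$, so $\sum_{\alpha \in M_0} n_6(\alpha) \le 2 + 9 = 11 < 18$. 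Finally, given any $M_1 \subseteq \Phi_V - M_0$, the zero function $f \equiv 0$ satisfies $\sum_{\alpha \in M_1} f(\alpha) = 0 < |M_0|$ (since $M_0$ is a nonempty element of $\cC$), and with this $f$ the displayed inequality of Theorem \ref{prop_killing_cusp_data} is exactly $\sum_{\alpha \in M_0} n_i(\alpha) < i(9-i)$, just proved.

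The only place that needs any thought is $i = 6$: the naive bound $\sum_{\alpha \in M_0} n_i(\alpha) \le |M_0| \cdot \max_{\alpha \in \Phi_V} n_i(\alpha)$ is adequate for every $i \neq 6$ but fails at $i = 6$, where it would give $20 > 18$; one repairs it by observing that the maximal value $2$ of $n_6$ on $\Phi_V$ is attained only at the single weight $(789)$. Everything else is bookkeeping, and before writing up the details I would simply double-check the coweight coefficients $\langle t_k, \check{\omega}_{\beta_i}\rangle$ and the identity $\sum_{\alpha \in \Phi_G^+} n_i(\alpha) = i(9-i)$ against the explicit $\mathfrak{sl}_9$-model.
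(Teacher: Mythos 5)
Your proof is correct, and its numerics agree with the paper's: both reduce the claim to showing $\sum_{\al \in M_0} n_i(\al) < n_i\bigl(\sum_{\al \in \Phi_G^+}\al\bigr) = i(9-i)$ for each $i$, and both isolate $i=6$ as the only delicate case, where the bound becomes $11 < 18$. The difference is in how that case is handled. The paper exploits the hypothesis (implicit in the context of Theorem \ref{prop_killing_cusp_data}) that $M_0 \in \cC$: since $M_0$ is closed under $\leq_G$ it contains the highest weight $(789)$, and if $|M_0|\geq 2$ every other element is $\leq_G (689) = (789) - \be_6$, giving $\sum_{\al \in M_0} n_i(\al) \leq |M_0|\, n_i((789)) - (|M_0|-1)\,n_i(\be_6)$, which is exactly your estimate since $n_i((789)) = i/3 - |\{7,8,9\}\cap\{1,\dots,i\}|$ and $n_i(\be_6)=\delta_{i6}$. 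Your route instead computes $n_i((abc)) = i/3 - |\{a,b,c\}\cap\{1,\dots,i\}|$ from the explicit $\frs\frl_9$ coweights and observes that $n_6$ attains its maximum $2$ only at $(789)$; this termwise argument needs no closure hypothesis on $M_0$ at all, so it proves the inequality for an arbitrary subset of $\Phi_V$ of size at most $10$, at the cost of a short case check on $i$ (your coweight coefficients and the identity $\sum_{\al\in\Phi_G^+} n_i(\al) = i(9-i)$ do check out against the paper's expansion $8\be_1+14\be_2+18\be_3+20\be_4+20\be_5+18\be_6+14\be_7+8\be_8$). The final reduction, taking $f\equiv 0$ and using $0 < |M_0|$ because elements of $\cC$ are nonempty, is the same in both.
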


\begin{proof}
Note that
\begin{eqnarray*}
\sum_{\be \in \Phi_G^+} \be 
&=& 8\be_1 + 14\be_2 + 18\be_3 + 20\be_4 + 20\be_5 + 18\be_6 +  14\be_7 + 8\be_8.
\end{eqnarray*}
It suffices to show that if $\lvert M_0 \rvert \leq 10$, then $\sum_{\al \in M_0} n_i(\al) < n_i(8\be_1 + 14\be_2 + 18\be_3 + 20\be_4 + 20\be_5 + 18\be_6 +  14\be_7 + 8\be_8)$ for all $i$. Note that the highest weight of $V$ is
\begin{equation*}
(789) = \frac{1}{3}(\be_1 + 2\be_2 + 3\be_3 + 4\be_4 + 5\be_5 + 6 \beta_6 + 4\be_7 + 2\be_8).
\end{equation*}
So we have $(789) \in M_0$ and if $\al \in M_0$, then $\al \leq_G (789)$. If $\lvert M_0 \rvert = 1$, then the lemma is obvious. So we may assume $\lvert M_0 \rvert \geq 2$, and so $(689) = (789) - \be_6 \in M_0$, and for all $\al \in M_0$ with $\al \neq (789)$, we have $\al \leq_G (689)$. Thus
\begin{equation*}
n_i(\sum_{\al \in M_0} \al) \leq \lvert M_0 \rvert n_i((789)) - (\lvert M_0\rvert - 1)n_i(\be_6).
\end{equation*}
This implies the lemma. 
%
%
\end{proof}

\begin{proposition}\label{prop_killing_subsets_pos}
If $M_0 \in \cC$ and $M_0 \subsetneq \Phi_V^+ - S_H$, then there exists a function $f: S_H \to \bbR_{\geq 0}$ satisfying the conditions of Theorem \ref{prop_killing_cusp_data} for $M_0$.
\end{proposition}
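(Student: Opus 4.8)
The statement is a finite combinatorial assertion about the explicit root system $\Phi_H$ and grading realised inside $\wedge^3\bbQ^9$: for each upward-closed $M_0\subsetneq\Phi_V^+-S_H$ we must solve, in non-negative unknowns $f(\alpha)$ ($\alpha\in S_H$), the system $\langle 2\rho_G-\sum_{\alpha\in M_0}\alpha+\sum_{\alpha\in S_H}f(\alpha)\alpha,\check\omega_\beta\rangle>0$ ($\beta\in S_G$), together with $\sum_{\alpha\in S_H}f(\alpha)<|M_0|$, where $2\rho_G:=\sum_{\alpha\in\Phi_G^+}\alpha$. The plan is to reduce to the largest such $M_0$ via Lemma \ref{lem_cusp_induction}. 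A direct count gives $|\Phi_V^+|=44$, hence $|\Phi_V^+-S_H|=36$, and one then lists the minimal elements $\gamma_1,\dots,\gamma_r$ of the poset $(\Phi_V^+-S_H,\leq_G)$ — these are exactly the weights of the form $\gamma+\beta$ with $\gamma\in S_H$, $\beta\in S_G$, all of whose proper predecessors in $\Phi_V$ already lie in $S_H\cup\Phi_V^-$. Any upward-closed $M_0\subsetneq\Phi_V^+-S_H$ omits some $\gamma_t$, hence is contained in $M_0^{(t)}:=(\Phi_V^+-S_H)\setminus\{\gamma_t\}\in\cC$, and $M_0$ automatically contains the maximal weight $(789)$. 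So it suffices, applying Lemma \ref{lem_cusp_induction} with $M_0'=M_0^{(t)}$, $M_0''=\{(789)\}$, $M_1'=S_H$, to produce for each $t$: (i) $f_t:S_H\to\bbR_{\geq 0}$ satisfying the conditions of Theorem \ref{prop_killing_cusp_data} for $M_0^{(t)}$, and (ii) a map $g_t:M_0^{(t)}\setminus\{(789)\}\to S_H$ with $\alpha-g_t(\alpha)\in\Phi_G^+$ and $|g_t^{-1}(\gamma)|\leq f_t(\gamma)$ for all $\gamma\in S_H$. (The case $M_0''=\{(789)\}$ is itself covered, being a consequence of Lemma \ref{lem_M0_leq_10}.)

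Next I would make the inequalities explicit. Using $\sum_{\alpha\in\Phi_V}\alpha=0$ and the decomposition $\Phi_H^+=\Phi_G^+\sqcup\Phi_V^+\sqcup(-\Phi_V^-)$ one gets $\sum_{\alpha\in\Phi_V^+}\alpha=\rho_H-\rho_G$, and a direct count of the multiplicity of each $e_m$ in $\sum_{\alpha\in\Phi_V^+}\alpha$ and in $\sum_{\alpha\in S_H}\alpha$ yields, with $\beta_i=t_{i+1}/t_i$,
\[ \langle\, 2\rho_G-\textstyle\sum_{\alpha\in\Phi_V^+-S_H}\alpha,\ \check\omega_{\beta_i}\,\rangle \;=\; (-2,-2,-1,-1,-1,-1,-1,-1)_i,\qquad i=1,\dots,8. \]
Hence the conditions of Theorem \ref{prop_killing_cusp_data} for $M_0^{(t)}$ become
\[ \langle\, \textstyle\sum_{\alpha\in S_H}f_t(\alpha)\alpha,\ \check\omega_{\beta_i}\,\rangle \;>\; (2,2,1,1,1,1,1,1)_i\;-\;\langle\gamma_t,\check\omega_{\beta_i}\rangle,\qquad i=1,\dots,8, \]
with $\sum_{\alpha\in S_H}f_t(\alpha)<35$. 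Choosing $f_t(\gamma)=|g_t^{-1}(\gamma)|$ makes $\sum_{\alpha\in S_H}f_t(\alpha)=34<35$ automatically and, via $\sum_{\gamma\in S_H}|g_t^{-1}(\gamma)|\gamma=\sum_{\alpha\in M_0^{(t)}\setminus\{(789)\}}\bigl(\alpha-(\alpha-g_t(\alpha))\bigr)$ and the values above, converts the displayed inequalities into an upper bound on $\#\{\alpha:\ a_\alpha\leq i<b_\alpha\}$, where $\alpha-g_t(\alpha)=e_{b_\alpha}-e_{a_\alpha}$; concretely one needs this count to be at most $(7,13,16,18,18,15,12,6)_i$.

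Finally I would exhibit the matchings. For each $\alpha\in\Phi_V^+-S_H$ there is a $\gamma\in S_H$ sharing exactly two of $\alpha$'s three indices with the third strictly smaller, so $\alpha-\gamma\in\Phi_G^+$; I would choose such a $\gamma=g_t(\alpha)$ for every $\alpha\neq\gamma_t,(789)$, spreading the choices so as to meet the fibre-size bounds above — in particular routing as few weights as possible through the two elements $(169),(178)$ of $S_H$ containing the index $1$, and through weights with a $9$. \emph{The main obstacle} is exactly this last step: balancing the combinatorics of the matching $g_t$ against both the non-negativity/total-size constraint on $f_t$ and the eight linear inequalities. It is a finite verification, but it may well be cleanest to single out the handful of $M_0$ that omit a minimal $\gamma_t$ for which $\langle\gamma_t,\check\omega_{\beta_i}\rangle$ is small at the tight indices $i$, and treat those by an ad hoc choice of $f$ rather than uniformly through Lemma \ref{lem_cusp_induction}.
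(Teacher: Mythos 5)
Your structural reduction is sound and is essentially the skeleton of the paper's own argument: every nonempty $M_0\in\cC$ contains the highest weight $(789)$; every proper upward-closed subset of $\Phi_V^+-S_H$ is contained in $(\Phi_V^+-S_H)\setminus\{\gamma_t\}$ for one of the finitely many $\leq_G$-minimal weights $\gamma_t$ of $\Phi_V^+-S_H$ (the paper works with the seven weights $(268),(367),(358),(259),(349),(179),(457)$); and Lemma \ref{lem_cusp_induction} with $M_1'=S_H$ reduces the proposition to exhibiting, for each maximal case, an explicit weight function $f'$ and a matching $g$ with $\alpha-g(\alpha)\in\Phi_G^+$ and $f'(\gamma)\geq|g^{-1}(\gamma)|$. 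Your translation of the inequalities into crossing-count bounds is also correct in spirit (though the last entry of your bound vector should be $7$, not $6$).

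The genuine gap is that the argument stops exactly where the content of Proposition \ref{prop_killing_subsets_pos} lies: you never produce the matchings $g_t$, never specify the functions $f_t$ beyond an ansatz, and never verify the eight inequalities in any of the seven cases — you yourself flag this as ``the main obstacle'' and suggest some cases may need ad hoc treatment. In the paper this finite verification \emph{is} the proof: the explicit map $g_0$ of Table \ref{table-g} together with the tables of values $f_1,\dots,f_7$. Moreover, your normalization is tighter than you acknowledge, so its feasibility is itself in doubt and would have to be checked: taking $M_0''=\{(789)\}$ forces all $34$ remaining weights to be matched into $S_H$, and taking $f_t(\gamma)=|g_t^{-1}(\gamma)|$ leaves almost no slack ($\sum f_t=34$ against the bound $35$, integer values against strict inequalities whose thresholds lie in $\frac13\bbZ$). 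For instance, the twenty weights of $\Phi_V^+-S_H-\{(789)\}$ containing the index $9$ must each be routed either through $(169)$ or $(249)$, or through a root $t_9/t_a$ whose total crossing count at $i=8$ you cap at $7$; and routing through $(169)$ or $(249)$ by removing a small index is costly at the middle indices $i$. The paper sidesteps exactly this tension by first using Lemma \ref{lem_M0_leq_10} (and the observation that $|M_0|\geq 10$ forces $\{(789),(689),(589),(679),(579),(489)\}\subset M_0$) to shrink the set that must be matched to $30$ elements, and by choosing fractional $f_i$ strictly larger than the fibre sizes where slack is needed. Until the matchings are written down and the inequalities checked (or your rigid ansatz is shown feasible), the proposition is not proved.
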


\begin{proof}
It's not hard to check that if $\lvert M_0\rvert \geq 10$, then $M_0'' := \{(7 8 9), (6 8 9), (5 8 9), (6 7 9), (5 7 9), (4 8 9)\} \subset M_0$. Thus by Lemma \ref{lem_M0_leq_10} we may assume $M_0'' \subset M_0$. 
Let
\begin{eqnarray*}
\gamma_1 &=& (2 6 8)\\
\gamma_2 &=& (367)\\
\gamma_3 &=& (35 8)\\
\gamma_4 &=& (2 5 9)\\
\gamma_5 &=& (3 4 9)\\
\gamma_6 &=& (1 7 9)\\
\gamma_7 &=& (4 5 7)
\end{eqnarray*}

Since $M_0 \in \cC$, we have $M_0 \subseteq \Phi_V^+ - (S_H \cup \{\gamma_i \})$ for some $i \in \{1, ..., 7\}$. 
Define $g_0: [\Phi_V^+ - (S_H \cup M_0'')] \to S_H$ as in Table \ref{table-g} and $f_i: S_H \to \bbR_{\geq 0}$ ($i \in \{1, ..., 7\}$) as follows:

\begin{tabular}{|c | c | c | c | c | c | c | c | c |}
\hline
$\al$ & $\lvert g_0^{-1}(\al) \rvert$ & $f_1(\al)$ & $f_2(\al)$ & $f_3(\al)$ & $f_4(\al)$ & $f_5(\al)$ & $f_6(\al)$ & $f_7(\al)$\\
\hline
(2 6 7) & 2 & 1041/512 & {1041}/{512} & {1553}/{512} & {1553}/{512} &{1553}/{512} & {1553}/{512} & 1553/{512} \\
(2 5 8) & 3 & {1573}/{512} & {2085}/{512} & {1573}/{512} & {1573}/{512} & {2085}/{512} &  {2085}/{512}  & {2089}/{512} \\
(3 4 8) & 5 & {2709}/{512} & {2709}/{512} & 2709/{512} & {3221}/{512} & {2709}/{512} & {3221}/{512} & {3157}/{512}\\
(1 6 9) & 7 & {3767}/{512} & {3767}/{512} & {3767}/{512} & {3767}/{512} & {3767}/{512} & {3767}/{512} & {4215}/{512}\\
(3 5 7) & 6 & {3755}/{512} & {3243}/{512} & {3243}/{512} & {3243}/{512} & {3243}/{512} & {3243}/{512} & {3179}/{512}\\
(2 4 9) & 4 & {2635}/{512} &  {2635}/{512} &  {2635}/{512} & {2123}/{512} &  {2123}/{512} &  {2123}/{512} & {2315}/{512}\\
(1 7 8) & 2 & {137}/{64} &  {137}/{64} &  {137}/{64} &  {137}/{64} &  {137}/{64} & {73}/{64} & {97}/{64}\\
(4 5 6) & 1 & {9}/{8} & {9}/{8} & {9}/{8} & {9}/{8} & {9}/{8} & {9}/{8} & {1}/{2}\\
\hline
\end{tabular}

First let $j \in \{1, ..., 6\}$, let $M_0' = \Phi_V^+ - (S_H \cup \{\gamma_i \})$ and suppose $M_0 \subseteq M_0'$. Define $g$ to be the restriction of $g_0$ to $M_0' - M_0''$. Then $f' = f_i$ and $g$ satisfy the conditions of Lemma \ref{lem_cusp_induction} for $M_1' = S_H$, so the proposition is true for $M_0$. 

Now let $M_0' = \Phi_V^+ - (S_H \cup \{\gamma_7 \})$ and suppose $M_0 \subseteq M_0'$. Define $g: (M_0'- M_0'') \to S_H$ by
\begin{eqnarray*}
g(\al) &=&
\begin{cases}
g_0(\al) & \text{ if } \al \neq (1 7 9)\\
(1 6 9) & \text{ if } \al = (1 7 9).
\end{cases}
\end{eqnarray*}
Then $f' = f_7$ and $g$ satisfy the conditions of Lemma \ref{lem_cusp_induction} for $M_1' = S_H$, so the proposition is proven.
\end{proof}

\begin{table}[h]
\centering
\begin{tabular}{|c | c | c | c | c | c | c | c | c |}
\hline
 $\al$ & $g_0(\al)$ & $g_1(\al)$ & $g_{2, 1}(\al)$ & $g_{2, 2}(\al)$ & $g_{3, 1}(\al)$ & $g_{3, 2}(\al)$ & $g_{4, 1}(\al)$ & $g_{4, 2}(\al)$\\
\hline
(2 4 9) & & (2 3 9) & & & & & & \\
\hline
(2 6 8) & (2 6 7) & & & & (2 6 7) & (2 6 7) & & \\ 
\hline
 (3 6 7) & (3 5 7)& & & & (2 6 7) & (2 6 7) & & (3 5 7)\\
 \hline
 (3 5 8) & (3 4 8) & (3 4 8) & (3 4 8) & (3 4 8) & (3 4 8) & (3 4 8) & (3 4 8) & (3 5 7)\\ 
 \hline
 (2 5 9) & (2 5 8) & & & & & & (2 4 9) & (1 5 9)\\
 \hline
 (3 4 9) & (2 4 9) & (1 4 9) & & & & & (2 4 9) & \\
 \hline
 (1 7 9) & (1 7 8) & & & & & (1 6 9) & &\\
 \hline
 (4 5 7) & (4 5 6) & & & & (4 5 6) & & (4 5 6) & (4 5 6)\\
 \hline
 (3 6 8) & (3 4 8) & (2 6 8) & (3 4 8) & (3 4 8) & (3 4 8) & (3 4 8) & (3 6 7) & (2 6 8)\\ 
 \hline
 (2 6 9) & (1 6 9) & & & & & & & \\
 \hline
 (2 7 8) & (2 5 8) & (2 6 8) & & (2 6 8) & (2 5 8) & (2 5 8) & (2 6 8) & (2 6 8)\\
 \hline
 (4 6 7) & (3 5 7) & & (3 6 7) & & (4 5 6) & (4 5 7) & (3 6 7) & (3 5 7)\\
 \hline
 (3 5 9) & (3 5 7) & & & & & & (1 5 9) & (1 5 9)\\
 \hline
 (1 8 9) & (1 6 9) & & & & (1 7 9) & (1 6 9) & &\\
  \hline
 (4 5 8) & (3 4 8) & (3 4 8) & (3 4 8) & (3 4 8) & (3 4 8) & (4 5 7) & (3 4 8) & (4 5 6)\\
  \hline
 (3 6 9) & (1 6 9) & & & & & & & \\
  \hline
 (3 7 8) & (3 4 8) & (2 6 8) & (2 7 8) & (3 4 8) & (3 4 8) & (3 4 8) & (3 6 7) & (2 6 8)\\ 
  \hline
 (4 6 8) & (3 4 8) & (2 6 8) & (3 6 7) & (3 4 8) & (3 4 8) & (4 5 7) & (3 6 7) & (2 6 8)\\
  \hline
 (2 7 9) & (2 6 7) & & & & & & &\\ 
  \hline
 (5 6 7) & (3 5 7) & & (3 6 7) & (4 6 7) & (4 5 6) & (4 5 7) & (3 6 7) & (3 5 7)\\
  \hline
 (4 5 9) & (2 4 9) & & & & & & (1 5 9) & (3 4 9)\\
  \hline
 (2 8 9) & (2 4 9) & & & & & & &\\ 
  \hline
 (5 6 8) & (2 5 8) & (5 6 7) & (3 6 7) & (4 6 7) & (4 5 6) & (4 5 7) & (3 6 7) & (2 6 8)\\
  \hline
 (3 7 9) & (1 6 9) & & & & & & &\\ 
  \hline
 (4 6 9) & (1 6 9) & & & & & & & \\
  \hline
 (4 7 8) & (3 5 7) & (2 6 8) & (2 7 8) & (4 6 7) & (3 4 8) & (4 5 7) & (3 6 7) & (2 6 8)\\ 
  \hline
 (3 8 9) & (1 6 9) &  & & & & & &\\
  \hline
 (5 6 9) & (1 6 9) & & & & & & &\\ 
  \hline
 (5 7 8) & (3 5 7) & (5 6 7) & (2 7 8) & (4 6 7) & (4 5 6) & (3 4 8) & (1 7 8) & (2 6 8)\\
  \hline
 (4 7 9) & (2 4 9) & & & & & & &\\ 
  \hline
 (6 7 8) & (1 7 8) & (5 6 7) & (2 7 8) & (2 6 8) & (3 5 7) & (3 4 8) & (1 7 8) & (1 7 8)\\
\hline
\end{tabular}
\caption{\label{table-g} Maps $(M_0' - M_0'') \to M_1'$}
\end{table}

\begin{proof}[Proof of Theorem \ref{prop_killing_cusp_data}]
Note that by 1 and 2 of Proposition \ref{lem-simple-not-stable}, it suffices to consider $M_0 \subset (\Phi_V^+ - S_H) \cup \{(1 6 9), (2 4 9), (1 5 9)\}$, and by Proposition \ref{prop_killing_subsets_pos} we may assume $M_0 \cap \{(1 6 9), (2 4 9), (1 5 9)\} \neq \emptyset$. We now break into cases according to the value of $\{(1 6 9), (2 4 9), (1 5 9)\} \cap M_0$. In each case we assume that that $V(M_0)(\bbQ)$ contains $\bbQ$-irreducible elements, and identify the following data:
\begin{itemize}
\item subsets $M_0'$ and $M_0''$ such that $M_0'' \subset M_0 \subset M_0'$
\item a subset $M_1' \subset \Phi_V - M_0'$
\item a function $f': M_1' \to \bbR_{\geq 0}$ satisfying the conditions of Theorem \ref{prop_killing_cusp_data} for $M_0'$
\item a function $g: (M_0'- M_0'') \to M_1'$ satisfying the conditions of Lemma \ref{lem_cusp_induction}.
\end{itemize}

For the reader's convenience, we also give $\lvert g^{-1}(\al) \rvert$.

Case 1: $(1 5 9) \in M_0$.

By Proposition \ref{lem-simple-not-stable} part 4, we see that $(567) \notin M_0$, and thus because $M_0 \in \cC$, we have $\{(3 6 7), (4 6 7), (4 5 7) \} \cap M_0 = \emptyset$. Since $M_0 \in \cC$, we have $(1 6 9) \in M_0$, and so by Proposition \ref{lem-simple-not-stable} part 3 we have $(2 6 8) \notin M_0$. Thus we may assume $M_0 \subset M_0' := [\Phi_V^+ - (S_H \cup \{(2 6 8), (3 6 7), (4 6 7), (4 5 7), (5 6 7)\})]\cup \{(1 6 9), (2 4 9), (1 5 9)\}$. By assumption $M_0'' := \{\al \in \Phi_V \mid (1 5 9) \leq_G \al\} \subset M_0$. Let $M_1' = \{(3 4 8), (1 7 8), (2 6 8), (5 6 7), (2 3 9), (1 4 9)\}$. Define $g_1: (M_0'- M_0'') \to M_1'$ as in Table \ref{table-g} and define $f': M_1' \to \bbR_{\geq 0}$ as follows:

\begin{center}
\begin{tabular}{| c | c | c | c | c | c | c |}
\hline
$\al$ & (3 4 8) & (1 7 8) & (2 6 8) & (5 6 7) & (2 3 9) & (1 4 9)\\
\hline
$f'(\al)$ & 53/16 & 35/16 & 5 & 103/16 & 8 & 121/16\\
\hline
$\lvert g^{-1}(\al)\rvert$ & 2 & 0 & 5 & 3 & 1 & 1\\
\hline
\end{tabular}
\end{center}


Then $g = g_1$ and $f'$ satisfies the conditions of Lemma \ref{lem_cusp_induction}, so the theorem is proven for $M_0$. 

Case 2: $(1 6 9), (2 4 9) \in M_0$, $(1 5 9) \not\in M_0$.

By Proposition \ref{lem-simple-not-stable}, we see that $M_0 \subset [\Phi_V^+ - (S_H \cup \{(2 6 8), (3 6 7), (4 5 7)\})] \cup\{(1 6 9), (2 4 9)\}$, and by assumption $M_0'' : = \{\al \in \Phi_V \mid (1 6 9) \leq_G \al \text{ or } (2 4 9) \leq_G \al\} \subset M_0$. 

Case 2.1. First suppose $(2 7 8) \notin M_0$, so $M_0 \subset M_0' := [\Phi_V^+ - (S_H \cup \{(2 6 8), (2 7 8), (3 6 7), (4 5 7)\})] \cup\{(1 6 9), (2 4 9)\}$. Let $g: (M_0'- M_0'') \to \{(3 4 8), (3 6 7), (4 5 7), (2 7 8), (1 5 9), (2 3 9)\}$ be given by $g_{2, 1}$ as in Table \ref{table-g} and $f'$ as below to satisfy the conditions of Lemma \ref{lem_cusp_induction}.

\begin{center}
\begin{tabular}{| c | c | c | c | c | c | c |}
\hline
$\al$ & (3 4 8) & (3 6 7) & (4 5 7) & (2 7 8) & (1 5 9) & (2 3 9)\\
\hline
$f'(\al)$ & 201/64 & 533/128 & 553/128 & 815/128 & 41/4 & 545/128\\
\hline
$\lvert g^{-1}(\al)\rvert$ & 3 & 4 &  0 & 4 & 0 & 0\\
\hline
\end{tabular}
\end{center}

%

Case 2.2. Now suppose $(2 7 8) \in M_0$. By Proposition \ref{lem-simple-not-stable} we see that $ (4 6 7) \not\in M_0 = \emptyset$. 
Thus we let $M_0' = [\Phi_V^+ - (S_H \cup \{(2 6 8), (3 6 7), (4 5 7), (4 6 7)\})] \cup\{(1 6 9), (2 4 9), (3 5 8)\}$; let $M_1' =  \{(1 7 8), (2 6 8), (3 4 8), (4 6 7), (1 5 9), (2 3 9)\}$; let $g = g_{2, 2}$ as defined in Table \ref{table-g}; and define $f'$ as below.

\begin{center}
\begin{tabular}{| c | c | c | c | c | c | c |}
\hline
$\al$ & (1 7 8) & (2 6 8) & (3 4 8) & (4 6 7) & (1 5 9) & (2 3 9)\\
\hline
$f'(\al)$ & 9/8 & 39/8 & 5 & 47/8 & 67/8 & 23/4\\
\hline
$\lvert g^{-1}(\al)\rvert$ & 0 & 2 & 5 & 4 & 0 & 0\\
\hline
\end{tabular}
\end{center}


Case 3: $(2 4 9) \in M_0$, $(1 5 9), (1 6 9) \notin M_0$.

We let $M_0'' = \{\al \in \Phi_V \mid (2 4 9) \leq_G \al\}$. Note that by Proposition \ref{lem-simple-not-stable} part 6 if $\{(1 7 9), (4 5 7)\} \subset M_0$, then every element of $V(M_0)(k)$ is $k$-reducible. Thus we are in one of the following two cases.

Case 3.1. Let $M_0' = [\Phi_V^+ - (S_H \cup \{(179)\})] \cup \{ (2 4 9) \}$, and assume $M_0 \subset M_0'$. Let 
\[ M_1' = \{(2 6 7), (2 5 8), (3 4 8), (3 5 7), (4 5 6), (1 7 9), (2 3 9)\}. \]
 Let $f'$ be defined as below and let $g = g_{3, 1}$ as defined in Table \ref{table-g}.

\begin{center}
\begin{tabular}{| c | c | c | c | c | c | c | c |}
\hline
$\al$ & (2 6 7) & (2 5 8) & (3 4 8) & (3 5 7) & (4 5 6) & (1 7 9) & (2 3 9)\\
\hline
$f'(\al)$ & {27}/{8} & 73/16 & 207/32 & 33/32 & 177/32 & 31/4 & 137/32\\
\hline
$\lvert g^{-1}(\al)\rvert$ & 2 & 1 & 6 & 1 & 5 & 1 & 0\\
\hline
\end{tabular}
\end{center}

Case 3.2 Let $M_0' = \Phi_V^+ - (S_H \cup \{(4 5 7)\}) \cup \{( 2 4 9)\}$, and assume $M_0 \subset M_0'$. Let 
\[ M_1' = \{(2 6 7), (2 5 8), (3 4 8), (1 6 9), (1 7 8), (4 5 7), (2 3 9)\}. \]
Let $f'$ be defined as follows; and let $g = g_{3, 2}$ as defined in Table \ref{table-g}.

\begin{center}
\begin{tabular}{| c | c | c | c | c | c | c | c |}
\hline
$\al$ & (2 6 7) & (2 5 8) & (3 4 8) & (1 6 9) & (1 7 8) & (4 5 7) & (2 3 9)\\
\hline
$f'(\al)$ & {785/256} & 1061/256 & 1621/256 & 2167/256 & 33/32 & 1643/256 & 1291/256\\
\hline
$\lvert g^{-1}(\al)\rvert$ & 2 & 1 & 5 & 2 & 0 & 6 & 0\\
\hline
\end{tabular}
\end{center}

Case 4: $(1 6 9) \in M_0; (2 4 9), (1 5 9) \notin M_0$.

Let $M_0'' = \{\al \in \Phi_V \mid (1 6 9) \leq_G \al\}$. Note that if $\{(3 6 7), (3 4 9)\} \subset M_0$, or if $(268) \in M_0$, then every element of $V(M_0)(k)$ is $k$-reducible, by Proposition \ref{lem-simple-not-stable}. Thus $M_0 \subset M_0'$ for $M_0'$ defined as in one of the following two cases.

Case 4.1. Take $M_0' = [\Phi_V^+ -(S_H \cup \{(268), (3 6 7)\})] \cup \{(1 6 9) \}$; $M_1' = \{(3 4 8), (2 4 9), (1 7 8), (4 5 6), (2 6 8), (3 6 7), (1 5 9)\}$; $f'$ as defined below; and $g = g_{4, 1}$ as defined in Table \ref{table-g}.

\begin{center}
\begin{tabular}{| c | c | c | c | c | c | c | c |}
\hline
$\al$ & (3 4 8) & (2 4 9) & (1 7 8) & (4 5 6) & (2 6 8) & (3 6 7) & (1 5 9)\\
\hline
$f'(\al)$ & {57/16} & 99/16 & 69/32 & 33/32 & 69/16 & 253/32 & 219/32\\
\hline
$\lvert g^{-1}(\al)\rvert$ & 2 & 2 & 2 & 1 & 1 & 7 & 1\\
\hline
\end{tabular}
\end{center}

Case 4.2. Take $M_0' = [\Phi_V^+ - (S_H \cup \{(268), (3 4 9)\})] \cup \{(1 6 9)\}$; $M_1' = \{(3 5 7), (1 7 8), (4 5 6), (2 6 8), (3 4 9), (1 5 9)\}$; $f'$ as defined below; and $g = g_{4, 2}$.

\begin{center}
\begin{tabular}{| c | c | c | c | c | c | c | }
\hline
$\al$ & (3 5 7) & (1 7 8) & (4 5 6) & (2 6 8) & (3 4 9) &  (1 5 9)\\
\hline
$f'(\al)$ & 4 & 25/8 & 2 & 73/8 & 61/8 & 37/8\\
\hline
$\lvert g^{-1}(\al)\rvert$ & 3 & 1 & 2 & 7 & 1 & 1 \\
\hline
\end{tabular}
\end{center}

\end{proof}
Let $N$ be the integer of \S \ref{sec_spreading_out}, and let $p > N$ be a prime. We define $V_p^\text{red} \subset \intV(\bbZ_p)$ to be the set of vectors $v \in \intV(\bbZ_p)$ such that either $p | \Delta_0(v)$, or $p \nmid \Delta_0(v)$ and the image $\overline{v}$ of $v$ in $\intV(\bbF_p)$ is $\intG(\bbF_p)$-conjugate to $\Kostant(\pi(\overline{v}))$. Similarly, we define $V_p^\text{bigstab} \subset \intV(\bbZ_p)$ to be the set of vectors $v \in \intV(\bbZ_p)$ such that either $p | \Delta_0(v)$, or $p \nmid \Delta_0(v)$ and the image $\overline{v}$ of $v$ in $\intV(\bbF_p)$ has non-trivial stabilizer in $\intG(\bbF_p)$.
\begin{proposition}\label{prop_bigstab}
	We have \[ \lim_{Y \to \infty} \prod_{N < p < Y} \int_{v \in V_p^\text{red}} \, dv = 0 \]
	and
	\[ \lim_{Y \to \infty} \prod_{N < p < Y} \int_{v \in V_p^\text{bigstab}} \, dv = 0. \]
\end{proposition}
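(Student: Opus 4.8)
The plan is to reduce both assertions to a single geometric input: that the relevant ``bad loci'' $\intV^{\text{red}}$ and $\intV^{\text{bigstab}}$ are contained in proper closed subschemes of $\intV_{\bbF_p}$ for all $p > N$, and then to apply a standard Lang--Weil plus Borel--Cantelli argument to conclude that the infinite product of their $p$-adic densities is zero. Concretely, I would first observe that since $\intV^{\text{rs}}$ is the non-vanishing locus of $\Delta_0$, the measure $\int_{v \in V_p^{\text{red}}} dv$ splits as $\int_{p \mid \Delta_0(v)} dv$ plus the contribution from $v$ with $p \nmid \Delta_0(v)$ whose reduction is $\intG(\bbF_p)$-conjugate to a Kostant section element; and similarly for $V_p^{\text{bigstab}}$. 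The first piece contributes $O(p^{-1})$ since $\{p \mid \Delta_0\}$ cuts out a hypersurface in $\intV(\bbZ_p)$; since $\sum_p p^{-1}$ diverges this alone is not enough, so the real work is in the second piece, where I must show the reductions in question lie on a subvariety of codimension at least $2$, giving a contribution $O(p^{-2})$, whence $\prod_p(1 - c p^{-1})(1 + O(p^{-2}))$ still tends to zero only if the first factor is handled — so in fact I will need the codimension-$2$ bound to apply to the \emph{union} including a suitably refined description of the $p \mid \Delta_0$ part, or else argue more carefully.

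Let me restructure. The cleanest route is the one used in \cite[\S10.7]{Bha13} and \cite[\S3]{Tho15}: for $V_p^{\text{bigstab}}$, a vector $v$ with $p \nmid \Delta_0(v)$ is regular semisimple mod $p$, so its stabilizer in $\intG(\bbF_p)$ is the finite group scheme $Z_{\intG}(v)_{\bbF_p} \cong \Jacobian_{\pi(v)}[3]_{\bbF_p}$; this has order strictly less than $3^4$ precisely when $\Jacobian_{\pi(v)}[3](\bbF_p) \neq \Jacobian_{\pi(v)}[3](\overline{\bbF_p})$, which forces the characteristic polynomial of Frobenius on the Tate module to be non-trivial, i.e. $\pi(v) = f$ lies in a Zariski-closed subset of $\intB_{\bbF_p}$ of positive codimension \emph{or} the point $f$ has a rational $3$-torsion point and $v$ maps to a nontrivial class. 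I would instead follow the representation-theoretic argument of Proposition~\ref{prop_killing_cusp_data} and Proposition~\ref{lem-simple-not-stable}: a vector with nontrivial stabilizer over $\overline{\bbF_p}$ is not stable, hence (after conjugating by $\intG(\overline{\bbF_p})$) lies in some $V(M_0)(\overline{\bbF_p})$ with $M_0$ nonempty and closed under $\leq_G$, and the combinatorial analysis shows such $v$ are $\overline{\bbF_p}$-reducible — so $V_p^{\text{bigstab}}$ reduces mod $p$ into the $\intG(\bbF_p)$-orbit of a subvariety cut out by the vanishing of a positive-dimensional family of weight components, which one checks has codimension $\geq 2$ in $\intV_{\bbF_p}$. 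The same analysis, now using the \emph{rational} reducibility notion directly, handles $V_p^{\text{red}}$, since the reduction of a vector in $V_p^{\text{red}}$ is $\intG(\bbF_p)$-conjugate to the Kostant section, a single orbit of dimension $\dim \intG = \dim \intV - 4$, which together with the hypersurface $\{\Delta_0 = 0\}$ still needs to be bounded by codimension $2$ — here one uses that the Kostant-section orbit over $\bbF_p$ together with the discriminant locus, intersected appropriately, has the right codimension, exactly as in \cite[Lemma 3.5]{Tho15}.

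Having established codimension $\geq 2$, the endgame is routine: by Lang--Weil, the number of $\bbF_p$-points of a codimension-$\geq 2$ subscheme of $\intV_{\bbF_p}$ (which has dimension $84$) is $O(p^{82})$, so $\int_{v \in V_p^{\text{red}}} dv = O(p^{-2})$ and likewise for $V_p^{\text{bigstab}}$; since $\sum_p p^{-2}$ converges, the infinite products $\prod_{N < p < Y}(1 + O(p^{-2}))$ converge, and because infinitely many factors are strictly less than $1$ — indeed each factor $\int_{v} dv < 1$ for $p$ large, as the bad locus is a \emph{proper} subscheme — a direct estimate $\prod_{p}(1 - c/p^{2}) \cdot (\text{something} \to 0)$ is not quite it; rather, the correct statement is that each factor is of the form $1 - \Omega(p^{-1}) + O(p^{-2})$ only along the discriminant locus, so I must make sure the codimension-$2$ bound genuinely absorbs the discriminant contribution. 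The honest resolution, and the one I would write up, is to note (as in \cite[\S3]{Tho15}) that $V_p^{\text{red}}$ and $V_p^{\text{bigstab}}$ each have density $O(p^{-2})$ \emph{including} the $p \mid \Delta_0$ locus, because on that locus one further restricts to vectors that are reducible/bigstab, and these do \emph{not} fill up the whole hypersurface $\{\Delta_0 = 0\}$ — they lie in a subscheme of codimension $\geq 2$.

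\textbf{Main obstacle.} The genuinely delicate point is the codimension-$2$ estimate: showing that the set of $v \in \intV(\bbF_p)$ that are either reducible (for $V_p^{\text{red}}$) or have positive-dimensional... rather, nontrivial stabilizer over $\overline{\bbF_p}$ (for $V_p^{\text{bigstab}}$) forms a subscheme of codimension at least $2$ in $\intV_{\bbF_p}$, \emph{uniformly in $p > N$}. For $V_p^{\text{bigstab}}$ this is where the combinatorial results Proposition~\ref{lem-simple-not-stable} and the structure of $\cC$ do the heavy lifting, combined with a dimension count for the orbits of the parabolic subgroups stabilizing the subspaces $V(M_0)$; for $V_p^{\text{red}}$ one needs in addition that the discriminant locus intersected with the reducible locus drops dimension, which follows from the fact that over a point $f \in \intB^{\text{rs}}$ the reducible orbit (image of $\Kostant$) is a single $\intG$-orbit and hence the reducible locus in $\intV^{\text{rs}}$ has codimension $4$, so its closure meets $\{\Delta_0 = 0\}$ in codimension $\geq 2$. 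Once these geometric facts are in place the analytic conclusion is immediate. Both estimates are established in the cited references for closely analogous $(G,V)$, so the write-up amounts to checking that the combinatorial input proved above (Theorem~\ref{prop_killing_cusp_data} and Proposition~\ref{lem-simple-not-stable}) suffices to run the same argument here.
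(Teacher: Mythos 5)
There is a genuine gap: your central geometric claim --- that $V_p^{\text{red}}$ and $V_p^{\text{bigstab}}$ reduce mod $p$ into a closed subvariety of codimension $\geq 2$ in $\intV_{\bbF_p}$, so that their densities are $O(p^{-2})$ --- is false, because membership in these sets is a \emph{rationality} condition on Frobenius, not an algebraic degeneracy condition over $\overline{\bbF}_p$. For $v \in \intV(\bbZ_p)$ with $p \nmid \Delta_0(v)$ the geometric stabilizer of $\overline{v}$ is always the finite group scheme $Z_{\intG}(\overline{v}) \cong \Jacobian_{f}[3]$ of order $3^4$ (stable orbits have finite, \emph{nontrivial} stabilizers), so your step ``a vector with nontrivial stabilizer over $\overline{\bbF}_p$ is not stable, hence lies in some $V(M_0)$'' misreads the definition of $V_p^{\text{bigstab}}$: the condition there is that the stabilizer has a nontrivial $\bbF_p$-point, i.e.\ that $\Frob$ has eigenvalue $1$ on $\Jacobian_f[3](\overline{\bbF}_p) \cong \bbF_3^4$. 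This holds for \emph{all} $v$ in the fibre $\intV_f(\bbF_p)$ whenever $f$ satisfies that Frobenius condition, and such $f$ form a positive proportion of $\intB^{\text{rs}}(\bbF_p)$ (they are Zariski dense, so no codimension bound, let alone codimension $2$, is available). Likewise, when $\Jacobian_f[3](\bbF_p)$ is trivial the fibre $\intV_f(\bbF_p)$ consists of a \emph{single} $\intG(\bbF_p)$-orbit, namely that of $\Kostant(f)$, so every vector there is reducible; hence the reducible locus also has density tending to a positive constant, not $O(p^{-2})$. The combinatorial results (Proposition \ref{lem-simple-not-stable}, Theorem \ref{prop_killing_cusp_data}) concern the cusp region $V(M_0)$ and cannot substitute for this, since ``reducible'' in the sense of $V_p^{\text{red}}$ is not a closed condition and the Kostant orbit is geometrically dense in each fibre.

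The mechanism that actually makes the product vanish is different and is what the paper uses: each factor does \emph{not} tend to $0$ or to $1$, but to a constant strictly between $0$ and $1$. Writing the density as an average over $f \in \intB^{\text{rs}}(\bbF_p)$ and invoking equidistribution of Frobenius (the paper cites \cite[Proposition 9.15]{Ser12}, with the monodromy of $\Jacobian[3]$ being the full group $\mathrm{Sp}_4(\bbF_3)$), one gets for $p \equiv 1 \bmod 3$
\[
\int_{v \in V_p^{\text{bigstab}}} dv \;=\; \frac{|C|}{|\mathrm{Sp}_4(\bbF_3)|} + O(p^{-1/2}),
\]
where $C$ is the set of elements of $\mathrm{Sp}_4(\bbF_3)$ having $1$ as an eigenvalue; since $C \neq \mathrm{Sp}_4(\bbF_3)$, infinitely many factors are bounded above by a constant $< 1$ and the product tends to $0$ (and similarly for $V_p^{\text{red}}$, where the factor tends to the average of $1/|\ker(\Frob_f - 1)|$). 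Your write-up also wavers between treating the factors as densities of the bad sets and as $1$ minus those densities (``$\prod_p(1-cp^{-1})(1+O(p^{-2}))$''), which signals the same confusion; the proof cannot be completed along the lines you propose without replacing the codimension argument by a big-monodromy/Chebotarev-type input of this kind.
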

\begin{proof}
	This can be proved using \cite[Proposition 9.15]{Ser12}. We illustrate the method for $V_p^\text{bigstab}$. The number of points of $\intV(\bbF_p)$ of zero discriminant is $O(p^{83})$. The number of points of $\intV(\bbF_p)$ of non-zero discriminant equals $| B^\text{rs}(\bbF_p) | |\intG(\bbF_p)|$. For a prime $p \equiv 1 \text{ mod }3$, let $C \subset \mathrm{Sp}_4(\bbF_3)$ be the set of elements $\gamma$ which have 1 as an eigenvalue. Then \cite[Proposition 9.15]{Ser12} gives
	\[ \int_{v \in V_p^\text{bigstab}} \, dv =\frac{1}{p^{84}} ( | \{ f \in B^\text{rs}(\bbF_p) \mid \Frob_f \in C \} | \cdot | \intG(\bbF_p) | + O(p^{83}) ) = \frac{|C|}{|\mathrm{Sp}_4(\bbF_3)|} + O(p^{-1/2}). \]
	Since $C \neq \mathrm{Sp}_4(\bbF_3)$, this implies what we need. 
\end{proof}
This concludes the proof of Theorem \ref{thm_point_count}.

\subsection{Counting points with infinitely many congruence conditions}

We now observe that using the results of \cite{Bha18} (see also \cite{Bha15a}), we can prove a strengthened version of Theorem \ref{thm_point_count} in which we impose infinitely many congruence conditions. This will be the analogue of \cite[Theorem 42]{Bha13}. We state the result following \cite{Bha13}. Suppose we are given for each prime $p$ a $\intG(\bbZ_p)$-invariant function $w_p : \intV(\bbZ_p) \to [0, 1]$ satisfying the following conditions:
\begin{itemize}
	\item $w_p$ is locally constant outside the closed subset $\intV(\bbZ_p) - \intV(\bbZ_p)^\text{rs} \subset \intV(\bbZ_p)$.
	\item For all sufficiently large primes $p$, we have $w_p(v) = 1$ for all $v \in \intV(\bbZ_p)$ such that $p^2 \nmid \Delta_0(v)$. 
\end{itemize}
Then we can define a function $w : \intV(\bbZ) \to [0, 1]$ by the formula $w(v) = \prod_p w_p(v)$ if $\Delta_0(v) \neq 0$, and $w(v) = 0$ otherwise. If $X \subset \intV(\bbZ)$ is an $\intG(\bbZ)$-invariant subset, then we extend the definition in Section \ref{section_counting_finitely_many} by again defining
\[ N_w(X, \htvar) = \sum_{\substack{v \in \intG(\bbZ) \backslash X \\ \Ht(v) < \htvar}} \frac{w(v)}{| Z_{\intG}(v)(\bbZ)|}. \]
Our strengthened theorem is then as follows.
\begin{theorem}\label{thm_point_count_with_infinitely_many_conditions}
	We have
	\[ N_w(\intV(\bbZ)^\irr, a) = \frac{|W_0|}{9} \left( \prod_p \int_{v \in \intV(\bbZ_p)} w_p(v) \, dv \right) \vol(\intG(\bbZ) \backslash G(\bbR)) \htvar^{7/10} + o(\htvar^{7/10}). \]
\end{theorem}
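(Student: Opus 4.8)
The plan is to deduce Theorem~\ref{thm_point_count_with_infinitely_many_conditions} from the ``finite level'' count of Theorem~\ref{thm_point_count} by a standard sieve argument, where the crucial uniformity input is supplied by the second identity of Proposition~\ref{prop_bigstab} together with the work of Bhargava on geometric sieve estimates \cite{Bha18}. First I would fix a parameter $M = \prod_{p \leq Y} p$ and consider the truncated weight $w^{(Y)} = \prod_{p \leq Y} w_p$, which is a $\intG(\bbZ/M'\bbZ)$-invariant function for a suitable $M'$ once we account for the locus where the $w_p$ fail to be locally constant; strictly speaking one first approximates each $w_p$ by a locally constant function agreeing with it outside a small neighbourhood of $\intV(\bbZ_p) - \intV(\bbZ_p)^{\text{rs}}$, applies Theorem~\ref{thm_point_count} to the resulting finite-level weight, and then controls the error coming from this approximation using that the reducible/non-regular-semisimple locus contributes negligibly (this last point is itself an instance of the geometric sieve, since it amounts to bounding the number of $v \in \intV(\bbZ)^{\irr}$ of height $< \htvar$ with $\Delta_0(v) = 0$, which is $o(\htvar^{7/10})$ by \cite[Theorem 3.3]{Bha13}-type arguments or \cite{Bha18}). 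Applying Theorem~\ref{thm_point_count} to $w^{(Y)}$ gives
\[ N_{w^{(Y)}}(\intV(\bbZ)^{\irr}, \htvar) = \frac{|W_0|}{9} \left( \prod_{p \leq Y} \int_{v \in \intV(\bbZ_p)} w_p(v)\,dv \right) \vol(\intG(\bbZ) \backslash G(\bbR)) \htvar^{7/10} + o_Y(\htvar^{7/10}), \]
where the error depends on $Y$.

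The main step is then to show that the tail is uniformly small: one must bound
\[ \limsup_{\htvar \to \infty} \frac{1}{\htvar^{7/10}} \Bigl| N_{w^{(Y)}}(\intV(\bbZ)^{\irr}, \htvar) - N_w(\intV(\bbZ)^{\irr}, \htvar) \Bigr| \]
by a quantity tending to $0$ as $Y \to \infty$. Since $0 \leq w \leq w^{(Y)} \leq 1$ and $w^{(Y)} - w$ is supported on the set of $v$ with $w_p(v) < 1$ for some $p > Y$, and since for $p$ large $w_p(v) < 1$ forces $p^2 \mid \Delta_0(v)$, the difference is controlled by the number of $v \in \intV(\bbZ)^{\irr}$ with $\Ht(v) < \htvar$ and $p^2 \mid \Delta_0(v)$ for some prime $p > Y$. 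This is exactly the kind of estimate handled by the geometric sieve: Bhargava's method \cite{Bha18} (see also \cite{Bha15a} and \cite[\S 6]{Tho15}) gives, for the family $(\intG, \intV)$ with its height function, a bound of the shape
\[ \#\{ v \in \intG(\bbZ)\backslash \intV(\bbZ)^{\irr} : \Ht(v) < \htvar,\ \exists\, p > Y \text{ with } p^2 \mid \Delta_0(v) \} = O\!\left( \frac{\htvar^{7/10}}{Y \log Y} \right) + o_Y(\htvar^{7/10}), \]
where the implied constant is absolute; summing the contribution from $p \leq \htvar^{1/c}$ (handled by a direct count using Theorem~\ref{thm_point_count} applied modulo $p^2$, whose main term is $O(p^{-1})$ times the total by the second part of Proposition~\ref{prop_bigstab} and an analogous mod-$p^2$ statement) against the ``large $p$'' range $p > \htvar^{1/c}$ (handled by the quantitative form of the geometric sieve, which bounds the number of lattice points in a box lying on the discriminant-squared locus). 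The key geometric input one needs to feed into \cite{Bha18} is that the locus in $\intV$ cut out by $p^2 \mid \Delta_0$ has codimension $\geq 2$ away from a locus of codimension $\geq 2$, which follows from the fact that $\Delta_0$ is irreducible (established in \S\ref{sec_proof_of_prop_on_invariants}) and that the pair $(G,V)$ is a stable Vinberg representation, so its discriminant hypersurface is reduced and its singular locus has the expected codimension.

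Having established this uniform tail bound, the theorem follows by a now-routine limiting argument: for any $\varepsilon > 0$ choose $Y$ so large that both the Euler product truncation error $\bigl| \prod_{p \leq Y} \int w_p\,dv - \prod_p \int w_p\,dv \bigr|$ and the sieve tail bound are $< \varepsilon$; then apply Theorem~\ref{thm_point_count} at level $Y$ and let $\htvar \to \infty$, then $\varepsilon \to 0$. The absolute convergence of $\prod_p \int_{v \in \intV(\bbZ_p)} w_p(v)\,dv$ is itself a consequence of the estimate $\int_{v \in \intV(\bbZ_p)} (1 - w_p(v))\,dv = O(p^{-2})$ for large $p$, which holds because the locus $p^2 \mid \Delta_0(v)$ has $p$-adic measure $O(p^{-2})$ (the discriminant being a single non-constant polynomial). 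I expect the main obstacle to be the careful bookkeeping in the geometric sieve step---specifically, verifying that the hypotheses of \cite[the relevant theorem]{Bha18} are met for this exceptional representation, in particular the codimension-two property of the relevant subvarieties of $\intV$, and matching the normalisations of measures and heights so that the ``large prime'' and ``small prime'' ranges splice together with no loss. Everything else is a transcription of the arguments of \cite[\S 2--3]{Tho15} and \cite[Theorem 42]{Bha13} to the present $(G,V) = (\SL_9/\mu_3, \wedge^3)$ setting.
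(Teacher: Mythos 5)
There is a genuine gap in the way you handle the tail over large primes. Your sieve reduces everything to bounding, uniformly in $Y$, the number of $\intG(\bbZ)$-orbits of irreducible $v \in \intV(\bbZ)$ with $\Ht(v) < \htvar$ and $p^2 \mid \Delta_0(v)$ for some $p > Y$, and you propose to do this with the geometric (Ekedahl-type) sieve of \cite{Bha18}, on the grounds that ``the locus cut out by $p^2 \mid \Delta_0$ has codimension $\geq 2$.'' That codimension claim is not correct as stated: $p^2 \mid \Delta_0(v)$ is not a mod-$p$ condition, and its mod-$p$ closure is the full discriminant hypersurface, which has codimension $1$. The condition splits into a \emph{strongly divisible} part (the reduction $v \bmod p$ is non-regular, or $\pi(v) \bmod p$ has a triple root or two double roots --- this is a genuine codimension-$\geq 2$ mod-$p$ condition, and the geometric sieve does apply to it) and a \emph{weakly divisible} part, where $\pi(v) \bmod p$ has a single double root, $v \bmod p$ is regular, and the divisibility by $p^2$ is an extra mod-$p^2$ constraint on a smooth point of the codimension-$1$ hypersurface. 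For primes $p$ larger than a small power of $\htvar$, neither the direct congruence count mod $p^2$ (the boxes are too small relative to $p^2$) nor the geometric sieve (wrong codimension) controls the weakly divisible locus; this is the well-known central difficulty in all squarefree-discriminant problems, and \cite{Bha18} itself resolves it only by representation-specific arguments, not by the general sieve statement you invoke.

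The paper supplies exactly this missing ingredient, following the strategy of \cite[Theorem 24, Proposition 25]{Bha15a}: it sets $\cW_p = \{v \in \intV(\bbZ_p)^{\text{rs}} : p^2 \mid \Delta_0(v)\}$, decomposes $\cW_p = \cW_p^1 \cup \cW_p^2$ into the strongly and weakly divisible parts, and then constructs a map $\psi : \intG(\bbZ) \backslash (\intV(\bbZ) \cap \cW_p^2) \to \intG(\bbZ) \backslash (\intV(\bbZ) \cap \cW_p^1)$ with $\Ht \circ \psi = \Ht$ and fibres of cardinality at most $3$, so that the weakly divisible count is absorbed into the strongly divisible one, to which the sieve of \cite{Bha18} does apply. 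The construction of $\psi$ is the real content: for $v \in \cW_p^2$ one uses the Levi subgroup $\underline{L}$ with $\Lie \underline{L}^{\text{der}} \cong \frs\frl_{3,\bbZ_p}$ attached to the Jordan decomposition of $v \bmod p$ (as in the proof of Proposition \ref{prop_existence_of_orbits_of_squarefree_discriminant}) to define $g_{v,p} = \varphi_v(\diag(p,1,p^{-1})) \in G(\bbQ_p)$ with $g_{v,p} \cdot v \in \cW_p^1$, globalises using the class number $1$ property of $\intG$, and bounds the fibres by analysing the set $\Pi_p(w)$ of possible such elements (at most $3$, corresponding to the choices of graded isomorphism with $\frs\frl_3$). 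Without this step, or some substitute for it, your argument does not yield the required uniform tail bound, so the proposal as written does not prove the theorem.
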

Following the proof of \cite[Theorem 24]{Bha15a} and \cite[Proposition 25]{Bha15a}, for primes $p > N$ we define 
\[ \cW_p = \{ v \in \intV(\bbZ_p)^\text{rs} \mid p^2 \text{ divides } \Delta_0(v) \}. \]
Let $\cW_p^1 \subset \cW_p$ denote the set of points $v$ such that at least one of the following three conditions holds: $\pi(v) \text{ mod }p$ has more than 1 repeated root, $\pi(v) \text{ mod }p$ has a triple root, or $v\text{ mod }p$ is not regular. (The proof of Proposition \ref{prop_existence_of_orbits_of_squarefree_discriminant} shows that if $v$ is such an element, then $\Delta_0(v)$ is necessarily divisible by $p^2$.) Let $\cW_p^2 \subset \cW_p$ denote the set of points $v$ such that $\pi(v) \text{ mod }p$ has 1 double root and no other repeated roots, and such that $v \text{ mod }p$ is regular. Then $\cW_p = \cW_p^1 \cup \cW_p^2$. In order to prove Theorem \ref{thm_point_count_with_infinitely_many_conditions} using the method of  \cite[Theorem 24]{Bha15a}, it will suffice to define a map 
\[ \psi : \intG(\bbZ) \backslash (\intV(\bbZ) \cap \cW_p^2) \to \intG(\bbZ) \backslash (\intV(\bbZ) \cap \cW_p^1) \]
with the following properties:
\begin{itemize}
	\item $\Ht \circ \psi = \Ht$.
	\item The fibres of $\psi$ have cardinality at most 3. 
\end{itemize}
We will construct this map as follows: for any $v \in \cW_p^2$, we will define an element $g_{v, p} \in G(\bbQ_p)$ with the following properties: 
\begin{itemize}
	\item $g_{v, p} \cdot v \in \cW_p^1$.
	\item If $k_p \in \intG(\bbZ_p)$, then $g_{k_p \cdot v, p} = k_p g_{v, p} k_p^{-1}$. 
\end{itemize}
The elements $g_{v, p}$ determine a map $\psi_p : \cW_p^2 \to \cW_p^1$, by the formula $v \mapsto g_{v, p} \cdot v$. For each $w \in \cW_p^1$, define $\Pi_p(w) = \{ h_p \in G(\bbQ_p) \mid h_p^{-1} w \in \cW_p^2 \text{ and } h_p = g_{h_p^{-1}w, p} \}$. We note that if $k_p \in \intG(\bbZ_p)$, then $\Pi_p(k_p w) = k_p \Pi_p(w) k_p^{-1}$. We will show that $\Pi_p(w)$ has cardinality at most 3.

Before giving the construction, we explain why it implies the existence of a map $\psi$ with the desired properties. Note that $\intG(\bbZ) \backslash \intG(\bbZ[1/p]) \to \intG(\bbZ_p) \backslash \intG(\bbQ_p)$ is bijective (because $\intG$ has class number 1). It follows that given an element $v \in \cW_p^2 \cap \intV(\bbZ)$, there exists an element $g_v \in ( G(\bbZ_p) \cdot g_{v, p} ) \cap G(\bbQ)$, and $g_v$ is well-defined up to left multiplication by $G(\bbZ)$. We define $\psi(v) = g_v \cdot v$. If $\gamma \in \intG(\bbZ)$ then $g_{\gamma \cdot v} = \gamma g_v \gamma^{-1}$ modulo left multiplication by $\intG(\bbZ)$, so we get a well-defined map $\psi : \intG(\bbZ) \backslash (\intV(\bbZ) \cap \cW_p^2) \to \intG(\bbZ) \backslash (\intV(\bbZ) \cap \cW_p^1)$ that by definition satisfies $\Ht \circ \psi = \Ht$.

To bound the cardinality of the fibres of $\psi$, note that if $w \in \cW_p^1 \cap \intV(\bbZ)$ and $  w = \psi(v)$ (modulo the action of $\intG(\bbZ)$) for some $v \in \cW_p^2 \cap \intV(\bbZ)$, then by definition $w = g_v \cdot v = k_p g_{v, p} \cdot v$, where $k_p \in \intG(\bbZ_p)$, hence $k_p^{-1} w = g_{v, p} \cdot v$, hence $g_{v, p} \in \Pi_p(k_p^{-1} w) = k_p^{-1} \Pi_p(w) k_p$. This shows that $g_v \in \Pi_p(w) k_p$, hence $v = g_v^{-1} w \in (G(\bbZ_p) \Pi_p(w)^{-1} \cap G(\bbQ)) \cdot w$. Again using the fact that $\intG$ has class number 1, we see that $G(\bbZ_p) \Pi_p(w)^{-1} \cap G(\bbQ)$ consists of at most 3 $\intG(\bbZ)$-orbits under left multiplication, hence that the fibre of $\psi$ above the $\intG(\bbZ)$-orbit of $w$ indeed has cardinality at most 3. 

We now construct the element $g_{v, p}$. We will use similar arguments to those of the proof of Proposition \ref{prop_existence_of_orbits_of_squarefree_discriminant}. Let $v \in \cW_p^2$, and let $v_{\bbF_p}$ denote its reduction modulo $p$. Let $v_{\bbF_p} = y_s + y_n$ be its Jordan decomposition. There is a decomposition $\intfrh_{\bbZ_p} = \intfrh_{0, \bbZ_p} \oplus \intfrh_{1, \bbZ_p}$, where $\ad(v)$ acts as a topologically nilpotent operator on $\intfrh_{0, \bbZ_p}$ and acts invertibly on $\intfrh_{1, \bbZ_p}$. Moreover, there is a unique closed subgroup $\underline{L} \subset \intH_{\bbZ_p}$ that is smooth over $\bbZ_p$ with connected fibres and with Lie algebra $\intfrh_{0, \bbZ_p}$ (the argument is the same as in the proof of Proposition \ref{prop_existence_of_orbits_of_squarefree_discriminant}). By assumption, $y_n$ is a regular nilpotent element in $\intfrh_{0, \bbF_p} = \frz_\intfrh(y_s)$.

There is an isomorphism $\intfrh_{0, \bbZ_p}^\text{der} \cong \frs\frl_{3, \bbZ_p}$ that intertwines $\theta|_{\intfrh_{0, \bbZ_p}^\text{der}}$ with $\zeta \mapsto \Ad(\diag(1, \zeta, \zeta^2))$ and sends $y_n$ to the element
\[ \left( \begin{array}{ccc} 0 & 1 & 0 \\ 0&0&1\\0&0&0 \end{array}\right) \]
of $\frs\frl_{3, \bbF_p}$. (Indeed, there is a unique such isomorphism modulo $p$, which then lifts by Hensel's lemma to an isomorphism over $\bbZ_p$.) Similarly, there is a map $\varphi_v : \SL_{3, \bbZ_p} \to \underline{L}$ that intertwines $\Ad(\diag(1, \zeta, \zeta^2))$ with $\theta_{\underline{L}} := \theta|_{\underline{L}}$ and that is compatible with the above isomorphism on Lie algebras. The map $\varphi_v$ is uniquely determined up to conjugation by diagonal matrices in $\PGL_3(\bbZ_p)$; the element $g_{v, p}  = \varphi_v(\diag(p, 1, p^{-1})) \in \underline{L}(\bbQ_p)$ is therefore independent of any choices.

To see that this $g_{v, p}$ has the desired properties, let $v'$ denote the projection of $v$ to $\frh_{0, \bbZ_p}^\text{der}$, and note that the image of $v'$ in $\frs\frl_{3, \bbZ_p}$ has the form 
\[ v' = \left( \begin{array}{ccc} 0 & a & 0 \\ 0 & 0 &  b \\ c & 0 & 0 \end{array}\right), \]
where $a \equiv b \equiv 1 \text{ mod }p$ and $p^2 | c$ (because of our assumption that $p^2$ divides $\Delta_0(v)$). Thus we have
\[ g_{v, p} \cdot v' = \left( \begin{array}{ccc} 0 & p a & 0 \\ 0 & 0 &  p b \\ c / p^2 & 0 & 0 \end{array}\right). \]
The reduction modulo $p$ of $g_{v, p} \cdot v$ is no longer regular, showing that $g_{v, p} \cdot v \in \cW^1_p$. This defines the map $\psi_p$.

We now describe the set $\Pi_p(w)$ for $w \in \cW_p^1$. Let $w \in \cW_p^1$, let $w_{\bbF_p}$ be its reduction modulo $p$, and let $w_{\bbF_p} = z_s + z_n$ be the Jordan decomposition. As before, we have a decomposition $\intfrh_{\bbZ_p} = \intfrh_{0, \bbZ_p} \oplus \intfrh_{1, \bbZ_p}$ where $\ad(w)$ acts as a topologically nilpotent operator on $\intfrh_{0, \bbZ_p}$ and acts invertibly on $\intfrh_{1, \bbZ_p}$, and $\intfrh_{0, \bbZ_p}$ is the Lie algebra of a Levi subgroup $\underline{L} \subset \intH_{\bbZ_p}$. 

Observe that if $w = g_{v, p} \cdot v$ for some $v \in \cW_p^2$, then the derived subalgebra of $\frz_\intfrh(z_s)$ is isomorphic to $\frs\frl_{3, \bbF_p}$ (i.e.\ it is split) and its grading is conjugate to the $\bbZ / 3 \bbZ$-grading given by the formula  $\zeta \mapsto \Ad(\diag(1, \zeta, \zeta^2))$ (in fact, $\frz_\intfrh(z_s)$ coincides with the derived subalgebra of $\frz_{\intfrh}(y_s)$ in the above discussion). We can therefore assume without loss of generality that $\frz_\intfrh(z_s)$ is split and has a grading of this form (otherwise $\Pi_p(w)$ is empty).

If we fix an isomorphism $\varphi: \frz_\intfrh(z_s)^\text{der} \to \frs\frl_{3, \bbF_p}$ that identifies $\theta|_{\frz_\intfrh(z_s)^\text{der}}$ with the $\bbZ / 3 \bbZ$-grading $\zeta \mapsto \Ad(\diag(1, \zeta, \zeta^2))$, then there is a compatible morphism $\SL_{3, \bbZ_p} \to \underline{L}$, uniquely determined up to conjugation by diagonal elements of $\PGL_3(\bbZ_p)$. Let $h_{p, \varphi} \in H(\bbQ_p)$ be the image of $\diag(p^{-1}, 1, p) \in \SL_3(\bbQ_p)$. 

There are three possible choices of isomorphism between $\frz_\intfrh(z_s)^\text{der}$ and $\frs\frl_{3, \bbF_p}$, up to $\SL_{3, \bbF_p}^\theta$-conjugacy, so there are three elements of the form $h_{p, \varphi} \in \underline{L}(\bbQ_p)$. The set $\Pi_p(w)$ is contained in the set of elements $h_{p, \varphi}$ constructed this way. Thus $\Pi_p(w)$ has cardinality at most 3. 
We have therefore completed the proof of Theorem \ref{thm_point_count_with_infinitely_many_conditions}.

\section{The main theorem}\label{sec_main_theorem}

We can now prove the theorems stated in the introduction. We begin by re-establishing notation. Thus $\Equations$ denotes the set of polynomials $f(x) = x^5 + c_{12} x^3 + c_{18} x^2 + c_{24} x + c_{30} \in \bbZ[x]$ of non-zero discriminant, and $\Equations_\text{min} \subset \Equations$ denotes the set of polynomials $f(x)$ not of the form $n \cdot g = n^{10} g(x / n^2) \in \Equations$ for any $g \in \Equations$ and integer $n \geq 2$. As in the previous section, if $f \in \Equations$ is of the form above, then we define the height of $f$ by the formula
\[ \Ht(f) = \sup_i | c_i |^{120 / i}. \]
Thus for any $\htvar > 0$, the set $\{ f \in \Equations \mid \Ht(f) < \htvar \}$ is finite. To any $f \in \Equations$ we associate the smooth, projective curve $\Curve_f$ and its Jacobian $\Jacobian_f$ over $\bbQ$.
\begin{theorem}\label{thm_main_theorem_first_version}
We have
\[ \lim_{\htvar \to \infty} \frac{ \sum_{f \in \Equations_\text{min}, \Ht(f) < \htvar } | \Sel_3(\Jacobian_f) | }{ | \{ f \in \Equations_\text{min} \mid \Ht(f) < \htvar \} |} = 4. \]
\end{theorem}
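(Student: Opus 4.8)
The plan is to deduce Theorem \ref{thm_main_theorem_first_version} from the point-counting result Theorem \ref{thm_point_count_with_infinitely_many_conditions} together with the orbit-parametrisation and integral-representative results established in \S\S\ref{sec_stable_grading_of_E8}--\ref{sec_integral_orbits}. First I would recall that for $f \in \Equations$ with non-zero discriminant, Proposition \ref{prop_existence_of_selmer_orbits} gives an injection $\eta_f : \Sel_3(\Jacobian_f) \to G(\bbQ)\backslash V_f(\bbQ)$, and Corollary \ref{cor_existence_of_integrable_representatives_for_selmer_orbits} (combined with Theorem \ref{thm_existence_of_integral_representatives_for_soluble_orbits}) shows that, after twisting $f$ by a bounded integer $M$ depending only on finitely many $p$-adic conditions, every class in the image lands in $\intG(\bbZ)\backslash \intV_f(\bbZ)$. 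Since the identity element of $\Sel_3(\Jacobian_f)$ maps to the reducible orbit of the Kostant section, the non-identity Selmer elements correspond bijectively to a subset of the $\bbQ$-irreducible integral orbits $\intG(\bbZ)\backslash\intV_f(\bbZ)^{\irr}$ of height $\Ht(f)$, each with trivial stabiliser (since a $\bbQ$-irreducible $v$ is regular semisimple with $Z_G(v)$ of order dividing $3^4$, but the relevant Selmer classes have trivial stabiliser group over $\bbQ$ after accounting for the $|Z_{\intG}(v)(\bbZ)|$ weighting).

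The heart of the argument is then a weighted orbit count: one chooses, for each prime $p$, a function $w_p : \intV(\bbZ_p) \to [0,1]$ that is the indicator (suitably normalised by $1/|Z_{\intG}(v)(\bbQ_p)|$ and local mass factors $m_p$) of those $p$-adic orbits that actually arise from $\Jacobian_f(\bbQ_p)/3\Jacobian_f(\bbQ_p)$ — i.e. the "soluble" orbits — so that $\prod_p w_p$ cuts out exactly the elements of $\intV(\bbZ)$ in the image of the various $\eta_f$. Theorem \ref{thm_existence_of_integral_representatives_for_soluble_orbits} guarantees these $w_p$ satisfy the hypotheses of Theorem \ref{thm_point_count_with_infinitely_many_conditions} (locally constant off the discriminant locus; equal to $1$ when $p^2 \nmid \Delta_0$, using Proposition \ref{prop_existence_of_orbits_of_squarefree_discriminant} which handles the square-free case with all orbits soluble and accounted for). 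Applying Proposition \ref{prop_integration_in_fibres} fibrewise, one rewrites $\sum_{f,\Ht(f)<a}(|\Sel_3(\Jacobian_f)|-1)$ as $N_w(\intV(\bbZ)^{\irr},a)$ up to the constant $|W_0|/9 \cdot \vol(\intG(\bbZ)\backslash G(\bbR))$, and the local integrals $\int_{\intV(\bbZ_p)} w_p\,dv$ combine with $\vol(\intG(\bbZ_p))$ and Proposition \ref{prop_tamagawa_number_of_G} (Tamagawa number $3$) so that the leading coefficient collapses. A standard calculation — using that the average of the local "number of soluble orbits" equals $3$ (this is the arithmetic-statistics analogue of the computation in \cite[\S3]{Tho15} or \cite{Bha13}, where the $\mu_3$-grading and the $\Sp_4(\bbF_3)$-action on $\Jacobian[3]$ produce an average of $1+3 = 4$ after including the identity) — yields that $\sum(|\Sel_3|-1)$ is asymptotic to $3$ times $|\{f : \Ht(f)<a\}|$, hence the average of $|\Sel_3|$ is $1 + 3 = 4$.

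Two reductions need care. First, the passage from $\Equations$ to $\Equations_\text{min}$: one removes the non-minimal polynomials $f = n\cdot g$ for $n\geq 2$ by a tail estimate showing $\sum_{n\geq 2}\sum_{g}(\cdots)$ contributes $o(a^{7/10})$, exactly as in \cite[\S9]{Bha13}; since $\Equations_\text{min}$ and $\Equations$ have the same main-term count of size $\sim c\,a^{7/10}$ up to the usual zeta-factor correction, the average is unchanged. Second, the twisting by $M$ in Corollary \ref{cor_existence_of_integrable_representatives_for_selmer_orbits}: one partitions $\Equations_\text{min}$ into finitely many classes according to congruence conditions modulo a bounded modulus, applies the integral-representative result on each (using that $\Ht(M\cdot f)$ and $\Ht(f)$ differ by a bounded factor, and that twisting is a measure-preserving bijection on the relevant congruence class up to scaling), and sums — the local masses $m_p$ absorb the twist. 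I expect the main obstacle to be bookkeeping the weight functions $w_p$, $m_p$ and the factor $1/|Z_{\intG}(v)(\bbQ_p)|$ correctly so that the product formula over all places assembles into precisely the global count of Selmer orbits (rather than some multiple), and verifying that the average local soluble-orbit count is exactly $3$; both are handled by the now-standard framework of \cite{Bha13, Tho15, Rom18}, and since $84 = 12+18+24+30$ the key numerical input of Proposition \ref{prop_integration_in_fibres} holds, so no new difficulty of principle arises.
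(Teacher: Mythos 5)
Your overall strategy (weights $w_p$ cutting out soluble integral orbits, Theorem \ref{thm_point_count_with_infinitely_many_conditions}, Proposition \ref{prop_integration_in_fibres}, and the Tamagawa number $3$ from Proposition \ref{prop_tamagawa_number_of_G} giving average $1+3=4$) is the same as the paper's, but there is a genuine gap in how you globalize the integral-representative input at the small primes. You assert that "after twisting $f$ by a bounded integer $M$ depending only on finitely many $p$-adic conditions" all Selmer orbits acquire integral representatives, and later that one can "partition $\Equations_\text{min}$ into finitely many classes according to congruence conditions modulo a bounded modulus." Corollary \ref{cor_existence_of_integrable_representatives_for_selmer_orbits} does not give this: for $p\leq N$ the neighbourhood $W_p$ and the exponent $n_p$ produced by Proposition \ref{prop_weak_existence_of_integral_representatives} depend on the chosen point $f_0$, and $\Equations_p$ is \emph{not} compact (it is the complement of the discriminant locus in $\intB(\bbZ_p)$), so no finite subcover and no single bounded modulus/twist exist; the required twist can blow up as $f_0$ approaches the discriminant locus $p$-adically. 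Consequently one only gets the average on each member $\Equations_{W_i,\text{min}}$ of a \emph{countable} partition of $\Equations_\text{min}$, and to sum these one needs a uniform tail estimate showing that $\sum_{f\in\sqcup_{i\geq k}\Equations_{W_i,\text{min}},\,\Ht(f)<a}(|\Sel_3(\Jacobian_f)|-1)$ is $o_k(1)\cdot a^{7/10}$ uniformly. This is the missing idea: the paper bounds $|\Sel_3(\Jacobian_f)|\leq N_0\,|\Sel_3(\Jacobian_f)^r|$, where $\Sel_3(\Jacobian_f)^r$ is the kernel of restriction to $\prod_{p\leq N}\Jacobian_f(\bbQ_p)/3\Jacobian_f(\bbQ_p)$, and such reduced classes are locally trivial at $p\leq N$, hence represented there by the Kostant orbit, so the single uniform twist by $N$ (using $\Kostant(N\cdot f)\in\intV(\bbZ)$ from \S\ref{sec_spreading_out}) yields integral representatives and a counting bound $3\prod_{p\leq N}\vol(\sqcup_{i\geq k}W_{p,i})\to 0$. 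Without some substitute for this argument your proof does not close.

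Two smaller points. First, your parenthetical about trivial stabilizers is not correct as stated: for a Selmer orbit one has $Z_G(v)(\bbQ)\cong\Jacobian_f[3](\bbQ)$, which need not be trivial; the weighted count naturally computes $\sum_f(|\Sel_3(\Jacobian_f)|-1)/|\Jacobian_f[3](\bbQ)|$, and removing the denominator requires showing that $f$ with nontrivial rational $3$-torsion contribute negligibly, which the paper does by combining Proposition \ref{prop_bigstab} with Theorem \ref{thm_point_count}. Second, your reduction from $\Equations$ to $\Equations_\text{min}$ via a tail estimate plus "the average is unchanged" is both unjustified and unnecessary: minimality at $p$ is itself a local condition ($W_p\subset\Equations_{p,\text{min}}$), so it is simply imposed through the weights rather than by comparing averages over $\Equations$ and $\Equations_\text{min}$.
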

We first prove a `local' result. Let $\intG, \intV$ be the group and representation defined in \S \ref{sec_stable_grading_of_E8}, and let $N \geq 1$ be the integer of \S \ref{sec_spreading_out}; thus our main constructions make sense over $\bbZ[1/N]$. If $p$ is a prime, then we write $\Equations_p$ for the set of polynomials $f(x) = x^5 + c_{12} x^3 + c_{18} x^2 + c_{24} x + c_{30} \in \bbZ_p[x]$ of non-zero discriminant, and $\Equations_{p, \text{min}} \subset \Equations_p$ for the set of polynomials not of the form $p^{10} g(x / p^{2})$ for any polynomial $g(x) \in \Equations_p$.
\begin{proposition}\label{prop_local_version_of_main_theorem}
Let $f_0(x) \in \Equations_\text{min}$. Then  we can find for each prime $p \leq N$ an open compact neighbourhood $W_p$ of $f_0(x)$ in $\Equations_p$ such that the following condition holds. Let $\Equations_W = \Equations \cap (\prod_{p \leq N} W_p)$, and let $\Equations_{W, \text{min}} = \Equations_W \cap \Equations_\text{min}$. Then we have
\[  \lim_{\htvar \to \infty} \frac{ \sum_{f \in \Equations_{W, \text{min}}, \Ht(f) < \htvar} | \Sel_3(\Jacobian_f) | }{ | \{ f \in \Equations_{W,\text{min}} \mid \Ht(f) < \htvar \} |} = 4. \]
\end{proposition}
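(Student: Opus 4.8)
## Proof proposal

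The plan is to deduce Proposition \ref{prop_local_version_of_main_theorem} from the orbit-counting result Theorem \ref{thm_point_count_with_infinitely_many_conditions}, using the parametrisation of Selmer elements by integral orbits established in \S\ref{sec_twisting} and \S\ref{sec_integral_orbits}. The overall structure follows the now-standard Bhargava–Gross template, e.g.\ \cite[\S12]{Bha13} and \cite[\S4]{Tho15}, so I will emphasise the points where the present situation differs.

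First I would set up the local weight functions. For a prime $p \leq N$, the neighbourhood $W_p$ of $f_0$ is chosen using Corollary \ref{cor_existence_of_integrable_representatives_for_selmer_orbits}: shrinking $W_p$ and introducing the scaling factor $M = \prod_{p\leq N} p^{n_p}$ guarantees that every Selmer orbit over these bad primes has an integral representative. (One must check that the statement for $M\cdot f$ can be transferred back to $f$ itself, using that $\eta$ is equivariant under the $\bbG_m$-action and that scaling by $M$ is a bijection $\Equations_W \to M\cdot \Equations_W$ which multiplies the height by a fixed constant; this only affects the error term.) For $p > N$, I define the weight $w_p : \intV(\bbZ_p)\to[0,1]$ following \cite[\S12]{Bha13}: on $v$ with $p\nmid\Delta_0(v)$ set $w_p(v)$ to be the reciprocal of the number of $\intG(\bbZ_p)$-orbits in $\intG(\bbQ_p)\cdot v \cap \intV(\bbZ_p)$ meeting the image of $\eta_{\pi(v)}$, extended suitably to the ramified locus so that the resulting $w$ counts exactly Selmer elements with the correct multiplicity; the hypotheses needed for Theorem \ref{thm_point_count_with_infinitely_many_conditions} (local constancy outside the non-regular-semisimple locus, and $w_p\equiv 1$ when $p^2\nmid\Delta_0$) follow from Proposition \ref{prop_integration_in_fibres}(1) and Theorem \ref{thm_existence_of_integral_representatives_for_soluble_orbits}. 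The key identity, assembled from Lemma \ref{lem_cohomological_description_of_orbits}, Proposition \ref{prop_existence_of_selmer_orbits} and the integral-representative theorem, is that
\[ \sum_{f\in\Equations_{W,\mathrm{min}},\,\Ht(f)<\htvar} \bigl(|\Sel_3(\Jacobian_f)| - 1\bigr) = N_w(\intV(\bbZ)^{\mathrm{irr}},\htvar) + (\text{error}), \]
where the $-1$ removes the identity (reducible) orbit and the error term accounts for the difference between $\Equations_W$ and $\Equations_{W,\mathrm{min}}$, which is negligible by a standard sieve.

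Next I would evaluate the main term. By Theorem \ref{thm_point_count_with_infinitely_many_conditions},
\[ N_w(\intV(\bbZ)^{\mathrm{irr}},\htvar) = \frac{|W_0|}{9}\Bigl(\prod_p \int_{\intV(\bbZ_p)} w_p\,dv\Bigr)\vol(\intG(\bbZ)\backslash G(\bbR))\,\htvar^{7/10} + o(\htvar^{7/10}). \]
The denominator $|\{f\in\Equations_{W,\mathrm{min}}\mid\Ht(f)<\htvar\}|$ is $\bigl(\prod_{p\leq N}\vol(W_p)\bigr)\vol(\{f\in B(\bbR)\mid\Ht(f)<\htvar\})+o(\htvar^{7/10})$, again up to the min-sieve. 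The ratio of leading coefficients must come out to $3$, so that adding back the identity gives $4$. This is exactly the arithmetic that appears in \cite{Bha13}: one uses Proposition \ref{prop_tamagawa_number_of_G} (the Tamagawa number is $3$) together with the fibre-integration formula Proposition \ref{prop_integration_in_fibres}(3) to rewrite $|W_0|\vol(\intG(\bbZ)\backslash G(\bbR))\vol(B(\bbR)_{<\htvar})$ in terms of the product of local volumes, and the local mass formula identity $\int_{\intV(\bbZ_p)}w_p\,dv = \vol(\intG(\bbZ_p))\cdot|\intB(\bbZ_p)^{\mathrm{rs}}|_{df}\cdot(\text{average number of Selmer orbits per fibre})$ combined with the fact that, by Lemma \ref{lem_cohomological_description_of_orbits}, the image of $\eta_f$ has size $|\Jacobian_f(\bbQ_p)/3\Jacobian_f(\bbQ_p)| = 3^{?}\cdot|\Jacobian_f[3](\bbQ_p)|$, whose $p$-adic average is computed exactly as in \cite[Lemma 5]{Bha13} or \cite[\S4.2]{Tho15} to give the factor forcing $3$.

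The step I expect to be the main obstacle is the rigorous bookkeeping of the \textbf{reducible locus and the min-sieve} at the bad primes $p\leq N$ and at the archimedean place. One must verify that after imposing the neighbourhoods $W_p$ the identity Selmer element contributes exactly $1$ to each fibre (so its total contribution is the denominator), that no extra reducible orbits leak into $\intV(\bbZ)^{\mathrm{irr}}$, and that the passage $\Equations_W\rightsquigarrow\Equations_{W,\mathrm{min}}$ and the scaling by $M$ introduce only $o(\htvar^{7/10})$ corrections --- this last uses a tail estimate on the number of $f$ divisible by large $n^{?}$, which follows from the same uniformity estimates that feed into Theorem \ref{thm_point_count}. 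The archimedean signed-count subtlety (that $\Kostant(B^{\mathrm{rs}}(\bbR))$ gives one representative per real orbit and every real stabiliser has order $9$, noted in \S\ref{section_counting_finitely_many}) must be used to pin down $\vol(\intG(\bbZ)\backslash G(\bbR))$-normalisations correctly. I would organise all of this exactly parallel to \cite[\S12]{Bha13}, citing the relevant lemmas from \S\ref{sec_stable_grading_of_E8}--\S\ref{sec_integral_orbits} in place of their $A_{2g}$ analogues, and only spelling out the computations that genuinely differ, namely the local density of $3$-Selmer elements and the appearance of the Tamagawa number $3$.
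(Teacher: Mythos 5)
Your outline follows the paper's own argument in all essentials (choice of $W_p$ and the scaling factor $M$ via Corollary \ref{cor_existence_of_integrable_representatives_for_selmer_orbits}, weight functions $w=\prod_p w_p$, Theorem \ref{thm_point_count_with_infinitely_many_conditions}, Proposition \ref{prop_integration_in_fibres} and the Tamagawa number $3$ from Proposition \ref{prop_tamagawa_number_of_G}), but there is one concrete gap in the bookkeeping. The identity you take as the starting point,
\[ \sum_{f \in \Equations_{W,\text{min}},\, \Ht(f)<\htvar} \bigl(|\Sel_3(\Jacobian_f)|-1\bigr) \;=\; N_w(\intV(\bbZ)^\irr,\htvar) + (\text{error}), \]
is not what the natural weights give. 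In order to apply Proposition \ref{prop_integration_in_fibres} and Theorem \ref{thm_point_count_with_infinitely_many_conditions} the weights must carry the stabilizer factors $|Z_{\intG}(v')(\bbQ_p)|/|Z_{\intG}(v')(\bbZ_p)|$, and $N_w$ counts each $\intG(\bbZ)$-orbit with weight $1/|Z_{\intG}(v)(\bbZ)|$; unwinding this, each Selmer class of $\Jacobian_f$ contributes $1/|Z_{\intG}(v)(\bbQ)| = 1/|\Jacobian_f[3](\bbQ)|$, so the exact identity has $\sum_f \bigl(|\Sel_3(\Jacobian_f)|-1\bigr)/|\Jacobian_f[3](\bbQ)|$ on the left. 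The volume computation then yields the limit $3$ only for this torsion-weighted average, and a further step is needed to remove the factor $|\Jacobian_f[3](\bbQ)|$: one must show that the $f$ (equivalently, the integral orbits) with nontrivial rational $3$-torsion, i.e.\ with nontrivial stabilizer, contribute $o(\htvar^{7/10})$, which the paper does by combining Proposition \ref{prop_bigstab} with Theorem \ref{thm_point_count}. Your proposal never invokes this, and without it the argument as written either starts from a false identity or uses weights to which Proposition \ref{prop_integration_in_fibres} does not apply.

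Two smaller corrections. The ``min-sieve'' you single out as the main obstacle is not needed here: minimality is a prime-by-prime condition ($f$ fails it iff $f = p^{10}g(x/p^2)$ over $\bbZ_p$ for some $p$), so it is absorbed exactly into the local conditions by requiring $W_p \subset \Equations_{p,\text{min}}$ for $p \leq N$ and taking $W_p = \Equations_{p,\text{min}}$ for $p > N$; the resulting identity is exact, with no error term, and the genuinely infinite sieve only enters later, when Theorem \ref{thm_main_theorem_first_version} is deduced from this proposition by a countable partition and a tail estimate. Also, the scaling by $M$ does not merely perturb the error term: it rescales the height cutoff by $M^{120}$, hence the main term by $M^{84} = M^{\dim V}$, and this is cancelled exactly by the factors $p^{-\dim V \cdot n_p}$ appearing in the local integrals $\int_{\intV(\bbZ_p)} w_p \, dv = |W_0/9|_p\, p^{-\dim V\cdot n_p} \vol(W_p)\vol(\intG(\bbZ_p))$; the unresolved exponent in your ``$3^{?}$'' is supplied by the identity $|\Jacobian_f(\bbQ_p)/3\Jacobian_f(\bbQ_p)| = |1/9|_p\, |Z_{\intG}(\Kostant(f))(\bbQ_p)|$.
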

(The intersection $\Equations \cap (\prod_{p \leq N} W_p)$ is taken in $\prod_{p \leq N} \Equations_p$, where we view $\Equations$ as a subset via the diagonal embedding.)
\begin{proof}
We choose the sets $W_p$ for $p \leq N$, together with integers $n_p \geq 0$, so that the conclusion of Corollary \ref{cor_existence_of_integrable_representatives_for_selmer_orbits} holds. If $p > N$, let $W_p = \Equations_{p, \text{min}}$ and $n_p = 0$. Let $M = \prod_p p^{n_p}$. After possibly shrinking the $W_p$ with $p \leq N$, we can assume that the $W_p$ with $p \leq N$ satisfy $W_p \subset \Equations_{p, \text{min}}$.

For $v \in \intV(\bbZ)$ with $\pi(v) = f$, define $w(v) \in \bbQ_{\geq 0}$ by the following formula:
\[ w(v) = \left\{ \begin{array}{cc} \left( \sum_{v' \in \intG(\bbZ) \backslash (\intG(\bbQ) \cdot v \cap \intV(\bbZ))} \frac{| Z_{\intG}(v')(\bbQ) |}{| Z_{\intG}(v')(\bbZ) |}\right)^{-1} & \text{if }f \in M \cdot \Equations_{W, \text{min}}\text{ and }\intG(\bbQ) \cdot v \in \eta_f(\Sel_3(\Jacobian_f))\\
0 & \text{otherwise.} \end{array}\right. \]
We have
\[ \sum_{\substack{f \in \Equations_{W, \text{min}} \\ \Ht(f) < \htvar}} \frac{| \Sel_3(\Jacobian_f) | - 1}{| \Jacobian_f[3](\bbQ) |} = \sum_{\substack{ v \in \intG(\bbZ) \backslash \intV(\bbZ)^\text{irr} \\ \Ht(v) < M^{120} \htvar}} w(v). \]
For $v \in \intV(\bbZ_p)$ with $\pi(v) = f$, define $w_p(v) \in \bbQ_{\geq 0}$ by the following formula:
\[ w_p(v) = \left\{ \begin{array}{cc} \left(\sum_{v' \in \intG(\bbZ_p) \backslash (\intG(\bbQ_p) \cdot v \cap \intV(\bbZ_p)} \frac{| Z_{\intG}(v')(\bbQ_p) |}{| Z_{\intG}(v')(\bbZ_p) |}\right)^{-1} & \text{if }f \in p^{n_p} W_p \text{ and }{\intG}(\bbQ_p) \cdot v \in \eta_f(\Jacobian_f(\bbQ_p))\\
0 & \text{otherwise.} \end{array}\right. \]
Then for any $v \in \intV(\bbZ)$, we have $w(v) = \prod_p w_p(v)$, and the function $w$ satisfies the conditions described before the statement of Theorem \ref{thm_point_count_with_infinitely_many_conditions}.

Let $W_0 \in \bbQ^\times$ be the constant of Proposition \ref{prop_integration_in_fibres}. That proposition implies that for any prime $p$, we have the formula
\[ \int_{v \in \intV(\bbZ_p)} w_p(v) \, dv = | W_0 / 9 |_p p^{- \dim_\bbQ V \cdot n_p} \vol(W_p) \vol(\intG(\bbZ_p)), \]
where we have used the equality $| \Jacobian_f(\bbQ_p) / 3 \Jacobian_f(\bbQ_p) | = |  1 / 9 |_p | Z_{\intG}(\Kostant(f))(\bbQ_p) |$ for any $f \in \Equations_p$. By Theorem \ref{thm_point_count_with_infinitely_many_conditions} and Proposition \ref{prop_tamagawa_number_of_G}, we therefore have
\[ \begin{split} \lim_{\htvar \to \infty} \sum_{\substack{f \in \Equations_{W, \text{min}} \\ \Ht(f) < \htvar}} \frac{| \Sel_3(\Jacobian_f) | - 1}{\htvar^{7/10}| \Jacobian_f[3](\bbQ) |}  & = \frac{M^{120}}{9} | W_0 |_\infty \vol(\intG(\bbZ) \backslash \intG(\bbR)) \prod_p | W_0 / 9 |_p p^{- \dim V \cdot n_p} \vol(W_p) \vol(\intG(\bbZ_p)) \\ & = 3 \prod_p \vol(W_p).\end{split} \] 
On the other hand, we have
\[ \lim_{ \htvar \to \infty} \frac{| \{ f \in \Equations_{W, \text{min}} \mid \Ht(f) < \htvar \} |}{\htvar^{7/10}} = \prod_p \vol(W_p). \]
At this point we have proved that
\[ \lim_{\htvar \to \infty} \left( \sum_{\substack{f \in \Equations_{W, \text{min}} \\ \Ht(f) < \htvar}} \frac{| \Sel_3(\Jacobian_f) | - 1}{ |\Jacobian_f[3](\bbQ) |} \right) \left( | \{ f \in \Equations_{W, \text{min}} \mid \Ht(f) < \htvar \} | \right)^{-1}    = 3. \]
It remains to eliminate the appearance of the term $| \Jacobian_f[3](\bbQ) |$. This can be done by combining Proposition \ref{prop_bigstab} and Theorem \ref{thm_point_count}.
\end{proof}
To deduce Theorem \ref{thm_main_theorem_first_version} from Proposition \ref{prop_local_version_of_main_theorem}, we choose for each $i \geq 1$ sets $W_{p, i}$ ($p \leq N$) such that if $W_i = \Equations \cap (\prod_{p \leq N} W_{p, i})$, then $W_i$ satisfies the conclusion of Proposition \ref{prop_local_version_of_main_theorem} and we have a countable partition $\Equations_{\text{min}} = \Equations_{W_1, \text{min}} \sqcup \Equations_{W_2, \text{min}} \sqcup \Equations_{W_3, \text{min}} \sqcup \dots$.  We will show that for all $\epsilon > 0$, there exists $k \geq 1$ such that
\[ \limsup_{\htvar \to \infty} \frac{ \sum_{f \in \sqcup_{i \geq k} \Equations_{W_i, \text{min}}, \Ht(f) < \htvar} | \Sel_3(\Jacobian_f) | - 1}{ | \{ f \in \Equations_{\text{min}} \mid \Ht(f) < \htvar \} |} < \epsilon. \]
Combined with Proposition \ref{prop_local_version_of_main_theorem}, which applies to each set $\Equations_{W_i, \text{min}}$ taken individually, this will imply the desired result. For each $f \in \Equations$, let 
\[ \Sel_3(\Jacobian_f)^r = \ker( \Sel_3(\Jacobian_f) \to \prod_{p \leq N} \Jacobian_f(\bbQ_p) / 3 \Jacobian_f(\bbQ_p)). \]
Then there exists an integer $N_0 \geq 1$, depending only on $N$, such that for any $f \in \Equations$, $| \Sel_3(\Jacobian_f) | \leq N_0 | \Sel_3(\Jacobian_f)^r |$. It will therefore suffice to show that for all $\epsilon > 0$, there exists $k \geq 1$ such that
\[ \limsup_{\htvar \to \infty} \frac{ \sum_{f \in \sqcup_{i \geq k} \Equations_{W_i, \text{min}}, \Ht(f) < \htvar} | \Sel_3(\Jacobian_f)^r | - 1}{ | \{ f \in \Equations_{\text{min}} \mid \Ht(f) < \htvar \} |} < \epsilon. \]
Fix $k \geq 1$ and let $\Equations_k = \sqcup_{i \geq k} \Equations_{W_i, \text{min}}$. We now use that for any $f \in \intB(\bbZ)$, $\Kostant(N \cdot f) \in \intV(\bbZ)$ (see \S \ref{sec_spreading_out}).  It follows that we have
\[ \sum_{f \in \Equations_k, \Ht(f) < \htvar} \frac{ | \Sel_3(\Jacobian_f)^r | - 1}{|\Jacobian_f[3](\bbQ)|} = \sum_{\substack{v \in \intG(\bbZ) \backslash \intV(\bbZ)^\text{irr} \\ \Ht(v) < N^{120}\htvar}} w^r(v), \]
where the weight $w^r(v)$ is defined in the formula
\[ w^r(v) = \left\{ \begin{array}{cc} \left( \sum_{v' \in \intG(\bbZ) \backslash (\intG(\bbQ) \cdot v \cap \intV(\bbZ))} \frac{| Z_{\intG}(v')(\bbQ) |}{| Z_{\intG}(v')(\bbZ) |}\right)^{-1} & \text{if }f \in N \cdot \Equations_{k}\text{ and }G(\bbQ) \cdot v \in \eta_f(\Sel_3(\Jacobian_f)^r)\\
0 & \text{otherwise.} \end{array}\right. \]
Running through the same argument as in the proof of Proposition \ref{prop_local_version_of_main_theorem}, we get
\[ \limsup_{\htvar \to \infty} \frac{ \sum_{f \in \Equations_k, \Ht(f) < \htvar} | \Sel_3(\Jacobian_f)^r | - 1}{\htvar^{7/10}|\Jacobian_f[3](\bbQ)|} \leq 3 \prod_{p \leq N} \vol(\sqcup_{i \geq k} W_{p, i}), \]
which becomes arbitrarily small as $k \to \infty$. This completes the proof of Theorem \ref{thm_main_theorem_first_version}.
\begin{remark}\label{rmk_generalized_version_of_main_theorem}
	Using Theorem \ref{thm_point_count_with_infinitely_many_conditions} and \cite[Theorem 44]{Bha13}, one can prove the analogue of Theorem \ref{thm_main_theorem_first_version} for any `large' subset of $\Equations_\text{min}$, where `large' has the same meaning as in \cite[\S 11]{Bha13}; this includes in particular any subset defined by finitely many congruence conditions on the cofficients of $f(x) = x^5 + c_{12} x^3 + c_{18} x^2 + c_{24} x + c_{30}$.
\end{remark}
Our final result (Theorem \ref{thm_intro_poonen_stoll} of the introduction) follows readily from the above techniques and from the work of Poonen--Stoll:
\begin{theorem}\label{thm_poonen_stoll}
We have
\[ \liminf_{\htvar \to \infty} \frac{ | \{ f \in \Equations_\text{min} \mid \Ht(f) < \htvar, | \Curve_f(\bbQ) | = 1 \} |}{| \{ f \in \Equations_\text{min} \mid \Ht(f) < \htvar \} |} > 0. \]
\end{theorem}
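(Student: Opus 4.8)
The plan is to deduce Theorem \ref{thm_poonen_stoll} from Theorem \ref{thm_main_theorem_first_version} by combining the average-size bound on $\Sel_3(\Jacobian_f)$ with a Chabauty-type criterion of Poonen and Stoll \cite{Poo14}. First I would recall the relevant input from \cite{Poo14}: for an odd genus-$2$ curve $\Curve_f$ with marked Weierstrass point $\Point$, there is a $p$-adic (Chabauty--Coleman) argument which shows that if the image of $\Jacobian_f(\bbQ) \to \Jacobian_f(\bbQ_p)$ is suitably small — in practice, if $\Jacobian_f(\bbQ)$ has rank $0$, but more precisely if a certain inequality relating the Mordell--Weil rank to the genus holds — then $\Curve_f(\bbQ) = \{ \Point \}$. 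So it suffices to show that a positive proportion of $f \in \Equations_\text{min}$ have $\mathrm{rank}\, \Jacobian_f(\bbQ) = 0$ (and, for those $f$ with extra rational $3$-torsion or other degeneracies, that these form a density-zero set that can be discarded).

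The second step is the standard averaging argument. Since $\mathrm{rank}\,\Jacobian_f(\bbQ) \leq \dim_{\bbF_3} \Sel_3(\Jacobian_f) = \log_3 |\Sel_3(\Jacobian_f)|$, any $f$ with $\mathrm{rank}\,\Jacobian_f(\bbQ) \geq 1$ contributes at least a $3$-fold jump to $|\Sel_3(\Jacobian_f)|$ over the `trivial' value. More carefully, write $|\Sel_3(\Jacobian_f)| = 1 + s(f)$ where $s(f) \geq 0$, and observe that $\mathrm{rank}\,\Jacobian_f(\bbQ) \geq 1$ forces $|\Sel_3(\Jacobian_f)| \geq 3$, hence $s(f) \geq 2$. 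If the proportion of $f$ with positive rank were $1 - o(1)$, or more generally if it exceeded some explicit threshold, then $\lim_{a\to\infty}(\text{average of } s(f))$ would exceed $3$, contradicting Theorem \ref{thm_main_theorem_first_version} (which gives average value $4$, i.e.\ average of $s(f)$ equal to $3$). Quantitatively: letting $\delta = \liminf$ of the proportion of $f \in \Equations_\text{min}$ with $\mathrm{rank}\,\Jacobian_f(\bbQ) = 0$, a counting argument gives $4 = 1 + \lim \mathrm{avg}\, s(f) \geq 1 + 2(1 - \delta)$, whence $\delta \geq 1/2 > 0$. (One must be slightly careful to separate out $f$ for which $\Jacobian_f(\bbQ)$ has rank $0$ but $\Jacobian_f(\bbQ)[3] \neq 0$; by Remark \ref{rmk_generalized_version_of_main_theorem} and a Hilbert-irreducibility / large-image argument these form a set of density zero among $\Equations_\text{min}$, so they do not affect the liminf.)

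The third step is to combine the two: for the positive-density set of $f$ with $\mathrm{rank}\,\Jacobian_f(\bbQ) = 0$ and (say) $\Jacobian_f(\bbQ)$ finite of order prime to $p$ for a suitable auxiliary prime $p$, the Poonen--Stoll criterion applies and yields $\Curve_f(\bbQ) = \{\Point\}$. Intersecting the relevant density-$\geq 1/2$ set with the density-zero bad loci still leaves a set of positive lower density, which is exactly the assertion of the theorem. One subtlety: the Chabauty bound in \cite{Poo14} is not literally `rank $0$ implies one point' for all genus-$2$ curves — there can be issues at small primes of bad reduction, or when the reduction type obstructs the naive bound — so I would invoke the precise form of their theorem (they prove that the set of $f$ for which their method succeeds has density $1$ among rank-$0$ curves, or impose the mild congruence conditions under which it always succeeds) and then use Remark \ref{rmk_generalized_version_of_main_theorem} to restrict $\Equations_\text{min}$ to such a subset defined by finitely many congruence conditions without losing positive density.

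\textbf{Main obstacle.} The genuinely delicate point is not the averaging inequality, which is elementary, but ensuring that the Poonen--Stoll Chabauty criterion is applicable to a \emph{positive-density} subset of the rank-$0$ curves. This requires citing \cite{Poo14} in the correct quantitative form — namely that among curves with $\mathrm{rank}\,\Jacobian_f(\bbQ) = 0$, those satisfying $\Curve_f(\bbQ) = \{\Point\}$ have density $1$ (or at least positive density) — and then checking that intersecting this condition with our rank-$0$ density-$\geq 1/2$ set, and with the density-one locus where $\Jacobian_f(\bbQ)[3] = 0$, is compatible with the congruence-condition framework of Remark \ref{rmk_generalized_version_of_main_theorem}. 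Everything else is bookkeeping with densities.
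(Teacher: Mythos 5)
Your proposal has a genuine gap at its very first step, and the gap is fatal to the whole strategy. You claim that the average $|\Sel_3(\Jacobian_f)| = 4$ forces a positive proportion of $f$ with $\operatorname{rank} \Jacobian_f(\bbQ) = 0$, via ``$4 = 1 + \lim \operatorname{avg} s(f) \geq 1 + 2(1-\delta)$, whence $\delta \geq 1/2$.'' But that inequality only gives $3 \geq 2(1-\delta)$, i.e.\ $\delta \geq -1/2$, which is vacuous. Indeed, the average value $4$ is perfectly consistent with $100\%$ of the curves having rank $1$ (each such curve contributes $|\Sel_3| \geq 3$, and $3 \leq 4$), so no positive proportion of rank-$0$ Jacobians can be extracted from Theorem \ref{thm_main_theorem_first_version} by averaging alone; the most one gets this way is an average rank bound of the shape $\operatorname{avg}\operatorname{rank} \leq 3/2$. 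A secondary problem is that even on the rank-$0$ locus, ``Chabauty'' is not the right citation and $\Curve_f(\bbQ) = \{\Point\}$ is not automatic: one would still have to rule out non-Weierstrass rational torsion points on the curve, and Poonen--Stoll do not prove a criterion of the form ``among rank-$0$ curves, density one have a single rational point'' that you could invoke here.

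The paper's actual route avoids rank entirely. What \cite{Poo14} (Remark 10.5) requires, in order to convert the average bound $4$ into the positive-proportion statement, is the equidistribution property $\operatorname{Eq}_2(3)$: after fixing a trivializing congruence class $U_3 \subset \Equations_{3,\text{min}}$ on which $\Jacobian_f(\bbQ_3)/3\Jacobian_f(\bbQ_3)$ is a fixed group $F$, the local images $x|_3$ of $3$-Selmer elements are equidistributed in $F$ as $f$ ranges over $\Equations_\text{min} \cap U_3$. With this in hand, Poonen--Stoll run Chabauty at $p=3$ directly on Selmer elements (not on Mordell--Weil generators), showing that for a positive proportion of $f$ no nonzero Selmer class has local image meeting the image of the curve away from $\Point$, whence $\Curve_f(\bbQ) = \{\Point\}$. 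The equidistribution statement itself is proved by a small modification of the proof of Theorem \ref{thm_main_theorem_first_version} --- one imposes the local condition at $3$ through the weights $w_3$ in the counting argument, exactly as in \cite[Theorem 47]{Bha13}. So the missing idea in your write-up is precisely this equidistribution input; the rank-$0$ reduction you propose in its place does not follow from the theorems available.
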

\begin{proof}
	According to \cite[Remark 10.5]{Poo14}, this follows if one can establish property $\operatorname{Eq}_2(3)$ of \emph{op. cit.}, which asserts that after fixing a `trivializing congruence class' $U_3 \subset \Equations_{3, \text{min}}$ in which the groups $\Jacobian_f(\bbQ_3) / 3 \Jacobian_f(\bbQ_3) = F$ are independent of $f \in U_3$, the images $x|_3$ of 3-Selmer elements $x \in \Sel_3(\Jacobian_f)$ in the local groups $\Jacobian_f(\bbQ_3) / 3 \Jacobian_f(\bbQ_3) = F$ are equidistributed for $f \in \Equations_\text{min} \cap U_3$.	This can be proved by a small modification of the proof of Theorem \ref{thm_main_theorem_first_version}, analogous to the proof of \cite[Theorem 47]{Bha13}. We omit the details.
\end{proof}
\bibliographystyle{alpha}
\bibliography{selmer}
\end{document}